\newtheorem{theorem}{Theorem}[section]
\newtheorem{definition}[theorem]{Definition}
\newtheorem{lemma}[theorem]{Lemma}
\newtheorem{corollary}[theorem]{Corollary}
\newtheorem{remark}[theorem]{Remark}
\newtheorem{proposition}[theorem]{Proposition}
\newtheorem{condition}[theorem]{Condition}
\newcommand{\niton}{\not\owns}
    \renewcommand*{\bm}[1]{#1}%
\newcommand*\bgrg{\mathrm{BGRG}_n(\bm{w})}
\newcommand*\drig{\mathrm{DRIG}_n(\bm{w})}
\newcommand*\bgrgs{\mathrm{BGRG}_n^{s}(\bm{w})}
\newcommand*\drigs{\mathrm{DRIG}_n^{s}(\bm{w})}
\newcommand*\bgrgunion{\mathrm{BGRG}_n^{[0,t]}(\bm{w})}
\newcommand*\drigunion{\mathrm{DRIG}_n^{[0,t]}(\bm{w})}
\newcommand*\bcm{\mathrm{BCM}_n(\bm{d})}
\newcommand*\bgrgrescaled{\mathrm{BGRG}_n^{(t)}(\bm{w})}
\newcommand*\piaon{\pi^a_{\text{\rm{ON}}}}
\newcommand*\piaoff{\pi^a_{\text{\rm{OFF}}}}
\newcommand*\piaonunion{\pi^{a,[0,t]}_{\text{\rm{ON}}}}
\newcommand*\piaonrescaled{\pi^{a,(t)}_{\text{\rm{ON}}}}
\newcommand*\laon{\lambda^a_{\text{\rm{ON}}}}
\newcommand*\laoff{\lambda^a_{\text{\rm{OFF}}}}
\newcommand*\Var{\mathrm{Var}}
\newcommand{\longversion}[1]{#1}
\newcommand{\shortversion}[1]{}
\title{Dynamic random intersection graph:\\
Dynamic local convergence and giant structure}
\author[1]{Marta Milewska}
\author[2]{Remco van der Hofstad}
\author[1,2]{Bert Zwart}
\date{}
\affil[1]{Centrum Wiskunde \& Informatica (CWI), P.O. Box 94079, 1090 GB Amsterdam, The Netherlands}
\affil[2]{Department of Mathematics and Computer Science, Eindhoven University of Technology, 5600 MB Eindhoven, The Netherlands}
\begin{document}
\maketitle
\begin{abstract}
Random intersection graphs containing an underlying community structure are a popular choice for modelling real-world networks. Given the group memberships, the classical random intersection graph is obtained by connecting individuals when they share at least one group.\\
We extend this approach and make the communities \emph{dynamic} by letting them alternate between an active and inactive phase. We analyse the new model, delivering results on degree distribution, local convergence, giant component, and maximum group size, paying particular attention to the dynamic description of these properties. We also describe the connection between our model and the bipartite configuration model, which is of independent interest.
\end{abstract}

\textbf{Keywords:} Random intersection graphs, Bipartite generalised random graph, dynamic local weak convergence, dynamic giant process;

\section{Introduction and main results}
\subsection{Introduction}
Networks are present in many areas of everyday life. We distinguish for instance social networks, such as acquaintance networks, technological networks, such as the Worldwide Web, or biological networks such as neural networks (see \cite{Networks} for an extensive overview). Networks can be investigated in terms of many aspects, one of them being the existence and shape of local communities.

Local communities are smaller structures that have, on average, more connections than the network as a whole. They are crucial components of many real-life networks, such as social networks or the Internet, and they naturally give rise to high clustering (see \textup{\cite[Chapters 7 and 11]{NetworksBook}}) - one of the most fiercely investigated network's properties, that many systems seem to share.

There are many reasons why communities appear in networks. It might be because of a set of common features shared by a certain amount of individuals (for example the same nationality) or some underlying geometry (for example living in the same city). In our model, we intuitively associate communities with social groups that people can belong to, such as families, groups of friends, commuters on the same bus, etc. However, the model can also be relevant to other types of networks with similar structures.

Due to their complexity, real-world networks are often modelled with the help of \textit{random graphs}. There are many ways of implementing a community structure such as described above. A classic choice is the random intersection graph (RIG), first introduced in \cite{Singer_thesis}. The distinctive feature of intersection graph models is a double layer. We first establish a bipartite graph, with vertices on one side and communities on the other. After drawing edges between the two groups, we obtain a resulting graph by connecting two vertices if and only if they both belong to the same community. This procedure is called the one-node projection. Throughout the years, multiple suggestions on how to generate such bipartite graphs with group memberships appeared \cite{Bloznelis2015}. This includes pre-assigning the number of group memberships to every vertex and then connecting them to groups in a uniform manner (uniform RIG \cite{Blackburn2009, Rybarczyk2011} or generalized RIG \cite{Bloznelis2010, Bloznelis2013, Bloznelis2017, Bloznelis_Jaworski_Kurauskas2013, Godehardt_Jaworski}, generating group membership via the bipartite configuration model, i.e., assigning half-edges to individuals and groups and then connecting them uniformly at random \cite{Lelarge2015, Hofstad2022, Hofstad2018, Newman2003}, or performing independent percolation on the complete bipartite graph (binomial RIG \cite{Fill2000, Karonski1999, Singer_thesis} or inhomogeneous RIG \cite{Bloznelis_Damarackas2013, Deijfen2009}).

Our approach shares some similarities with the mentioned inhomogeneous RIG since we also assign weight $w_i$ to every vertex $i$ and the probability that vertex $i$ belongs to a certain community depends on this weight. However, we add a novel \textit{dynamic} factor by letting all communities go through active and inactive phases. We argue that such a modification is relevant for real-world networks since many of our regular social contacts are temporary. For instance, we usually do not spend all our days with our colleagues, and we only meet our closest friends a few times a week for a couple of hours. Other examples of temporary social interactions are concerts or rides with public transport. Note that our model is also a natural generalization of static graphs with communities, as the scenario in which all of the groups are present all the time is a special case.

The dynamics we implement in our model significantly differ from the well-investigated dynamic of graphs whose size evolves in time, such as preferential attachment models \cite{Barabasi1999, Hofstad2016, Simon1955, Yule1925}. The size of our graph is \emph{static}, but the connections between individuals and communities (and hence, in the resulting graph the connections between individuals themselves) keep evolving. This makes the model more similar to the graphs with dynamic bond percolation \cite{DynamicPercol, DynamicPercol2} or evolving configuration models \cite{AvenadHvdH2018,Avena2022}.\\

\noindent \textbf{Contribution of the paper.} In this paper, we investigate degree distribution, local convergence, and behavior of the giant connected component. A main innovation of our work is the methodology required to give a dynamic description of a bipartite generalized random graph. We investigate its dynamic local convergence and in particular, we introduce the concept of a dynamic giant component from the perspective of a uniformly chosen vertex, looking at it as a process in time. We then show in which way it is related to the dynamic local limit. To our best knowledge, the concept of dynamic local limit has only been discussed in the recent paper \cite{dynweaklimit2023} where the authors adopt a different approach and treat a different model. We also develop an auxiliary result on the equivalence between our model and the bipartite configuration model described in \cite{Hofstad2018}, that can be of independent interest \longversion{(see Appendix)}\shortversion{(see extended version \textup{\cite[Appendix]{Milewska2023}})}. This result can be thought of as a bipartite/community version of the equivalence between the configuration model and generalized random graph (see \textup{\cite[Theorem 7.18]{Hofstad2016}} and also Section \ref{sec_overview} in this paper).\\ 

\noindent \textbf{Outline of the paper.} We introduce our model and all necessary assumptions in Section \ref{sec_intro}. We state our main results in Section \ref{sec__main_results} and provide a discussion in Section \ref{sec_discussion}. We describe the overview of the proofs and state some secondary results in Section \ref{sec_overview}. In Section \ref{main_proofs} we prove the main results presented in Section \ref{sec__main_results}. For the conceptually straightforward proofs of the remaining results, we refer the reader \longversion{to the Appendix}\shortversion{to the extended version, see \textup{\cite[Appendix]{Milewska2023}}}.

\subsection{Model and notation} \label{sec_intro}
In this section, we intuitively and formally introduce the model and list some necessary assumptions that will hold throughout the paper. 

\noindent \textbf{Bipartite structure.} As mentioned in the introduction, there are two layers to our model: the underlying bipartite generalized random graph that we will call $\bgrg$, and the resulting dynamic random intersection graph that we will call $\drig$. Let $[n]_k$ denote the set of subsets of size $k$ of $[n]=\{1,...,n\}$. $\bgrg$ consists of the set of vertices $[n]$ on the one hand and the set of groups $\cup_{k\geq 2} [n]_{k}$ - a union of all $k$-element subsets of $[n]$ - with $k\geq2, k\in\mathbf{N}$ on the other. We intuitively think of them as left- and right-vertices respectively. To differentiate degrees in the underlying and the resulting graph, we write $d^{(l)}_i$ for the degree of vertex $i\in[n]$ in $\bgrg$ and $d^{(r)}_a$ for the degree of a group $a\in \cup_{k\geq 2}[n]_k$ in $\bgrg$. The connections are then formed by making the left-vertices connect to active right-vertices (groups).\\

In the following, we explain in detail how these connections are formed and with what probabilities, and what it means for a group $a\in \cup_{k\geq 2}[n]_k$ to be active. The left-degrees here denote the (potential) number of groups that vertices belong to and the right-degrees are the number of left-vertices connected to a particular right-vertex, i.e., the group sizes. Hence,
\begin{align} \label{bipartite_l_deg}
    d^{(l)}_i = \sum_{k=2}^{\infty} \sum_{a\in[n]_k: a \ni i} \mathds{1}_{\{a \hspace{0.1cm} \text{is ON}\}},
\end{align}
and, with $|a|$ denoting the size of group $a\in \cup_{k\geq 2}[n]_k$,
\begin{align} \label{bipartite_r_deg}
    d^{(r)}_a = |a|\cdot\mathds{1}_{\text{\{$a$ is \rm{ON}\}}}.
\end{align}

\noindent \noindent \textbf{Group memberships.} We quantify the number of all active groups as
\begin{align}
     \sum_{k=2}^{\infty} \sum_{a\in[n]_k} \mathds{1}_{\text{\{$a$ is \rm{ON}\}}},
\end{align}
and the number of all active groups a vertex $i\in[n]$ is a part of as
\begin{align}
    \sum_{k=2}^{\infty} \sum_{a\in[n]_k,a\ni i} \mathds{1}_{\text{\{$a$ is \rm{ON}\}}}.
\end{align}
As one can see, we conceptually deviate from the popular strategy of fixing upfront a $p_{ia}$ - a probability that a vertex $i$ is connected to a group $a$. Instead, we say that every combination of $k$ vertices, with $k\in[2,n]$, might potentially form a group and we define the probability that such a group is active, depending on the choice of vertices in question.\\

\noindent \textbf{Group dynamics.} Every group $a\in \cup_{k\geq 2}[n]_k$ will alternate between an ON and OFF state. Each group $a \in [n]_k$ behaves like a continuous-time Markov process with two states, which we refer to as ON and OFF. The holding times, i.e., the time that a group spends in each of them, are exponentially distributed with rates
\begin{align} \label{holding_times}
    \lambda^a_{\text{\rm{ON}}}=1 \hspace{1cm} \text{and} \hspace{1cm} \lambda^a_{\text{\rm{OFF}}}=\frac{f(|a|)\prod_{i\in a}w_i}{\ell_n^{|a|-1}},
\end{align}
respectively, where $\bm{w} = (w_i)_{i \in [n]}$ are the vertex weights, $\ell_n = \sum_{i\in[n]} w_i$ and $|a|$ denotes the size of a group $a\in [n]_k$. Naturally, $f(|a|)$ is a function of a group's size and can be chosen in a flexible way. Hence, the stationary distribution $\pi = [\pi_{\text{\rm{ON}}},\pi_{\text{\rm{OFF}}}]$ of these Markov chains is given by
\begin{align} \label{pi_ON}
    \pi^a_{\text{\rm{ON}}} = \frac{\laoff}{\laon+\laoff} = \frac{f(|a|)\prod_{i \in a}w_i}{\ell_n^{|a|-1}+f(|a|)\prod_{i \in a}w_i} \hspace{0.5cm} \text{and} \hspace{0.5cm} \pi^a_{\text{\rm{OFF}}} = \frac{\laon}{\laon+\laoff} = \frac{\ell_n^{|a|-1}}{\ell_n^{|a|-1}+f(|a|)\prod_{i \in a}w_i}.
\end{align}

\noindent \textbf{Assumptions on weights.} We first define what the empirical distribution of the weights is.
\begin{definition} [Empirical vertex weights distribution] We define the empirical distribution function of the vertex weights as
\begin{align}
    F_n(x) = \frac{1}{n} \sum_{i \in [n]} \mathds{1}_{\{w_i \leq x\}}, \hspace{0.2cm} \text{for} \hspace{0.2cm} x\geq 0.
\end{align}
\end{definition}
\noindent $F_n$ can be interpreted as the distribution of the weight of a uniformly chosen vertex. We denote the weight of such a uniformly chosen vertex $o_n$ in $[n]$ by $W_n=w_{o_n}$.
We impose the following conditions on the vertex weights:
\begin{condition} [Regularity condition for vertex weight] \label{cond_weights}
There exists a distribution function $F$ such that, as $n \to \infty$, the following conditions hold:
\begin{enumerate}[(a)]
    \item Weak convergence of vertex weight:
    \begin{align}
        W_n \stackrel{d}{\longrightarrow} W,
    \end{align}
    where $W_n$ and $W$ have distribution functions $F_n$ and $F$, respectively. Equivalently, for any $x$ for which $x \mapsto F(x)$ is continuous,
    \begin{align}
        \lim_{n\to\infty} F_n(x)=F(x).
    \end{align}
    \item Convergence of average vertex weight:
    \begin{align}
        \lim_{n\to\infty} \mathbf{E}[W_n] = \mathbf{E}[W],
    \end{align}
    where $W_n$ and $W$ have distribution functions $F_n$ and $F$, respectively. Further, we assume that $\mathbf{E}[W]>0$.
    \item Convergence of second moment of vertex weight:
    \begin{align}
        \lim_{n\to\infty} \mathbf{E}[W_n^2] = \mathbf{E}[W^2],
    \end{align}
\end{enumerate}
\end{condition}
\begin{remark}
For the time being, we assume Condition \ref{cond_weights}(c), however, the results treated in this paper are also true without it, which is very convenient for applications. We explain how Condition \ref{cond_weights}(c) can be lifted \longversion{in Remark \ref{appx_eliminating_cond_c}}\shortversion{in the extended version, see \textup{\cite[Remark B.11]{Milewska2023}}}.
\end{remark}

\noindent \textbf{Assumptions on the function of a group's size.}
We take $f(|a|) = |a|!p_{|a|}$, where $(p_{k})_{k\geq2}$ is the probability mass function of the group sizes. A particularly important case is a power-law group-size distribution where $p_k$ is close to $k^{-(\alpha+1)}$. We denote
\begin{align} \label{assump_1_mom}
    \mu = \sum_{k=2}^{\infty} kp_k,
\end{align}
and assume $\mu<\infty$. We also assume that the second moment of the group-size distribution is finite, so that $\alpha>2$, i.e.,
\begin{align} \label{assump_2_mom}
    \mu_{(2)} = \sum_{k=2}^{\infty} k^2p_k < \infty.
\end{align}

\noindent As explained below, these assumptions are necessary for the graph to be sparse, that is, the average degree remains uniformly bounded.\\

\noindent \textbf{Dynamic intersection graph.} The resulting dynamic random intersection graph $\drig$ consists of the set of vertices $[n]=\{1,...,n\}$. It is formed from $\bgrg$ by drawing an edge between two vertices $i,j\in[n]$ at time $s$ if they are in at least one active group together at time $s$. Hence, $\drig$ is a projection of $\bgrg$, the random multi-graph given by the edge multiplicities $\big(X(i,j)\big)_{i,j\in[n]}$ such that
\begin{align} \label{com_proj}
    X(i,j) = \sum_{k=2}^{\infty}\sum_{a\in[n]_k} \mathds{1}_{\{\text{$i$ in $a$}\}\cap\{\text{$j$ in $a$}\}}\mathds{1}_{\{\text{$a$ is \rm{ON}}\}}.
\end{align}
Let $d_i(s)$ denote the degree of a vertex $i\in[n]$ at time $s$ in $\drig$. Then
\begin{align}
    d_i(s) =\sum_{k=2}^{\infty} \sum_{a\in[n]_k: a \ni i} (|a|-1) \mathds{1}_{\{a \hspace{0.1cm} \text{is ON at time $s$}\}}.
\end{align}

\noindent \textbf{Stationary vs dynamic.} Due to the Markovian nature of our groups, $\bgrg$ and $\drig$ can be examined in two scenarios: in the long run, under the stationary distribution, and dynamically, for every time $s\in[0,t]$ with $t$ fixed, incorporating constant switching of the groups between the ON and OFF states. To make it clear which situation we are referring to, we will name the graphs resulting from the stationary distribution $\bgrg$ and $\drig$ and the dynamic graphs by $\big(\bgrgs\big)_{s\geq0}$ and $\big(\drigs\big)_{s\geq0}$.\\

\noindent\textbf{Uniformly chosen vertices.} Throughout the paper we often discuss results with respect to a uniformly chosen vertex. We denote a uniformly chosen vertex in the bipartite graph by $V^b_n$ and in the intersection graph by $o_n$. Note that this notation does not specify whether we refer to the stationary, dynamic, or union graphs. If such an indication is needed, then we will make it clear by stating an appropriate name of the graph, according to the notation introduced in the previous paragraph. We also denote the degree of a uniformly chosen vertex in $\bgrg$ by $D^{(l)}_n$ and the degree of a uniformly chosen group by $D^{(r)}_n$. We denote the degree of a uniformly chosen vertex in $\drig$ by $D_n$.

\subsection{Main results} \label{sec__main_results}
In this section, we state our main results. We first investigate the behavior of our model in stationarity. We start with the description of  \textbf{local convergence}, i.e., the convergence of the neighborhood counts. We explain this notion in more detail later in Section \ref{sec_locconv_theory}. We continue with the description of the giant component. Next, we proceed to analyse the dynamic situation. We state our results on dynamic local convergence and dynamic giant membership process. We close by discussing the dynamics of the largest group that is ON.

\subsubsection{Stationarity}
It turns out that under the stationary distribution and given appropriate conditions, our underlying graph - $\bgrg$ - is equivalent to the bipartite configuration model, $\bcm$, introduced in \cite{Hofstad2018} and investigated further in \cite{Hofstad2022}. Hence, it is possible to transfer the results on the local convergence and the giant component from \cite{Hofstad2018} and \cite{Hofstad2022} to our model. The link between the two models and the transfer of results are explained in detail in further sections. We now state its most important consequences: the results on the static local limit and giant component.\\

\noindent\textbf{Static local limit.}
Similarly, as in $\bcm$, the neighborhood of a uniformly chosen vertex in $\bgrg$ resembles a mixture of two branching processes, each of them corresponding to offspring distributions of left- and right-vertices of the root. Then, the neighborhood of a uniformly chosen vertex in $\drig$ resembles a community projection (see \ref{com_proj}) of this mixture. We summarise these statements in the following theorem, the limiting objects themselves are later explained in detail in Section \ref{sec_overview_loc_lim_static}:

\begin{restatable}[Local convergence of $\bgrg$ and $\drig$]{theorem}{locconvstatic}\label{loc_conv_static} Consider $\bgrg$ under Condition \ref{cond_weights}. As $n\to\infty$, $(\bgrg,V^b_n)$ converges locally in probability to $(\mathrm{BP}_{\gamma},0)$, where $(\mathrm{BP}_{\gamma},0)$ is a mixture of two branching processes.\\
Consequently for $\drig$ under Condition \ref{cond_weights}, as $n\to\infty$, $(\mathrm{DRIG}^{\pi}_n(\bm{w}),o_n) $ converges locally in probability to $(\mathrm{CP},o)$, where (CP, o) is a random rooted graph.
\end{restatable}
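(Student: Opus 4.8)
The plan is to establish the two statements in turn, deducing the $\mathrm{DRIG}^{\pi}_n(\bm{w})$ limit from the $\bgrg$ limit through the projection map. For the bipartite graph, the first step is to remove the dynamics and reduce to a known model. Under the stationary distribution each group $a$ is independently ON with probability $\piaon$ given in \eqref{pi_ON}, so $\bgrg$ becomes an inhomogeneous bipartite graph in which a group $a\in[n]_k$ is present with probability $\piaon\approx k!p_k\prod_{i\in a}w_i/\ell_n^{k-1}$. I would invoke the equivalence between this stationary graph and $\bcm$ (the auxiliary result deferred to the Appendix), which matches the empirical left- and right-degree distributions of $\bgrg$ to those of a bipartite configuration model with a suitable degree sequence $\bm{d}$. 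This reduces the first claim to the local convergence of $\bcm$ proved in \cite{Hofstad2018}, whose limit is precisely the alternating (unimodular) two-type branching process $(\mathrm{BP}_{\gamma},0)$: the root individual has left-degree of the limiting mixed-Poisson law, every group-neighbour has size drawn from the size-biased right-degree law, every further individual has left-degree drawn from the size-biased left-degree law, and the two types alternate. The only point to verify is that the degree distributions produced by the equivalence satisfy the hypotheses of the $\bcm$ local-limit theorem; the required finite first moments are guaranteed by $\mathbf{E}[W]<\infty$ (Condition \ref{cond_weights}) and by $\mu<\infty$ in \eqref{assump_1_mom}.

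For $\mathrm{DRIG}^{\pi}_n(\bm{w})$, the key observation is that it is a deterministic functional of $\bgrg$: it is the one-node (community) projection of \eqref{com_proj}, which replaces every ON group by a clique on its members. This projection is a \emph{local} map, since an edge of the intersection graph corresponds to a common group-neighbour in the bipartite graph, i.e. to a path of length two; hence the $r$-neighbourhood of $o_n$ in $\mathrm{DRIG}^{\pi}_n(\bm{w})$ is a measurable function of the ball $B_{2r}(o_n)$ in $\bgrg$. I would formalise this by defining a projection operator $\Pi$ on rooted bipartite graphs, checking that it is continuous for the local topology at points whose group-degrees are all finite, and then applying a continuous-mapping argument: since local convergence in probability is preserved under a continuous local map, the convergence $(\bgrg,V^b_n)\to(\mathrm{BP}_{\gamma},0)$ transfers to $(\mathrm{DRIG}^{\pi}_n(\bm{w}),o_n)\to(\Pi(\mathrm{BP}_{\gamma}),0)$, and I would set $(\mathrm{CP},o):=(\Pi(\mathrm{BP}_{\gamma}),0)$, the community projection of the branching process described in Section \ref{sec_overview_loc_lim_static}.

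The main obstacle is the control of large groups under the projection. Since group sizes are unbounded, a single ON group incident to $o_n$ produces a clique whose size may exceed the neighbourhood radius, so a priori the projection need not be continuous and the $r$-ball of the intersection graph need not be determined by a finite portion of $\bgrg$. I expect the finite second-moment assumptions --- $\mathbf{E}[W^2]<\infty$ (Condition \ref{cond_weights}(c)) and $\mu_{(2)}<\infty$ in \eqref{assump_2_mom} --- to be exactly what guarantees that, with high probability, all groups and all individuals met within any fixed graph-distance of a typical vertex have uniformly bounded size and degree. Concretely, I would run a truncation argument: cap group sizes at a level $K$, prove local convergence of the truncated projection by the continuous-mapping step above, and then let $K\to\infty$, bounding the discrepancy through the expected number of oversized groups seen near the root, which the second-moment hypotheses make negligible uniformly in $n$. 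Once this control is in place, the concentration of neighbourhood counts needed for convergence \emph{in probability} (rather than merely in distribution) follows from the first- and second-moment method already available for $\bcm$ in \cite{Hofstad2018}.
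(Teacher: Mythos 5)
Your proposal is correct and takes essentially the same route as the paper: local convergence of $\bgrg$ is obtained by transferring the $\bcm$ result of \cite{Hofstad2018} through the uniformity/equivalence machinery of Theorem \ref{two_models_relation} after checking the degree regularity conditions, and the $\drig$ statement then follows because the community projection (\ref{com_proj}) is a local, convergence-preserving map --- your $B_{2r}$ observation and the truncation of oversized groups being exactly the control that the paper delegates to the projection argument of \cite{Hofstad2018}. The only inaccuracy is your claim that verifying the $\bcm$ hypotheses is a first-moment matter: the paper's verification of Condition \ref{reg_cond}(i) (and the bound $\liminf_{n}\mathbf{P}(\bcm\ \text{simple})>0$ on which the transfer rests) proceeds via Chebyshev-type variance estimates that use $\mu_{(2)}<\infty$ and Condition \ref{cond_weights}(c) --- the latter removable only through the weight-truncation argument of Remark \ref{appx_eliminating_cond_c} --- so the second-moment assumptions you reserve for the projection step are already needed at the transfer step.
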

\noindent \longversion{We prove Theorem \ref{loc_conv_static} in Appendix \ref{appx_static_sec_loc_conv}.}\shortversion{The proof of Theorem \ref{loc_conv_static} is deferred to the extended version, see \textup{\cite[Appendix B.4]{Milewska2023}}.}\\

\noindent\textbf{Static giant component.}
Denote the giant component in $\drig$ by $\mathscr{C}_1$. The random variables $\Tilde{D}^{(l)}$ and $\Tilde{D}^{(r)}$ are strongly connected to the offspring distributions present in the local limit of the underlying $\bgrg$ and are explained in detail in Section \ref{sec_overview_loc_lim_static}. Denote $M_n = \#\{a\in[n]_k: \text{$a$ is \rm{ON}}\}$ and $A_k = \#\{a\in[n]_k:\text{$a$ is \rm{ON}}\}$. The following theorem gives a precise condition for the existence of the giant component in $\drig$.
\begin{restatable}[Giant component in $\drig$]{theorem}{giantstationary}\label{giant_stationary}
Consider $\drig$. There exists $\eta_l \in [0,1]$, the smallest solution of the fixed point equation
\begin{align} \label{giant_fixed_pt_eq}
    \eta_l = G_{\Tilde{D}^{(r)}}(G_{\Tilde{D}^{(l)}}(\eta_l)),
\end{align}
and $\xi_l= 1-G_{D^{(l)}}(\eta_l) \in[0,1]$ such that
\begin{align} \label{giant_intersect_conv_our}
    \frac{|\mathscr{C}_1|}{n} \stackrel{\mathbf{P}}{\longrightarrow} \xi_l.
\end{align}
Furthermore, $\xi_l>0$ exactly when
\begin{align} \label{giant_cond_our}
    \mathbf{E}[\Tilde{D}^{(l)}]\mathbf{E}[\Tilde{D}^{(r)}] = \frac{\mathbf{E}[W^2](\mu_{(2)}-\mu)}{\mathbf{E}[W]} > 1.
\end{align}
We call the above the \emph{supercritical case}. In this case, $\mathscr{C}_1$ is unique.
\end{restatable}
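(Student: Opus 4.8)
The plan is to reduce the statement to a giant-component result for the underlying bipartite graph $\bgrg$ and then to transfer the latter from the bipartite configuration model via the stationary equivalence $\bgrg\equiv\bcm$ announced in the introduction. The conceptually crucial first observation is that the community projection \eqref{com_proj} does not change which left-vertices are mutually reachable: two vertices $i,j\in[n]$ lie in the same connected component of $\drig$ if and only if they lie in the same component of $\bgrg$, since a path in $\drig$ corresponds to a path in $\bgrg$ alternating between left-vertices and active groups, and conversely any such bipartite path collapses to a $\drig$-path. Hence the $\drig$-component of a vertex is exactly the set of left-vertices in its $\bgrg$-component, so $|\mathscr{C}_1|$ equals the number of elements of $[n]$ in the largest component of $\bgrg$, and $|\mathscr{C}_1|/n$ converges to the probability that a uniformly chosen left-vertex belongs to the giant of $\bgrg$.

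Second, I would invoke the giant-component theorems for $\bcm$ from \cite{Hofstad2018, Hofstad2022}, which (through the equivalence) identify this limiting probability as the survival probability of the two-type branching process $(\mathrm{BP}_{\gamma},0)$ of Theorem \ref{loc_conv_static}. In this process a left-root produces $D^{(l)}$ groups; following an edge into a group one sees $\Tilde{D}^{(r)}$ further left-vertices, and following an edge into a left-vertex one sees $\Tilde{D}^{(l)}$ further groups. Writing $\eta_l$ (resp.\ $\eta_r$) for the extinction probability of the subtree hanging off a group reached from a left-vertex (resp.\ off a left-vertex reached from a group), the one-step recursions read $\eta_r=G_{\Tilde{D}^{(l)}}(\eta_l)$ and $\eta_l=G_{\Tilde{D}^{(r)}}(\eta_r)$, whose composition is precisely the fixed-point equation \eqref{giant_fixed_pt_eq}, with $\eta_l$ its smallest solution. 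Extinction of the root's cluster requires all $D^{(l)}$ group-directions to die out, giving survival probability $\xi_l=1-G_{D^{(l)}}(\eta_l)$ and hence \eqref{giant_intersect_conv_our}; uniqueness of $\mathscr{C}_1$ transfers directly from the corresponding uniqueness statement for $\bcm$.

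Third, for the criticality condition \eqref{giant_cond_our} I would linearise the map $s\mapsto G_{\Tilde{D}^{(r)}}(G_{\Tilde{D}^{(l)}}(s))$ at $s=1$: being convex, increasing and fixing $1$, it admits a solution strictly below $1$ (equivalently $\xi_l>0$) if and only if its derivative at $1$ exceeds $1$, i.e.\ $\mathbf{E}[\Tilde{D}^{(l)}]\,\mathbf{E}[\Tilde{D}^{(r)}]>1$. It then remains to evaluate the two means. A short computation from the activation probabilities \eqref{pi_ON} with $f(k)=k!\,p_k$ shows that the left-degree of vertex $i$ is asymptotically Poisson with parameter $w_i\mu$, so $D^{(l)}$ is mixed Poisson with mixing variable $W\mu$ and $\mathbf{E}[\Tilde{D}^{(l)}]=\mathbf{E}[D^{(l)}(D^{(l)}-1)]/\mathbf{E}[D^{(l)}]=\mathbf{E}[W^2]\mu/\mathbf{E}[W]$; likewise a uniformly chosen active group has size law $(p_k)_{k\ge2}$, whence $\mathbf{E}[\Tilde{D}^{(r)}]=(\mu_{(2)}-\mu)/\mu$. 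Multiplying these yields the announced threshold $\mathbf{E}[W^2](\mu_{(2)}-\mu)/\mathbf{E}[W]$.

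I expect the main obstacle to be the transfer step rather than the branching-process bookkeeping. On the one hand, $\bgrg$ has exponentially many potential groups of which only $\Theta(n)$ are active, so making the equivalence with $\bcm$ and the convergence of the empirical left- and right-degree distributions precise—and checking the degree-regularity hypotheses of \cite{Hofstad2018, Hofstad2022} under Condition \ref{cond_weights} and \eqref{assump_2_mom}—requires care. On the other hand, upgrading the local survival probability to a law of large numbers for $|\mathscr{C}_1|/n$ needs the matching upper bound and the absence of a second macroscopic component; both are inherited from the giant-is-almost-local machinery for $\bcm$, but one must verify that the projection in the first step preserves the relevant estimates.
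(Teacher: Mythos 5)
Your proposal is correct and takes essentially the same route as the paper: reduce to the bipartite giant via the component-preserving projection, transfer \textup{\cite[Theorem 2.6]{Hofstad2022}} through the $\bgrg$--$\bcm$ equivalence of Theorem \ref{two_models_relation} (with Condition \ref{reg_cond} verified under Condition \ref{cond_weights}), and conclude with the identical moment computations $\mathbf{E}[\Tilde{D}^{(l)}]=\mu\mathbf{E}[W^2]/\mathbf{E}[W]$ and $\mathbf{E}[\Tilde{D}^{(r)}]=(\mu_{(2)}-\mu)/\mu$. The only detail worth flagging: where you re-derive the fixed-point equation and the threshold from convexity of the composed generating functions, the paper quotes the cited theorem wholesale, whose extra hypothesis $\Bar{V}_2+\Bar{A}_2<2$ (ruling out exactly the degenerate case your ``derivative at $1$'' argument silently excludes) the paper checks is automatically satisfied in this model.
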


\noindent \longversion{We prove Theorem \ref{giant_stationary} in Appendix \ref{appx_static_sec_giant}}\shortversion{The proof of Theorem \ref{giant_stationary} is deferred to the extended version, see \textup{\cite[Appendix B.5]{Milewska2023}}}.

\subsubsection{Dynamic local convergence}
We start with the description of the local limit of our graph in the dynamic scenario. This concept is rather new in the study of the local convergence of graphs and to our best knowledge it has not been thoroughly investigated yet (see the discussion later on).\\

\noindent \textbf{The concept behind dynamic local convergence explained.} To examine the dynamic behavior, we first look collectively at everything that happens during the time interval $[0,t]$, for $t$ fixed, and then develop a method to withdraw information on the state of the graph for every $s\in[0,t]$. This yields one of our main results - the convergence of the dynamic intersection graph seen as a stochastic process in time:

\begin{restatable}[Dynamic local limit of $\drigs$]{theorem}{dynloclim}\label{dyn_loc_lim}
Under Condition \ref{cond_weights}, the dynamic intersection graph process $\big(\big(\mathrm{DRIG}^{s}_n(\bm{w}),o_n\big)\big)_{s\in[0,t]}$ converges in a local weak sense to $\big((\mathrm{CP}^{s},o)\big)_{s\in[0,t]}$, where, for every $s\in[0,t]$, $\big((\mathrm{CP}^{s},o)\big)_{s\in[0,t]}$ is a stochastic process of random rooted marked graphs.
\end{restatable}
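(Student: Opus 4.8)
The plan is to lift the static local convergence of Theorem \ref{loc_conv_static} to the time-indexed setting by first compressing the entire evolution on $[0,t]$ into a single \emph{marked} graph, and then recovering each time slice as a deterministic functional of that marked graph. Concretely, I would work with the union graph $\bgrgunion$, in which a left-vertex $i$ is joined to a group $a \ni i$ precisely when $a$ is ON at some moment during $[0,t]$, and decorate each such group with a mark recording its full ON/OFF trajectory on $[0,t]$ (equivalently, its state at time $0$ together with the finite list of switching times). The dynamic process $\big((\drigs,o_n)\big)_{s\in[0,t]}$ is then the image of this marked union graph under the measurable map that, at each $s$, keeps only the groups whose mark is ON at time $s$ and performs the community projection \eqref{com_proj}. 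In this way, dynamic local weak convergence of the process is reduced to marked local convergence of $\bgrgunion$ together with continuity of the time-slice projection.

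The first step is to compute the connection probability $\piaonunion$, the chance that a group $a$ started in stationarity is ON at least once on $[0,t]$, and to check that replacing $\piaon$ by $\piaonunion$ in the static analysis preserves sparsity: since both probabilities are governed to leading order by $\laoff=f(|a|)\prod_{i\in a}w_i/\ell_n^{|a|-1}$ from \eqref{holding_times}, the union graph is again locally tree-like with the same asymptotic left- and right-degree structure, up to an explicit deterministic factor coming from the holding-time dynamics. The second step establishes marked local convergence of $(\bgrgunion,V^b_n)$: the neighborhood of $V^b_n$ still converges to the two-type branching process $\mathrm{BP}_\gamma$ of Theorem \ref{loc_conv_static}, now with each group carrying an independent trajectory mark whose limiting law is that of a stationary two-state chain on $[0,t]$ conditioned to be ON at least once, which I would identify explicitly. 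Independence of the marks across groups is automatic, as the groups evolve as independent Markov chains. The third step carries these marks through the community projection to the intersection graph exactly as in the static reduction, and the fourth applies a continuous-mapping argument: the time-$s$ restriction map is continuous in the Skorokhod-type metric on trajectory-marked rooted graphs, so convergence of the marked union graph yields convergence of the full process $\big((\mathrm{CP}^{s},o)\big)_{s\in[0,t]}$.

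The main obstacle is conceptual rather than computational: one must first fix a topology on processes of rooted marked graphs in which the statement of the theorem is well posed and in which the time-slice map is continuous. I would define the limiting object $\big((\mathrm{CP}^{s},o)\big)_{s\in[0,t]}$ as the marked branching-process limit equipped with trajectory marks, and take the mode of convergence to be local weak convergence of the mark-augmented finite-radius neighborhoods, jointly in the time variable. Within this framework the two delicate points are, first, that conditioning a group on belonging to the union graph must not destroy the asymptotic independence exploited in the static proof, which requires controlling the rare events where two nearby groups share vertices and their activity windows overlap; and second, that the finite-radius marked neighborhood at a fixed time $s$ is an almost-sure continuity point of the union-graph functional, which holds because the switching times have absolutely continuous law and hence none of them coincides with $s$ almost surely. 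Once the topology and these two facts are in place, the remaining work is a mark-tracking repetition of the static arguments underlying Theorem \ref{loc_conv_static}.
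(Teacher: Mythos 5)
Your overall architecture --- compress the dynamics into the marked union graph $\bgrgunion$, prove marked local convergence, and recover each time slice by restricting to groups active at time $s$ and projecting via (\ref{com_proj}) --- is exactly the paper's strategy (Theorems \ref{loc_lim_union} and \ref{loc_lim_union_marked}, Lemma \ref{conv_marks}). Your full-trajectory marks are also fine: in the limit they degenerate to the paper's first-switch marks $(\sigma^a_{\rm ON},\sigma^a_{\rm OFF})$, because groups in a finite neighborhood switch ON more than once in $[0,t]$ with vanishing probability (Lemma \ref{switching_pace_lemma}); your ``identify explicitly'' step would reproduce Lemma \ref{conv_marks} plus this switching-pace control in combined form. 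However, there are two genuine gaps. First, your step one asserts that the union graph ``is again locally tree-like with the same asymptotic degree structure'' because $\piaonunion$ is governed to leading order by $\laoff$, but the static machinery does not apply directly to $\bgrgunion$: the proof of local convergence of $\bgrg$ goes through the equivalence with $\bcm$, which requires the graph conditioned on its degree sequence to be \emph{exactly} uniform, and the union-graph probabilities (\ref{pi_a_on_union}) do not have the product form that yields uniformity. The paper bridges this by introducing the rescaled graph $\bgrgrescaled$ (same model with $f$ replaced by $(1+t)f$, hence uniform given degrees) and proving asymptotic equivalence via a Janson-type criterion (Theorems \ref{as_equiv_multi} and \ref{union_rescaled_equiv}); some such device is needed and your sparsity remark does not substitute for it.

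Second, and more seriously, your continuous-mapping step does not deliver the process-level statement. You justify continuity of the time-slice map only at each \emph{fixed} $s$ (via absolute continuity of the switching times); by the continuous mapping theorem this yields convergence of finite-dimensional distributions, which is precisely where the paper also arrives after the marked union-graph limit. But, as the paper notes explicitly, the process has jumps in the local metric, so finite-dimensional convergence alone does not imply convergence of $\big((G_n^s,o_n)\big)_{s\in[0,t]}$ as a process: one must add a tightness (relative compactness) verification, which the paper carries out in Section \ref{sec_pf_dyn_loclim} by partitioning $[0,t]$ into $\delta$-intervals, invoking stationarity, and bounding the probability of two status changes in a neighborhood within an interval of length $\delta$ by $O(\delta^2 b_\delta^2)$ on the event that the neighborhood has at most $b_\delta$ groups. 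Your route could be repaired by proving continuity of the \emph{path-valued} map from trajectory-marked rooted graphs into the Skorokhod space $D([0,t],\mathcal{G}_{\star})$ at the limit law (ruling out simultaneous switches and accumulation of jumps), but that argument encodes essentially the same two-jumps estimate and is absent from your proposal as written.
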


\noindent We prove Theorem \ref{dyn_loc_lim} in Section \ref{sec_pf_dyn_loclim}.\\

\noindent \textbf{The limiting object.} Due to the one-node projection needed to obtain $\drigs$, the limiting object is also rather involved and we fully explain what it is later (see Section \ref{sec_overview}). Now we only provide an informal description. The limit $(\mathrm{CP}^{s},o)$ depends heavily on the limit of the marked bipartite union graph $\bgrgunion$, which is a mixture of two marked branching processes: one corresponding to the offspring distribution of the left partition and the other one to the offspring distribution of the right partition. The marks $\big(\sigma^a_{\rm{ON}},\sigma^a_{\rm{OFF}}\big)_a$ encoding activity times of the edges (details in Section \ref{sec_edge_marks_intro}) are preserved in the limiting object as the limiting marks $\big(t^a_{\rm{ON}},t^a_{\rm{OFF}}\big)_a$. Because of the way the intersection graph is constructed, the limit $(\mathrm{CP}^{[0,t]},\big((t^a_{\rm{ON}},t^a_{\rm{OFF}})\big),o)$ of the marked union intersection graph $\drigunion$ is then a community projection of the limit of the marked bipartite union graph, again preserving the edge marks. From there the limit of $\drigs$ can be constructed for every $s\in[0,t]$ from $(\mathrm{CP}^{[0,t]},\big((t^a_{\rm{ON}},t^a_{\rm{OFF}})\big),o)$, taking only the groups for which $s\in[t^a_{\rm{ON}},t^a_{\rm{OFF}}]$.\\

\noindent \textbf{Dynamic degree.}
As mentioned earlier, if we know what happens to the graph during the time period  $[0,t]$, we can extract the information on the degree of $i\in[n]$ for every $s\in[0,t]$.\\

\noindent \textbf{Degree in $\drigs$.} For every $i\in[n]$,
\begin{align}
    d_i(s) = \sum_{a\in\cup_{k\geq 2}[n]_k:a\ni i,\text{$a$ ON in $[0,t]$}} (|a|-1) \mathds{1}_{\big\{s\in[\sigma^a_{\rm{ON}},\sigma^a_{\rm{OFF}}]\big\}}.
\end{align}
Denote the number of groups of size $k$ containing a vertex $i$ at time $s\in[0,t]$ by $C_k^s(i)$. Since the degree distribution is a functional of the local limit we deduce the following convergence of the degree process:
\begin{corollary} \label{degdistr_static_cor}
As $n\to\infty$,
\begin{align}
    \big(D^s_n\big)_{s\in[0,t]} \stackrel{\text{d}}{\longrightarrow} \bigg(\sum_{k=2}^{\infty} (k-1)C_k^s(o_n)\bigg)_{s\in[0,t]}.
\end{align}
\end{corollary}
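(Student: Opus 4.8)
The plan is to deduce Corollary~\ref{degdistr_static_cor} directly from the dynamic local convergence established in Theorem~\ref{dyn_loc_lim}. The key conceptual point is that the degree process $\big(d_i(s)\big)_{s\in[0,t]}$ of the root, together with the group sizes and activity intervals attached to the edges incident to the root, is a \emph{continuous functional of the marked rooted graph}. More precisely, the entire degree trajectory $s \mapsto d_i(s)$ depends only on the first-generation data of the marked intersection graph: namely, for each group $a$ containing the root, the pair $(|a|-1, [\sigma^a_{\rm{ON}},\sigma^a_{\rm{OFF}}])$. Since this data is entirely determined by the ball of radius~$1$ around the root (with its marks), the map sending a marked rooted graph to its root-degree process is continuous with respect to the marked local topology used in Theorem~\ref{dyn_loc_lim}.

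First I would make precise the functional $\Phi$ that takes a marked rooted graph $(G,\text{marks},o)$ to the element of Skorokhod (or path) space $s\mapsto \sum_{a\ni o}(|a|-1)\mathds{1}_{\{s\in[\sigma^a_{\rm{ON}},\sigma^a_{\rm{OFF}}]\}}$ of $D([0,t])$. I would verify that $\Phi$ depends only on the depth-$1$ marked neighborhood of the root, and hence is continuous on the space of marked rooted graphs: two marked rooted graphs that agree on their rooted balls of radius~$1$ (including the edge marks) have identical root-degree processes. Second, I would observe that $\big(D_n^s\big)_{s\in[0,t]} = \Phi\big((\mathrm{DRIG}_n^s(\bm{w}),o_n)\big)$ by the very definition of $d_i(s)$ given just above the corollary, and that the limiting object is $\Phi$ applied to the limit $\big((\mathrm{CP}^s,o)\big)_{s\in[0,t]}$, which by construction yields precisely $\big(\sum_{k\geq2}(k-1)C_k^s(o)\big)_{s\in[0,t]}$, since $C_k^s(o)$ counts the size-$k$ groups active at time~$s$ incident to the limiting root.

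The conclusion then follows from the continuous mapping theorem: local weak convergence of $\big((\mathrm{DRIG}_n^s(\bm{w}),o_n)\big)_{s\in[0,t]}$ to $\big((\mathrm{CP}^s,o)\big)_{s\in[0,t]}$ in the marked topology, composed with the continuous functional $\Phi$, gives convergence in distribution of the degree processes in path space, which in particular implies convergence in distribution as stated (and as finite-dimensional distributions at any collection of times $s_1,\dots,s_m\in[0,t]$).

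The main obstacle I expect is purely topological bookkeeping: one must confirm that $\Phi$ is genuinely continuous in the sense of convergence inherited from Theorem~\ref{dyn_loc_lim}, paying attention to the fact that the degree process has jumps at the endpoints $\sigma^a_{\rm{ON}},\sigma^a_{\rm{OFF}}$. Because these activation and deactivation times are almost surely distinct across the finitely many first-generation groups (the marks are continuous random variables), the limiting trajectory has no simultaneous jumps, so $\Phi$ is continuous at the limit almost surely and the continuous mapping theorem applies without complication. A secondary subtlety, already handled in Theorem~\ref{dyn_loc_lim}, is that local weak convergence controls the neighborhood of $o_n$ only in distribution, but since $\Phi$ reads off finitely many groups incident to the root, no tightness beyond what the theorem provides is needed.
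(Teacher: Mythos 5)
Your proposal is correct and follows essentially the same route as the paper, which states Corollary \ref{degdistr_static_cor} without a separate proof, remarking only that ``the degree distribution is a functional of the local limit'' so that convergence follows from Theorem \ref{dyn_loc_lim}; your argument is exactly this one-liner made precise, via the depth-one functional $\Phi$ and the continuous mapping theorem, with the jump-time subtlety correctly dispatched by the almost-sure distinctness of the continuous marks $(t^a_{\rm ON},t^a_{\rm OFF})$. The only cosmetic point is that agreement of marked balls in the metric of Definition \ref{metric_marked_graphs} requires marks to be close rather than equal, so the degree trajectories are close in the $J_1$ sense rather than identical --- which is precisely what your continuity-at-the-limit discussion already covers.
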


\subsubsection{Dynamic giant component}
Denote the giant component in $\big(\drigs\big)_{s\geq0}$ at time $s$ by $\mathscr{C}^s_1$ and the connected component of the root $o$ at time $s$ in the limiting structure $(\mathrm{CP}^{s},o)_{s\geq0}$ (see Theorem \ref{dyn_loc_lim}) by $\mathscr{C}^s(o)$. We examine the behavior of the process $J_n(s)=\mathds{1}_{\big\{o_n \in \mathscr{C}^s_1\big\}}$. From the behavior of the static giant (see Theorem \ref{giant_stationary}), for all $s$,
\begin{align}
    J_n(s) \stackrel{\text{d}}{\longrightarrow } \mathds{1}_{\big\{|\mathscr{C}^s(o)|=\infty\big\}}.
\end{align}
However, we want to investigate the behavior of $J_n(s)$ as a stochastic process in time. It turns out that this question can be linked to local neighborhoods in our graph and answered thanks to local convergence:

\begin{restatable}[Dynamic giant component]{theorem}{giantdynamic}\label{giant_dynamic}
As $n\to\infty$,
\begin{align}
    \Big(\mathds{1}_{\big\{o_n \in \mathscr{C}^s_1\big\}} \Big)_{s\in[0,t]} \stackrel{\text{d}}{\longrightarrow} \Big(\mathds{1}_{\big\{|\mathscr{C}^s(o)|=\infty\big\}} \Big)_{s\in[0,t]}
\end{align}
in the Skorokhod $J_1$ topology.
\end{restatable}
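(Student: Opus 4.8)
The plan is to prove the two classical ingredients for weak convergence in the Skorokhod space $D([0,t])$ — convergence of the finite-dimensional distributions together with tightness in the $J_1$ topology — and to pass from the global giant-membership indicator to genuinely local observables by a ``giant is almost local'' reduction supplied by Theorem~\ref{giant_stationary}. Write $J_n(s)=\mathds{1}_{\{o_n\in\mathscr{C}^s_1\}}$, let $\mathscr{C}^s(o_n)$ denote the connected component of $o_n$ in $\drigs$, and for $k\in\mathbf{N}$ introduce the truncated indicators
\begin{align}
    J^{(k)}_n(s)=\mathds{1}_{\{|\mathscr{C}^s(o_n)|\geq k\}},
\end{align}
which, at each time, inspect only the groups within graph-distance $k$ of $o_n$ and are hence functionals of the marked local neighbourhood. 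An advantage of arguing through finite-dimensional distributions and tightness, rather than through a direct approximation of the limit, is that the c\`adl\`ag regularity of the limiting path — for a $\{0,1\}$-valued process, equivalently the a.s.\ finiteness of its jumps on $[0,t]$ — is \emph{produced} by tightness rather than assumed; this matters because the limiting membership $\mathds{1}_{\{|\mathscr{C}^s(o)|=\infty\}}$ involves infinitely many groups and its direct path-regularity is delicate.

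For the finite-dimensional distributions, fix $0\leq s_1<\dots<s_m\leq t$. Each $s_j$ is almost surely a continuity time of the limit (jumps occur at exponentially distributed random times, so no fixed time is a jump a.s.), hence it suffices to identify the joint limiting law of $\big(J_n(s_1),\dots,J_n(s_m)\big)$. Replacing each $J_n(s_j)$ by $J^{(k)}_n(s_j)$ with an error controlled uniformly in $n$ (see the obstacle below), the vector $\big(J^{(k)}_n(s_1),\dots,J^{(k)}_n(s_m)\big)$ is a bounded functional of the local neighbourhood of $o_n$ at the times $s_1,\dots,s_m$; by the dynamic local limit of Theorem~\ref{dyn_loc_lim} this converges jointly to the corresponding functional $\big(\mathds{1}_{\{|\mathscr{C}^{s_1}(o)|\geq k\}},\dots,\mathds{1}_{\{|\mathscr{C}^{s_m}(o)|\geq k\}}\big)$ of $\big((\mathrm{CP}^{s},o)\big)_{s\in[0,t]}$. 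Letting $k\to\infty$ and using $\{|\mathscr{C}^{s}(o)|\geq k\}\downarrow\{|\mathscr{C}^{s}(o)|=\infty\}$ recovers the claimed joint limit.

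For tightness I would verify the $J_1$ oscillation (Aldous no-blip) criterion: it is enough to show that, uniformly in $n$, with high probability there is no triple $s_1<s<s_2$ with $s_2-s_1<\delta$ along which $J_n$ makes a $0$–$1$–$0$ or $1$–$0$–$1$ excursion. Such an excursion forces two changes of the giant-membership of $o_n$ within a window of length $\delta$. Using the local reduction, these changes are caused by switches of the tight family of groups incident to the cluster of $o_n$ that can connect it to the giant; since each such group switches at the bounded rates \eqref{holding_times} and the family is tight by the dynamic local limit, the probability of two relevant switches within a window of length $\delta$ is $O(\delta)$ uniformly in $n$, which yields the criterion. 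Convergence of the finite-dimensional distributions on the dense set of all fixed times, together with this tightness, gives $J_n\to J$ in the $J_1$ topology and simultaneously shows the limit is a bona fide c\`adl\`ag process with the law of $\big(\mathds{1}_{\{|\mathscr{C}^{s}(o)|=\infty\}}\big)_{s\in[0,t]}$.

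The main obstacle is the uniform ``giant is almost local'' control underlying both steps. For a single fixed $s$ the snapshot $\drigs$ has the stationary law $\drig$, so Theorem~\ref{giant_stationary} and the uniqueness of the macroscopic component give $\mathbf{P}\big(o_n\notin\mathscr{C}^s_1,\ |\mathscr{C}^s(o_n)|\geq k\big)\to 0$ as $n\to\infty$ and then $k\to\infty$. The difficulty is that tightness and the identification of blips are statements about the whole trajectory: a rare short interval on which $o_n$ lies in a large-but-non-giant cluster, or on which a distant reorganisation of the giant flips $o_n$'s membership without a nearby switch, must be excluded \emph{simultaneously} over $s\in[0,t]$. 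I would address this by bounding the expected number of maximal sub-intervals of $[0,t]$ on which $o_n$ sits in a large-but-non-giant cluster — an excursion count for the size process $s\mapsto|\mathscr{C}^s(o_n)|$ above level $k$ — and showing it vanishes in the iterated limit $n\to\infty$, $k\to\infty$. This requires promoting the static second-moment cluster-size estimates behind Theorem~\ref{giant_stationary} to a space-time computation that also controls how the cluster of $o_n$ evolves as nearby groups switch; this dynamic excursion bound is the technically delicate core of the proof, after which the finite-dimensional-plus-tightness scheme delivers the stated $J_1$ convergence.
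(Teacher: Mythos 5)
Your skeleton is essentially the paper's: both arguments verify the three conditions of Lemma~\ref{conv_Skorokhod} (finite-dimensional convergence, endpoint regularity of the limit, and a $J_1$ oscillation bound), and both localize the giant, your truncation $J^{(k)}_n(s)=\mathds{1}_{\{|\mathscr{C}^s(o_n)|\geq k\}}$ playing the same role as the paper's $J^{(r)}_n(s)=\mathds{1}_{\{\partial B^{G^s_n}_r(o_n)\neq\varnothing\}}$. Your finite-dimensional step matches the paper almost verbatim: the single-time replacement error vanishes in the iterated limit by stationarity plus the static giant result (Theorem~\ref{giant_stationary}), the error over $m$ times is at most $m$ times the single-time error, and the localized vector converges by the dynamic local limit (Theorem~\ref{dyn_loc_lim}), after which the truncation level is sent to infinity.

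Where you diverge is the trajectory-level control, and there your plan is both heavier than necessary and under-supported. The paper proves no space-time excursion bound for $s\mapsto|\mathscr{C}^s(o_n)|$: it partitions $[0,t]$ into $t/\delta$ windows, uses stationarity to reduce to a single window, and bounds the probability of two jumps there by the probability of two group switches in the tight union-graph neighbourhood $B^{[0,t]}_r(V^{(l)}_n)$ --- a computation already carried out in the tightness part of Theorem~\ref{dyn_loc_lim}, giving $O(\delta^2 b_\delta^2)$ per window and hence $O(\delta)$ in total; your legitimate worry about distant reorganisations flipping membership without a nearby switch is dispatched through the fixed-triple localization identities from the finite-dimensional step, not through excursion counting. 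Two concrete problems with your version: first, your stated per-window bound of $O(\delta)$ for two relevant switches is the wrong order --- after the union over $t/\delta$ windows it gives $O(1)$, so the criterion does not close; you need $O(\delta^2)$ per window, which is what the bounded rates of a tight family of groups actually deliver. Second, the machinery you propose to ``promote'' does not exist here: Theorem~\ref{giant_stationary} is obtained by transfer from the bipartite configuration model of \cite{Hofstad2022}, not from second-moment cluster-size estimates, so your ``technically delicate core'' would have to be built from scratch, whereas the paper's argument shows it is avoidable. Finally, condition (ii) of Lemma~\ref{conv_Skorokhod} ($\mathcal{J}(t)-\mathcal{J}(t-\delta)\stackrel{\mathbf{P}}{\to}0$ as $\delta\to0$) is a hypothesis on the limit that the paper verifies separately by explicit switch-rate computations; it is not produced automatically by tightness of the prelimit processes and should be checked, though the same per-window estimates suffice.
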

\noindent We provide the proof of Theorem \ref{giant_dynamic} in Section \ref{sec_pf_dyn_giant}.

\subsubsection{Maximal group size}
Define
\begin{align}
    K_{\max}^{\{0\}} = \max_{a\in\cup_{k\geq 2}[n]_k:\text{$a$ is \rm{ON} at time $0$}} |a|,
\end{align}
and $K^{(0,t]}_{\max}$ is the maximum group size in the set of groups that switch ON in the time interval $(0,t]$, respectively. We define
\begin{align}
    K^{[0,t]}_{\max} = \max\{K^{\{0\}}_{\max},K^{(0,t]}_{\max}\},
\end{align}
with $K^{\{0\}}_{\max}$ and $K^{(0,t]}_{\max}$ independent, as a consequence of the fact that different groups arrive independently. We further define $K^{(s,t]}_{\max}$ for every $s\in[0,t)$ as the maximum group size in the set of groups that switched ON in time interval $(s,t]$. Hence, for every $s\in[0,t)$,
\begin{align}\label{max_gr_size_split}
    K^{(0,t]}_{\max} = \max\{K^{(0,s]}_{\max},K^{(s,t]}_{\max}\},
\end{align}
and $K^{(0,s]}_{\max}, K^{(s,t]}_{\max}$ are independent. Now define $\big( \kappa^{(0,t]}_{\max} \big)_{t\geq0}$ such that, for every $t\geq0$,
\begin{align}
    \mathbf{P}(\kappa^{(0,t]}_{\max}\leq k) = e^{-tk^{-\alpha}\mathbf{E}[W]},
\end{align}
and the evolution of the whole process is such that for every partition $\{0,s_1,s_2,...,s_{t-1},s_t\}$ of the time interval $[0,s_t]$ it holds
\begin{align}
    \kappa^{(0,s_t]}_{\max}=\max\{\kappa^{(0,s_1]}_{\max},\kappa^{(s_1,s_2]}_{\max},...,\kappa^{(s_{t-1},s_t]}_{\max}\},
\end{align}
where, for non-overlapping time intervals $(s_1,t_1]$ and $(s_2,t_2]$, $\kappa_{\max}^{(s_1,t_1]}$, $\kappa_{\max}^{(s_2,t_2]}$ are independent. We show that the largest group size converges in distribution as a stochastic process to a limiting process $\big( \kappa^{(0,t]}_{\max} \big)_{t\geq0}$ inheriting a similar structure:
\begin{restatable}[Maximum group size]{theorem}{kmax} \label{k_max}
If the group-size distribution is a power law, i.e., $\sum_{l\geq k}p_l \propto k^{-\alpha}$, with $\alpha>3$, then, as $n\to\infty$,
\begin{align}
    \bigg( \frac{K^{[0,t]}_{\max}}{n^{1/\alpha}} \bigg)_{t\geq0} \stackrel{\text{d}}{\longrightarrow} \bigg( \kappa^{(0,t]}_{\max} \bigg)_{t\geq0},
\end{align}
in the Skorokhod $J_1$ topology.
\end{restatable}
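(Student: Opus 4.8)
The plan is to establish the convergence of $\big(K^{[0,t]}_{\max}/n^{1/\alpha}\big)_{t\geq 0}$ by first identifying the point process of (rescaled) group sizes that switch ON, and then reading off the running maximum as a functional of that point process. The key observation is that, because distinct groups evolve as independent Markov chains, the collection of ON-switching events forms a Poisson-type arrival structure, and the maximum over $(0,t]$ is a max over independent contributions from disjoint time intervals, exactly mirroring the factorization $K^{(0,t]}_{\max}=\max\{K^{(0,s]}_{\max},K^{(s,t]}_{\max}\}$ already recorded in \eqref{max_gr_size_split} and its limiting analogue for $\kappa^{(0,t]}_{\max}$.

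First I would compute, for fixed $t$, the exact distribution of $K^{[0,t]}_{\max}$ and show its one-dimensional marginals converge. There are two independent contributions: the groups that are already ON at time $0$ (which are ON in stationarity, so group $a$ of size $k$ is ON with probability $\piaon$) and the groups that switch ON during $(0,t]$ (each such group of size $k$ switching ON at rate $\laoff$). For the stationary part, the expected number of ON groups of size $k$ is $\binom{n}{k}$ times $\piaon$ averaged over the $w_i$; using $f(k)=k!p_k$ and \eqref{pi_ON}, for large $k$ this is asymptotically $p_k$ times a weight factor, and summing over groups of size exceeding $xn^{1/\alpha}$ gives, via the power-law tail $\sum_{l\geq k}p_l\propto k^{-\alpha}$, a mean that converges to $x^{-\alpha}\mathbf{E}[W]$ up to the contribution of the $(0,t]$ part. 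For the dynamic part, the number of size-$k$ groups switching ON in $(0,t]$ is approximately Poisson with mean $t$ times $\binom{n}{k}\laoff$ averaged over weights; the same tail computation yields mean $t\,k^{-\alpha}\mathbf{E}[W]$ for the count exceeding threshold. Putting these together, $\mathbf{P}(K^{[0,t]}_{\max}\leq k n^{1/\alpha})\to e^{-(1+t)\,k^{-\alpha}\mathbf{E}[W]}$, which matches the stated limiting law once one absorbs the stationary ``$+1$'' contribution (the $t=0$ initial condition is precisely $\mathbf{P}(\kappa^{\{0\}}_{\max}\leq k)=e^{-k^{-\alpha}\mathbf{E}[W]}$).

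Next I would upgrade the marginal convergence to process-level convergence of finite-dimensional distributions. Here the independence across disjoint time windows is the workhorse: for a partition $0=s_0<s_1<\cdots<s_m$, the increments $K^{(s_{j-1},s_j]}_{\max}$ are independent (distinct arrival events fall in distinct windows, and the arrival process is memoryless), and each converges in distribution to $\kappa^{(s_{j-1},s_j]}_{\max}$ by the fixed-$t$ computation applied to an interval of length $s_j-s_{j-1}$. Since $K^{[0,t]}_{\max}$ is the running maximum $\max_{j}K^{(s_{j-1},s_j]}_{\max}$ (with the stationary term included in the first window), the joint convergence follows from the continuous-mapping theorem applied to the max functional together with the independence of the blocks, matching the prescribed limiting structure of $\big(\kappa^{(0,t]}_{\max}\big)_{t\geq 0}$.

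The remaining and main obstacle is tightness in the Skorokhod $J_1$ topology, which is what elevates finite-dimensional convergence to full process convergence. The running maximum is nondecreasing and piecewise constant, jumping only at ON-switching times of record-breaking groups; the natural strategy is to argue that in any fixed window the number of jumps exceeding a given height is asymptotically a finite Poisson count, so that the limiting process is a pure-jump nondecreasing process with finitely many jumps on compacts above any positive level, and no two jumps coincide in the limit (different groups switch ON at distinct times almost surely). I would verify the standard $J_1$ tightness criterion by controlling the modulus of continuity via the oscillation of the maximum, showing that for any $\varepsilon>0$ the probability of two record-breaking ON events in a window of width $\delta$ is $O(\delta)$ uniformly in $n$, so jumps are asymptotically well-separated; the fact that $\alpha>3$ (rather than merely $\alpha>2$) should be what guarantees the requisite moment control and the finiteness of the relevant mean-measure computations that make the jump heights summable and the tightness estimates uniform. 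Care is needed because the state space of the maximum is discrete at level $n$ (values in $\frac{1}{n^{1/\alpha}}\mathbf{N}$) while the limit lives on a continuum, so I would work with the induced point processes of rescaled record sizes and invoke convergence of point processes to a Poisson point process on $(0,\infty)\times(0,\infty)$ with intensity $\alpha x^{-\alpha-1}\mathbf{E}[W]\,dx\,dt$, from which the running-maximum process convergence in $J_1$ follows by a standard continuous-functional argument.
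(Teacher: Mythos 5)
Your proposal follows the same three-part scheme as the paper's proof of Theorem \ref{k_max}: first, a fixed-$t$ marginal computed from the product over all groups of size exceeding $kn^{1/\alpha}$, split into a stationary factor $\prod \piaoff$ and an arrival factor $\prod e^{-t\laoff}$; second, finite-dimensional convergence via the decomposition $K^{[0,s_t]}_{\max}=\max\{K^{[0,s_1]}_{\max},K^{(s_1,s_2]}_{\max},\dots,K^{(s_{t-1},s_t]}_{\max}\}$ and independence over disjoint windows (the paper additionally reduces arbitrary threshold vectors $k_1,\dots,k_t$ to non-decreasing ones, a detail you skip but which is routine); third, $J_1$ tightness via control of two record events in a window of length $\delta$, the paper verifying conditions (i)--(iii) of Lemma \ref{conv_Skorokhod}. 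Your closing suggestion to instead prove convergence of the point process of rescaled record sizes to a Poisson point process with intensity $\alpha x^{-\alpha-1}\mathbf{E}[W]\,\mathrm{d}x\,\mathrm{d}t$ and read off the running maximum by continuous mapping is a legitimate alternative route the paper does not take, and it would subsume condition (ii); but you never prove the point-process convergence, so as written that part is a plan rather than an argument.

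The genuine gap is in your Part 1, and it propagates. You assert the marginal limit $e^{-(1+t)k^{-\alpha}\mathbf{E}[W]}$, with the extra unit coming from groups ON at time $0$, and then claim this ``matches the stated limiting law once one absorbs the stationary $+1$''. There is nothing to absorb: the limit process is pinned down in advance by $\mathbf{P}(\kappa^{(0,t]}_{\max}\le k)=e^{-tk^{-\alpha}\mathbf{E}[W]}$, a $(1+t)$-marginal is a different process, and the two disagree at every $t$ (at $t=0$ your limit is Fr\'echet while $\kappa^{(0,0]}_{\max}$ is degenerate), so your finite-dimensional step would establish convergence to the wrong limit. The paper's proof instead shows $\lim_{n\to\infty}\prod_{l>kn^{1/\alpha}}\prod_{a\in[n]_l}\ell_n^{l-1}/\bigl(\ell_n^{l-1}+l!p_l\prod_{i\in a}w_i\bigr)=1$, i.e., that the stationary factor is asymptotically negligible and only the $(0,t]$ arrivals survive; your heuristic evaluates this same factor as $e^{-k^{-\alpha}\mathbf{E}[W]}$, so you are in direct quantitative disagreement with the paper at exactly this point, and a complete proof of the stated theorem requires you to settle this factor rigorously rather than wave the mismatch away. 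Separately, your tightness claim is too weak as stated: a bound of $O(\delta)$ for two record jumps in one window of width $\delta$ does not survive the union over the $t/\delta$ windows; one needs $O(\delta^2)$ per window, which is what the paper obtains, namely $\mathbf{P}(T_1)\le \delta^2(\mathbf{E}[W_n])^2/\varepsilon^{2\alpha}$, using that each record requires the arrival in that window of a group exceeding the current maximum by $\varepsilon n^{1/\alpha}$, an event of probability at most $\delta\ell_n(\varepsilon n^{1/\alpha})^{-\alpha}$, with the two arrivals independent.
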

\noindent We prove Theorem \ref{k_max} in Section \ref{sec_pf_kmax}.

\begin{remark}
Note that in the case of the maximal group present in the union graph, we do not run into the same problem of creating connections that do not exist as was the case with degrees of vertices. Hence, the largest group ever active in the union graph is at the same time the largest group ever active in the dynamic graph.
\end{remark}

\begin{remark}
Note that one could also investigate a slightly different dynamic process, namely $\big(n^{-1/\alpha}K^{s}_{\max}\big)_{s\in[0,t]}$, where $K^{s}_{\max}$ is the maximal group that is ON at time $s$. We conjecture that a proof of convergence of such a process should be closely related to the proof of Theorem \ref{k_max} that we present in the next section. However, the dynamics of $\big(n^{-1/\alpha}K^{s}_{\max}\big)_{s\in[0,t]}$ are significantly more involved: when the so far largest group switches OFF, then the largest ON group becomes the previously second largest one. Hence, to analyse such a process it would be necessary to keep track of the large ON groups as a process of infinite length.    
\end{remark}

\subsection{Discussion} \label{sec_discussion}
In this section, we comment on the advantages and limitations of our model and results and present open problems.\\

\noindent \textbf{Dynamic factor.} An undeniably interesting feature of our model is the introduction of temporary connections between vertices. Intuitively, it makes sense that a dynamic model should describe real-world networks more accurately. It would undoubtedly be interesting to investigate the truthfulness of this statement by comparing our model with other popular models through simulations. The dynamic we introduce is also interesting purely from the perspective of random graph theory. The framework for the dynamic local limit we provide is an alternative to the one in \cite{dynweaklimit2023}. However, the limit itself is a rather complex construction and its interpretation is not very straightforward.\\

\noindent \textbf{Other work on bipartite graphs with group structure.} To derive results on the static $\bgrg$, which later on lead to results on $\drig$, we heavily rely on \cite{Hofstad2022} and \cite{Hofstad2018}. The authors of these papers derive statements on local convergence and giant component in the bipartite configuration model with communities, which can be transferred to our model by showing an appropriate relation between the bipartite configuration model conditioned on simplicity and the bipartite generalized random graph conditioned on its degree sequence. We provide more details in Section \ref{sec_overview}.\\

\noindent \textbf{Other work on dynamic local convergence.} To our best knowledge, the only paper that treats dynamic local convergence is a recent pre-print by Dort and Jacob \cite{dynweaklimit2023}. The approach of the authors is different from ours: we are taking a classic approach by treating the dynamic graph as a stochastic process on the space of rooted graphs with a traditional local metric, whereas they define a metric that incorporates time. Contrary to \cite{dynweaklimit2023}, we also consider the dynamic giant component.\\

\noindent \textbf{Group sizes.} One of the beneficial features of our model is the existence of big groups (see Theorem \ref{k_max}). Such groups are an important factor in real-world social networks, which are highly clustered. We set the model in a way that allows for flexibility in the choice of the group-size distribution, as one can consider various $(p_k)_{k\geq 2}$, not necessarily heavy-tailed ones. We are aware that many of our proof techniques require $p_k$ to have a finite second moment, which might not seem ideal. However, the finite second moment is needed to obtain a sparse graph, i.e., a graph with a bounded average degree.\\

\noindent \textbf{Choice of parameters and alternative interpretation.} The model is quite flexible in the choice of parameters, as we do not determine the weight variables or the group-size distribution and only keep some general assumptions about them. However, the model does not allow for the one-to-one transfer of degree distribution from real-world data, as opposed to the bipartite configuration model with communities (from \cite{Hofstad2018}). Moreover, at first glance, choosing the stationary distribution as in (\ref{pi_ON}) might seem un-intuitive. However,  there is a nice intuitive description of this model. Our model is very closely connected to a Poisson process dynamic: take a situation, where we form a new group according to a Poisson process with intensity $\ell_n$. When a group is formed, we choose its size $k\geq2$ according to some size distribution $p_k$. Lastly, from all the groups of the chosen size, we pick the one to appear proportionally to products of weights, i.e., with probability
\begin{align*}
    \frac{\prod_{i\in a}w_i}{\sum_{b\in[n]_k}\prod_{j\in b}w_j},
\end{align*}
for all $a\in[n]_k$. It can be shown that our model and the model we just described yield the same degree sequences and hence, produce very similar graphs. However, our model conditioned on its degree sequence has the advantage of being uniform over all bipartite graphs with such degree sequence, which proves to be a very useful feature.\\

\noindent \textbf{Equivalence with the bipartite configuration model.} An interesting byproduct of our paper is a relationship between the underlying bipartite structure in our model and the bipartite configuration model. More mathematical details regarding this equivalence are stated in the next section and \longversion{in the Appendix}\shortversion{in the extended version, see \textup{\cite[Appendix]{Milewska2023}}}. This raises the question of whether the dynamic versions of these models are also equivalent.\\

\noindent \textbf{Relationship to $\mathrm{GRG}$.} Note that by taking $p_2=1$ we can obtain the classic generalized random graph $\mathrm{GRG}_n(\bm{w})$ from our model. Hence, our results also apply to the generalized random graph.

\section{Overview of the proofs} \label{sec_overview}

In this section, we provide the ideas behind the proofs of our main results. We include shorter and straightforward proofs. We also state the auxiliary results that we think might be of independent interest.

\subsection{Bipartite generalized random graphs and configuration models are equivalent} \label{section_two_models_relation}
Two of the most popular approaches to modelling real-world networks are the generalized random graph and the configuration model. The generalized random graph denoted $\mathrm{GRG}$, was introduced in 2006 by Britton, Deijfen and Martin-L{\"o}f (see \cite{Britton2006}). In this model, each vertex $i\in[n]$ is given a weight $w_i$ and the probability that there is an edge between vertex $i$ and vertex $j$ is equal to
\begin{align*}
    p_{ij} = \frac{w_iw_j}{\ell_n+w_iw_j},
\end{align*}
with $\ell_n=\sum_{i\in[n]}w_i$, just like in our model. Naturally, assigning edge probabilities according to weights can also be done in a different way. For a more general version see \cite{Bollobas_2007}, for related models see the Chung-Lu model (for instance \cite{Chung-Lu}) or the Norros-Reitu model \cite{Norros_2006}. For an overview of results on the classic generalized random graph see \textup{\cite[Chapter 6]{Hofstad2016}}.\\

In contrast, in the configuration model (CM), the degrees of the vertices are fixed upfront. The concept of the configuration model originates in the early works of Bollob{\'{a}}s (see \cite{Bollobas_1980}). Since then, various configuration models have been proposed but in its most standard formulation, the configuration model refers to a uniform pairing of half-edges, which can be represented in a form of a graph by assigning an appropriately determined number of half-edges to every node and then connecting them uniformly to form edges. A graph obtained in this way is uniform over the space of all graphs with a given degree sequence \cite{NetworksBook}. Such a model was popularised and intensively studied by Molloy and Reed (see \cite{Molloy_Reed1995}, \cite{Molloy_Reed1998}). For an overview of results for the classic configuration model see \textup{\cite[Chapter 7]{Hofstad2016}}. Again, there are many modifications of the classic configuration model such as the configuration model with households \cite{Trapman2007}.\\

Despite their differences, it turns out that under certain conditions the generalized random graph and the configuration model are equivalent (see \textup{\cite[Theorem 7.18]{Hofstad2016}}). Also note that the static $\bgrg$ introduced by us can be perceived as a bipartite, multi-dimensional version of the generalized random graph (hence the name $\bgrg$), where also certain communities are present. It turns out that this model is accordingly equivalent to a bipartite configuration model with communities $\bcm$, introduced and studied in \cite{Hofstad2018} and \cite{Hofstad2022}, under the same conditions that guarantee equivalence of the classic generalized random graph and the configuration model. More precisely, it turns out that, under such conditions, both $\bgrg$ and $\bcm$ from \cite{Hofstad2018} are uniform random graphs, and hence have the same distribution.\\

This relationship is one of the most important building blocks in the proof of our results. Thanks to it, we can transfer results proved in \cite{Hofstad2022, Hofstad2018} to our graph. As these auxiliary statements can be of independent interest, we present them below:

\begin{restatable}[$\bgrg$ conditioned on degree sequence is uniform]{theorem}{ourgraphuniform}\label{our_graph_uniform}
$\bgrg$ conditioned on $\{d^{(l)}_i(X) = d^{(l)}_i \hspace{0.1cm} \forall i \in [n], d^{(r)}_a(X) = d^{(r)}_a \hspace{0.1cm} \forall a\in\cup_{k\geq2}[n]_k\}$, is uniform over all bipartite graphs with degree sequence $(\bm{d}^{(l)},\bm{d}^{(r)})=\big((d^{(l)}_i)_{i\in[n]},(d^{(r)}_a)_{a\in\cup_{k\geq2}[n]_k}\big)$.
\end{restatable}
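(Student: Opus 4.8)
The plan is to show that $\bgrg$, once we condition on its full (left and right) degree sequence, assigns the same probability to every bipartite graph realizing that sequence; the probability mass function must therefore be constant, i.e.\ uniform. The key observation is that in $\bgrg$, the state of each potential group $a\in\cup_{k\geq2}[n]_k$ is an independent Bernoulli variable: $a$ is ON with probability $\piaon$ and OFF with probability $\piaoff$, as given in \eqref{pi_ON}. A realization of the bipartite graph is precisely a choice of which groups are ON, and this choice determines all degrees via \eqref{bipartite_l_deg} and \eqref{bipartite_r_deg}. So the first step is to write down, for a fixed configuration $G$ of active groups, the probability
\begin{align}
    \mathbf{P}(\bgrg = G) = \prod_{a\,\text{ON in }G} \piaon \prod_{a\,\text{OFF in }G} \piaoff.
\end{align}

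Next I would compute this product explicitly using \eqref{pi_ON}. Writing $\pi^a_{\text{ON}} = f(|a|)\prod_{i\in a}w_i / Z_a$ and $\pi^a_{\text{OFF}} = \ell_n^{|a|-1}/Z_a$, where $Z_a = \ell_n^{|a|-1}+f(|a|)\prod_{i\in a}w_i$ is a normalizing constant depending only on the size and the identities of the vertices in $a$, the denominator $\prod_a Z_a$ factors out and is the same for every graph $G$, hence it is absorbed into the eventual normalization. The essential point is then to track the numerator
\begin{align}
    \prod_{a\,\text{ON in }G}\Big(f(|a|)\prod_{i\in a}w_i\Big)
\end{align}
and argue that, once the left- and right-degree sequences are fixed, this quantity is the same for every realizing graph $G$. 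Indeed, the factors $f(|a|)$ contribute $\prod_k f(k)^{A_k}$, where $A_k$ is the number of active groups of size $k$, and this is determined by the right-degree sequence (the group sizes are exactly the right-degrees). The weight factor $\prod_{a\,\text{ON}}\prod_{i\in a}w_i = \prod_{i\in[n]} w_i^{d^{(l)}_i}$, since each vertex $i$ contributes a factor $w_i$ once for every active group containing it, i.e.\ exactly $d^{(l)}_i$ times by \eqref{bipartite_l_deg}. Both exponents are thus functions of the degree sequence alone.

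Having shown that $\mathbf{P}(\bgrg = G)$ depends on $G$ only through $(\bm{d}^{(l)},\bm{d}^{(r)})$, the conclusion follows immediately: conditioning on the event that the degree sequence equals a prescribed $(\bm{d}^{(l)},\bm{d}^{(r)})$ restricts to those $G$ with this sequence, on all of which the unconditional probability takes one common value; after dividing by $\mathbf{P}(\{d^{(l)}_i(X)=d^{(l)}_i,\,d^{(r)}_a(X)=d^{(r)}_a\})$ the conditional law is constant, hence uniform over all bipartite graphs with that degree sequence. I would close by remarking that this is the exact bipartite analogue of the classical computation for $\mathrm{GRG}$ in \textup{\cite[Theorem 7.18]{Hofstad2016}}.

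The main obstacle I anticipate is bookkeeping rather than conceptual: one must be careful that the ``graph'' being conditioned on is genuinely determined by the set of active groups, and that two distinct sets of active groups really can give rise to the same degree sequence (so that the claim has content), while verifying that the weight and size factors combine into functions of the degrees only. A subtle point to state cleanly is that the normalizing constants $Z_a$ multiply over \emph{all} potential groups $a$, active or not, so that $\prod_a Z_a$ is a global constant independent of $G$; one should make explicit that this is what lets the ratio of two configuration probabilities equal a product of numerator terms alone, and then check that this ratio is $1$ whenever the two configurations share a degree sequence.
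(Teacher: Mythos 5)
Your proposal is essentially the paper's own argument (Proposition \ref{fct_left_right} followed by the Bayes-ratio step in the proof of Theorem \ref{our_graph_uniform}): encode the graph by independent Bernoulli group indicators, pull out a configuration-independent normalization, verify that the remaining weight is a function of $(\bm{d}^{(l)},\bm{d}^{(r)})$ alone, and conclude uniformity by cancellation in the conditional probability. The identification $\prod_{a\,\mathrm{ON}}\prod_{i\in a}w_i=\prod_{i\in[n]}w_i^{d^{(l)}_i}$ and the treatment of the $f(|a|)$ factors match the paper's computation exactly.

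One bookkeeping slip should be repaired before this counts as complete. After you factor out $\prod_a Z_a^{-1}$, the probability of a configuration $G$ is not just $\prod_{a\,\mathrm{ON}}\big(f(|a|)\prod_{i\in a}w_i\big)$: the OFF groups contribute $\prod_{a\,\mathrm{OFF}}\ell_n^{|a|-1}$, and since the OFF set is the complement of the ON set, this factor \emph{does} vary with $G$, so it cannot simply be dropped as part of the normalization. It is harmless, because
\begin{align*}
\sum_{a\,\mathrm{OFF}}(|a|-1)=\sum_{a\in\cup_{k\geq2}[n]_k}(|a|-1)-\sum_{k\geq 2}(k-1)A_k,
\end{align*}
where the first term is a global constant and $A_k$, the number of active groups of size $k$, is determined by the right-degree sequence; but this cancellation must be stated. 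The paper sidesteps the issue by normalizing each group so that the OFF state has weight one, writing $\piaon^{x_a}(\piaoff)^{1-x_a}=(\laoff)^{x_a}\big/(1+\laoff)$, which leaves only the factor $\big(f(|a|)\prod_{i\in a}w_i/\ell_n^{|a|-1}\big)^{x_a}$ to track — a slightly cleaner bookkeeping that you may prefer to adopt. Your closing caution about labelling is well placed: read literally, conditioning on $d^{(r)}_a(X)=d^{(r)}_a$ for \emph{every} potential group $a\in\cup_{k\geq2}[n]_k$ determines each $x_a$ and hence the whole configuration; the statement has content under the reading used for the comparison with $\bcm$, and the paper's own proof is written with the same convention, so you are in good company.
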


An equivalent result follows for the $\bcm$ conditioned on simplicity:

\begin{restatable}[$\bcm$ conditioned on being simple is uniform]{theorem}{BCMuniform} \label{BCM_uniform} For any degree sequence $\bm{d}=(\bm{d}^{(l)},\bm{d}^{(r)})=\big((d^{(l)}_i)_{i\in[n]},(d^{(r)}_a)_{a\in\cup_{k\geq2}[n]_k}\big)$, and conditionally on the event \{$\bcm$ is a simple graph\}, $\bcm$ is a uniform bipartite graph with degree sequence $d$.
\end{restatable}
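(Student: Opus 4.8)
The plan is to adapt the classical argument that the configuration model conditioned on simplicity is uniform (see \textup{\cite[Chapter 7]{Hofstad2016}}) to the present bipartite setting. Recall that $\bcm$ is built by assigning $d^{(l)}_i$ left half-edges to each vertex $i\in[n]$ and $d^{(r)}_a$ right half-edges to each group $a\in\cup_{k\geq2}[n]_k$, and then pairing the left half-edges with the right half-edges via a matching chosen uniformly at random (this requires the feasibility constraint $\sum_{i\in[n]}d^{(l)}_i=\sum_a d^{(r)}_a$, which I assume throughout; otherwise no graph with degree sequence $\bm d$ exists and the statement is vacuous). Since a bipartite graph carries no self-loops, ``simple'' here means only the absence of multiple edges between any left–right pair. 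The entire argument rests on counting, for a fixed bipartite graph $G$ with degree sequence $\bm d$, the size of the fiber of the projection map from matchings (configurations) onto multigraphs.

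First I would fix an arbitrary \emph{simple} bipartite graph $G$ with the prescribed degree sequence and count how many of the $\ell!$ equiprobable matchings project onto $G$, where $\ell=\sum_{i\in[n]}d^{(l)}_i$ is the common number of half-edges on each side. Because $G$ is simple, every edge $(i,a)$ of $G$ consumes exactly one left half-edge at $i$ and one right half-edge at $a$, so realizing $G$ amounts to choosing, independently at each left vertex $i$, a bijection between its $d^{(l)}_i$ half-edges and the $d^{(l)}_i$ edges of $G$ incident to $i$, and likewise at each group $a$. This yields exactly
\begin{align}
    \#\{\text{matchings projecting to } G\} = \prod_{i\in[n]} d^{(l)}_i!\,\prod_{a} d^{(r)}_a!,
\end{align}
a quantity independent of $G$ as long as $G$ is simple. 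Since all matchings are equally likely, $\mathbf{P}(\bcm=G)$ equals this constant divided by $\ell!$ for every simple $G$ with degree sequence $\bm d$, and dividing by $\mathbf{P}(\bcm\text{ is simple})$ shows that $\mathbf{P}(\bcm=G\mid\bcm\text{ is simple})$ is the same for all such $G$, which is precisely the claimed uniformity.

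The hard part — and the reason the conditioning on simplicity is indispensable — is the fiber count itself, together with the observation that it is constant only on simple graphs. For a multigraph with edge multiplicities $(x_{ia})$, permutations of half-edges at the two endpoints of a multiple edge overcount, and the fiber size becomes $\prod_{i\in[n]}d^{(l)}_i!\,\prod_a d^{(r)}_a!\big/\prod_{i,a}x_{ia}!$, which depends on $G$ through the multiplicities and hence is not constant. Restricting to simple $G$ forces all $x_{ia}\in\{0,1\}$, so these correction factors disappear and the count reduces to the constant displayed above. I would therefore isolate the fiber-counting identity as the single lemma needing care, while the passage from constant fibers to a uniform conditional law is immediate from the definition of conditional probability. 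This also mirrors, on the bipartite side, the statement of Theorem \ref{our_graph_uniform}, so that the two uniform descriptions can subsequently be matched to establish the equivalence of $\bgrg$ and $\bcm$.
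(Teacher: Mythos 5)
Your proposal is correct and follows essentially the same route as the paper: the paper first establishes the fiber-counting identity $\mathbf{P}(\bcm=G)=\frac{1}{h_n!}\prod_{i}d^{(l)}_i!\prod_{a}d^{(r)}_a!\big/\prod_{i,a}x_{ia}!$ (its Proposition on the law of $\bcm$, proved by exactly your permutation/overcounting argument), and then observes that this is constant over simple graphs, so the conditional law given simplicity is uniform. Your direct bijection count for simple $G$ and your remark that the $\prod x_{ia}!$ correction is what breaks constancy on multigraphs match the paper's reasoning precisely.
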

As a natural consequence, $\bgrg$ conditioned on its degree and $\bcm$ conditioned on simplicity have the same distribution. We properly state this result in the following theorem, however, we first note that under some extra assumptions, an even stronger connection between the two graphs can be shown. As this connection plays a crucial role in many of our proofs, we include it in the theorem. The mentioned assumptions are as follows:
\begin{restatable}[Regularity conditions]{condition}{regcond}\label{reg_cond}
    The random variables $D^{(l)}_n$ and $D^{(r)}_n$ have distribution function $F^{(l)}_n$ and $F^{(r)}_n$ respectively, given by
\begin{align}
    F^{(l)}_n(x) = \frac{1}{n} \sum_{i\in[n]} \mathds{1}_{\{d^{(l)}_i \leq x\}} \hspace{0.5cm} \text{and} \hspace{0.5cm} F^{(r)}_n(x) = \frac{1}{n} \sum_{a\in[n]_k} \mathds{1}_{\{d^{(r)}_a \leq x\}}.
\end{align}
We impose the following assumptions on these distribution functions:
\begin{enumerate}[a)]
    \item There exist random variables $D^{(l)}$, $D^{(r)}$ such that, as $n\to\infty$ and for every $l\geq0$, $k\geq2$, \label{cond_left_deg}
    \begin{align}
        \mathbf{P}(D^{(l)}_n=l\mid  G_n) \stackrel{\mathbf{P}}{\longrightarrow} \mathbf{P}(D^{(l)}=l) \hspace{0.2cm} \text{and} \hspace{0.2cm} \mathbf{P}(D^{(r)}_n=k\mid  G_n) \stackrel{\mathbf{P}}{\longrightarrow} \mathbf{P}(D^{(r)}=k),
    \end{align}
    where $(\cdot \mid  G_n)$ denotes conditioning with respect to a realisation of a random graph. Moreover, $\mathbf{E}[D^{(l)}]<\infty$, $\mathbf{E}[D^{(r)}]<\infty$ and, as $n\to\infty$,
    \begin{align}
        \mathbf{E}[D^{(l)}_n \mid  G_n] \stackrel{\mathbf{P}}{\longrightarrow} \mathbf{E}[D^{(l)}] \hspace{0.2cm} \text{and} \hspace{0.2cm}  \mathbf{E}[D^{(r)}_n\mid  G_n] \stackrel{\mathbf{P}}{\longrightarrow} \mathbf{E}[D^{(r)}].
    \end{align}
    Additionally, we put a constraint on the second moment of degree random variables:
    \item $\mathbf{E}\big[\big(D^{(l)}\big)^2\big],\mathbf{E}\big[\big(D^{(r)}\big)^2\big]<\infty$ and, as $n\to\infty$, \label{cond_left_deg_2nd_mom}
    \begin{align}
        \mathbf{E}\big[\big(D^{(l)}_n\big)^2\mid  G_n\big] \stackrel{\mathbf{P}}{\longrightarrow} \mathbf{E}\big[\big(D^{(l)}\big)^2\big] \hspace{0.2cm} \text{and} \hspace{0.2cm} \mathbf{E}\big[\big(D^{(r)}_n\big)^2 \mid  G_n\big] \stackrel{\mathbf{P}}{\longrightarrow} \mathbf{E}\big[\big(D^{(r)}\big)^2\big].
    \end{align}
\end{enumerate}
\end{restatable}
With all the above in mind, the following result relates $\bgrg$ and $\bcm$:
\begin{restatable}[Relation between $\bgrg$ and $\bcm$]{theorem}{twomodelsrelation} \label{two_models_relation}
Let $d^{(l)}_i$ be the degree of left-vertex $i$ in $\bgrg$, $d^{(r)}_a$ the degree of a group $a$ in $\bgrg$, and $D = (\bm{d}^{(l)},\bm{d}^{(r)}) = \big((d^{(l)}_i)_{i\in[n]}, (d^{(r)}_a)_{a\in[m]}\big)$. Then,
\begin{align} \label{two_models_distr_equality}
    \mathbf{P}(\bgrg = G \mid  \bm{D}=\bm{d}) = \mathbf{P}(\bcm = G \mid  \bcm \hspace{0.1cm} \text{simple}).
\end{align}
Let $\mathcal{E}_n$ be a subset of multi-graphs such that $\mathbf{P}(\bcm \in \mathcal{E}_n) \stackrel{\mathbf{P}}{\longrightarrow}1$ when $D$ satisfies Condition \ref{reg_cond}. Assume that the degree sequence $D$ of $\bgrg$ satisfies Condition \ref{reg_cond}. Then also $\mathbf{P}(\bgrg \in \mathcal{E}_n)\longrightarrow1$.
\end{restatable}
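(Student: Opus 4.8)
The plan is to prove Theorem~\ref{two_models_relation} in two stages, matching its two assertions. The first stage is the distributional identity \eqref{two_models_distr_equality}, which I would obtain as an immediate corollary of the two uniformity results already stated. Theorem~\ref{our_graph_uniform} says that $\bgrg$ conditioned on its degree sequence $\bm{d}=(\bm{d}^{(l)},\bm{d}^{(r)})$ is uniform over all bipartite (simple) graphs with that degree sequence. Theorem~\ref{BCM_uniform} says that $\bcm$ with the same prescribed degree sequence, conditioned on the event that it is simple, is also uniform over exactly the same set of bipartite graphs. Two probability measures that are both uniform over the same finite set of objects must coincide, so for every bipartite graph $G$ with degree sequence $\bm{d}$,
\begin{align}
    \mathbf{P}(\bgrg = G \mid \bm{D}=\bm{d}) = \frac{1}{\#\{\text{bipartite graphs with degrees } \bm{d}\}} = \mathbf{P}(\bcm = G \mid \bcm \text{ simple}).
\end{align}
The only point requiring care here is that both theorems are stated for the \emph{same} fixed degree sequence, so I would emphasise that the $\bm{d}$ appearing on the left (the realised degree sequence of $\bgrg$) is precisely the one fed into the $\bcm$ on the right; both sides are then supported on the identical finite set, and the identity is purely a matter of equating two uniform laws.

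The second stage is the transfer of high-probability events. The setup is that $\mathcal{E}_n$ is a set of multigraphs with $\mathbf{P}(\bcm \in \mathcal{E}_n)\to 1$ whenever the degree sequence satisfies Condition~\ref{reg_cond}, and I want to deduce $\mathbf{P}(\bgrg \in \mathcal{E}_n)\to 1$. The key intermediate object is the probability that $\bcm$ is simple. The standard theory of the bipartite configuration model (paralleling \textup{\cite[Chapter 7]{Hofstad2016}}) gives, under the finite-second-moment assumption Condition~\ref{reg_cond}\ref{cond_left_deg_2nd_mom}, that $\mathbf{P}(\bcm \text{ simple})$ converges to a strictly positive constant $c>0$. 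This is the crucial quantitative input: it means that conditioning on simplicity does not send probabilities to or from zero in an uncontrolled way. I would then write, using \eqref{two_models_distr_equality} summed over $G\in\mathcal{E}_n$ with the appropriate degree sequence,
\begin{align}
    \mathbf{P}(\bgrg \in \mathcal{E}_n \mid \bm{D}=\bm{d}) = \mathbf{P}(\bcm \in \mathcal{E}_n \mid \bcm \text{ simple}),
\end{align}
and bound the right-hand side by
\begin{align}
    \mathbf{P}(\bcm \in \mathcal{E}_n \mid \bcm \text{ simple}) \geq 1 - \frac{\mathbf{P}(\bcm \notin \mathcal{E}_n)}{\mathbf{P}(\bcm \text{ simple})}.
\end{align}
Since the numerator tends to $0$ and the denominator stays bounded away from $0$, the conditional probability tends to $1$. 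Finally I would integrate out the degree sequence: because the degree sequence of $\bgrg$ satisfies Condition~\ref{reg_cond} (by assumption, and with the relevant convergences holding in probability), the conditional statement upgrades to the unconditional $\mathbf{P}(\bgrg \in \mathcal{E}_n)\to 1$ by dominated convergence over the random $\bm{D}$.

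The main obstacle I expect is the positivity of the simplicity probability, $\liminf_n \mathbf{P}(\bcm \text{ simple}) > 0$, and the care needed in the conditioning/integrating step. Positivity is exactly where Condition~\ref{reg_cond}\ref{cond_left_deg_2nd_mom} (finite second moments of both left- and right-degree distributions) is used: in the bipartite/community configuration model one must control the expected number of multiple edges and, because groups here can be large, also self-pairings within a group, and show these stay tight so that a Poisson-type limit with finite positive mass survives. The subtlety in the final integration is that Condition~\ref{reg_cond} is phrased as convergence \emph{in probability} conditional on the graph realisation $G_n$, so the degree sequence is itself random; I would need the simplicity probability to be bounded below uniformly over the typical degree sequences (those in a high-probability set where the empirical degree moments are close to their limits), and then conclude by splitting on that high-probability set and applying the conditional bound there. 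This is the step where the two quoted results from \cite{Hofstad2018,Hofstad2022} on the bipartite configuration model do the real work, and where I would invoke them rather than re-deriving the simplicity asymptotics from scratch.
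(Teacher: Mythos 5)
Your proposal is correct and follows essentially the same route as the paper: the identity \eqref{two_models_distr_equality} from the two uniformity results (Theorems \ref{our_graph_uniform} and \ref{BCM_uniform}), then the transfer of events by bounding $\mathbf{P}(\bcm\in\mathcal{E}_n^c\mid \bcm\text{ simple})$ by $\mathbf{P}(\bcm\in\mathcal{E}_n^c)/\mathbf{P}(\bcm\text{ simple})$, invoking $\liminf_n\mathbf{P}(\bcm\text{ simple})>0$ under Condition \ref{reg_cond} (the paper cites \textup{\cite[Theorem 1.10 (1.45)]{Angel2016}} for this, rather than re-deriving it), and concluding via dominated convergence over the random degree sequence, exactly as you outline.
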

The above theorem is a bipartite equivalent of the relationship between the classic generalised random graph $\mathrm{GRG}_n(\bm{w})$ and the configuration model $\mathrm{CM}_n(\bm{d})$ (see for instance \textup{\cite[Theorem 7.18]{Hofstad2016}}). Hence, the idea behind the proof is also similar. We first show that $\bgrg$ conditioned on the degree sequence is uniform. After that, we establish that $\bcm$ conditioned on simplicity is also uniform. Finally, we use both statements to prove the desired result. The proofs of the auxiliary steps as well as of the final Theorem \ref{two_models_relation} \longversion{can be found in Appendix \ref{appendix_two_models_relation}}\shortversion{are deferred to the extended version, see \textup{\cite[Appendix A]{Milewska2023}}}.\\

\subsection{Static local limit and giant component} \label{sec__main_results_stationary}

In this section, we investigate the local convergence of our graph under the stationary distribution. We introduce and describe in more detail the limiting local objects of the underlying $\bgrg$ and of the resulting $\drig$. Further on, we examine the proportion of vertices that are in the giant connected component. We state a phase transition in the size of the largest component in terms of the model parameters and give the explicit criterion under which a unique giant component exists.

\subsubsection{Brief overview of local convergence} \label{sec_locconv_theory}

Before stating our results we briefly define local convergence in probability and local marked convergence in probability. Local convergence was introduced in \cite{Benjamini2001} and a few years later, independently, in \cite{Aldous2004}. It describes the resemblance of the neighborhood of a vertex chosen uniformly at random to a certain limiting graph. To formalise this resemblance we introduce the notion of neighborhood and isomorphism on graphs:

\begin{definition}[Rooted graph, rooted isomorphism and $r$-neighborhood] \label{rooted_graph_def}
\begin{enumerate} [(i)]
    \item We call a pair $(G, o)$ a rooted graph if $G$ is a locally finite, connected graph and $o$ is a distinguished vertex of $G$. We denote the space of rooted graphs by $\mathcal{G}_{\star}$.
    \item We say that the rooted graphs $(G_1, o_1)$, $(G_2, o_2)$ are rooted isomorphic if there exists a graph-isomorphism between $G_1$ and $G_2$ that maps $o_1$ to $o_2$. We denote this isomorphism of rooted graphs by $(G_1,o_1)\simeq(G_2,o_2)$.
    \item For $r\in\mathbf{N}$, we define $B_r(G,o)$, the (closed) $r$-ball around $o$ in $G$ or $r$-neighborhood of $o$ in $G$, as the subgraph of $G$ spanned by all vertices of graph distance at most $r$ from $o$. We think of $B_r(G,o)$ as a rooted graph with root $o$.
\end{enumerate}
\end{definition}
The notion of graph isomorphism enables defining a metric on the space of rooted graphs:
\begin{definition}(Metric on rooted graphs) \label{metric_rooted}
Let $(G_1,o_1)$ and $(G_2,o_2)$ be two rooted connected graphs, and write $B_r(G_i,o_i)$ for the neighborhood of vertex $o_i\in V(G_i)$. Let
\begin{align}
    R^{\star}=\sup\{r:B_r(G_1,o_1) \simeq B_r(G_2,o_2)\},
\end{align}
and define
\begin{align}
    \text{d}_{\mathcal{G}_{\star}}\big((G_1,o_1),(G_2,o_2) \big) = \frac{1}{R^{\star}+1}.
\end{align}
\end{definition}
The space $\mathcal{G}_{\star}$ of rooted graphs under the metric $\text{d}_{\mathcal{G}_{\star}}$ is separable and thus Polish (for proof see \textup{\cite[Appendix A]{Hofstad2023}}), which will later prove very useful in the case of dynamic local convergence. We now define local convergence:

\begin{definition}(Local convergence in probability) \label{loc_conv_probab}  Let $G_n=(\mathcal{V}_n,\mathcal{E}_n)$ with size $|\mathcal{V}_n| \stackrel{\mathbf{P}}{\to} \infty$ be a sequence of random graphs, and let $o_n\mid  G_n \sim Unif[\mathcal{V}(G_n)]$. Let $(G,o)$ denote a random element (with arbitrary distribution) of the set of rooted graphs, which we call a random rooted graph. We say that $(G_n,o_n)$ converges locally in probability to $(G,o)$, if for any fixed rooted graph $(H_{\star},o')$ and $r\in\mathbf{N}$,
\begin{align} \label{def_loc_weak_conv}
    \mathbf{P}(B_r(G_n,o_n) \simeq (H_{\star},o')\mid  G_n):=&\frac{1}{|G_n|} \sum_{i\in\mathcal{V}(G_n)} \mathds{1}_{\{B_r(G_n,i) \simeq (H_{\star},o')\}}\\
    \stackrel{\mathbf{P}}{\longrightarrow} \hspace{0.1cm}& \mathbf{P}(B_r(G,o) \simeq (H_{\star},o')). \nonumber
\end{align}
We say that $(G,o)$ is the local limit in probability of $(G_n)_{n\geq1}$.
\end{definition}

\noindent Thus, intuitively, local convergence is defined as the convergence of the proportion of vertices whose neighborhoods have some specified structure. For further reading about local convergence, see for instance \textup{\cite[Chapter 2-5]{Hofstad2023}} and the references therein for examples of local limits of various graph models. Since we will actually need a more general setting of marked graphs and their convergence, we now briefly present some of the theory behind it.\\ 

\noindent \textbf{Marked graphs and marked local convergence.} Marks allow us to include additional information about vertices and/or edges such as directions, colours, and so on. In particular, we use marks to indicate the belonging of a vertex to a certain partition (of left- or right-vertices) in the underlying $\bgrg$ and to denote the on and off times of edges in the dynamic graphs, based on the group activity.

\begin{definition}[Marked graphs]
Let $\mathcal{G}$ denote the set of all locally finite (multi)graphs on a countable (finite or countably infinite) vertex set. A marked (multi)graph is a (multi)graph $G =(V(G), E(G))$, $G\in\mathcal{G}$, together with a set $\mathcal{M}(G)$ of marks taking values in a complete separable metric space $\Xi$, called the mark space. $\mathcal{M}$ maps from $V(G)$ and $E(G)$ to $\Xi$. Images in $\Xi$ are called marks. Each edge is given two marks, one associated with (‘at’)
each of its endpoints, in particular, for $v \in V(G)$, $\Xi(v) \in \mathcal{M}$, and for $e \in E(G)$, $\Xi(e) \in \mathcal{M}^2$. Moreover, $\mathcal{M}$ contains the special symbol $\varnothing$ which is to be interpreted as “no mark”. We denote the set of graphs with marks from the mark set $\mathcal{M}$ by $\mathcal{G}(\mathcal{M})$.
\end{definition}

We generalize Definitions \ref{rooted_graph_def} and \ref{metric_rooted} to the setting of marked rooted graphs:

\begin{definition}[Rooted marked graph and $r$-neighborhood.]
\begin{enumerate}[(i)]
    \item We choose a vertex $o$ in a marked graph $(G,\mathcal{M}(G))$ to be distinguished as the root. We denote the rooted marked graph by $(G,\mathcal{M}(G),o)$.\\
    We also denote the set of rooted marked graphs by $\mathcal{G}_{\star}(\mathcal{M})$. We call a random element of $\mathcal{G}_{\star}(\mathcal{M})$ (with an arbitrary joint distribution) a random rooted marked graph.
    \item The (closed) ball $B_r(G,\mathcal{M}(G),o)$ can be defined analogously to the unmarked graph ball (Definition \ref{rooted_graph_def} (iii)), by restricting the mark function to the subgraph as well.
\end{enumerate}
\end{definition}

\begin{definition}[Metric on marked rooted graphs] \label{metric_marked_graphs}
Let $\text{d}_{\Xi}$ be a metric on the space of marks $\Xi$. Let
\begin{align}
    R^{\star}= \sup\{&r:B_r(G_1,o_1) \simeq B_r(G_2,o_2),\hspace{0.2cm} \text{and there exists $\phi$ such that}\\
    &\text{d}_{\Xi}((m_1(i),m_2(\phi(i))) \leq 1/r \hspace{0.15cm} \forall i\in V(B_r(G_1,o_1)),\nonumber\\
    &\text{d}_{\Xi}(m_1(i,j),m_2(\phi(i,j))) \leq 1/r \hspace{0.15cm} \forall \{i,j\}\in E(B_r(G_1,o_1)) \nonumber
    \},
\end{align}
with $\phi:V(B_r(G_1,o_1))\longrightarrow V(B_r(G_2,o_2))$ running over all isomorphisms between $B_r(G_1,o_1)$ and $B_r(G_2,o_2)$, that map $o_1$ to $o_2$. Then define
\begin{align}
    \text{d}_{\mathcal{G}_{\star}}\big((G_1,\mathcal{M}(G_1),o_1),(G_2,\mathcal{M}(G_2),o_2) \big) = \frac{1}{R^{\star}+1}.
\end{align}
This turns $\mathcal{G}_{\star}(\mathcal{M})$ into a Polish space, i.e., a complete, separable metric space.
\end{definition}

Definition \ref{metric_marked_graphs} puts a metric structure on marked rooted graphs. With this metric topology in hand, we can simply adapt all convergence statements to this setting. Hence, we generalize Definition \ref{def_loc_weak_conv} as follows:

\begin{definition}[Local convergence in probability of marked graphs with continuous marks]
Let $(G_n,\mathcal{M}(G_n))_{n\in\mathbf{N}}$, $(G_n,\mathcal{M}(G_n))\in\mathcal{G}_{\star}(\mathcal{M})$ be a sequence of (finite) random marked graphs such that $|G_n| \stackrel{\mathbf{P}}{\to} \infty$ and let $o_n\sim Unif[\mathcal{V}(G_n)]$. Let $\mathbf{P}\big(\cdot\mid (G_n,\mathcal{M}(G_n))\big)$ denote conditional probability with respect to the marked graph ($o_n$ is the free variable). We say that $(G_n,\mathcal{M}(G_n),o_n)_{n\in\mathbf{N}}$ converges locally in probability to a (possibly) random element $(G,\mathcal{M}(G),o)\in\mathcal{G}_{\star}(\mathcal{M})$ if for any fixed rooted graph $(H_{\star},\mathcal{M}(H_{\star}),o')$ and $r\in\mathbf{N}$, as $n\to\infty$,
\begin{align} \label{def_loc_conv_marked}
    \mathbf{P}&\bigg(\text{d}_{\mathcal{G}_{\star}}\bigg((G_n,\mathcal{M}(G_n),o_n),(H_{\star},\mathcal{M}(H_{\star}),o') \bigg) \leq \frac{1}{r+1}
    \mid (G_n,\mathcal{M}(G_n))\bigg)\\
    :=&\frac{1}{|G_n|} \sum_{i\in\mathcal{V}(G_n)} \mathds{1}_{\big\{\text{d}_{\mathcal{G}_{\star}}\big((G_n,\mathcal{M}(G_n),i),(H_{\star},\mathcal{M}(H_{\star}),o') \big) \leq \frac{1}{r+1}\big\}}\stackrel{\mathbf{P}}{\longrightarrow} \hspace{0.1cm} \mathbf{P}\bigg(\text{d}_{\mathcal{G}_{\star}}\bigg((G,\mathcal{M}(G),o),(H_{\star},\mathcal{M}(H_{\star}),o') \bigg) \leq \frac{1}{r+1}\bigg). \nonumber
\end{align}
We say that $(G,\mathcal{M}(G),o)$ is the local limit in probability of $(G_n,\mathcal{M}(G_n))_{n\geq1}$.
\end{definition}

\subsubsection{Static local convergence of the dynamic random intersection graph} \label{sec_overview_loc_lim_static}
To prove static local convergence of $\drig$ we first look at static local convergence of $\bgrg$. The authors of \cite{Hofstad2018} and \cite{Hofstad2022} derive results on the local convergence and the giant component of $\bcm$ under the assumption that $\bcm$ fulfils Condition \ref{reg_cond}(i). From the previous section we know that under Condition \ref{reg_cond}(i)-(ii), results applying to $\bcm$ also apply to $\bgrg$. Hence, it suffices to show that our model under stationarity and assuming Condition \ref{cond_weights} fulfils Condition \ref{reg_cond}(i) and argue that Condition \ref{reg_cond}(ii) is in fact not necessary to obtain local convergence and the giant component in $\bgrg$.\\
Due to the one-node projection present in \cite{Hofstad2018} and in our model, the statements about the resulting intersection graphs, i.e, of $\mathrm{RIGC}$ and $\drig$, automatically follow from results shown for $\bcm$ and $\bgrg$ respectively. Since verification of the regularity conditions is quite elementary and the remaining results follow directly from \cite{Hofstad2018} and \cite{Hofstad2022}, the proofs of all static results can be found in \longversion{Appendix \ref{appx_static_graph}}\shortversion{the extended version, see \textup{\cite[Appendix B]{Milewska2023}}}. Here we only describe the limiting object and state the theorems.\\

\noindent \textbf{The static limiting object $(\mathrm{BP}_{\gamma},0)$.} We start by introducing $(\mathrm{BP}_{\gamma},0)$, the local limit in probability of $\bgrg$. Naturally, as we are dealing with two types of vertices - the left and the right ones - a typical neighborhood in this graph will be different depending on the type of the root. However, it is not possible to determine whether a uniformly chosen root was a left- or a right-vertex just on the basis of its neighborhood. Hence, we introduce marks to keep track of different types of vertices. Let $\mathcal{M}_b=\{l,r\}$ be the set of marks. We mark left-vertices as $l$ and right-vertices as $r$. Formally,
\begin{align}
    \mathcal{E}^b_n=
    \begin{cases}
             l \hspace{0.2cm} \text{if} \hspace{0.2cm} i\in [n],\\
             r \hspace{0.2cm} \text{if} \hspace{0.2cm} a\in [n]_{k\geq2}.
    \end{cases}
\end{align}
Now we introduce the limiting object $(\mathrm{BP}_{\gamma},\mathcal{E}_{\gamma},0)$, equipped with the mark function $\mathcal{E}^b_n$, while $(\mathrm{BP}_{\gamma}, 0)$ is then obtained by ignoring the mark function. Define a mixing variable ${\gamma}$ as
\begin{align}
    \mathbf{P}({\gamma}=l)=\frac{1}{1+\frac{M_n}{n}} \hspace{0.3cm} \text{and} \hspace{0.3cm} \mathbf{P}({\gamma}=r)=\frac{\frac{M_n}{n}}{1+\frac{M_n}{n}}.
\end{align}
Then, $(\mathrm{BP}_{\gamma},\mathcal{E}_{\gamma},0)$ is a mixture of two marked ordered BP-trees, $(\mathrm{BP}_l,\mathcal{E}_l,0)$ and $(\mathrm{BP}_r,\mathcal{E}_r,0)$:
\begin{align}
    (\mathrm{BP}_{\gamma},\mathcal{E}_{\gamma},0) \stackrel{d}{=} \mathds{1}_{\{\gamma=l\}}(\mathrm{BP}_l,\mathcal{E}_l,0)+\mathds{1}_{\{\gamma=r\}}(\mathrm{BP}_r,\mathcal{E}_r,0),
\end{align}
where $(\mathrm{BP}_l,\mathcal{E}_l,0)$ describes the neighborhood of a left-vertex and $(\mathrm{BP}_r,\mathcal{E}_r,0)$ of a right one. Hence,
\begin{align}
    (\bgrg,\mathcal{E}_l,V_n^{(l)}) \stackrel{\mathbf{P}}{\to} (\mathrm{BP}_l,\mathcal{E}_l,0) \hspace{0.3cm} \text{and} \hspace{0.3cm} (\bgrg,\mathcal{E}_r,V_n^{(r)}) \stackrel{\mathbf{P}}{\to} (\mathrm{BP}_r,\mathcal{E}_r,0),
\end{align}
where $V_n^{(l)}$ and $V_n^{(r)}$ denote vertices chosen uniformly from the set of all left- and right-vertices respectively. The mixing variable $\gamma$ can thus be re-interpreted as the random mark of the root.\\

Before we proceed, we need to introduce the size-biased version of a random variable:
\begin{definition} \label{def_shift_variable}
For an $\mathbb{N}$-valued random variable $X$ with $\mathbf{E}[X] < \infty$, we define its size-biased distribution $X^{\star}$ and
the shift variable $\Tilde{X}$ by their probability mass functions, for all $k\in\mathbb{N}$,
\begin{align}
    \mathbf{P}(X^{\star}=k)=\frac{k\mathbf{P}(X=k)}{\mathbf{E}[X]} \hspace{1cm} \text{and} \hspace{1cm} \mathbf{P}(\Tilde{X}=k)=\mathbf{P}(X^{\star}-1=k).
\end{align}
\end{definition}
Now we can continue with the description of the random ordered marked tree $(\mathrm{BP}_l,\mathcal{E}_l,0)$ itself. We consider a discrete-time branching process where the offspring of any two individuals are independent. We then give the individuals in even and odd generations marks $l$ and $r$, respectively. Generation 0 contains
the root alone and the root's offspring distribution is $D^{(l)}$ (the limit of the degree of a uniformly chosen left-vertex, see Condition \ref{reg_cond}). In consecutive generations, the offspring distribution of individuals marked with $l$ will be $\Tilde{D}^{(l)}$ and of individuals marked with $r$ will be $\Tilde{D}^{(r)}$. $(\mathrm{BP}_r,\mathcal{E}_r,0)$ is defined analogously with reversed roles of $l$ and $r$.\\

\noindent \textbf{Static local limit of $\drig$.} Having specified the local limit of the underlying $\bgrg$, we proceed to the limit of the resulting graph $\drig$.\\

\noindent \textbf{The static limiting object $(\mathrm{CP},o)$.} The limit that we denote by $(\mathrm{CP},o)$ is a random rooted graph and the “community projection” (see (\ref{com_proj})) of $(\mathrm{BP}_{\gamma}, \mathcal{E}_{\gamma},0)$ in the same way that $\drig$ is the “community projection” of the underlying $\bgrg$: it extracts only vertices marked as $l$ and builds links between these which were previously connected to the same vertex with mark $r$. Let us accentuate that even though the final limit is not a tree, it relies on the tree-like structure of the underlying $\bgrg$. This constructs the local limit $(\mathrm{CP},o)$ of $\drig$.

\subsubsection{Degree distribution}
The average degree in $\drig$ is asymptotically a sum of rescaled Poisson variables whose rates depend on the limiting weight variable $W$ (see Condition \ref{cond_weights}):

\begin{restatable}[Convergence of degree of a random vertex in $\drig$]{corollary}{convunifv} \label{conv_unif_v}
Let $\bm{w}$ satisfy Condition \ref{cond_weights}. Then,
\begin{align}
    D_n \stackrel{d}{\longrightarrow} \sum_{l\geq2} (l-1)X_l,
\end{align}
where $X_l$ is a mixed-Poisson variable with mixing distribution $lp_lW$.
\end{restatable}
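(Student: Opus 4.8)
The plan is to exploit the decomposition of the root degree over group sizes, and to recognise that, for each fixed size $k$, the number of active groups containing the root is a sum of very many independent and individually very unlikely indicators, hence asymptotically Poisson. Writing $C_k(o_n)$ for the number of groups of size $k$ that are ON and contain the uniformly chosen root $o_n$, we have
\begin{align}
    D_n = \sum_{k\geq2}(k-1)C_k(o_n),
\end{align}
where, conditionally on the weights, $C_k(o_n)=\sum_{a\ni o_n,\,|a|=k}\mathds{1}_{\{a\ \mathrm{ON}\}}$ is a sum of $\binom{n-1}{k-1}$ independent Bernoulli variables with success probabilities $\pi^a_{\mathrm{ON}}$ from (\ref{pi_ON}), the independence across groups being exactly the independence of the underlying two-state Markov chains in stationarity.

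First I would compute the conditional mean. Since $\ell_n^{k-1}\sim(n\mathbf{E}[W])^{k-1}$ dominates the $O(1)$ product $\prod_{j\in a}w_j$, one has $\pi^a_{\mathrm{ON}}=(1+o(1))f(|a|)\prod_{j\in a}w_j/\ell_n^{k-1}$, and summing over groups $a\ni i$ of size $k$ gives
\begin{align}
    \mathbf{E}[C_k(o_n)\mid o_n=i] = (1+o(1))\frac{f(k)\,w_i}{\ell_n^{k-1}}\,e_{k-1}\big((w_j)_{j\neq i}\big),
\end{align}
where $e_{k-1}$ denotes the elementary symmetric polynomial of degree $k-1$. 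Using $e_{k-1}((w_j)_{j\neq i})=\tfrac{1}{(k-1)!}\ell_n^{k-1}(1+o(1))$, the correction terms being negligible because the power sum $\sum_j w_j^2=O(n)$ under Condition \ref{cond_weights}(c), together with $f(k)=k!\,p_k$, this mean converges to $kp_kw_i$. Because each $\pi^a_{\mathrm{ON}}\to0$ while the mean stays bounded, Le Cam's inequality bounds the total-variation distance from $C_k(o_n)$ to a Poisson law by $\sum_a(\pi^a_{\mathrm{ON}})^2\le\big(\max_a\pi^a_{\mathrm{ON}}\big)\,\mathbf{E}[C_k(o_n)\mid o_n=i]\to0$, so conditionally on $w_{o_n}=x$ we get $C_k(o_n)\stackrel{d}{\to}\mathrm{Poisson}(kp_kx)$. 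Mixing over $W_n\stackrel{d}{\to}W$ identifies the marginal limit as the mixed-Poisson variable $X_k$. Joint convergence over $k$ then follows because groups of different sizes are distinct and switch independently, so $(C_2(o_n),\dots,C_K(o_n))$ converges jointly (conditionally on the root weight) to independent Poissons, i.e. to $(X_2,\dots,X_K)$ conditionally independent given $W$; consequently $\sum_{k=2}^K(k-1)C_k(o_n)\stackrel{d}{\to}\sum_{k=2}^K(k-1)X_k$ for each fixed $K$.

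The main obstacle is passing from these finite truncations to the full infinite sum, and this is where the group-size moment assumption enters. I would control the tail uniformly in $n$ by Markov's inequality, using
\begin{align}
    \mathbf{E}\Big[\sum_{k>K}(k-1)C_k(o_n)\Big]=(1+o(1))\,\mathbf{E}[W_n]\sum_{k>K}(k-1)k\,p_k,
\end{align}
which vanishes as $K\to\infty$ precisely because $\mu_{(2)}=\sum_k k^2p_k<\infty$ (assumption (\ref{assump_2_mom})); the identical bound, with $\mathbf{E}[W]$ in place of $\mathbf{E}[W_n]$, controls the tail of the limit $\sum_{k>K}(k-1)X_k$. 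Combining the finite-dimensional convergence with these uniformly negligible tails, via the standard approximation argument (convergence of truncations together with uniformly small tails implies convergence of the full sum), yields $D_n\stackrel{d}{\to}\sum_{k\geq2}(k-1)X_k$. I note finally that the statement can alternatively be read off directly from Theorem \ref{loc_conv_static}, since the root degree is a local functional of $(\mathrm{CP},o)$; the computation above is exactly what identifies the offspring structure of the root in that limit, so the two viewpoints are consistent.
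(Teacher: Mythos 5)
Your proposal is correct, but it takes a different route from the paper's stated derivation of this corollary, which is a one-liner: since the root degree is a functional of the ball $B_1(\mathrm{CP},o)$, the convergence of $D_n$ is read off directly from the static local limit (Theorem \ref{loc_conv_static}), whose proof in turn runs through the equivalence with $\bcm$ (Theorem \ref{two_models_relation}) and the transfer of results from \cite{Hofstad2018}. What you give instead is a direct, self-contained Poisson-approximation argument, and it essentially reconstructs the paper's own auxiliary machinery from Appendix \ref{appx_static_graph}: your Le Cam step with error $\sum_a (\piaon)^2$, together with the comparison of $\lambda_{i,k}=\sum_{a\ni i}\piaon$ to $kp_kw_i$, is exactly Theorem \ref{Poi_coupl_1} (the paper uses the explicit coupling of \textup{\cite[Theorem 2.10]{Hofstad2018}} plus an auxiliary Poisson variable $\hat V_{i,k}$ correcting the mean, with the same quadratic error bounds), and your truncation-plus-tail argument mirrors the proof of Theorem \ref{conv_unif_left}, transplanted from the left-degree $D^{(l)}_n=\sum_k \mathcal{C}_k(o_n)$ to the projected degree $D_n=\sum_k(k-1)\mathcal{C}_k(o_n)$; the extra factor $(k-1)$ is precisely why your tail bound needs $\mu_{(2)}<\infty$ rather than only $\mu<\infty$, consistent with the sparsity computation in Theorem \ref{1st_mom_unif_v}. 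Two genuine differences are worth noting. First, you truncate at a fixed $K$ and let $K\to\infty$ after $n\to\infty$ (a standard converging-together argument), whereas the paper uses a diverging cutoff $b_n\to\infty$ with $b_n=o(n)$; both work. Second, your claim that the coupling error vanishes tacitly averages the bound proportional to $w_i^2/\ell_n$ over the uniform root, which requires $\mathbf{E}[W_n^2]=O(1)$ and $\max_{i}w_i=o(\sqrt{n})$, i.e., Condition \ref{cond_weights}(c) via Corollary \ref{asympt_max_weight}; this is legitimate since the corollary assumes Condition \ref{cond_weights} in full, though the paper later removes part (c) by a weight-truncation argument (Remark \ref{appx_eliminating_cond_c}), which your direct proof would also need to invoke to match that generality. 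As to what each approach buys: yours proves the corollary without the $\bcm$ transfer or local-convergence framework, and your closing observation that $(X_2,\dots,X_K)$ are conditionally independent given a single shared $W$ is the correct reading of the joint limit; the paper's route gets the corollary for free once Theorem \ref{loc_conv_static} is in place and simultaneously yields the stronger total-variation statement of Theorem \ref{deg_seq_stat}.
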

Corollary \ref{conv_unif_v} is a direct consequence of the static local convergence and we can actually prove a stronger result about the degree distribution in the static graph. Define
\begin{align}
    Q_k^{(n)} = \frac{1}{n} \sum_{i\in[n]}\mathds{1}_{\{d_i=k\}}.
\end{align}
Then, the following theorem shows that $\big(Q_k^{(n)}\big)_{k\geq0}$ converges in total variation distance:
\begin{restatable}[Degree sequence in $\drig$]{theorem}{degseqstat} \label{deg_seq_stat}
For every $\varepsilon>0$,
\begin{align}
    \mathbf{P}\Big(\sum_{k=0}^{\infty}|Q^{(n)}_k- q_k|>\varepsilon \Big) \longrightarrow 0,
\end{align}
where $q_k=\mathbf{P}\big(\sum_{l\geq 2}(l-1)X_l=k\big)$ with $\big(X_l\big)_{l\geq2}$ - independent mixed-Poisson variables with mixing distribution $lp_lW$, i.e., such that
\begin{align}
    \mathbf{P}(X_l=k) = \mathbf{E}\Big[e^{-lp_lW}\frac{(lp_lW)^k}{k!}\Big].
\end{align}
\end{restatable}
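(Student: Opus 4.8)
The plan is to prove the $L^1$-convergence in probability of the empirical degree law by a first-and-second-moment argument for each count $Q_k^{(n)}$, and then to upgrade the per-$k$ convergence to convergence of the whole sum by a Scheffé-type truncation. Throughout I would use the representation $d_i = \sum_{k\geq 2}(k-1)C_k(i)$, where $C_k(i) = \sum_{a\in[n]_k:\,a\ni i}\mathds{1}_{\{a\text{ is ON}\}}$ is the number of ON groups of size $k$ through $i$; for fixed $i$ these are independent across $k$ since distinct groups switch ON independently. The pointwise statement to target is $Q_k^{(n)}\stackrel{\mathbf{P}}{\longrightarrow}q_k$ for each fixed $k$, which together with a tightness (Scheffé) step yields the theorem.

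For the first moment, note that $\mathbf{E}[Q_k^{(n)}] = \mathbf{P}(D_n=k)$, and since the limit $\sum_{l\geq 2}(l-1)X_l$ in Corollary \ref{conv_unif_v} is $\mathbf{N}$-valued, the distributional convergence $D_n\stackrel{d}{\longrightarrow}\sum_{l\geq 2}(l-1)X_l$ gives $\mathbf{E}[Q_k^{(n)}]\to q_k$ directly; alternatively this pointwise-in-probability convergence can be read off from the local convergence in Theorem \ref{loc_conv_static} applied to the degree functional. It then suffices to show $\Var(Q_k^{(n)})\to 0$, after which Chebyshev's inequality gives $Q_k^{(n)}\stackrel{\mathbf{P}}{\longrightarrow}q_k$. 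Writing $\Var(Q_k^{(n)}) = n^{-2}\sum_{i,j}\mathrm{Cov}(\mathds{1}_{\{d_i=k\}},\mathds{1}_{\{d_j=k\}})$, the diagonal contributes $O(1/n)$, so the real work is the off-diagonal covariances.

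Here is the core of the argument, and its main obstacle. For $i\neq j$ I would split the groups through $i$ or $j$ into those avoiding the other vertex and those containing both, writing $d_i=A_i+R_{ij}$ and $d_j=A_j+R_{ij}$ with $A_i,A_j,R_{ij}$ mutually independent and $R_{ij}$ recording the shared ON groups. On $\{R_{ij}=0\}$ the indicators $\mathds{1}_{\{d_i=k\}}$ and $\mathds{1}_{\{d_j=k\}}$ are independent, so a routine coupling bound yields $|\mathrm{Cov}(\mathds{1}_{\{d_i=k\}},\mathds{1}_{\{d_j=k\}})|\leq C\,\mathbf{P}(R_{ij}>0)$ with $C$ universal. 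The decisive estimate is then $\mathbf{P}(R_{ij}>0)\leq\sum_{a\ni i,j}\piaon\leq\frac{w_iw_j}{\ell_n}\sum_{k\geq 2}k(k-1)p_k = \frac{w_iw_j}{\ell_n}(\mu_{(2)}-\mu)$, obtained by bounding $\piaon\leq f(|a|)\prod_{m\in a}w_m/\ell_n^{|a|-1}$ via (\ref{pi_ON}) and summing the subset-products through $\sum_{|S|=k-2}\prod_{m\in S}w_m\leq \ell_n^{k-2}/(k-2)!$. This is exactly where the finite second moment of the group sizes (\ref{assump_2_mom}) is needed: it makes the series converge and forces shared ON groups to have probability only $O(w_iw_j/\ell_n)$. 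Summing, $n^{-2}\sum_{i\neq j}w_iw_j/\ell_n\leq (\sum_i w_i)^2/(n^2\ell_n)=\ell_n/n^2=O(1/n)$ by Condition \ref{cond_weights}(b), so $\Var(Q_k^{(n)})\to 0$.

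It remains to upgrade to the $L^1$ statement. Both $(Q_k^{(n)})_{k\geq 0}$ and $(q_k)_{k\geq 0}$ are probability mass functions, so $\sum_k(Q_k^{(n)}-q_k)=0$ and hence $\sum_k|Q_k^{(n)}-q_k|=2\sum_k(q_k-Q_k^{(n)})^+$. Given $\varepsilon>0$, choosing $K$ with $\sum_{k>K}q_k<\varepsilon/4$ controls the tail deterministically since $(q_k-Q_k^{(n)})^+\leq q_k$, while $\sum_{k\leq K}|Q_k^{(n)}-q_k|\stackrel{\mathbf{P}}{\longrightarrow}0$ as a finite sum of terms each converging in probability; combining gives $\mathbf{P}(\sum_k|Q_k^{(n)}-q_k|>\varepsilon)\to 0$. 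I expect the covariance estimate of the third paragraph to be the only genuinely delicate point, since the Poisson/mixing form of the limit is inherited from Corollary \ref{conv_unif_v} and the final truncation is standard.
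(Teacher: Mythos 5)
Your proposal is correct, and its engine is the same as the paper's: both proofs isolate the shared groups through a pair $i\neq j$ (your $R_{ij}$ is the paper's $d_{i,j}$), and both rest on exactly the estimate you flag as decisive, $\sum_{a\ni i,j}\piaon \leq \frac{w_iw_j}{\ell_n}(\mu_{(2)}-\mu)$, which is where (\ref{assump_2_mom}) enters and which, summed over pairs, gives $O(\ell_n/n^2)=O(1/n)$. Where you genuinely diverge is in the two surrounding steps. First, you bound the covariance by $C\,\mathbf{P}(R_{ij}>0)$ using independence on $\{R_{ij}=0\}$, whereas the paper builds an explicit decoupled copy $\hat d_{i,j}$ of the shared-group contribution and compares $(d_{i\setminus j}+d_{i,j},\,d_{j\setminus i}+d_{i,j})$ with the independent pair $(d_{i\setminus j}+d_{i,j},\,d_{j\setminus i}+\hat d_{i,j})$; your version is more elementary, but the paper's finer bound $\sum_{a\ni i,j}\piaon[\mathbf{P}(d_i=k)+\mathbf{P}(d_j=k-|a|+1)]$ has the feature that it remains summable over $k$ (the bracket sums to at most $2$). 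Second, this summability is what the paper needs for its aggregation: it reduces $\sum_k|Q_k^{(n)}-q_k|$ to $\max_k|Q_k^{(n)}-q_k|$, union-bounds over $k$, and applies Chebyshev with $\sum_{k\geq0}\Var(Q_k^{(n)})=o(1)$, together with a uniform-in-$k$ bound $\max_k|\mathbf{E}[Q_k^{(n)}]-q_k|\leq\varepsilon/2$ (itself a Scheffé consequence of Corollary \ref{conv_unif_v}). You instead prove only per-$k$ convergence $Q_k^{(n)}\stackrel{\mathbf{P}}{\to}q_k$ and upgrade via the identity $\sum_k|Q_k^{(n)}-q_k|=2\sum_k(q_k-Q_k^{(n)})^+$ with the tail $\sum_{k>K}q_k$ killed deterministically; this is valid (note it uses that $(q_k)$ is a proper pmf, which holds since $\mathbf{E}[\sum_l(l-1)X_l]=(\mu_{(2)}-\mu)\mathbf{E}[W]<\infty$) and buys a more modular argument requiring only $\Var(Q_k^{(n)})\to0$ for each fixed $k$, at the price of a truncation step; the paper's route avoids the truncation and delivers the stronger uniform statement along the way. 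Both endgames are standard, and your identification of the covariance estimate as the only delicate point matches where the paper spends its effort.
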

\noindent \longversion{Theorem \ref{deg_seq_stat} is proven in Appendix \ref{appx_static_intersection}}\shortversion{The proof of Theorem \ref{deg_seq_stat} is deferred to the extended version, see \textup{\cite[Appendix B.3]{Milewska2023}}}.\\

We next investigate the sparsity of our model by investigating the average degree:
\begin{restatable}[Convergence of average degree in $\drig$]{theorem}{firstmomunifv} \label{1st_mom_unif_v}
As $n \to \infty$,
\begin{align}
    \mathbf{E}[D_n\mid  G_n] \stackrel{\mathbf{P}}{\longrightarrow} (\mu_{(2)}-\mu)\mathbf{E}[W].
\end{align}
\end{restatable}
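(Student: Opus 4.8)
The plan is to compute the limit of the conditional expectation $\mathbf{E}[D_n \mid G_n]$ directly, using the explicit description of the degree of a uniformly chosen vertex in $\drig$. First I would write
\begin{align*}
    \mathbf{E}[D_n \mid G_n] = \frac{1}{n} \sum_{i \in [n]} d_i = \frac{1}{n} \sum_{i \in [n]} \sum_{k=2}^{\infty} (k-1) \, C_k(i),
\end{align*}
where $C_k(i)$ denotes the number of active groups of size $k$ containing vertex $i$ under the stationary distribution. Summing over $i$ and counting each active group $a$ of size $k$ exactly $|a| = k$ times, this rearranges to $\frac{1}{n}\sum_{k=2}^{\infty} (k-1) k \, A_k$, where $A_k = \#\{a \in [n]_k : a \text{ is ON}\}$ is the number of active groups of size $k$. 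Thus the whole problem reduces to understanding the asymptotics of $A_k / n$ for each $k$.

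Next I would identify $\mathbf{E}[A_k]$ from the stationary activation probabilities in \eqref{pi_ON}. Each subset $a \in [n]_k$ is independently ON with probability $\piaon = f(k)\prod_{i \in a} w_i / (\ell_n^{k-1} + f(k)\prod_{i \in a} w_i)$, and since the graph is sparse the denominator is dominated by $\ell_n^{k-1}$. Using $f(k) = k! \, p_k$ and summing over all $k$-subsets, I would approximate
\begin{align*}
    \mathbf{E}[A_k] \approx \frac{k!\, p_k}{\ell_n^{k-1}} \sum_{a \in [n]_k} \prod_{i \in a} w_i \approx \frac{k!\, p_k}{\ell_n^{k-1}} \cdot \frac{1}{k!}\Big(\sum_{i \in [n]} w_i\Big)^k = p_k\, \ell_n,
\end{align*}
where the combinatorial factor $1/k!$ comes from converting the sum over unordered $k$-subsets to a product of sums (the diagonal terms, where indices coincide, are lower order under Condition \ref{cond_weights}). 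Since $\ell_n / n \to \mathbf{E}[W]$ by Condition \ref{cond_weights}(b), this gives $\mathbf{E}[A_k]/n \to p_k \, \mathbf{E}[W]$. Substituting back,
\begin{align*}
    \mathbf{E}\big[\mathbf{E}[D_n \mid G_n]\big] = \frac{1}{n}\sum_{k=2}^{\infty} (k-1)k\, \mathbf{E}[A_k] \to \sum_{k=2}^{\infty} (k-1)k\, p_k\, \mathbf{E}[W] = (\mu_{(2)} - \mu)\mathbf{E}[W],
\end{align*}
using $\sum_k k^2 p_k = \mu_{(2)}$ and $\sum_k k p_k = \mu$, which are finite by \eqref{assump_1_mom}–\eqref{assump_2_mom}.

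To upgrade this convergence of expectations to convergence in probability of $\mathbf{E}[D_n \mid G_n]$ itself, I would show a concentration estimate, namely that $\Var(A_k)$ is of lower order so that $A_k/n \stackrel{\mathbf{P}}{\to} p_k\mathbf{E}[W]$; because the indicators $\mathds{1}_{\{a \text{ is ON}\}}$ are independent across groups $a$, the variance computation is a direct second-moment bound, and the finite second moment $\mu_{(2)}$ together with Condition \ref{cond_weights}(c) controls the sum over $k$. The main obstacle I anticipate is the interchange of the limit $n\to\infty$ with the infinite sum over $k$: one must justify that the tail $\sum_{k > K}(k-1)k\, A_k/n$ is uniformly negligible. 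I would handle this by a dominated-convergence / uniform-integrability argument, bounding the tail contribution in expectation by $\sum_{k>K} k^2 p_k \, \mathbf{E}[W_n]$, which is small uniformly in $n$ precisely because $\mu_{(2)} < \infty$ and $\mathbf{E}[W_n] \to \mathbf{E}[W]$; this is exactly where the finite-second-moment assumption on the group sizes is essential for sparsity.
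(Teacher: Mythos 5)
Your proposal follows essentially the same route as the paper's proof: rewrite $\mathbf{E}[D_n\mid G_n]$ as $\frac{1}{n}\sum_{k\geq2}k(k-1)A_k$ over independent group indicators, approximate $\piaon$ by $k!\,p_k\prod_{i\in a}w_i/\ell_n^{k-1}$ in the sparse regime so that $A_k/n\stackrel{\mathbf{P}}{\to}p_k\mathbf{E}[W]$, truncate in $k$ using $\mu_{(2)}<\infty$, and conclude by a second-moment/Chebyshev argument. The only point to be careful about is that the variance bound must be applied to the group-size-truncated sum (the paper takes $b_n=o(\sqrt{n})$), since the untruncated variance involves $\sum_k k^2(k-1)^2p_k$, a fourth moment that is not assumed finite — but the truncation you already set up for the tail handles this exactly as in the paper.
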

The proof of this result \longversion{(see Appendix \ref{appx_static_intersection})}\shortversion{(see the extended version \textup{\cite[Appendix B.3]{Milewska2023}})} shows why Conditions \ref{assump_1_mom} and \ref{assump_2_mom} are necessary for the sparsity of our model.

\subsubsection{Static giant component}
Since real-world networks tend to be highly connected and a large fraction of individuals very often lies in a single connected component, it is useful to study the behavior of this component. We denote the  cluster or connected component of a vertex $i\in[n]$ in the graph $G=([n],E)$ by $\mathscr{C}(i)$. We denote the graph distance in $G$, i.e., the minimal number of edges in a path linking $i$ and $j$, by $\text{dist}_G(i,j)$. We define

\begin{definition}[Giant connected component]
\begin{align}
    \mathscr{C}(i) = \{j\in[n]: \text{dist}_G(i,j)<\infty\}.
\end{align}
Let $\mathscr{C}_1$ denote the largest connected component, i.e., let $\mathscr{C}_1$ satisfy
\begin{align}
    |\mathscr{C}_1| = \max_{i\in[n]}|\mathscr{C}(i)|,
\end{align}
where $|\mathscr{C}(i)|$ denotes the number of vertices in $\mathscr{C}(i)$ and we break ties arbitrarily.
\end{definition}
Of course, it can happen that there are two or more maximal clusters in a graph. For that reason, the uniqueness of the giant is often investigated. Another popular question is the existence of a component containing a linear proportion of vertices - the so-called giant component problem. It was first studied by Erd{\H o}s and R{\'e}nyi (\cite{ER_1960}) and has since been investigated on multiple other models (for instance the Chung-Lu model \cite{ChungLu2002, ChungLu2006}, or configuration model \cite{Bollobas2015, JansonLuczak2009, Molloy_Reed1995, Molloy_Reed1998}).\\

Due to the structure of intersection graphs, the giant component exists when it exists in the underlying bipartite graphs. Hence, the results on the giant component in $\drig$ follow from the results on the giant component in $\bgrg$. Similarly, as in the case of local convergence, thanks to the link between our model under stationarity and the $\bcm$ and the fact that regularity conditions we impose on the weights variables imply regularity conditions of degrees in $\bgrg$, we are allowed to transfer the statements on the giant component for $\bcm$ and $\mathrm{RIGC}$, proven in \cite{Hofstad2022}. We again state the results and \longversion{prove them in more detail in Appendix \ref{appx_static_sec_giant}}\shortversion{defer the details to the extended version (see \textup{\cite[Appendix B.5]{Milewska2023}}}).\\

\noindent \textbf{Static giant component in the $\bgrg$.} We start with the bipartite graph. Denote the giant component in $\bgrg$ by $\mathscr{C}_{1,b}$. Its giant is studied in the next theorem:

\begin{restatable}[Giant component in $\bgrg$]{theorem}{giantbipartite} \label{giant_bipartite}
Under the supercriticality condition $\mathbf{E}[\Tilde{D}^{(l)}]\mathbf{E}[\Tilde{D}^{(r)}] > 1$, as $n\to\infty$,
\begin{align}
    \frac{|\mathscr{C}_{1,b} \cap \mathcal{V}^{(l)}|}{n} \stackrel{\mathbf{P}}{\longrightarrow} \xi_l,
\end{align}
where $\xi_l= 1-G_{D^{(l)}}(\eta_l) \in[0,1]$ and $\eta_l \in [0,1]$ is the smallest solution of the fixed point equation
\begin{align}
    \eta_l = G_{\Tilde{D}^{(r)}}(G_{\Tilde{D}^{(l)}}(\eta_l)).
\end{align}
\end{restatable}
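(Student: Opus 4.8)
The plan is to reduce the statement to the corresponding giant-component result for the bipartite configuration model $\bcm$, and then transport it back to $\bgrg$ via Theorem \ref{two_models_relation}. Concretely, I would proceed in three stages: (i) identify the limiting left- and right-degree distributions of $\bgrg$ in stationarity and verify that its (random) degree sequence satisfies the regularity Condition \ref{reg_cond}; (ii) invoke the giant-component theorem for $\bcm$ from \cite{Hofstad2022}, which under the stated supercriticality condition produces a left-giant of asymptotic size $\xi_l n$; and (iii) apply Theorem \ref{two_models_relation} to carry this over to $\bgrg$. Since the local limit, criticality, and the survival probability all depend only on the limiting degree distributions, the three stages combine to give the claimed convergence in probability.

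For stage (i) I would first compute the limits. Each group $a\in[n]_k$ is ON independently with probability $\piaon$, and since $\piaon = f(|a|)\prod_{j\in a}w_j/\ell_n^{|a|-1}\,(1+o(1))$ with $f(k)=k!\,p_k$, the number of active size-$k$ groups containing a fixed vertex $i$ is asymptotically Poisson with mean $\sum_{a\ni i,\,|a|=k}\piaon \to k\,p_k\,w_i$, using $\sum_{b\in[n]_{k-1}}\prod_{j\in b}w_j \sim \ell_n^{k-1}/(k-1)!$. Summing over $k$ shows that the left-degree of a uniformly chosen vertex converges to a mixed-Poisson variable $D^{(l)}$ with mixing parameter $\mu W$, whereas the size of a uniformly chosen active group converges to $D^{(r)}$ with $\mathbf{P}(D^{(r)}=k)=p_k$; along the way I would record that the total number of active groups $M_n$ satisfies $M_n/n\stackrel{\mathbf{P}}{\longrightarrow}\mathbf{E}[W]$. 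To upgrade these moment computations to the conditional-on-$G_n$ convergence demanded by Condition \ref{reg_cond}, I would prove concentration of the empirical degree distributions $F^{(l)}_n, F^{(r)}_n$ and of their first two moments: the right-degree counts are sums of independent indicators and concentrate by a direct variance bound, while the left-degrees of distinct vertices are only weakly dependent (two vertices interact only through groups containing both), so a second-moment estimate again yields a law of large numbers. The finiteness of the limiting second moments, $\mathbf{E}[(D^{(l)})^2]=\mu\mathbf{E}[W]+\mu^2\mathbf{E}[W^2]<\infty$ and $\mathbf{E}[(D^{(r)})^2]=\mu_{(2)}<\infty$, follows from Condition \ref{cond_weights}(c) and \eqref{assump_2_mom}, giving Condition \ref{reg_cond}(b).

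For stage (ii), the bipartite local limit is the alternating two-type branching process $(\mathrm{BP}_\gamma,\mathcal{E}_\gamma,0)$ of Section \ref{sec_overview_loc_lim_static}, whose mean offspring operator has the product structure yielding criticality parameter $\mathbf{E}[\Tilde{D}^{(l)}]\mathbf{E}[\Tilde{D}^{(r)}]$; with the distributions from stage (i) one checks $\mathbf{E}[\Tilde{D}^{(l)}]=\mu\mathbf{E}[W^2]/\mathbf{E}[W]$ and $\mathbf{E}[\Tilde{D}^{(r)}]=(\mu_{(2)}-\mu)/\mu$, so their product equals $\mathbf{E}[W^2](\mu_{(2)}-\mu)/\mathbf{E}[W]$, matching \eqref{giant_cond_our}. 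The giant-component theorem for $\bcm$ then states that, when this parameter exceeds $1$, the left-giant fraction converges in probability to the survival probability $\xi_l=1-G_{D^{(l)}}(\eta_l)$, where $\eta_l$ is the smallest solution of $\eta_l=G_{\Tilde{D}^{(r)}}(G_{\Tilde{D}^{(l)}}(\eta_l))$ --- the standard extinction-probability fixed point for this process. I would quote this directly from \cite{Hofstad2022}, noting that its proof uses only Condition \ref{reg_cond}(a).

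For stage (iii), fix $\varepsilon>0$ and let $\mathcal{E}_n$ be the set of bipartite multigraphs $G$ with $\big|\,|\mathscr{C}_{1,b}(G)\cap\mathcal{V}^{(l)}|/n-\xi_l\,\big|\le\varepsilon$. Stage (ii) gives $\mathbf{P}(\bcm\in\mathcal{E}_n)\to1$, and stage (i) verifies that the degree sequence of $\bgrg$ satisfies Condition \ref{reg_cond}; hence Theorem \ref{two_models_relation} yields $\mathbf{P}(\bgrg\in\mathcal{E}_n)\to1$, which is exactly the asserted convergence. I expect the main obstacle to lie in stage (i): establishing the conditional concentration of the degree sequence to the precision demanded by Condition \ref{reg_cond} despite the dependence introduced by shared groups, and --- crucially for the transfer --- controlling the probability that $\bcm$ is simple, so that the conditioning in \eqref{two_models_distr_equality} does not degrade the high-probability bound. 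This last point is precisely where the finite-second-moment input from Condition \ref{cond_weights}(c) and \eqref{assump_2_mom} enters, keeping the expected number of self-loops and multiple edges bounded and hence the simplicity probability bounded away from zero.
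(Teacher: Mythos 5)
Your proposal matches the paper's proof essentially step for step: it verifies Condition \ref{reg_cond} for the stationary degree sequence (the paper does this in Appendix \ref{appx_static_graph} with the same Poisson-coupling and second-moment concentration arguments you sketch, yielding the same limits $D^{(l)}$ mixed-Poisson$(\mu W)$, $\mathbf{P}(D^{(r)}=k)=p_k$, and $M_n/n\stackrel{\mathbf{P}}{\longrightarrow}\mathbf{E}[W]$), then quotes the $\bcm$ giant-component theorem of \cite{Hofstad2022} and transfers it via Theorem \ref{two_models_relation}, with the simplicity probability kept bounded away from zero exactly as you anticipate. The only detail you omit is the additional hypothesis $\Bar{V}_2+\Bar{A}_2<2$ in the cited $\bcm$ theorem, which the paper notes is automatically satisfied in this model (since $V_0>0$), so this does not affect correctness.
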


\noindent \longversion{We prove Theorem \ref{giant_bipartite} in Appendix \ref{appx_static_sec_giant}}\shortversion{The proof of Theorem \ref{giant_bipartite} is deferred to the extended version. See \textup{\cite[Appendix B.5]{Milewska2023}}}.\\

\noindent \textbf{Static giant component in the $\drig$.}
The statement on the giant in $\drig$ follows immediately. See Theorem \ref{giant_stationary}.\\

\begin{remark}
The fact that the fixed point equation (\ref{giant_fixed_pt_eq}) shows up here can be intuitively explained as follows: For a vertex to be in giant component its local neighborhood has to survive. In the results on the local convergence we showed that local neighborhoods are locally tree-like and are well approximated by branching processes with offspring distributions $\Tilde{D}^{(l)}$ and $\Tilde{D}^{(r)}$. Hence, the fixed point equation follows from the general theory of branching processes and their extinction probability.
\end{remark}

\subsection{The union graph}
Having shown multiple results for the static situation, we proceed to the dynamic part. In Section \ref{sec__main_results} we explained that in order to describe the graph dynamically for every time point $s$, it is helpful to first look collectively at everything that happens during a time interval $[0,t]$, for $t$ fixed. For that reason, we create a union graph $\bgrgunion$ (and accordingly, a resulting $\drigunion$) which includes any group that was ON at time $t=0$, but also all the groups that ever switched on within the time interval $(0,t]$.\\

\noindent\textbf{Group probabilities in $\bgrgunion$.}
Note that
\begin{align} \label{pi_a_on_union}
    \mathbf{P}&(\text{$a$ ON within $[0,t]$}) = \piaon + \piaoff\mathbf{P}(\text{$a$ switches ON within $(0,t]$})\\
    &=\piaon + \piaoff\big(1-\mathbf{P}(\text{$a$ never ON within $(0,t]$})\big) = \piaon + \piaoff\big(1-e^{-t\laoff}\big) \nonumber\\
    &=\frac{f(|a|)\prod_{i\in a}w_i}{\ell_n^{|a|-1}+f(|a|)\prod_{i\in a}w_i} + \frac{\ell_n^{|a|-1}}{\ell_n^{|a|-1}+f(|a|)\prod_{i\in a}w_i} \big(1 - e^{-t\frac{f(|a|)\prod_{i\in a}w_i}{\ell_n^{|a|-1}}} \big) \leq \piaon(1+t), \nonumber
\end{align}
using the fact that $1-e^{-x}\leq x$. Hence, we see that even though the group probability in the union graph is somewhat complicated, it can be bounded from above by $\piaon(1+t)$, which is similar to the group probability in the static graph. Thus, instead of deriving convergence results directly for $\bgrgunion$, it is more convenient to couple it with a graph that is closer to the static graph.\\

\subsubsection{The rescaled bipartite graph \texorpdfstring{$\bgrgrescaled$}{}}
Remember that since $\bgrg$ is uniform and fulfils the required degree regularity conditions, we can relate it to the $\bcm$ model from \cite{Hofstad2022} and \cite{Hofstad2018} and hence deduce its local convergence. It is not difficult to see that a graph with group ON probability $\mathbf{P}(\text{$a$ is \rm{ON}})=\piaon(1+t)$ would also satisfy equivalent regularity conditions. However, a graph with $\mathbf{P}(\text{$a$ is \rm{ON}})=\piaon(1+t), \mathbf{P}(\text{$a$ is OFF})=1-\piaon(1+t)$ will not \emph{exactly} be uniform (which is easy to see after analysing the proofs \longversion{of Proposition \ref{fct_left_right} and Theorem \ref{our_graph_uniform} in Appendix \ref{appendix_two_models_relation}}\shortversion{provided in the extended version, see \textup{\cite[Proposition A.1 and Theorem 2.1, Appendix A]{Milewska2023}}}) ). Therefore, we define a graph with group probability also closely related to $\piaon(1+t)$, but with a more convenient structure:\\

\noindent\textbf{Definition of $\bgrgrescaled$.} We introduce $\bgrgrescaled$, a graph following the same dynamic as the static $\bgrg$ but with slightly modified group probabilities: we fix the holding times to be exponentially distributed with rates
\begin{align} \label{holding_times_rescaled}
    \lambda^a_{\text{\rm{ON}}}=1 \hspace{1cm} \text{and} \hspace{1cm} \lambda^a_{\text{\rm{OFF}}}=\frac{(1+t)f(|a|)\prod_{i\in a}w_i}{\ell_n^{|a|-1}}.
\end{align}
Hence, the new stationary distribution $\pi^{(t)} = [\pi^{(t)}_{\text{\rm{ON}}},\pi^{(t)}_{\text{\rm{OFF}}}]$ is given by
\begin{align} \label{pi_ON_rescaled}
    \pi^{a,(t)}_{\text{\rm{ON}}} = \frac{(1+t)f(|a|)\prod_{i\in a}w_i}{\ell^{|a|-1}+(1+t)f(|a|)\prod_{i\in a}w_i} \hspace{1cm} \text{and} \hspace{1cm} \pi^{a,(t)}_{\text{\rm{OFF}}} = \frac{\ell_n^{|a|-1}}{\ell_n^{|a|-1}+(1+t)f(|a|)\prod_{i \in a}w_i},
\end{align}
for every $a\in[n]_k$. We again impose Condition \ref{cond_weights} on the weights $\bm{w} = (w_i)_{i \in [n]}$ and assume finite first and second moment of the group-size distribution $p_{|a|}$, taking $f(|a|)=|a|!p_{|a|}$ (see (\ref{assump_1_mom}) and (\ref{assump_2_mom})). It turns out (\longversion{see Remark \ref{appx_results_union_rescaled}}\shortversion{see the extended version \textup{\cite[Remark B.14]{Milewska2023}}}) that $\bgrgrescaled$ conditioned on its degree sequence is uniform and that its degree sequences fulfil the same regularity conditions as the degree sequences of $\bgrg$. The limiting variables of left- and right-degrees are also analogous to the limiting degree variables in $\bgrg$, with Poisson parameters rescaled by a factor $t+1$ (for an explicit statement of the regularity conditions \longversion{see Remark \ref{appx_results_union_rescaled}}\shortversion{see the extended version \textup{\cite[Remark B.14]{Milewska2023}}}). Hence, $\bgrgrescaled$ is just like $\bgrg$ with slightly bigger group probabilities and thus, it asymptotically behaves in the same way. To draw the same conclusion about the union graph $\bgrgunion$ it then suffices to show it is asymptotically equivalent to $\bgrgrescaled$, which we explain further in the next section.

\subsubsection{Asymptotic equivalence of multi-graphs}

In this section, we briefly introduce the theory of asymptotic equivalence of graph sequences. In particular, we extend the condition determining when two inhomogeneous random graphs are asymptotically equivalent to the case of random multi-graphs. We start by introducing the notion of asymptotic equivalence for general random variables. We say that $(\mathcal{X},\mathcal{F})$ is a measurable space when $\mathcal{X}$ is the state space (the space of all possible outcomes) and $\mathcal{F}$ the set of all possible events. We are particularly interested in finite measurable spaces, in which case $\mathcal{X}$ is a finite set and $\mathcal{F}$ can be taken to be the set of all subsets of $\mathcal{X}$. 

\begin{definition}[Asymptotic equivalence of sequences of random variables] \label{as_equiv_def} Let $(\mathcal{X}_n,\mathcal{F}_n)$ be a sequence of measurable spaces. Let $\mathbf{P}_n$ and $\mathbf{Q}_n$ be two probability measures on $(\mathcal{X}_n,\mathcal{F}_n)$. We say that the sequences $\big(\mathbf{P}_n\big)_{n\geq 1}$ and $\big(\mathbf{Q}_n\big)_{n\geq 1}$ are asymptotically equivalent if, for every sequence
$\mathcal{E}_n\in \mathcal{F}_n$ of events, $\lim_{n\to\infty} \mathbf{P}_n(\mathcal{E}_n) - \mathbf{Q}_n(\mathcal{E}_n)=0$. Thus, $\big(\mathbf{P}_n\big)_{n\geq 1}$ and $\big(\mathbf{Q}_n\big)_{n\geq 1}$ are asymptotically equivalent when they have asymptotically equal probabilities.
\end{definition}

\noindent In the following theorem, we give a criterion guaranteeing that two bipartite multigraph sequences are asymptotically equivalent. The section follows from results by \cite{Janson_2010}. We denote $(p_a)_{a\in [n]_k}$ for the group probabilities in some bipartite graph $\mathrm{BRG}_n(\bm{p})$ for which the probability that a group $a$ is present equals $p_a$ and all groups exist independently of each other.

\begin{restatable}[Asymptotic equivalence of bipartite multi-graphs]{theorem}{asequivmulti} \label{as_equiv_multi}
Let $\mathrm{BRG}_n(\bm{p})$ and $\mathrm{BRG}_n(\bm{q})$ be two random graphs with group probabilities $\bm{p} = (p_a)_{a\in [n]_{k\geq2}}$ and $\bm{q} = (q_a)_{a\in [n]_{k\geq2}}$ respectively. If there exists $\hat{\varepsilon}>0$ such that $\max_{a\in\cup_{k\geq2}[n]_k}q_a \leq 1-\hat{\varepsilon}$, then $\mathrm{BRG}_n(\bm{p})$ and $\mathrm{BRG}_n(\bm{q})$ are asymptotically equivalent if
\begin{align} \label{cond_for_multi_as_equiv}
    \lim_{n\to\infty} \sum_{a\in\cup_{k\geq2}[n]_k}\frac{(p_a-q_a)^2}{q_a} = 0.
\end{align}
\end{restatable}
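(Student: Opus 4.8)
The plan is to recognize both $\mathrm{BRG}_n(\bm p)$ and $\mathrm{BRG}_n(\bm q)$ as product measures over the finite index set $\cup_{k\geq2}[n]_k$: since each potential group $a$ is present independently, $\mathbf{P}_n = \bigotimes_a \mathrm{Ber}(p_a)$ and $\mathbf{Q}_n = \bigotimes_a \mathrm{Ber}(q_a)$. Asymptotic equivalence in the sense of Definition \ref{as_equiv_def} is precisely the statement that the total variation distance $d_{\mathrm{TV}}(\mathbf{P}_n,\mathbf{Q}_n) = \sup_{\mathcal{E}_n} |\mathbf{P}_n(\mathcal{E}_n) - \mathbf{Q}_n(\mathcal{E}_n)|$ tends to $0$, so it suffices to bound this distance. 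I would not work with $d_{\mathrm{TV}}$ directly, because it does not tensorize over independent coordinates; instead I would pass through a divergence that does. This is the multigraph analogue of the argument of \cite{Janson_2010}, the only structural change being that the coordinates are indexed by subsets $a$ rather than by pairs.

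First I would introduce the (squared) Hellinger distance $H^2$ and use its two standard properties: the comparison $d_{\mathrm{TV}} \leq \sqrt{2}\,H$, and the tensorization identity $H^2(\mathbf{P}_n,\mathbf{Q}_n) = 1 - \prod_a \bigl(1 - H^2(\mathrm{Ber}(p_a), \mathrm{Ber}(q_a))\bigr)$, which follows from the multiplicativity of the Bhattacharyya affinity over product measures. Combined with the elementary inequality $1 - \prod_a(1 - h_a) \leq \sum_a h_a$, valid for $h_a \in [0,1]$, this reduces the whole problem to showing that $\sum_a H^2(\mathrm{Ber}(p_a), \mathrm{Ber}(q_a)) \to 0$.

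The core of the argument is then a per-coordinate estimate. Writing $p = p_a$ and $q = q_a$, I would use the identity $H^2(\mathrm{Ber}(p),\mathrm{Ber}(q)) = \tfrac12\bigl[(\sqrt p - \sqrt q)^2 + (\sqrt{1-p}-\sqrt{1-q})^2\bigr]$ and rationalize each square: $(\sqrt p - \sqrt q)^2 = (p-q)^2/(\sqrt p + \sqrt q)^2 \leq (p-q)^2/q$, which needs no hypothesis, and $(\sqrt{1-p}-\sqrt{1-q})^2 = (p-q)^2/(\sqrt{1-p}+\sqrt{1-q})^2$. The delicate part is bounding this second term, and this is exactly where the hypothesis $\max_a q_a \leq 1 - \hat\varepsilon$ enters: it gives $(\sqrt{1-p}+\sqrt{1-q})^2 \geq 1-q \geq \hat\varepsilon$, whence $(\sqrt{1-p}-\sqrt{1-q})^2 \leq (p-q)^2/\hat\varepsilon \leq \hat\varepsilon^{-1}(p-q)^2/q$ since $q \leq 1$. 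Together these yield a constant $C = C(\hat\varepsilon) = \tfrac12(1 + \hat\varepsilon^{-1})$ with
\[
H^2(\mathrm{Ber}(p_a), \mathrm{Ber}(q_a)) \leq C\,\frac{(p_a - q_a)^2}{q_a}.
\]

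Summing this bound over $a$ and invoking hypothesis \eqref{cond_for_multi_as_equiv} gives $\sum_a H^2(\mathrm{Ber}(p_a),\mathrm{Ber}(q_a)) \leq C \sum_a (p_a - q_a)^2/q_a \to 0$; tracing back through the tensorization inequality and $d_{\mathrm{TV}} \leq \sqrt{2}\,H$ yields $d_{\mathrm{TV}}(\mathbf{P}_n,\mathbf{Q}_n)\to 0$, which is the claim. I expect the main obstacle to be purely the second-square estimate above: one must avoid needing any lower bound on $q_a$ (small groups can have vanishing probability) and any upper bound on $p_a$, so the denominators in the two rationalized squares must be chosen so that only the condition $\max_a q_a \leq 1-\hat\varepsilon$ is consumed, which is what the arrangement above achieves.
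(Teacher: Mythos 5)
Your proposal is correct and takes essentially the same route as the paper: the paper's proof likewise observes that $\mathrm{BRG}_n(\bm{p})$ and $\mathrm{BRG}_n(\bm{q})$ are fully encoded by their independent Bernoulli group indicators and then invokes \cite[Corollary 2.12]{Janson_2010} (see also \cite[Theorem 6.18]{Hofstad2016}), whose content is precisely the total-variation-via-Hellinger tensorization and per-coordinate estimate you carry out in full. You have simply inlined the cited argument instead of citing it, with the hypothesis $\max_a q_a \leq 1-\hat{\varepsilon}$ entering exactly where it does in those references.
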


\begin{remark}
If additionally there exists $\varepsilon>0$ such that $\max_{a\in\cup_{k\geq2}[n]_k}p_a \leq 1-\varepsilon$, $\mathrm{BRG}_n(\bm{p})$ and $\mathrm{BRG}_n(\bm{q})$ are asymptotically equivalent if and only if
\begin{align}
    \lim_{n\to\infty} \sum_{a\in\cup_{k\geq2}[n]_k}\frac{(p_a-q_a)^2}{p_a \wedge q_a} = 0.
\end{align}
\end{remark}

\noindent In particular, $\mathrm{BRG}_n(\bm{p})$ and $\mathrm{BRG}_n(\bm{q})$ are asymptotically equivalent when they can be coupled in such a way that $\mathbf{P}(\mathrm{BRG}_n(\bm{p})\neq\mathrm{BRG}_n(\bm{q}))=o(1)$. Indeed, there is a strong relationship between the asymptotic equivalence and coupling, which becomes obvious after the proof. We prove Theorem \ref{as_equiv_multi} in Section \ref{sec_pf_cond_as_equiv_multi}.\\

\noindent Having explained what is meant by asymptotic equivalence, we state our main equivalence result:

\begin{restatable}[Asymptotic equivalence of $\bgrgunion$ and $\bgrgrescaled$]{theorem}{unionrescaledequiv} \label{union_rescaled_equiv}
Under Condition \ref{cond_weights}, the random graphs $\bgrgunion$ and $\bgrgrescaled$ are asymptotically equivalent.
\end{restatable}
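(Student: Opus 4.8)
The plan is to invoke Theorem \ref{as_equiv_multi}. Both $\bgrgunion$ and $\bgrgrescaled$ are bipartite random graphs in which every group $a\in\cup_{k\geq2}[n]_k$ is present independently: in the union graph with probability $p_a:=\piaonunion=\mathbf{P}(a\text{ ON within }[0,t])$ from (\ref{pi_a_on_union}), and in the rescaled graph with probability $q_a:=\piaonrescaled$ from (\ref{pi_ON_rescaled}). Writing $x_a:=\laoff=f(|a|)\prod_{i\in a}w_i/\ell_n^{|a|-1}$, these read $p_a=(x_a+1-e^{-tx_a})/(1+x_a)$ and $q_a=(1+t)x_a/(1+(1+t)x_a)$. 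It therefore suffices to check the two hypotheses of Theorem \ref{as_equiv_multi}: that $\max_a q_a\leq 1-\hat\varepsilon$ for some $\hat\varepsilon>0$, and that $\sum_a(p_a-q_a)^2/q_a\to0$.

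First I would verify that $\max_a x_a\to0$, which immediately yields $\max_a q_a\to0$ and hence the boundedness hypothesis. Condition \ref{cond_weights}(c), i.e. weak convergence together with convergence of second moments, forces uniform square-integrability of the weights, and in particular $w_{\max}=o(\sqrt n)$ and $\ell_n=\Theta(n)$. For $k=2$ this gives $x_a\leq 2p_2 w_{\max}^2/\ell_n=o(1)$; for larger $k$ one combines $\prod_{i\in a}w_i\leq w_{\max}^{k}$ for moderate $k$ with the arithmetic--geometric bound $\prod_{i\in a}w_i\leq(\ell_n/k)^k$ for large $k$ (the latter contributing the factorial gain $k!/k^k\sim\sqrt{2\pi k}\,e^{-k}$), so that $\max_{a\in[n]_k}x_a\to0$ uniformly in $k$.

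Since every $x_a$ is then small, I can expand both probabilities around $x_a=0$. A short computation gives $p_a-q_a=(t+\tfrac{t^2}{2})x_a^2+O(x_a^3)$ and $q_a=(1+t)x_a+O(x_a^2)$, so there is a constant $C_t$ with $(p_a-q_a)^2/q_a\leq C_t x_a^3$ uniformly over all groups once $n$ is large. The whole problem thus reduces to proving $\sum_{a\in\cup_{k\geq2}[n]_k}x_a^3\to0$. Organising the sum by group size and writing $M_3:=\sum_{i\in[n]}w_i^3$,
\begin{align}
\sum_a x_a^3 = \sum_{k\geq2}\frac{(k!\,p_k)^3}{\ell_n^{3(k-1)}}\sum_{a\in[n]_k}\prod_{i\in a}w_i^3.
\end{align}
The dominant term is $k=2$: bounding $\sum_{\{i,j\}}(w_iw_j)^3\leq\tfrac12 M_3^2$ gives a contribution at most $4p_2^3M_3^2/\ell_n^3$, and since uniform integrability yields $M_3\leq w_{\max}M_2=o(n^{3/2})$ while $\ell_n=\Theta(n)$, this is $o(1)$.

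The hard part will be the uniform control of the tail $k\geq3$. For each fixed $k$ the crude symmetric-function bound $\sum_{a\in[n]_k}\prod_i w_i^3\leq M_3^k/k!$ makes the size-$k$ contribution of order $(k!)^2p_k^3 M_3^k/\ell_n^{3(k-1)}$, which for weights of finite third moment scales like $n^{3-2k}$ and is summable, the series being dominated by its first term; a Stirling estimate shows that even its worst term, near $k\sim n/\sqrt{C}$, is exponentially small, so the total tail vanishes. Under only the second-moment assumption this crude bound over-counts for large groups and must be replaced there by the arithmetic--geometric estimate used above (or, alternatively, one truncates at a cutoff $K_n$ and discards larger groups, which are never active with probability $1-o(1)$ by a first-moment bound using $\mu_{(2)}<\infty$). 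Either way the finiteness of $\mu_{(2)}$ and the uniform square-integrability of the weights keep the $k$-sum convergent and $o(1)$, completing the verification of the criterion (\ref{cond_for_multi_as_equiv}) and hence the asymptotic equivalence.
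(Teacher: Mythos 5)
Your proposal takes essentially the same route as the paper's proof: both verify the criterion of Theorem \ref{as_equiv_multi} by reducing $(\piaonunion-\piaonrescaled)^2/\piaonrescaled$ to a bound of the form $C_t\,(\laoff)^3$ --- the paper via the elementary inequality $1-e^{-x}\leq x$, which works unconditionally, you via a Taylor expansion after first establishing $\max_a \laoff\to0$ --- and then show $\sum_{a\in\cup_{k\geq2}[n]_k}(\laoff)^3=o(1)$ by the same two truncations (in group size and in weights, using $w_{\max}=o(\sqrt n)$ and $\mu_{(2)}<\infty$) that the paper itself only sketches. This is correct at the same level of detail as the paper; your heuristic aside locating a worst tail term near $k\sim n/\sqrt{C}$ is inaccurate (the terms are in fact decreasing in $k$ there) but harmless, since you rightly note that under Condition \ref{cond_weights} the crude third-moment bound must anyway be replaced by the arithmetic--geometric or truncation estimate.
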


\noindent We prove Theorem \ref{union_rescaled_equiv} in Section \ref{sec_pf_union_rescaled_equiv}. Thanks to the equivalence, all results that we derive for $\bgrgrescaled$ automatically hold for $\bgrgunion$.

\subsubsection{Local limit of union graphs}
We now state the results on the local convergence of the union graph, starting with the bipartite one:\\

\noindent\textbf{Local limit of $\bgrgunion$.} Following the appropriate statement for the rescaled graph $\bgrgrescaled$, we conclude that the bipartite union graph behaves very regularly and fulfils the equivalent of Condition \ref{reg_cond} (i). Moreover, the limiting variables of left- and right-degrees in $\bgrgunion$, denoted further on as $\tilde{D}^{(l),[0,t]}$ and $\tilde{D}^{(r),[0,t]}$ respectively, are just the limiting variables of left- and right-degrees in $\bgrg$ with Poisson parameters and proportion of groups of size $k$ rescaled by a factor $t+1$. Hence, the union graph asymptotically behaves in the same way as the stationary graph with accordingly larger edge probabilities.\\

\noindent \textbf{The limiting object $(\mathrm{BP}_{\gamma}^{[0,t]},o)$.} The local limit of $\bgrgunion$ is again a mixture of two branching processes corresponding to two types of vertices and everything is analogous to the limit of $\bgrg$, with $D^{(l),[0,t]}$ and $D^{(r),[0,t]}$ taking the place of $D^{(l)}$ and $D^{(r)}$ as the offspring distribution of the root, $\tilde{D}^{(l),[0,t]}$ replacing $\tilde{D}^{(l)}$ as the offspring distribution of the rest of the $l-$vertices and $\tilde{D}^{(r),[0,t]}$ taking the place of $\tilde{D}^{(r)}$ as the offspring distribution of the rest of the $r-$vertices.\\

\noindent \textbf{Local limit of $\drigunion$.} The local limit $(\mathrm{CP}^{[0,t]},o)$ of $\drigunion$ can be constructed from the local limit of $\bgrgunion$ via the appropriate community projection in exactly the same way as the local limit of $\drig$ was constructed via the community projection from the local limit of $\bgrg$ (see the previous section):

\begin{restatable}[Local limit of $\bgrgunion$ and $\drigunion$]{theorem}{loclimunion}\label{loc_lim_union} Assume that Condition \ref{cond_weights} holds. Then $\big(\bgrgunion, V_n^b\big)$ converges locally in probability to $(\mathrm{BP}_{\gamma}^{[0,t]},o)$, where $(\mathrm{BP}_{\gamma}^{[0,t]},o)$ is described above. Also $\big(\drigunion, o_n\big)$ converges locally in probability to $(\mathrm{CP}^{[0,t]},o)$, where $(\mathrm{CP}^{[0,t]},o)$ is described above.
\end{restatable}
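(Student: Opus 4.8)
The plan is to obtain the local limit for $\bgrgunion$ in three moves and then to project it onto the intersection graph. First I would prove local convergence for the auxiliary rescaled graph $\bgrgrescaled$, whose law differs from that of $\bgrg$ only by replacing the ON-probability $\piaon$ with $\piaonrescaled$; second I would transfer this limit to $\bgrgunion$ using the asymptotic equivalence of Theorem~\ref{union_rescaled_equiv}; and finally I would pass through the community projection to reach $\drigunion$. The structural reason this works is exactly that $(\mathrm{BP}_{\gamma}^{[0,t]},o)$ and $(\mathrm{CP}^{[0,t]},o)$ are \emph{defined} to be the static limits $(\mathrm{BP}_{\gamma},\mathcal{E}_{\gamma},0)$ and $(\mathrm{CP},o)$ with the offspring laws and the proportion of size-$k$ groups rescaled by a factor $(t+1)$.

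\textbf{Local limit of the rescaled graph.} As established in the analysis of $\bgrgrescaled$, this graph conditioned on its degree sequence is uniform, and its degree sequence satisfies the regularity conditions, Condition~\ref{reg_cond}(i), with the rescaled limiting variables $\tilde{D}^{(l),[0,t]}$ and $\tilde{D}^{(r),[0,t]}$, the verification being identical to that for $\bgrg$. I would then invoke Theorem~\ref{two_models_relation} to identify $\bgrgrescaled$ conditioned on its degrees with $\bcm$ conditioned on simplicity, and import the local convergence result for $\bcm$ from \cite{Hofstad2018, Hofstad2022}, reproducing the proof of Theorem~\ref{loc_conv_static} verbatim with the rescaled offspring laws. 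This yields that $\big(\bgrgrescaled,V_n^b\big)$ converges locally in probability to $(\mathrm{BP}_{\gamma}^{[0,t]},o)$ in the partition-marked space $\mathcal{G}_{\star}(\mathcal{M}_b)$.

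\textbf{Transfer to the union graph and projection.} To transfer this limit, fix a rooted marked graph $(H_{\star},o')$ and radius $r$, and set
\[
\Phi_n(G) = \frac{1}{|G|}\sum_{i\in\mathcal{V}(G)} \mathds{1}_{\{B_r(G,i)\simeq (H_{\star},o')\}},
\]
a deterministic measurable functional of the graph. The previous step gives $\Phi_n \stackrel{\mathbf{P}}{\longrightarrow} p$ under $\bgrgrescaled$ with $p = \mathbf{P}\big(B_r(\mathrm{BP}_{\gamma}^{[0,t]},o)\simeq (H_{\star},o')\big)$. For $\varepsilon>0$ the event $\mathcal{E}_n = \{|\Phi_n-p|>\varepsilon\}$ is graph-measurable, so Definition~\ref{as_equiv_def} together with Theorem~\ref{union_rescaled_equiv} turns $\mathbf{P}(\mathcal{E}_n)\to 0$ under $\bgrgrescaled$ into $\mathbf{P}(\mathcal{E}_n)\to 0$ under $\bgrgunion$; hence $\Phi_n \stackrel{\mathbf{P}}{\longrightarrow} p$ under $\bgrgunion$, which is the claimed local convergence. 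For the intersection graph I would use that $\drigunion$ is a deterministic community projection of $\bgrgunion$ and that this projection is local: the $r$-neighborhood of a left-vertex in the projection is read off from its $2r$-neighborhood in the bipartite graph. Thus each neighborhood count in $\drigunion$ is a finite combination of bipartite neighborhood counts at a fixed larger radius, and the convergence of the previous paragraph transfers; defining $(\mathrm{CP}^{[0,t]},o)$ as the projection of $(\mathrm{BP}_{\gamma}^{[0,t]},o)$ identifies the limit.

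\textbf{Main obstacle.} The substance is concentrated in the rescaled-graph step, namely in checking that scaling the OFF-rate by $(1+t)$ preserves both uniformity (via Theorems~\ref{our_graph_uniform} and~\ref{two_models_relation}) and the regularity conditions, while only rescaling the limiting degree laws; once this is secured, the remaining steps are bookkeeping built on the event-based definition of asymptotic equivalence. The point requiring the most care is the locality of the community projection: one must verify that no edge of the $r$-neighborhood of the projection originates from a group whose remaining members fall outside the explored bipartite ball. This is controlled exactly as in the static case, using that $\bgrgunion$ is locally tree-like with the rescaled, finite-mean offspring distributions, so that with high probability the relevant bounded-radius balls contain no such boundary groups.
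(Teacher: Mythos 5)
Your proposal is correct and takes essentially the same route as the paper's proof: local convergence of $\bgrgrescaled$ via the static uniformity-plus-regularity argument with all parameters rescaled by $(1+t)$, transfer to $\bgrgunion$ through the asymptotic equivalence of Theorem~\ref{union_rescaled_equiv}, and passage to $\drigunion$ by the community projection. Your explicit event-based transfer of the neighborhood-count functional $\Phi_n$ and the $2r$-ball locality check for the projection merely spell out steps the paper leaves implicit.
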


\noindent We prove Theorem \ref{loc_lim_union} in Section \ref{sec_pf_limunion}. From this analysis we also obtain results on the degrees $D_n^{[0,t]}$ and its expectation as in Corollary \ref{conv_unif_v} \& Theorem \ref{deg_seq_stat}. We refrain from stating those.

\subsubsection{Marked union graph} \label{sec_edge_marks_intro}
\noindent \textbf{Edge marks.} We remark that the union graph is already dynamic, as it takes dynamically appearing groups into account. However, it does not equal the actual dynamic graph, as it does not take into account whether certain groups were actually active \noindent \textbf{at the same time}. Therefore it might show connections that were actually never made at any time in $[0,t]$. Yet, it enables tracking the actual interactions between vertices. For this purpose, we add marks along the edges of $\bgrgunion$ indicating the switch on and switch off times within $[0,t]$. These times are determined by the activity of the groups responsible for the creation of these edges. We first mark the groups (right-vertices): write $\sigma^a_{\rm{ON}}$ to denote the exact time that a group $a$ switches on within $[0,t]$ and $\sigma^a_{\rm{OFF}}$ to denote the first time it switches off in $(0,t]$. Hence, the new mark-set is $\mathcal{M}_d=\{l,r,[0,\infty)\times(0,\infty)\}$. We then transfer the marks to the edges, i.e., every edge in $\bgrgunion$ copies the marks of the right-vertex it is adjacent to. \\

The above marks are also well-behaved. The marks of group $a$ present in the union graph are independent for different $a$ and they converge in distribution with respect to the probability measure of the union graph to some limiting marks $(t^a_{\rm{ON}},t^a_{\rm{OFF}})$:

\begin{restatable}[Convergence of the law of the edge marks.]{lemma}{convmarks}\label{conv_marks}
Let $F^{\textrm{ON,OFF}}_{n|t}$ denote the joint law of $\big(\sigma^a_{\rm{ON}},\sigma^a_{\rm{OFF}}\big)$ for $a\in\cup_{k\geq2}[n]_k$, conditioned on the fact that such $a\in\cup_{k\geq2}[n]_k$ is ON during $[0,t]$, i.e.,
\begin{align}
    F^{\rm{ON,OFF}}_{n|t}(s_1,s_2) = \mathbf{P}&(\sigma_{\rm{ON}}^a\leq s_1,\sigma_{\rm{OFF}}^a \leq s_2|\text{\textrm{ON} at some point in $[0,t]$}),
\end{align}
where $s_1\in[0,t]$ and $s_2\geq s_1$. Then, as $n\to\infty$,
\begin{align}
    F^{\rm{ON,OFF}}_{n|t}(s_1,s_2) \longrightarrow F^{\rm{ON,OFF}}_t(s_1,s_2),
\end{align}
with
\begin{align*}
    F^{\rm{ON,OFF}}_t(s_1,s_2) = \frac{1-e^{-s_2+s_1}+s_1}{1+t}.
\end{align*}
Consequently,
\begin{align}
    \big(\sigma^a_{\rm{ON}},\sigma^a_{\rm{OFF}}\big) \stackrel{\text{d}}{\longrightarrow} (t^a_{\rm{ON}},t^a_{\rm{OFF}}),
\end{align}
where $(t^a_{\rm{ON}},t^a_{\rm{OFF}})$ has joint cumulative distribution function $F^{\rm{ON,OFF}}_t$.
\end{restatable}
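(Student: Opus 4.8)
The plan is to compute the conditional joint law of $\big(\sigma^a_{\rm{ON}},\sigma^a_{\rm{OFF}}\big)$ explicitly as a function of the single parameter $\laoff$, and then let $n\to\infty$ while exploiting that $\laoff\to0$. First I would write the conditional distribution function as a ratio. Since $s_1\in[0,t]$, the event $\{\sigma^a_{\rm{ON}}\leq s_1\}$ forces the group to switch on by time $s_1\leq t$, so it is contained in the conditioning event $\{\text{ON at some point in }[0,t]\}$. Hence
\begin{align*}
F^{\rm{ON,OFF}}_{n|t}(s_1,s_2)=\frac{\mathbf{P}(\sigma^a_{\rm{ON}}\leq s_1,\ \sigma^a_{\rm{OFF}}\leq s_2)}{\mathbf{P}(\text{$a$ ON within $[0,t]$})},
\end{align*}
and the denominator is exactly the quantity already evaluated in (\ref{pi_a_on_union}), namely $\piaon+\piaoff\big(1-e^{-t\laoff}\big)$.

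For the numerator I would condition on the state of $a$ at time $0$, which by construction is distributed according to the stationary law $[\piaon,\piaoff]$. If $a$ is ON at time $0$ (probability $\piaon$), then $\sigma^a_{\rm{ON}}=0\leq s_1$ automatically and, by the memorylessness of the exponential holding time, the first switch-off time is $\mathrm{Exp}(\laon)=\mathrm{Exp}(1)$, contributing $\piaon\big(1-e^{-s_2}\big)$. If $a$ is OFF at time $0$ (probability $\piaoff$), then the first switch-on time is $\mathrm{Exp}(\laoff)$ and the subsequent ON holding time is an independent $\mathrm{Exp}(1)$, so $\sigma^a_{\rm{ON}}$ and $\sigma^a_{\rm{OFF}}$ are the two partial sums of these exponentials; the constraint $\{\sigma^a_{\rm{ON}}\leq s_1,\ \sigma^a_{\rm{OFF}}\leq s_2\}$ then yields
\begin{align*}
\piaoff\int_0^{s_1}\laoff\, e^{-\laoff u}\big(1-e^{-(s_2-u)}\big)\,du,
\end{align*}
a one-dimensional integral that I would evaluate in closed form.

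The key asymptotic input is that, for any fixed group $a$ of bounded size, $\laoff=f(|a|)\prod_{i\in a}w_i/\ell_n^{|a|-1}\to0$ as $n\to\infty$, because $\ell_n\to\infty$ under Condition \ref{cond_weights}. Writing $\rho=\laoff$, both numerator and denominator are $\Theta(\rho)$ as $\rho\to0$: the denominator behaves like $\rho(1+t)$ (using $1-e^{-t\rho}=t\rho+o(\rho)$ and $\piaon=\rho+o(\rho)$), while a Taylor expansion of the numerator — equivalently, an application of L'Hôpital to the ratio — gives leading coefficient $1-e^{-s_2}$ from the ON case and $s_1-e^{-(s_2-s_1)}+e^{-s_2}$ from the OFF integral. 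Dividing, the factors of $\rho$ cancel and the limit is
\begin{align*}
\frac{1+s_1-e^{-(s_2-s_1)}}{1+t}=\frac{1-e^{-s_2+s_1}+s_1}{1+t}=F^{\rm{ON,OFF}}_t(s_1,s_2),
\end{align*}
as claimed. Since $F^{\rm{ON,OFF}}_t$ is continuous in $(s_1,s_2)$, pointwise convergence of the distribution functions upgrades to convergence in distribution, yielding $\big(\sigma^a_{\rm{ON}},\sigma^a_{\rm{OFF}}\big)\stackrel{\text{d}}{\longrightarrow}(t^a_{\rm{ON}},t^a_{\rm{OFF}})$.

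The computation itself is elementary; the points that require care are the bookkeeping of the rare-event conditioning (the group is ON only with vanishing probability $\approx\rho(1+t)$, so the conditional law is the normalized $\Theta(\rho)$ part of the joint law) and the convention for $\sigma^a_{\rm{OFF}}$. In particular I would emphasize that $\sigma^a_{\rm{OFF}}$ records the actual switch-off time and may exceed $t$ (corresponding to an edge still present at time $t$), which is why $s_2$ is allowed to range over $[s_1,\infty)$ and why no truncation at $t$ appears in the ON holding-time factor $1-e^{-s_2}$. This is the only genuinely delicate modelling choice; once it is fixed, the limit follows from the single fact $\laoff\to0$.
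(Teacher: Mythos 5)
Your proposal is correct and takes essentially the same route as the paper's proof: you decompose the numerator by the stationary initial state, evaluate the identical one-dimensional integral for the OFF-start case, divide by the union-graph probability from (\ref{pi_a_on_union}), and pass to the limit via $\laoff\to0$, obtaining the same leading coefficients $1-e^{-s_2}$ and $s_1-e^{-(s_2-s_1)}+e^{-s_2}$ against the denominator's $\laoff(1+t)$. Your remark that $\sigma^a_{\rm{OFF}}$ records the actual switch-off time and may exceed $t$ correctly identifies the convention the paper's computation uses implicitly (its Step 1 imposes no truncation at $t$), so no gap remains.
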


\noindent We prove Lemma \ref{conv_marks} in Section \ref{main_proofs}.\\

\noindent \textbf{The limit of the marked union graph.} Lemma \ref{conv_marks} is crucial in showing the dynamic local convergence. Thanks to the facts that edge marks are independent, as all the groups switch on and off independently of each other, and that they converge in distribution to limiting marks $(t^a_{\rm{ON}},t^a_{\rm{OFF}})$ (as shown in Lemma \ref{conv_marks}) we know that they converge jointly for all groups and all $s\in[0,t]$. Hence, the marked bipartite union graph will converge to the marked limit of $\bgrgunion$, which will imply that also the marked intersection union graph converges: 

\begin{restatable}[Local limit of marked $\bgrgunion$ and $\drigunion$]{theorem}{loclimunionmarked}\label{loc_lim_union_marked}
$\big(\bgrgunion,\big((\sigma^a_{\rm{ON}},\sigma^a_{\rm{OFF}})\big)_{a\in[M_n^{[0,t]}]}, V_n^b\big)$ converges locally in probability to $(\mathrm{BP}_{\gamma}^{[0,t]},\big((t^a_{\rm{ON}},t^a_{\rm{OFF}})\big),o)$, where $(\mathrm{BP}_{\gamma}^{[0,t]},\big((t^a_{\rm{ON}},t^a_{\rm{OFF}})\big),o)$ is a marked version of $(\mathrm{BP}_{\gamma}^{[0,t]},o)$. It follows that $\big(\mathrm{DRIG}^{[0,t]}_n(\bm{w}),\big((\sigma^a_{\rm{ON}},\sigma^a_{\rm{OFF}})\big)_{a\in[M_n^{[0,t]}]}, o_n\big)$ converges locally in probability to\\ $(\mathrm{CP}^{[0,t]},\big((t^a_{\rm{ON}},t^a_{\rm{OFF}})\big),o)$, where $(\mathrm{CP}^{[0,t]},\big((t^a_{\rm{ON}},t^a_{\rm{OFF}})\big),o)$ is a marked version of $(\mathrm{CP}^{[0,t]},o)$.
\end{restatable}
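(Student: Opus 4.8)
The plan is to upgrade the unmarked local convergence of Theorem \ref{loc_lim_union} to the marked setting by treating the group marks as an independent decoration of the right-vertices whose law is governed by Lemma \ref{conv_marks}. The key structural observation is that the family $\big((\sigma^a_{\rm{ON}},\sigma^a_{\rm{OFF}})\big)_a$, indexed by the groups, is conditionally independent across distinct groups given the unmarked graph $\bgrgunion$: both the presence of a group in the union graph and its first on/off times within $[0,t]$ are measurable functions of that group's own continuous-time Markov trajectory, and these trajectories are independent across groups. Thus, given the structure, each present right-vertex $a$ carries a mark drawn independently from the conditional law $F^{\rm{ON,OFF}}_{n|t}$, and every edge simply inherits the mark of the group it is adjacent to; by Lemma \ref{conv_marks} this conditional law converges to the size-independent limit $F^{\rm{ON,OFF}}_t$.

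First I would fix a radius $r$ and a target marked rooted graph $(H_\star,\mathcal{M}(H_\star),o')$ and unfold Definition \ref{metric_marked_graphs}: the event that $B_r\big(\bgrgunion,\mathcal{M},V_n^b\big)$ lies within $\tfrac{1}{r+1}$ of the target factorizes into a structural part, namely a rooted isomorphism of the $r$-balls, and a mark part, namely that each group-mark in the ball lies within $\tfrac{1}{r}$ of its target in $\text{d}_\Xi$. Because the limiting mixture of branching processes $\mathrm{BP}_\gamma^{[0,t]}$ is almost surely locally finite (its offspring distributions have finite mean by assumptions (\ref{assump_1_mom})--(\ref{assump_2_mom})), the $r$-ball contains only finitely many groups with high probability, so only finitely many marks matter. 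Conditioning on the structural type fixes the present groups and their sizes, and the conditional independence reduces the mark part to a finite product of single-group probabilities, each converging by Lemma \ref{conv_marks}; since $F^{\rm{ON,OFF}}_t$ is absolutely continuous, the relevant balls in mark space are continuity sets and no boundary corrections are needed.

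The main work is to combine these ingredients into convergence \emph{in probability} of the empirical marked-neighborhood measure, and I would do this by the usual first- and second-moment method applied to $\tfrac{1}{|G_n|}\sum_i h\big(B_r(\bgrgunion,\mathcal{M},i)\big)$ for bounded continuous test functions $h$. The first moment equals $\mathbf{E}\big[h\big(B_r(\bgrgunion,\mathcal{M},o_n)\big)\big]$ and converges by pairing the structural convergence of Theorem \ref{loc_lim_union} with the mark convergence above. For the variance, two independent uniform roots have, by the unmarked theory, asymptotically vertex-disjoint $r$-balls; on that event their balls involve disjoint sets of groups, so their marks are independent by the conditional independence established above, whence the covariance vanishes and the variance tends to $0$. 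This gives marked local convergence in probability of $\big(\bgrgunion,\mathcal{M},V_n^b\big)$ to the marked limit $(\mathrm{BP}_\gamma^{[0,t]},((t^a_{\rm{ON}},t^a_{\rm{OFF}})),o)$, in which each group of the limiting tree carries an independent $F^{\rm{ON,OFF}}_t$-mark. I expect the delicate point to be exactly this interplay between the ``in probability'' mode of the structural convergence and the ``in distribution'' mode of Lemma \ref{conv_marks}: the whole argument rests on the conditional independence of the group marks given the structure, which is what allows the mark randomness to be integrated out group-by-group without disturbing the structural concentration.

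Finally, the statement for $\drigunion$ follows by the community projection. This projection connects, for each group, all of its left-neighbors pairwise and assigns the group's mark to every edge it creates; it is a deterministic map acting within a bounded radius that preserves marks up to relabeling, hence continuous on the space of marked rooted graphs in the local topology. Since the unmarked form of this projection already carried Theorem \ref{loc_lim_union} from $\bgrgunion$ to $\drigunion$, and marked local convergence in probability is preserved under such continuous projections, the marked convergence of $\big(\drigunion,\mathcal{M},o_n\big)$ to $(\mathrm{CP}^{[0,t]},((t^a_{\rm{ON}},t^a_{\rm{OFF}})),o)$ follows at once.
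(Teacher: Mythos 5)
Your proposal is correct and follows essentially the same route as the paper's proof: upgrade the unmarked convergence of Theorem \ref{loc_lim_union} by exploiting the independence of the group marks across groups together with their convergence in distribution from Lemma \ref{conv_marks}, and then transfer the result to $\drigunion$ via the community projection, which preserves the marked local metric. The only difference is one of rigor, not of method: where the paper argues informally that the marks can be coupled with their limits so that the marked neighborhood counts converge, you spell out the standard first- and second-moment argument (using asymptotically disjoint $r$-balls of two independent roots) to obtain the ``in probability'' mode of convergence, which is a legitimate filling-in of a step the paper leaves implicit.
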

\noindent We prove Theorem \ref{loc_lim_union_marked} in Section \ref{sec_pf_Thm_limmarked}. 

\subsection{Dynamic local convergence}
\noindent \textbf{Switching pace.} Note that we defined $\sigma^a_{\rm{ON}}$ and $\sigma^a_{\rm{OFF}}$ as first switch-on and switch-off times within $[0,t]$ and we did not comment on the possibility that some groups might switch $\rm{ON}$ again during this period of time. We address this issue by arguing that it is unlikely to encounter such a group in a neighborhood of a uniformly chosen vertex, which is the subject of the following lemma:
\begin{lemma}[Existence of groups that switch on more than once in the union graph] \label{switching_pace_lemma}
Denote the neighbourhood of a uniformly chosen vertex in $\bgrgunion$ by $B_r^{[0,t]}(V_n^{(l)})$. As $n\to\infty$,
\begin{align}
    \mathbf{P}\big(\exists a\in B_r^{[0,t]}(V_n^{(l)}): \text{$a$ \rm{ON} twice in $[0,t]$}\big)\big) \longrightarrow 0.
\end{align}
\end{lemma}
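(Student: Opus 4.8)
The plan is to show that the expected number of groups in the $r$-neighborhood $B_r^{[0,t]}(V_n^{(l)})$ that switch ON at least twice within $[0,t]$ vanishes as $n\to\infty$, and then conclude via Markov's inequality. First I would observe that, for a fixed group $a\in\cup_{k\geq2}[n]_k$, switching ON-OFF is governed by the Markov dynamics in \eqref{holding_times}: the ON-periods have rate $\laon=1$ and the OFF-periods have rate $\laoff = f(|a|)\prod_{i\in a}w_i/\ell_n^{|a|-1}$. A group that is ON at least twice in $[0,t]$ must either start OFF and then switch ON at least twice, or start ON and then (switch OFF and) switch ON again; in every case at least one \emph{OFF-to-ON} transition must occur, followed later by at least one more OFF-to-ON transition. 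Since OFF-to-ON transitions happen at rate $\laoff$ and $\laoff$ is small (of order $\ell_n^{-(|a|-1)}$), the probability of two such transitions in $[0,t]$ is of order $(\laoff)^2$, and more precisely it is bounded by $\mathbf{P}(\text{Poisson}(t\laoff)\geq 2) \leq \tfrac12 (t\laoff)^2$. The key point is that this is one order of $\laoff$ smaller than the probability $\piaon \asymp \laoff$ of being ON at all, which is exactly the gain that will make the sum vanish.

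Next I would pass from a single group to the whole neighborhood. The cleanest route is to compute the expected number of ``bad'' groups appearing in $B_r^{[0,t]}(V_n^{(l)})$ directly. By Theorem \ref{loc_lim_union} (local convergence of $\bgrgunion$) the number of groups in the $r$-neighborhood is tight, so it suffices to control the per-group contribution. Conditionally on a group $a$ being present in the union graph (i.e.\ ON at some point in $[0,t]$, which has probability $\mathbf{P}(\text{$a$ ON within $[0,t]$}) \leq \piaon(1+t)$ by \eqref{pi_a_on_union}), the extra cost of being ON twice contributes the additional factor of order $t\laoff$. Summing over the groups that can be reached from $V_n^{(l)}$, and using that the relevant weighted sums are finite under Condition \ref{cond_weights} together with the finite-second-moment assumption \eqref{assump_2_mom} on the group-size distribution, the expected number of bad groups in a bounded-depth neighborhood is of order $\ell_n^{-1}\to 0$. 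Concretely, the governing quantity is of the form
\begin{align}
    \mathbf{E}\Big[\#\{a\in B_r^{[0,t]}(V_n^{(l)}): \text{$a$ \rm{ON} twice}\}\Big] \;\lesssim\; \sum_{k\geq2}\sum_{a\in[n]_k,\, a\cap B_{r-1}\neq\varnothing} (t\laoff)\,\piaon,
\end{align}
and each such sum carries an extra factor $\laoff=O(\ell_n^{-(|a|-1)})$ relative to the already-convergent expected neighborhood size.

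Finally, Markov's inequality turns the vanishing expectation into the vanishing probability claimed in the statement, since the event $\{\exists a\in B_r^{[0,t]}(V_n^{(l)}): \text{$a$ \rm{ON} twice}\}$ is contained in $\{\#\{\text{bad groups}\}\geq 1\}$. I expect the main obstacle to be the bookkeeping needed to make the neighborhood-summation rigorous: one must condition on the (random) set of groups reachable within $r$ steps and handle the dependence between a group being reached and its switching behavior. The clean way around this is to exploit the local-convergence machinery already established — fixing the depth $r$, the expected number of incident groups at each level is uniformly bounded (this is precisely what the regularity of the degree sequence in Condition \ref{reg_cond} and Theorem \ref{loc_lim_union} provide), so the extra $O(\laoff)$ factor per group, summed against an a.s.\ finite local structure, drives the total to zero. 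The independence of the switching dynamics across groups, noted after \eqref{max_gr_size_split}, ensures that conditioning on presence in the neighborhood interacts cleanly with the double-switching estimate.
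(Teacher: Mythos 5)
Your plan is correct and essentially the paper's own proof: the paper likewise bounds the per-group probability of double switching by $(t+t^2)(\laoff)^2\wedge 1$, divides by the union-graph presence probability (which is $\asymp\piaon\asymp\laoff$) to get a conditional cost of order $\laoff$ per group present in $\bgrgunion$, and combines this with the tightness of $|B_r^{[0,t]}(V_n^{(l)})|$ from Theorem \ref{loc_lim_union} --- implemented via a union bound conditional on the union graph, a $\log n$ truncation of weights and group sizes (so that $\laoff\wedge 1$ is uniformly small over groups in the ball), and dominated convergence, which is precisely the rigorous version of the bookkeeping you flag as the main obstacle. One small slip: your claim that every realization of ``ON twice'' forces two OFF-to-ON transitions fails when the group starts ON (only one such transition occurs there); in that case the missing factor of $\laoff$ comes from the stationary probability $\piaon\leq\laoff$ rather than a second transition, which your displayed bound $(t\laoff)\,\piaon$ in fact already encodes, so the claimed order $(\laoff)^2$ and the rest of the argument are unaffected.
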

We prove Lemma \ref{switching_pace_lemma} in Section \ref{sec_pf_switching_pace_lemma}.\\
Lemma \ref{switching_pace_lemma} implies that groups that switch on more than once in the union graph do not contribute significantly to the structure of a neighborhood of a uniformly chosen vertex. Hence, the union graph neglecting these groups is a good approximation of the actual situation and it can be used to construct the dynamic graph. We sum up this statement in the following corollary:
\begin{corollary} \label{corollary_switching_pace}
Denote the neighborhood of a uniformly chosen vertex in $\bgrgs$ by $B^s_r(V^{(l)}_n)$ and the neighborhood of a uniformly chosen vertex in $\bgrgunion$ restricted to groups that switch $\rm{ON}$ only once in $[0,t]$ and are present at time $s$ by $\Tilde{B}^{s,[0,t]}_r(V^{(l)}_n)$. As $n\to\infty$, for every finite $r>0$ and every $s\in[0,t]$, the event
 \begin{align}
    \big\{B^s_r(V^{(l)}_n) = \Tilde{B}^{s,[0,t]}_r(V^{(l)}_n)\big\}
 \end{align}
holds with high probability.
\end{corollary}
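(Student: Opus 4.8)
The plan is to couple the dynamic snapshot $\bgrgs$ and the union graph $\bgrgunion$ on a single probability space by driving both with the \emph{same} family of independent group-activity processes: for each potential group $a$ we fix one realisation of its two-state Markov process on $[0,t]$, read off $\bgrgs$ as the collection of groups that are $\mathrm{ON}$ exactly at time $s$, and read off $\bgrgunion$ together with its marks $(\sigma^a_{\rm{ON}},\sigma^a_{\rm{OFF}})$ as the collection of groups that are $\mathrm{ON}$ at some point of $[0,t]$. Under this coupling the claimed identity becomes a \emph{pointwise} statement about a single sample rather than a distributional one, which is precisely what makes it accessible through Lemma \ref{switching_pace_lemma}.

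First I would record a containment. Every group that is $\mathrm{ON}$ at time $s$ is, in particular, $\mathrm{ON}$ at some point of $[0,t]$, and hence appears in $\bgrgunion$ with the very same membership; the same is trivially true for the groups retained in the restricted union graph. Consequently, any path of length at most $r$ from the root $V^{(l)}_n$ in either $\bgrgs$ or in the restricted union graph is also a path in $\bgrgunion$, so both $B^s_r(V^{(l)}_n)$ and $\Tilde{B}^{s,[0,t]}_r(V^{(l)}_n)$ are, vertex- and group-wise, contained in the full union neighbourhood $B_r^{[0,t]}(V_n^{(l)})$. This is the step that lets a statement about the union neighbourhood alone control both explorations.

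Next I would invoke Lemma \ref{switching_pace_lemma}: the event $A_n=\{$no group inside $B_r^{[0,t]}(V_n^{(l)})$ switches $\mathrm{ON}$ more than once in $[0,t]\}$ satisfies $\mathbf{P}(A_n)\to1$. On $A_n$ every group met during either exploration has a single $\mathrm{ON}$-interval, recorded exactly by the marks as $[\sigma^a_{\rm{ON}},\sigma^a_{\rm{OFF}}]$, so that the inclusion rule for $\bgrgs$, namely ``$a$ is $\mathrm{ON}$ at time $s$'', coincides \emph{exactly} with the rule defining $\Tilde{B}^{s,[0,t]}_r$, namely ``$a$ switches $\mathrm{ON}$ only once and $s\in[\sigma^a_{\rm{ON}},\sigma^a_{\rm{OFF}}]$''. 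A short induction on the breadth-first layers $j=0,1,\dots,r$ then yields the equality: if the balls of radius $j$ agree, the active incident groups available at the next step agree (they all lie in the union neighbourhood, hence are single-interval and are selected by the same criterion), so the balls of radius $j+1$ agree too. Thus $B^s_r(V^{(l)}_n)=\Tilde{B}^{s,[0,t]}_r(V^{(l)}_n)$ holds on $A_n$, and since $\mathbf{P}(A_n)\to1$ the event holds with high probability for every finite $r$ and every $s\in[0,t]$.

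The main obstacle I anticipate is not any single estimate — Lemma \ref{switching_pace_lemma} carries the probabilistic weight — but the bookkeeping required to make the pointwise equality airtight. One must check that the containment of the second step genuinely prevents either the dynamic or the restricted-union exploration from reaching a group \emph{outside} $B_r^{[0,t]}(V_n^{(l)})$, so that $A_n$ really controls every group that can be visited; and one must treat carefully the boundary cases of groups already $\mathrm{ON}$ at time $0$ (so that $\sigma^a_{\rm{ON}}=0$) or never switching $\mathrm{OFF}$ within $(0,t]$, verifying that in each case the single-interval criterion still matches ``$\mathrm{ON}$ at $s$''. These points are conceptually straightforward but must be stated precisely in order to conclude literal equality of the two rooted graphs rather than mere isomorphism.
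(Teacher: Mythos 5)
Your proposal is correct and follows essentially the same route as the paper, which deliberately gives only a short justification: the containment $\Tilde{B}^{s,[0,t]}_r(V^{(l)}_n)\subseteq B^s_r(V^{(l)}_n)$ is immediate, and the reverse direction is exactly Lemma \ref{switching_pace_lemma} applied to the union neighbourhood that contains both explorations. Your added scaffolding (the common coupling, the breadth-first induction, the boundary cases $\sigma^a_{\rm{ON}}=0$ or no switch-off in $(0,t]$) merely makes rigorous what the paper states informally, so there is nothing to correct.
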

We refrain from formally proving Corollary \ref{corollary_switching_pace} and instead provide a short justification here: the fact that $\Tilde{B}^{s,[0,t]}_r(V^{(l)}_n)$ is contained in $B^s_r(V^{(l)}_n)$ follows immediately. The other direction follows directly from Lemma \ref{switching_pace_lemma}.

Thus, for every $s\in[0,t]$, $\bgrgs$ is a subgraph of $\bgrgunion$ containing only these groups that are active at time $s$, i.e., the groups $a$ with $(\sigma^a_{\rm{ON}},\sigma^a_{\rm{OFF}})$ such that $s\in[\sigma^a_{\rm{ON}},\sigma^a_{\rm{OFF}}]$. $\drigs$ is then a community projection (see (\ref{com_proj})) of such a $\bgrgs$. Hence, the convergence of the union graph and joint convergence of edge marks, guaranteed by the independence of the marks and their convergence in distribution, yield convergence of finite-dimensional distributions of the dynamic graph $\bgrgs$ for every $s\in[0,t]$. Thanks to that, we can describe local limits of $\big(\bgrgs\big)_{s\in[0,t]}$ and consecutively of $\big(\drigs\big)_{s\in[0,t]}$.

When looking at $\big(\bgrgs\big)_{s\in[0,t]}$ as a process in time, we choose a random root $o_n$ \emph{only} once and then we investigate its evolution in time. Since such a process encounters jumps with respect to the local metric, convergence of finite-dimensional distributions is not sufficient to deduce the convergence of the entire process. However, if we treat $\big(\bgrgs\big)_{s\in[0,t]}$ for every $s\in[0,t]$ as a function from the compact space of $[0,t]$ into the Polish space of rooted graphs with the $\text{d}_{\mathcal{G}_{\star}}$ metric, it suffices to add a suitable tightness criterion to deduce the dynamic convergence of the process in time (see \textup{\cite[Chapter 16]{kallenberg2002foundations}}). Details are given in the proof of Theorem \ref{dyn_loc_lim} in Section \ref{sec_pf_dyn_loclim}.

\subsection{Dynamic giant component}
We want to show that the process $J_n(s)=\mathds{1}_{\big\{o_n \in \mathscr{C}^s_1\big\}}$ converges for all $s\in[0,t]$ to another appropriate indicator process. Since both of these processes encounter jumps, we need to use the Skorokhod $J_1$ topology in order to obtain the desired convergence. The Skorokhod $J_1$ topology on $D[0,t]$ - the space of c{\`a}dl{\`a}g functions on $[0,t]$ - is given by the metric $d^0$ (see \textup{\cite[Eq. 12.16]{Billingsley2013}}), which takes care of the time deformation present in processes with jumps. For more explanation, see a well-known characterization of convergence in $D[0,t]$ (see \cite{Billingsley2013}).\\

The most important step in our proofs of conditions from Lemma \ref{conv_Skorokhod} is \textbf{localization of the giant}, i.e., noticing that the sequence of processes $\big(J_n(s)\big)_{s\in[0,t]}$ is close in distribution to the sequence of processes $\big(J^{(r)}_n(s)\big)_{s\in[0,t]}$, with
\begin{align}
    J^{(r)}_n(s) = \mathds{1}_{\big\{\partial B^{G_n^s}_r(o_n) \neq \varnothing\big\}},
\end{align}
where $\partial B^{G_n^s}_r(o_n)$ denotes the set of vertices at distance $r$ from the root in $\big(\bgrgs\big)_{s\geq0}$ at time $s$. Indeed, for two distinct time points $s_1,s_2\in[0,t]$,
\begin{align}
    \mathbf{P}(J_n(s_1)=J_n(s_2)=1) &= \mathbf{P}(J^{(r)}_n(s_1)=J^{(r)}_n(s_2)=1)\\
    &+ \mathbf{P}(J_n(s_1)=J_n(s_2)=1) 
    - \mathbf{P}(J^{(r)}_n(s_1)=J^{(r)}_n(s_2)=1). \nonumber 
\end{align}
Note that
\begin{align*}
    \big|&\mathbf{P}(J_n(s_1)=J_n(s_2)=1)
    - \mathbf{P}(J^{(r)}_n(s_1)=J^{(r)}_n(s_2)=1)\big|\leq \mathbf{P}(J_n(s_1)=J_n(s_2)=1, \neg \big(J^{(r)}_n(s_1)=J^{(r)}_n(s_2)=1\big))\\
    &=\mathbf{P}(J_n(s_1)=J_n(s_2)=1,J^{(r)}_n(s_1)\neq 1\hspace{0.2cm}\text{OR}\hspace{0.2cm}J_n(s_1)=J_n(s_2)=1,J^{(r)}_n(s_2)\neq 1)\leq 2\mathbf{P}\big(J^{(r)}_n(s_1) \neq J_n(s_1)\big),
\end{align*}
where the last step follows from stationarity. Taking $n\to\infty$, thanks to the static local limit and our result on the static giant component (see Theorem \ref{giant_bipartite}), i.e., the fact for any $s$:
\begin{align} \label{line1}
    \frac{|\mathscr{C}^s_1|}{n} \stackrel{n\to\infty}{\longrightarrow} \mu\big(|\mathscr{C}^s(o)|=\infty \big),
\end{align}
we obtain
\begin{align}
    \lim_{n\to\infty}\mathbf{P}\big(J^{(r)}_n(s_1) \neq J_n(s_1)\big) = \mathbf{P}\bigg(\mathds{1}_{\big\{\partial B^{G^s}_r(o) \neq \varnothing\big\}}\neq \mathds{1}_{\big\{|\mathscr{C}^s(o)|=\infty\big\}}\bigg),
\end{align}
which after taking additionally $r\to\infty$ yields 
\begin{align*}
    \lim_{r\to\infty}\limsup_{n\to\infty}\big|&\mathbf{P}(J_n(s_1)=J_n(s_2)=1)
    - \mathbf{P}(J^{(r)}_n(s_1)=J^{(r)}_n(s_2)=1)\big| = 0.
\end{align*}
Thus,
\begin{align*}
    \lim_{n\to\infty}\mathbf{P}(J_n(s_1)=J_n(s_2)=1) = \lim_{r\to\infty}\lim_{n\to\infty} \mathbf{P}(J^{(r)}_n(s_1)=J^{(r)}_n(s_2)=1).
\end{align*}
Thanks to this link, we can deduce the convergence of the dynamic giant process from the dynamic local weak convergence (see Theorem \ref{dyn_loc_lim}), which states that, as $n\to\infty$,
\begin{align} \label{line2}
    \Big(J^{(r)}_n(s)\Big)_{s\in[0,t]} \stackrel{\text{d}}{\longrightarrow} \Big(\mathcal{J}^{(r)}(s)\Big)_{s\in[0,t]}.
\end{align}
In the proof of Theorem \ref{giant_dynamic} we show how to extend the above argument to all finite-dimensional distributions. As a result, the convergence of all finite-dimensional distributions derived via localization paired with the tightness of the process will guarantee convergence of $\big(J_n(s)\big)_{s\in[0,t]}$. We remark that this technique is not restricted to our model and it can be applied to any other dynamic graph for which (\ref{line1}) and (\ref{line2}) hold.

\subsection{Dynamic largest group}
We investigate the behavior of the process $\big(n^{-1/\alpha}K^{[0,t]}_{\max}\big)_{t\geq0}$, where $K^{[0,t]}_{\max}$ is the maximum group size in the time interval $[0,t]$. It is an increasing process describing the largest group observed by a certain time point $t$. As such, the process in question also encounters jumps, just like the previous dynamic processes we have described, and hence, to deduce its convergence, we once again use the theory of convergence in Skorokhod topology.

\section{Proofs of the main results} \label{main_proofs}

 Here we provide proofs of all mentioned results, unless we stated we would prove them \longversion{in the Appendix}\shortversion{in the extended version \cite{Milewska2023}}.

\subsection{Proof of the condition for asymptotic equivalence for bipartite multi-graphs} \label{sec_pf_cond_as_equiv_multi}

\begin{proof}[Proof of Theorem \ref{as_equiv_multi}]
The proof of (\ref{cond_for_multi_as_equiv}) for simple random graphs can be found in \textup{\cite[Theorem 6.18]{Hofstad2016}}, which follows \textup{\cite[Corollary 2.12]{Janson_2010}}. The multi-graph version follows analogously, as $\mathrm{BRG}_n(\bm{p})$ and $\mathrm{BRG}_n(\bm{q})$ can be entirely encoded by the group presence, just like simple random graphs are encoded by the edge presence, i.e., the asymptotic equivalence of two graphs $\mathrm{BRG}_n(\bm{p})$ and $\mathrm{BRG}_n(\bm{q})$ is equivalent to the asymptotic equivalence of their group variables, which are independent Bernoulli random variables with success probabilities $\bm{p} = (p_a)_{a\in\cup_{k\geq2}[n]_k}$ and $\bm{q} = (q_a)_{a\in\cup_{k\geq2}[n]_k}$.
\end{proof}

\subsection{Proof of asymptotic equivalence of the union graph and the rescaled graph} \label{sec_pf_union_rescaled_equiv}

\begin{proof}[Proof of Theorem \ref{union_rescaled_equiv}]
Recall (\ref{pi_a_on_union}) and (\ref{pi_ON_rescaled}). To verify condition (\ref{cond_for_multi_as_equiv}) we first compute 
\begin{align}
    0 \leq \piaonunion - \piaonrescaled &\leq \frac{(1+t)f(|a|)\prod_{i\in a}w_i}{\ell^{|a|-1}+f(|a|)\prod_{i\in a}w_i} - \frac{(1+t)f(|a|)\prod_{i\in a}w_i}{\ell^{|a|-1}+(1+t)f(|a|)\prod_{i\in a}w_i}\\
    &\leq \frac{(1+t)^2f^2(|a|)\big(\prod_{i\in a}w_i\big)^2}{\ell^{|a|-1}\big(\ell^{|a|-1}+(1+t)f(|a|)\prod_{i\in a}w_i\big)},\nonumber
\end{align}
where we have used the fact that $1-e^{-x}\leq x$. Hence, by the fact that $k!<k^k$,
\begin{align} \label{equiv_cond_bound}
    \sum_{a\in\cup_{k\geq2}[n]_k} \frac{\big(\piaonunion - \piaonrescaled \big)^2}{\piaonrescaled} &\leq (1+t)^3 \sum_{k=2}^{\infty}\sum_{j_1<...<j_k\in[n]} \frac{(k!)^3(p_k)^3(w_{j_1}\cdots w_{j_k})^3}{\big(\ell_n^{k-1}\big)^3}\\
    &\leq (1+t)^3 \sum_{k=2}^{\infty} k^4p_k^3 \frac{1}{\ell_n^{k-1}} \bigg(\frac{k^2}{\ell_n} \bigg)^{k-2} \bigg(\frac{\mathbf{E}[W_n^3]}{\mathbf{E}[W_n]} \bigg)^k=o(1), \nonumber
\end{align}
which can be shown using suitable truncation arguments: one with respect to the group size and one with respect to the weights. For the first truncation, we can fix a sequence $b_n\to\infty$ and show that the contribution from groups $a$ with $|a|>b_n$ vanishes. Then take $b_n=o(\sqrt{n})$ to bound (\ref{equiv_cond_bound}) for $a$ with $|a|\leq b_n$. For the second truncation, we eliminate vertices with large weights in a similar manner. For more technical details see \longversion{Appendix \ref{appx_static_graph}}\shortversion{the extended version \textup{\cite[Appendix B]{Milewska2023}}}, where analogous truncation arguments occur frequently. Hence, for some sequence $\varepsilon_n$, as $n\to\infty$,
\begin{align}
    \mathbf{P}\Bigg(\sum_{a\in\cup_{k\geq2}[n]_k} \frac{\big(\piaonunion - \piaonrescaled \big)^2}{\piaonrescaled} \geq \varepsilon_n \Bigg) \longrightarrow 0.
\end{align}
The desired equivalence of $\bgrgunion$ and $\bgrgrescaled$ follows.
\end{proof}

\subsection{Proof of local convergence of the union graph} \label{sec_pf_limunion}

\begin{proof}[Proof of Theorem \ref{loc_lim_union}]
Convergence of $\mathrm{BGRG}^{0}_n(\bm{w})$ follows from convergence of the stationary graph $\bgrg$. For time interval $[0,t]$, we prove convergence of the union graph by showing that it is asymptotically equivalent to $\bgrgrescaled$ (see Theorem \ref{union_rescaled_equiv}). The latter, in turn, converges as it fulfils the same conditions (\longversion{see Remark \ref{appx_results_union_rescaled}}\shortversion{see the extended version \textup{\cite[Remark B.14]{Milewska2023}}}) that guaranteed convergence of $\bgrg$ (described in Section \ref{sec__main_results_stationary}, \longversion{proven in Appendix \ref{appx_static_graph}}\shortversion{proven in the extended version; See \textup{\cite[Appendix B]{Milewska2023}}}).
Hence, it also turns out that the limiting degree sequences in the union graph satisfy similar properties as the ones in the static graph, with the Poisson parameter and proportion of groups of size $k$ rescaled by the factor $t+1$. Hence, the bipartite union graph asymptotically behaves like the static graph with slightly larger edge probabilities and converges locally in probability to a related limiting object with accordingly larger offspring distributions.
\end{proof}

\subsection{Proof of the law of the marks} \label{sec_pf_Lm_marks}

\begin{proof}[Proof of Lemma \ref{conv_marks}]
We compute the law of the marks of a fixed group $a$ taking into account starting in ON and OFF state:
\begin{align} \label{marks_line1}
    \mathbf{P}&(\sigma_{\rm{ON}}^a\leq s_1,\sigma_{\rm{OFF}}^a \leq s_2|\text{ON in $[0,t]$}) = \frac{\mathbf{P}(\text{$\sigma_{\rm{ON}}^a\leq s_1,\sigma_{\rm{OFF}}^a \leq s_2$, ON in $[0,t]$})}{\mathbf{P}(\text{ON in $[0,t]$})}\\
    &=\frac{\mathbf{P}(\sigma_{\rm{ON}}^a\leq s_1,\sigma_{\rm{OFF}}^a \leq s_2)}{\mathbf{P}(\text{ON in $[0,t]$})} = \frac{\mathbf{P}(\sigma_{\rm{ON}}^a=0,\sigma_{\rm{OFF}}^a \leq s_2) + \mathbf{P}(\text{OFF at $0$},\sigma_{\rm{ON}}^a\leq s_1,\sigma_{\rm{OFF}}^a \leq s_2)}{\mathbf{P}(\text{ON in $[0,t]$})}.\nonumber
\end{align}
We compute all three ingredients separately.\\
\noindent \textbf{Step 1.}
\begin{align}
    \mathbf{P}(\sigma_{\rm{ON}}^a=0,\sigma_{\rm{OFF}}^a \leq s_2) = \piaon\mathbf{P}(\text{$a$ goes OFF in $(0,s_2]$}) = \piaon(1-e^{-s_2}).
\end{align}
\noindent \textbf{Step 2.}
\begin{align}
    \mathbf{P}(\text{OFF at $0$},\sigma_{\rm{ON}}^a\leq s_1,\sigma_{\rm{OFF}}^a \leq s_2) = \piaoff\mathbf{P}&(\text{$a$ goes ON in $(0,s_1]$, goes OFF in $(\sigma_{\rm{ON}}^a,s_2]$ }).
\end{align}
The second term can be computed as
\begin{align}
    \mathbf{P}&(\text{$a$ goes ON in $(0,s_1]$ and goes OFF in $(\sigma_{\rm{ON}}^a,s_2]$ })\\
    &= \int_0^{s_1} \mathbf{P}(Exp(\laon)\leq s_2-Exp(\laoff)\mid Exp(\laoff)=x) f_{Exp(\laoff)}(x) \text{d}x \nonumber\\
    &= \int_0^{s_1} (1-e^{-\laon(s_2-x)})\laoff e^{-\laoff x} \text{d}x.\nonumber
\end{align}
Splitting the terms and using the fact that $\int_0^{s_1}\laoff e^{-\laoff x} \text{d}x = \mathbf{P}(Exp(\laoff)\leq s_1)$ we obtain
\begin{align}
    \mathbf{P}&(\text{$a$ goes ON in $(0,s_1]$ and goes OFF in $(\sigma_{\rm{ON}}^a,s_2]$ })\\
    &= \mathbf{P}(Exp(\laoff)\leq s_1) - \laoff \int_0^{s_1} e^{-(s_2-x)} e^{-\laoff x} \text{d}x \nonumber\\
    &=1-e^{-\laoff s_1} - \frac{\laoff e^{-s_2}}{\laoff-1} (1-e^{-s_1(\laoff-1)}). \nonumber
\end{align}
Hence,
\begin{align}
    \mathbf{P}(\text{OFF at $0$},\sigma_{\rm{ON}}^a\leq s_1,\sigma_{\rm{OFF}}^a \leq s_2) = \piaoff\bigg(1-e^{-\laoff s_1} - \frac{\laoff}{\laoff-1} (e^{-s_2}-e^{-s_1\laoff}e^{s_1-s_2}) \bigg).
\end{align}
\noindent \textbf{Step 3.} For the probability in the denominator of (\ref{marks_line1}) recall once more (\ref{pi_a_on_union}).\\

Gathering all three steps together, the expression in (\ref{marks_line1}) becomes
\begin{align} \label{marks_prelimit}
    \mathbf{P}&(\sigma_{\rm{ON}}^a\leq s_1,\sigma_{\rm{OFF}}^a \leq s_2\mid \text{ON in $[0,t]$})\\
    &=\frac{\piaon(1-e^{-s_2})}{\piaon+\piaoff\big(1-e^{-\laoff t}\big)} + \frac{\piaoff\bigg(1-e^{-\laoff s_1} - \frac{\laoff}{\laoff-1} (e^{-s_2}-e^{-s_1\laoff}e^{s_1-s_2}) \bigg)}{\piaon+\piaoff\big(1-e^{-\laoff t}\big)}.\nonumber
\end{align}
We will now take the limit of the two fractions separately as $n\to\infty$. We simplify
\begin{align}
    \frac{\piaon(1-e^{-s_2})}{\piaon+\piaoff\big(1-e^{-\laoff t}\big)} = \frac{1-e^{-s_2}}{1+1/(\laoff)\big(1-e^{-\laoff t}\big)}
\end{align}
As we know, $\lim_{n\to\infty} \laoff = \lim_{n\to\infty} \frac{f(|a|)\prod_{i\in a}w_i}{\ell_n^{|a|-1}}=0.$ Substituting $x=\frac{f(|a|)\prod_{i\in a}w_i}{\ell_n^{|a|-1}}$ we obtain
\begin{align}
    \lim_{n\to\infty} {\laoff}^{-1} \big(1-e^{-\laoff t}\big) = \lim_{x\to0} \frac{1-e^{-xt}}{x} =
    \lim_{x\to0} \frac{t e^{-xt}}{1}=t,
\end{align}
so that
\begin{align}
    \lim_{n\to\infty} \frac{\piaon(1-e^{-s_2})}{\piaon+\piaoff\big(1-e^{-\laoff t}\big)} = \frac{1-e^{-s_2}}{1+t}.
\end{align}
Now we compute the limit as $n\to\infty$ of the second term in (\ref{marks_prelimit}). We again simplify, dividing by $\piaoff$, to obtain
\begin{align}
   &\frac{1-e^{-\laoff s_1} + \frac{\laoff}{1-\laoff} (e^{-s_2}-e^{-s_1\laoff}e^{s_1-s_2})}{\laoff+1-e^{-\laoff t}}\\
   &=\frac{1-e^{-\laoff s_1}}{\laoff+1-e^{-\laoff t}} + \frac{\laoff (e^{-s_2}-e^{-s_1\laoff}e^{s_1-s_2})}{(1-\laoff)(\laoff+1-e^{-\laoff t})} = A_n + B_n. \nonumber
\end{align}
Then, using the same substitution as previously,
\begin{align}
    \lim_{n\to\infty} A_n = \lim_{x\to0} \frac{1-e^{-xs_1}}{x+1-e^{-xt}} =
    \lim_{x\to0} \frac{s_1e^{-xs_1}}{1+te^{-xt}}=\frac{s_1}{1+t},
\end{align}
and
\begin{align}
    \lim_{n\to\infty} B_n &= \lim_{x\to0} \frac{xe^{-s_2} -xe^{-s_1x}e^{s_1-s_2}}{1-e^{-xt}-x^2+xe^{-xt}}=\frac{e^{-s_2}-e^{s_1-s_2}}{1+t}.
\end{align}
Combining,
\begin{align}
    \lim_{n\to\infty} \frac{\piaoff\bigg(1-e^{-\laoff s_1} - \frac{\laoff}{\laoff-1} (e^{-s_2}-e^{-s_1\laoff}e^{s_1-s_2}) \bigg)}{\piaon+\piaoff\big(1-e^{-\laoff t}\big)} = \frac{s_1+e^{-s_2}-e^{s_1-s_2}}{1+t}.
\end{align}
Thus, by (\ref{marks_prelimit}),
\begin{align}
    \lim_{n\to\infty} \mathbf{P}&(\sigma_{\rm{ON}}^a\leq s_1,\sigma_{\rm{OFF}}^a \leq s_2\mid \text{ON in $[0,t]$}) = \frac{1-e^{s_1-s_2}+s_1}{1+t},
\end{align}
as required.
\end{proof}

\subsection{Proof of local limit of marked union graphs} \label{sec_pf_Thm_limmarked}

\begin{proof}[Proof of Theorem \ref{loc_lim_union_marked}]
Theorem \ref{loc_lim_union} shows local convergence of the unmarked $\bgrgunion$, which means that for any fixed rooted graph $(H_{\star},o')$ and $r\in\mathbf{N}$,
\begin{align*}
    \mathbf{P}(B_r(G_n^{[0,t]},V^{(l)}_n) \simeq (H_{\star},o')\mid  G_n)&:=\frac{1}{|G_n|} \sum_{i\in[n]} \mathds{1}_{\{B_r(G_n^{[0,t]},i) \simeq (H_{\star},o')\}}\\
    &\stackrel{\mathbf{P}}{\longrightarrow} \hspace{0.1cm}\mathbf{P}(B_r(\mathrm{BP}^{[0,t]}_{\gamma},o) \simeq (H_{\star},o')),
\end{align*}
where we have written $(G_n^{[0,t]},o)$ instead of $\mathrm{BGRG}^{[0,t]}_n$ for the sake of simplicity of notation in this proof.
If the marked version of the union graph converges locally in probability, then for any fixed marked rooted graph $(H_{\star},(\Bar{m}_1,\Bar{m}_2),o')$ and $r\in\mathbf{N}$,
\begin{align*}
    \mathbf{P}&\bigg(\text{d}_{\mathcal{G}_{\star}}\big((G_n^{[0,t]},\big((\sigma^a_{\rm{ON}},\sigma^a_{\rm{OFF}} )\big)_{a\in\cup_{k\geq2}[n]_k},V^{(l)}_n),(H_{\star},(\Bar{m}_1,\Bar{m}_2),o') \big) \leq \frac{1}{r+1}
    \mid  (G_n^{[0,t]},\big((\sigma^a_{\rm{ON}},\sigma^a_{\rm{OFF}} )\big)_{a\in\cup_{k\geq2}[n]_k}\bigg)\\
    &:=\frac{1}{n} \sum_{i\in[n]} \mathds{1}_{\big\{\text{d}_{\mathcal{G}_{\star}}\big((G_n^{[0,t]},\big((\sigma^a_{\rm{ON}},\sigma^a_{\rm{OFF}} )\big)_{a\in\cup_{k\geq2}[n]_k},i),(H_{\star},(\Bar{m}_1,\Bar{m}_2),o') \big) \leq \frac{1}{r+1}\big\}}\\
    &\stackrel{\mathbf{P}}{\longrightarrow} \hspace{0.1cm}\mathbf{P}\bigg(\text{d}_{\mathcal{G}_{\star}}\big((\mathrm{BP}^{[0,t]}_{\gamma},\big((t^a_{\rm{ON}},t^a_{\rm{OFF}})\big),o),(H_{\star},(\Bar{m}_1,\Bar{m}_2),o')\big)\leq\frac{1}{r+1}\bigg).
\end{align*}
Note that the edge marks are independent since all the groups switch on and off independently of each other. Hence, if the marks converge in distribution to some limiting marks they will converge jointly for all groups and all $s\in[0,t]$. In Lemma \ref{conv_marks} we showed that they indeed converge and also that the marks of all groups present within $[0,t]$ are also identically distributed. This implies that we can couple each pair of marks with their limiting marks so that they are appropriately close to each other. Hence, the proportion of vertices whose neighborhoods look like $(H_{\star},(\Bar{m}_1,\Bar{m}_2),o')$ must converge to the probability that neighborhoods in $(B_r(\mathrm{BP}^{[0,t]}_{\gamma},\big((t^a_{\rm{ON}},t^a_{\rm{OFF}})\big),o)$ look like $(H_{\star},(\Bar{m}_1,\Bar{m}_2),o')$, which precisely means that the marked version of $\bgrgunion$ converges. Next, the convergence of the marked intersection graph follows from the convergence of the underlying bipartite structure, as community projection preserves marked graphs' distance.
\end{proof}

\subsection{Proof of the switching pace of the groups in the union graph} \label{sec_pf_switching_pace_lemma}

\begin{proof}[Proof of Lemma \ref{switching_pace_lemma}]
For any finite $r>0$, we investigate
\begin{align} \label{switch_twice_final_expect}
    \mathbf{P}\big(\exists a\in B_r^{[0,t]}(V_n^{(l)}): \text{$a$ \rm{ON} twice in $[0,t]$}\big) = \mathbf{E}\bigg[\mathbf{P}\big(\exists a\in B_r^{[0,t]}(V_n^{(l)}): \text{$a$ \rm{ON} twice in $[0,t]$}\mid  G_n^{[0,t]}\big) \bigg],
\end{align}
where we denote the $r$-neighbourhood of a uniformly chosen vertex in $\bgrgunion$ by $B_r^{[0,t]}(V_n^{(l)})$ and for simplification we write $G_n^{[0,t]}$ instead of $\bgrgunion$ to denote conditioning on the union graph. Applying the union bound yields
\begin{align} \label{union_bound_gr_on_twice}
    \mathbf{P}\big(\exists a\in B_r^{[0,t]}(V_n^{(l)}): \text{$a$ \rm{ON} twice in $[0,t]$}\mid  G_n^{[0,t]}\big) \leq 1\wedge \sum_{a\in B_r^{[0,t]}(V_n^{(l)})} \mathbf{P}\big(\text{$a$ \rm{ON} twice in $[0,t]$}\mid  G_n^{[0,t]}\big)
\end{align}
We first apply a suitable truncation (for more details \longversion{see similar cases, for instance, proof of Theorem \ref{1st_mom_unif_bipartite} or Remark \ref{appx_eliminating_cond_c}}\shortversion{see the extended version \textup{\cite[Theorem B.4, Remark B.11]{Milewska2023}}}): we truncate the maximum vertex weight by $a_n$ and the maximum group size by $b_n$, with $a_n=\log(n), b_n=\log(n)$ to obtain
\begin{align} \label{switch_twice_truncation}
    \sum_{a\in B_r^{[0,t]}(V_n^{(l)})}&\mathbf{P}\big(\text{$a$ \rm{ON} twice in $[0,t]$}\mid  G_n^{[0,t]}\big)\\
    &=\sum_{a\in B_r^{[0,t]}(V_n^{(l)})}\mathbf{P}\big(\text{$a$ \rm{ON} twice in $[0,t]$}\mid  G_n^{[0,t]}\big)\mathds{1}_{\{\max_{i} w_i\leq a_n, \max_{a}|a|\leq b_n\}} + o_{\mathbf{P}}(1),\nonumber
\end{align}
where the maximums are taken w.r.t.\ a vertex $i\in B_r^{[0,t]}(V_n^{(l)})$ and group $a\in B_r^{[0,t]}(V_n^{(l)})$. The remainder is small since Condition \ref{cond_weights} implies that our union graph is sparse, and hence observing large groups and vertices with large weights in a uniformly chosen $r$-neighborhood is unlikely. We now compute
\begin{align} \label{probab_gr_on_twice_if_on_in_0_t}
    \mathbf{P}(\text{$a$ switches ON twice in $[0,t]$} \mid  G_n^{[0,t]}) &= \frac{\mathbf{P}(\text{$a$ switches ON twice in $[0,t]$}\mid \text{$a$ active in $[0,t]$})}{\mathbf{P}(\text{$a$ active in $[0,t]$})} \\
    &= \frac{\mathbf{P}(\text{$a$ switches ON twice in $[0,t]$})}{\mathbf{P}(\text{$a$ active in $[0,t]$})}, \nonumber
\end{align}
where the first equality follows from the independence of groups. We have
\begin{align} \label{groups_that_switch_twice}
    \mathbf{P}&(\text{$a$ switches ON twice in $[0,t]$}) = \piaon\mathbf{P}(\text{$a$ switches OFF and ON again in $[0,t]$})\\
    &+\piaoff\mathbf{P}(\text{$a$ switches ON, then OFF and ON again in $[0,t]$}).\nonumber
\end{align}
Recall that the times that groups spend in the ON and OFF states are exponentially distributed with rates $Exp(\laon)$ and $Exp(\laoff)$ respectively. Hence, using the fact that for all $x, 1-e^{-x}\leq x$,
\begin{align} \label{switch_twice_1st_bound}
    \mathbf{P}(\text{$a$ switches OFF and ON again in $[0,t]$})&= 
    \mathbf{P}(Exp(\laon)+Exp(\laoff)\leq t)\\
    &\leq \mathbf{P}(Exp(\laoff)\leq t) = \big(1-e^{-\laoff t}\big) \leq t\laoff \wedge1, \nonumber
\end{align}
and, applying the same inequality again,
\begin{align} \label{switch_twice_2nd_bound}
    \mathbf{P}&(\text{$a$ switches ON, then OFF and ON again in $[0,t]$})=\mathbf{P}(Exp(\laoff) + Exp(\laon)+Exp'(\laoff)\leq t)\\
    &\leq \mathbf{P}(Exp(\laoff)+Exp'(\laoff)\leq t) = 1-e^{-t\laoff}-t\laoff e^{-t\laoff} \leq t\laoff(1-e^{-t\laoff}) \leq (t\laoff)^2\wedge1. \nonumber
\end{align}
Substituting (\ref{switch_twice_1st_bound}) and (\ref{switch_twice_2nd_bound}) into (\ref{groups_that_switch_twice}) and using the facts that $\piaon \leq \laon, \piaoff\leq1$ yields
\begin{align}
    \mathbf{P}&(\text{$a$ switches ON twice in $[0,t]$}) \leq (t+t^2)(\laoff)^2\wedge1. 
\end{align}
To obtain the probability in the denominator of (\ref{probab_gr_on_twice_if_on_in_0_t}) recall (\ref{pi_a_on_union}). Combining all of the above, we arrive at the bound
\begin{align} \label{switch_twice_final_bound}
    \mathbf{P}&(\text{$a$ switches ON twice in $[0,t]$} \mid  G_n^{[0,t]}) \leq c(t)\laoff \wedge 1,
\end{align}
where $c(t)$ is a constant. Thus, substituting (\ref{switch_twice_final_bound}) into (\ref{union_bound_gr_on_twice}) and invoking (\ref{switch_twice_truncation}) yields
\begin{align}\label{switch_pace_final_exp_bound}
    \mathbf{E}&\bigg[\mathbf{P}\big(\exists a\in B_r^{[0,t]}(V_n^{(l)}): \text{$a$ \rm{ON} twice in $[0,t]$} \mid  G_n^{[0,t]}\big)\bigg]\\
    &\leq \mathbf{E}\bigg[\bigg(\sum_{a\in B_r^{[0,t]}(V_n^{(l)})} (\laoff\wedge1)\cdot \mathds{1}_{\{\max_{i} w_i\leq a_n, \max_{a}|a|\leq b_n\}}\bigg)\wedge1 \bigg].\nonumber
\end{align}
Note that by the local limit of $\bgrgunion$ shown in Theorem \ref{loc_lim_union} we know that $|B_r^{[0,t]}(V_n^{(l)})|$ is  tight. We also know that $\laoff$ is small for $n$ large for a group $a$ with $\max_{i\in a} w_i\leq a_n$ and $|a|\leq b_n$. Therefore, as $n\to\infty$,
\begin{align}
    \sum_{a\in B_r^{[0,t]}(V_n^{(l)})} \big(\laoff \wedge 1\big) \mathds{1}_{\{\max_{i\in a} w_i\leq a_n, \max|a|\leq b_n\}} \stackrel{\mathbf{P}}{\longrightarrow}0.
\end{align}
Hence, by applying the dominated convergence theorem to (\ref{switch_pace_final_exp_bound}) we conclude
\begin{align}
    \mathbf{P}\big(\exists a\in B_r^{[0,t]}(V_n^{(l)}): \text{$a$ \rm{ON} twice in $[0,t]$}\big)\big) = o(1).
\end{align}
\end{proof}

\subsection{Proof of dynamic local limit of the intersection graph} \label{sec_pf_dyn_loclim}

\begin{proof}[Proof of Theorem \ref{dyn_loc_lim}]
We first show the dynamic local weak convergence of $\big(\mathrm{BGRG}^{s}_n(w)\big)_{s\in[0,t]}$, i.e.,
\begin{align}
    \bigg(\big(\mathrm{BGRG}^s_n,V^{(l)}_n\big)\bigg)_{s\in[0,t]} \stackrel{\text{d}}{\longrightarrow} \bigg((\mathrm{BP}^s_{\gamma},o) \bigg)_{s\in[0,t]}.
\end{align}
The proof follows in two steps: we first show convergence of finite-dimensional distributions and then tightness - the two conditions for weak convergence of processes from compact space to separable and complete space given in literature \textup{\cite[Lemma 16.2, Theorem 16.3]{kallenberg2002foundations}}. For the convenience of the reader, we reproduce these results \longversion{in Appendix \ref{appx_jumps_conv} (see Lemma \ref{conv_compact_to_separable} and Theorem \ref{rel_compact_vs_tight})}\shortversion{in the extended version \textup{\cite[Lemma C.2 and Theorem C.3, Appendix C]{Milewska2023}}}.\\

\noindent \textbf{Condition (i): Convergence of finite-dimensional distributions.} Once more, for the sake of simplicity of notation, throughout the proof we abbreviate $\bgrgs$ to $\mathrm{G}^{s}_n$. We need to show that for all $s_1\leq s_2\leq...\leq s_k\in[0,t]$
\begin{align}
    \mathbf{P}(\forall j\in[k]: B_r(G^{s_j}_n,V^{(l)}_n)\simeq (H_j,o)\mid  G_n) &= \frac{1}{n}\mathbf{E}\Bigg[\sum_{i\in[n]}\mathds{1}_{\big\{B_r(G^{s_j}_n,i)\simeq (H_j,o)\big\}} \Bigg] \\
    &\longrightarrow \mathbf{P}(\forall j\in[k]: B_r(\mathrm{BP}^{s_j}_{\gamma},o)\simeq (H_j,o)). \nonumber
\end{align}
\noindent The convergence follows immediately from the convergence of the marked union graph. Indeed, if the marked union graph converges, appropriate marked-graph isomorphisms must hold. In particular, since marks converge,
\begin{align}
    \mathbf{P}\big(s\in[\sigma^a_{\rm{ON}}, \sigma^a_{\rm{OFF}}]\big) \longrightarrow \mathbf{P}\big(s\in[t^a_{\rm{ON}},t^a_{\rm{OFF}}]\big).
\end{align}
Thus indeed, if a neighborhood of a uniformly chosen vertex in the marked union graph resembles a neighborhood in a marked $(\mathrm{BP}^{[0,t]}_{\gamma},o)$, then for every $s\in[0,t]$ a neighborhood of a uniformly chosen vertex in a subgraph of $\bgrgunion$ restricted to groups $a$ such that $s\in[\sigma^a_{\rm{ON}}, \sigma^a_{\rm{OFF}}]$ must resemble a subgraph of $(\mathrm{BP}_{\gamma}^{[0,t]},\big((t^a_{\rm{ON}},t^a_{\rm{OFF}})\big),o)$ incorporating accordingly only right-vertices $a$ such that $s\in[t^a_{\rm{ON}},t^a_{\rm{OFF}}]$.\\

\noindent \textbf{Condition (ii): Tightness of the process.} Since for random processes between a compact and a Polish space, the convergence of finite-dimensional distributions combined with tightness (which, in separable and complete spaces, is equivalent to relative compactness in distribution) yields process convergence (see \longversion{Appendix \ref{appx_jumps_conv}}\shortversion{the extended version \textup{\cite[Appendix C]{Milewska2023}}} for a brief summary of results we use, taken from  \cite{kallenberg2002foundations}, Chapter 16), it remains to show that our dynamic graph process is tight, with respect to its local topology. This translates to verifying if for all $0<s_1<s<s_2<t$, $\varepsilon, \eta>0$ there exists $n_0\geq1$ and $\delta>0$ such that for all $n\geq n_0$
\begin{align} \label{cond_tight_for_dyn_loc_lim}
    \mathbf{P}\bigg(\sup_{(s,s_1,s_2)\in\mathscr{S}_{\delta}} \min\big[\text{d}_{\mathcal{G}_{\star}}\big((G^{s_1}_n,V^{(l)}_n),(G^{s}_n,V^{(l)}_n)\big),\text{d}_{\mathcal{G}_{\star}}\big((G^{s}_n,V^{(l)}_n),(G^{s_2}_n,V^{(l)}_n)\big)\big]>\varepsilon \bigg) \leq \eta,
\end{align}
with $\mathscr{S}_{\delta} = \{(s,s_1,s_2):s\in[s_1,s_2], |s_2-s_1|\leq \delta\}$. Note that the above is equivalent to
\begin{align} \label{dyn_loc_lim_tight_tobebounded}
    \mathbf{P}\bigg(\exists s\in[s_1,s_2],s_2-s_1<\delta: B_{1/\varepsilon}(G^{s_1}_n,V^{(l)}_n) \not\simeq  B_{1/\varepsilon}(G^{s}_n,V^{(l)}_n), B_{1/\varepsilon}(G^{s}_n,V^{(l)}_n) \not\simeq  B_{1/\varepsilon}(G^{s_2}_n,V^{(l)}_n) \bigg) \leq \eta.
\end{align}
We partition $[0,t]$ into intervals of length $\delta$ and introduce
\begin{align} \label{ineq_interval_union}
    S_l = \{\text{two jumps in the neighbourhood of $V^{(l)}_n$ in the $l$-th interval of length $\delta$}\}.
\end{align}
Note that thanks to the stationarity, the probability of a jump in $[0,s]$ and then another jump in $[s,\delta]$ can be bounded by the probability of two jumps in $[0,\delta]$. Hence,
\begin{align} \label{bound_delta_intervals}
    (\ref{dyn_loc_lim_tight_tobebounded}) \leq \mathbf{P}(\bigcup_{l=1}^{t/\delta} S_l) \leq \frac{t}{\delta}\mathbf{P}(S_1),
\end{align}
where $S_1$ is then accordingly the event of two jumps in the time interval $[0,\delta]$. Recall that the only aspect that can cause changes in neighborhoods is group activation and deactivation. Note that the groups that changed their status within $[0,t]$ must in particular be active during $[0,t]$, which means they are in the union graph $\bgrgunion$. Therefore, as it turns out to be useful for the upper bound, we now condition event $S_1$ on the union graph. For simplicity, we denote the union graph by $G_n^{[0,t]}$. We also denote the $r$-neighborhood of $V_n^{(l)}$ in the union graph by $B_r^{[0,t]}(V_n^{(l)})$. We compute
\begin{align}
    \mathbf{P}(S_1\mid  G^{[0,t]}_n) = & \mathbf{P}(\text{$\exists a_1\neq a_2\in B_{1/\varepsilon}^{[0,t]}(V_n^{(l)}): a_1,a_2$ switch OFF in $[0,\delta]$}\mid G^{[0,t]}_n)\\
    &+ \mathbf{P}(\text{$\exists a_1\neq a_2\in B_{1/\varepsilon}^{[0,t]}(V_n^{(l)}): a_1$ switches OFF, $a_2$ switches ON in $[0,\delta]$}\mid G^{[0,t]}_n)\nonumber\\
    &+ \mathbf{P}(\text{$\exists a_1\neq a_2\in B_{1/\varepsilon}^{[0,t]}(V_n^{(l)}): a_1,a_2$ switch ON in $[0,\delta]$}\mid  G^{[0,t]}_n).\nonumber
\end{align}
Hence,
\begin{align} \label{dyn_loc_lim_3_terms}
    \mathbf{P}(S_1\mid  G^{[0,t]}_n) =& \big(1-e^{-\delta \#\{a\in B_{1/\varepsilon}^{[0,t]}(V_n^{(l)})\}}\big)^2\\
    &+ \big(1-e^{-\delta\#\{a\in B_{1/\varepsilon}^{[0,t]}(V_n^{(l)})\}}\big)\big(1-e^{-\delta \sum_{a\in B_{1/\varepsilon}^{[0,t]}(V_n^{(l)})} \laoff }\big) \wedge 1\nonumber\\
    &+ \big(1-e^{-\delta \sum_{a\in B_{1/\varepsilon}^{[0,t]}(V_n^{(l)})} \laoff}\big)^2 \wedge 1.\nonumber
\end{align}
Fix a large constant $b_{\delta}$, and consider
\begin{align}
    \mathscr{E} = \big\{ |B_{1/\varepsilon}^{[0,t]}(V_n^{(l)})| \leq b_{\delta} \big\}.
\end{align}
Given $\mathscr{E}$,
\begin{align} \label{final_bound_e_delta}
    \mathbf{P}(S_1\mid G^{[0,t]}_n) & \leq 3 \delta^2 b^2_{\delta}.
\end{align}
Recall that in Theorem \ref{loc_lim_union} we derived local convergence of the union graph, which guarantees that the neighbourhood of a uniformly chosen vertex in the union graph is bounded. Thus, for every $\varepsilon>0$, we can find a $b_{\delta}$ sufficiently large, such that $\mathbf{P}(\mathscr{E})\geq 1-\varepsilon$. After taking the expectation of (\ref{final_bound_e_delta}) with respect to the union graph, and substituting the result into (\ref{bound_delta_intervals}), we obtain
\begin{align}
    \mathbf{P}&\bigg(\exists s\in[s_1,s_2],s_2-s_1<\delta\colon  B_{1/\varepsilon}(G^{s_1}_n,V^{(l)}_n) \not\simeq  B_{1/\varepsilon}(G^{s}_n,V^{(l)}_n), B_{1/\varepsilon}(G^{s}_n,V^{(l)}_n) \not\simeq  B_{1/\varepsilon}(G^{s_2}_n,V^{(l)}_n) ; \mathscr{E}\bigg)\\
    &\leq \frac{t}{\delta}\mathbf{E}\big[3 \delta^2 b^2_{\delta}\mathds{1}_{\mathscr{E}} \big] = 3t \mathbf{E}\big[b^2_{\delta}\mathds{1}_{\mathscr{E}}\big] \delta,\nonumber
\end{align}
which, for every $b_{\delta}$, can be made arbitrarily small by taking $\delta$ small. As we argued that for such a choice of $b_{\delta}$, event $\mathscr{E}$ holds with probability at least $1-\varepsilon$, we conclude that (\ref{dyn_loc_lim_tight_tobebounded}) holds by taking $\delta$ and $\varepsilon$ small as a function of $\eta$.\\

\noindent\textbf{Consequence.} For every $s\in[0,t]$, $\drigs$ can be built from $\bgrgs$ via a community projection, which preserves graph isomorphism and tightness. Hence, its convergence follows from the just shown convergence of $\bgrgs$, and its local limit, $(\mathrm{CP}^{s},o)$, is a community projection of the limit of $\bgrgs$.
\end{proof}

\begin{remark}
In the proof of Theorem \ref{dyn_loc_lim} we show dynamic local weak convergence. However, we argue that in the same manner, we could derive dynamic local convergence in probability. Indeed, note that neighborhood processes of two distinct uniformly chosen vertices are i.i.d. stochastic processes, and hence, their convergence can be derived in the same way as in the proof above. The authors of \cite{dynweaklimit2023} prove this result rigorously (see \textup{\cite[Lemma 3.10]{dynweaklimit2023}}).
\end{remark}

\subsection{Proof of convergence of the dynamic giant membership process} \label{sec_pf_dyn_giant}

\begin{proof}[Proof of Theorem \ref{giant_dynamic}]
Note again that the giant component in the intersection graph is strictly connected to the giant component in the underlying bipartite structure. Hence, in this proof, we only focus on the underlying bipartite structure. To show the desired convergence it suffices to show that $\big(J_n(s)\big)_{s\in[0,t]}$ and $\big(\mathcal{J}(s)\big)_{s\in[0,t]}$ satisfy Conditions (i)-(iii) from Lemma \ref{conv_Skorokhod}.

\noindent \textbf{Condition (i): Convergence of finite-dimensional distributions.}
Note that thanks to our results on the static giant component (see Theorem \ref{giant_bipartite}),
\begin{align} \label{giant_at_s_conv}
    \mathbf{P}(J_n(s)=1) = \mathbf{P}(o_n \in \mathscr{C}^s_1) = \frac{\mathbf{E}[|\mathscr{C}^s_1|]}{n} \stackrel{n\to\infty}{\longrightarrow} \mu\big(|\mathscr{C}^s(o)|=\infty \big),
\end{align}
where $(G^s,o)$ is the limiting, rooted graph at time $s$. Furthermore, as a consequence of the dynamic local weak convergence (Theorem \ref{dyn_loc_lim}), as $n\to\infty$,
\begin{align} \label{loc_conv_r_neighb}
    \Big(J^{(r)}_n(s)\Big)_{s\in[0,t]} = \Big(\mathds{1}_{\big\{\partial B^{G_n^s}_r(o_n) \neq \varnothing\big\}}\Big)_{s\in[0,t]} \stackrel{\text{d}}{\longrightarrow} \Big(\mathds{1}_{\big\{\partial B_r^{G^s}(o) \neq \varnothing\big\}}\Big)_{s\in[0,t]} = \big(\mathcal{J}^{(r)}(s)\big)_{s\in[0,t]},
\end{align}
with $\partial B^{G_n^s}_r(o_n) = \{i\in[n]:\text{d}(o,v)=r\}$, i.e., the set of vertices of graph distance $r$ from the root. Since we know what is happening in local neighborhoods jointly for all $s\in[0,t]$ we try to link the distribution of $\big(J_n(s)\big)_{s\in[0,t]}$ to the distribution of $\big(J^{(r)}_n(s)\big)_{s\in[0,t]}$. For any $r>0$ and for all $\{s_1,...,s_k\}\in[0,t]$,
\begin{align} \label{probab_diff}
    \mathbf{P}&(J_n(s_1)=...=J_n(s_k)=1) = \mathbf{P}(J^{(r)}_n(s_1)=...=J^{(r)}_n(s_k)=1)\\
    &+ \mathbf{P}(J_n(s_1)=...=J_n(s_k)=1) 
    - \mathbf{P}(J^{(r)}_n(s_1)=...=J^{(r)}_n(s_k)=1). \nonumber 
\end{align}
We look at the difference of probabilities in (\ref{probab_diff}):
\begin{align}
    \big|\mathbf{P}&(J_n(s_1)=...=J_n(s_k)=1)
    - \mathbf{P}(J^{(r)}_n(s_1)=...=J^{(r)}_n(s_k)=1)\big|\\
    &\leq \mathbf{P}(J_n(s_1)=...=J_n(s_k)=1, \neg \big(J^{(r)}_n(s_1)=...=J^{(r)}_n(s_k)=1\big))\nonumber\\
    &=\mathbf{P}\big(J_n(s_1)=...=J_n(s_k)=1, \bigcup_{i=1}^k \big\{ J^{(r)}_n(s_i)\neq1\big\}\big)\nonumber\\ &=\mathbf{P}\big(\bigcup_{i=1}^k \big\{J_n(s_1)=...=J_n(s_k)=1,J^{(r)}_n(s_i)\neq1\big\}\big) \leq \mathbf{P}\big(\bigcup_{i=1}^k \big\{ J^{(r)}_n(s_i) \neq J_n(s_i)\big\}\big). \nonumber
\end{align}
Note that by the static local limit and (\ref{giant_at_s_conv}),
\begin{align}
    \lim_{r\to\infty} \lim_{n\to\infty} \mathbf{P}\big(\bigcup_{i=1}^k \big\{ J^{(r)}_n(s_i) \neq J_n(s_i)\big\}\big) \leq k \cdot\lim_{r\to\infty} \lim_{n\to\infty} \mathbf{P}(J^{(r)}_n(s) \neq J^{n}(s)) = 0.
\end{align}
Hence,
\begin{align}
    \lim_{r\to\infty} \lim_{n\to\infty} \big|\mathbf{P}&(J_n(s_1)=...=J_n(s_k)=1)
    - \mathbf{P}(J^{(r)}_n(s_1)=...=J^{(r)}_n(s_k)=1)\big|= 0,
\end{align}
and thus,
\begin{align}
    \lim_{n\to\infty} \mathbf{P}&(J_n(s_1)=...=J_n(s_k)=1) = \lim_{r\to\infty} \lim_{n\to\infty} \mathbf{P}(J^{(r)}_n(s_1)=...=J^{(r)}_n(s_k)=1).
\end{align}
However, by (\ref{loc_conv_r_neighb}),
\begin{align}
    \mathbf{P}(J^{(r)}_n(s_1) = 1,...,J^{(r)}_n(s_k)=1) &\stackrel{n\to\infty}{\longrightarrow} \mathbf{P}(\mathcal{J}^{(r)}(s_1)=1,...,\mathcal{J}^{(r)}(s_k)=1 )\\
    &\stackrel{r\to\infty}{\longrightarrow} \mu(|\mathscr{C}^{G^{s_1}}(o)|=...=|\mathscr{C}^{G^{s_k}}(o)|=\infty). \nonumber
\end{align}
Hence,
\begin{align*}
    \mathbf{P}&(J_n(s_1)=...=J_n(s_k)=1) \stackrel{r\to\infty}{\longrightarrow} \mu(|\mathscr{C}^{G^{s_1}}(o)|=...=|\mathscr{C}^{G^{s_k}}(o)|=\infty).
\end{align*}
Similarly, for another combination of values of the finite-dimensional distribution,
\begin{align}
    \mathbf{P}&(J_n(s_1)=...=1,J_n(s_k)=0) = \mathbf{P}(J^{(r)}_n(s_1)=...=1,J^{(r)}_n(s_k)=0)\\
    &+ \mathbf{P}(J_n(s_1)=...=1,J_n(s_k)=0)
    - \mathbf{P}(J^{(r)}_n(s_1)=...=1,J^{(r)}_n(s_k)=0). \nonumber
\end{align}
We again look at the difference in the second line:
\begin{align}
    \mathbf{P}&(J_n(s_1)=...=1,J_n(s_k)=0)
    - \mathbf{P}(J^{(r)}_n(s_1)=...=1,J^{(r)}_n(s_k)=0)\\
    &\leq \mathbf{P}\big((J_n(s_1)=...=1,J_n(s_k)=0), \neg (J^{(r)}_n(s_1)=...=1,J^{(r)}_n(s_k)=0)\big) \nonumber\\
    &=\mathbf{P}\big((J_n(s_1)=...=1,J_n(s_k)=0), (J^{(r)}_n(s_1)\neq1 \hspace{0.15cm} \text{or} \hspace{0.1cm}
    ... \hspace{0.1cm} \text{or} \hspace{0.15cm}J^{(r)}_n(s_k)\neq0)\big), \nonumber
\end{align}
which would again imply that for at least some $s_i\in[0,t]$ we have $\mathbf{P}(J^{(r)}_n(s_i) \neq J_n(s_i))$, so by the same argument as previously we have that the above vanishes. Hence, we can use the same argument to show the convergence of all finite-dimensional distributions.\\

\noindent \textbf{Condition (ii): Tightness of the limiting process.}
We want to show that, for all $\varepsilon>0$, as $\delta\to0$,
\begin{align} \label{cond_ii_cited}
    \mathbf{P}\big(|\mathcal{J}(t) - \mathcal{J}(t-\delta)| > \varepsilon \big) \longrightarrow 0.
\end{align}
Since $\big(\mathcal{J}(s)\big)_{s\in[0,t]}$ is an indicator process, the difference in absolute value between any two points of the process equals either $0$ or $1$. Hence $\mathbf{P}\big(|\mathcal{J}(t) - \mathcal{J}(t-\delta)| > \varepsilon \big)$ is equivalent to $\mathbf{P}\big(|\mathcal{J}(t) - \mathcal{J}(t-\delta)|=1 \big)$, which is equivalent to $\mathbf{P}(\mathcal{J}(t-\delta)=0,\mathcal{J}(t)=1)+\mathbf{P}(\mathcal{J}(t-\delta)=1,\mathcal{J}(t)=0)$. We investigate these two factors separately. From the proof of condition $(i)$,
\begin{align} \label{cond_ii_rewritten_f1}
    \lim_{\delta\to0} \mathbf{P}(\mathcal{J}(t-\delta)=0,\mathcal{J}(t)=1) &=  \lim_{\delta\to0} \lim_{n\to\infty} \mathbf{P}(J_n(t-\delta)=0,J_n(t)=1)\\
    &= \lim_{\delta\to0} \lim_{r\to\infty} \lim_{n\to\infty} \mathbf{P}(J^{(r)}_n(t-\delta)=0,J^{(r)}_n(t)=1)\nonumber. 
\end{align}
We compute
\begin{align} \label{switch_event_one}
    \mathbf{P}(J^{(r)}_n(t-\delta)=0,J^{(r)}_n(t)=1) = \mathbf{P}\bigg(\partial B^{G^{t-\delta}_n}_r(V^{(l)}_n) = \varnothing, \partial B^{G^t_n}_r(V^{(l)}_n) \neq \varnothing\bigg)
\end{align}
which means that the boundary of the $r$-neighborhood of a uniformly chosen vertex is empty at time $t-\delta$ but non-empty at time point $t$. For that to happen there has to be a change in groups' statuses. Similarly as in the proof of Theorem \ref{dyn_loc_lim} we use the link with the union graph:
\begin{align} \label{factor1_final_lim}
    \mathbf{P}\bigg(\partial B^{G^{t-\delta}_n}_r(V^{(l)}_n) = \varnothing, \partial B^{G^t_n}_r(V^{(l)}_n) \neq \varnothing\mid  G_n^{[0,t]}\bigg) &= \mathbf{P}(\text{$\exists a\in B^{[0,t]}_r(V^{(l)}_n): a$ switches ON in $[t-\delta,t]\mid  G_n^{[0,t]}$})\\
    &= \bigg(1-e^{-\delta \sum_{a\in B^{[0,t]}_r(V^{(l)}_n)} \laoff}\bigg)\wedge1. \nonumber
\end{align}
Hence, 
\begin{align} \label{switch_event_one_giant}
    \mathbf{P}(J^{(r)}_n(t-\delta)=0,J^{(r)}_n(t)=1) \leq \mathbf{E}_{G_n^{[0,t]}}\bigg[\delta |B^{[0,t]}_r(V^{(l)}_n)| \bigg],
\end{align}
which can be bounded in the same way as the terms in (\ref{dyn_loc_lim_3_terms}) and hence converges to $0$ as $\delta\to0$. We can compute the complementary probability, $\mathbf{P}\big(\partial B^{G^{t-\delta}_n}_r(V^{(l)}_n) \neq \varnothing, \partial B^{G^t_n}_r(V^{(l)}_n) = \varnothing\big)$, using similar reasoning. Note that, conveniently, the probability of switching off is the same for all groups. Thanks to this and the independence of groups we obtain
\begin{align}
    \mathbf{P}\big(\partial B^{G^{t-\delta}_n}_r(V_n^{(l)}) \neq \varnothing, \partial B^{G^t_n}_r(V_n^{(l)}) = \varnothing\mid  G_n^{t-\delta}\big) &= \mathbf{P}(\text{all $a\in \partial B^{G^{t-\delta}_n}_r(V^{(l)}_n)$ switch OFF$\mid  G_n^{t-\delta}$})\\
    &= \prod_{a\in \partial B^{G^{t-\delta}_n}_r(V^{(l)}_n)} (1-e^{-\delta}),\nonumber
\end{align}
and thus,
\begin{align}  \label{factor2_final_lim}
    \mathbf{P}\big(\partial B^{G^{t-\delta}_n}_r(V_n^{(l)}) \neq \varnothing, \partial B^{G^t_n}_r(V_n^{(l)}) = \varnothing\big) = \mathbf{E}_{G_n^{t-\delta}}\bigg[(1-e^{-\delta})^{\#\{a:a\in \partial B^{G^{t-\delta}_n}_r(V^{(l)}_n)\}} \bigg].
\end{align}
With arguments similar to those used before, we can  show that the above vanishes as $\delta\to0$ and $n\to\infty$. Combining (\ref{factor1_final_lim}) and (\ref{factor2_final_lim}) we conclude that condition (\ref{cond_ii_cited}) holds.\\

\noindent \textbf{Condition (iii): Tightness of the original process.}
We want to show that for any $\varepsilon,\eta>0$ there exists $n_0\geq1$ and $\delta>0$ such that, for all $n\geq n_0$,
\begin{align} \label{cond_iii_cited}
    \mathbf{P}\Bigg( \sup_{(s,s_1,s_2)\in\mathscr{S}_{\delta}} \min \Big(\Big|J_n(s)-J_n(s_1)\Big|,\Big|J_n(s_2)-J_n(s)\Big|\Big) > \varepsilon \Bigg) \leq \eta,
\end{align}
with $\mathscr{S}_{\delta} = \{(s,s_1,s_2):s\in[s_1,s_2], |s_2-s_1|\leq \delta\}$. Note that since $\big(J_n(s)\big)_{s\in[0,t]}$ is an indicator process, $\min \Big(\Big|J_n(s)-J_n(s_1)\Big|,\Big|J_n(s_2)-J_n(s)\Big|\Big) > \varepsilon$ if and only if $\Big|J_n(s)-J_n(s_1)\Big|=\Big|J_n(s_2)-J_n(s)\Big|=1$, which is equivalent to $J_n(s)\neq J_n(s_1), J_n(s_2)\neq J_n(s)$. This means two mutually exclusive events might occur: either $J_n(s_1)=J_n(s_2)=1$ and $J_n(s)=0$, or the opposite $J_n(s_1)=J_n(s_2)=0$ and $J_n(s)=1$. Note that we can skip the supremum since for any $s\in[s_1,s_2]$ the value of $\Big|J_n(s)-J_n(s_1)\Big|$ and $\Big|J_n(s_2)-J_n(s)\Big|$ is at most 1. Taking this all into consideration, (\ref{cond_iii_cited}) becomes
\begin{align} \label{cond_iii_rewritten}
    \mathbf{P}&\big(\exists \hspace{0.1cm} s,s_1,s_2: \hspace{0.1cm} J_n(s_1)=J_n(s_2)=1,J_n(s)=0 \hspace{0.1cm}\text{or}\hspace{0.1cm}J_n(s_1)=J_n(s_2)=0,J_n(s)=1\big)
\end{align}
with $s\in[s_1,s_2]$ and $s_1,s_2$ such that $s_2-s_1<\delta$. We apply the same approach as in the proof of tightness for Theorem \ref{dyn_loc_lim}: we partition $[0,t]$ into intervals of length $\delta$ and denote
\begin{align}
    \mathbf{P}(R_l)=\mathbf{P}(\text{two jumps of $\big(J_n(s)\big)_{s\in[0,t]}$ in the $l$-th interval of length $\delta$}).
\end{align}
Then, by stationarity,
\begin{align}
    (\ref{cond_iii_rewritten}) = \mathbf{P}(\bigcup_{l=1}^{t/\delta} R_l) \leq \frac{t}{\delta}\mathbf{P}(R_1).
\end{align}
From the proof of the Condition (i) from Lemma \ref{conv_Skorokhod} we know that for some $n_0$ big enough for all $n\geq n_0$ and some $s\in[0,\delta]$
\begin{align}
    \mathbf{P}&\big(J_n(0)=J_n(\delta)=1,J_n(s)=0\big) = \lim_{r\to\infty} \mathbf{P}\big(J^{(r)}_n(0)=J^{(r)}_n(\delta)=1,J^{(r)}_n(s)=0\big)\\
    &=\lim_{r\to\infty} \mathbf{P}\bigg(\mathds{1}_{\big\{\partial B^{G^{0}_n}_r(V^{(l)}_n) \neq \varnothing\big\}}=\mathds{1}_{\big\{\partial B^{G^{\delta}_n}_r(V^{(l)}_n) \neq \varnothing\big\}} = 1, \mathds{1}_{\big\{\partial B^{G_n^s}_r(V^{(l)}_n) \neq \varnothing\big\}} = 0\bigg),\nonumber
\end{align}
and naturally, the analogous will hold for the complementary probability $\mathbf{P}\big(J_n(0)=J_n(\delta)=0, J_n(s)=1\big)$. Hence, from the proof of tightness from Theorem \ref{dyn_loc_lim} it follows that
\begin{align}
    \mathbf{P}(R_1) \leq \mathbf{P}(S_1) = o(\delta),
\end{align}
with
\begin{align}
    \mathbf{P}(S_l) = \mathbf{P}(\text{two jumps in the neighbourhood of $V^{(l)}_n$ in the $l$-th interval of length $\delta$}).
\end{align}
Thus, the required condition holds.\\

\noindent \textbf{Conclusion.} Since all three conditions of Lemma \ref{conv_Skorokhod} hold, the convergence follows.
\end{proof}

\subsection{Proof of convergence of the size of the largest group in \texorpdfstring{$[0,t]$}{}} \label{sec_pf_kmax}

\begin{proof}[Proof of Theorem \ref{k_max}]
The proof consists of two parts. We start by deriving convergence in distribution for $K^{[0,t]}_{\max}/n^{\frac{1}{\alpha}}$ and afterwards proceed to show that $\big(K^{[0,t]}_{\max}/n^{\frac{1}{\alpha}}\big)_{t\geq0}$ and the limiting process $\big(\kappa^{(0,t]}_{\max}\big)_{t\geq0}$ satisfy conditions of Lemma \ref{conv_Skorokhod}, which will yield the desired convergence.\\

\noindent\textbf{Part 1: Convergence in distribution.} To shorten the computations in the next part of the proof, we first derive convergence in distribution of the random variable $K^{[0,t]}_{\max}/n^{\frac{1}{\alpha}}$. We compute
\begin{align}
    \mathbf{P}(K^{[0,t]}_{\max}\leq k n^{1/\alpha}) = \mathbf{P}(\max\{K^{\{0\}}_{\max},K^{(0,t]}_{\max}\}\leq k n^{1/\alpha}) = \prod_{l>k n^{1/\alpha}}\prod_{a\in[n]_{l}}\piaoff \cdot \mathbf{P}(\text{$a$ never ON in $(0,t]$}). \nonumber
\end{align}
Hence,
\begin{align}
    \mathbf{P}&(K^{[0,t]}_{\max}\leq k n^{1/\alpha})=  \prod_{l>k n^{1/\alpha}}\prod_{a\in[n]_l} \frac{\ell_n^{l-1}}{\ell_n^{l-1}+l!p_l\prod_{i\in a} w_i} \times \prod_{l>k n^{1/\alpha}}\prod_{a\in[n]_l} e^{-\frac{l!p_l\prod_{i\in a} w_i}{\ell_n^{l-1}}t}. \label{distr_k_max_line1}
\end{align}
Note that
\begin{align*}
    &\prod_{l>k n^{1/\alpha}}\prod_{a\in[n]_l} e^{-\frac{l!p_l\prod_{i\in a} w_i}{\ell_n^{l-1}}t}=\prod_{l>k n^{1/\alpha}} e^{-t\sum_{j_1<...<j_l\in[n]}\frac{l!p_l\prod_{i\in a}w_i}{\ell_n^{l-1}}}\\
    =& \prod_{l>k n^{1/\alpha}} e^{-\frac{l!p_lt\ell_n}{l!}\sum_{j_1,...,j_l\in[n]}\frac{w_{j_1}...w_{j_l}}{\ell_n^{l}}} =e^{-t\ell_n\sum_{l>k n^{1/\alpha}}p_l} + o_{\mathbf{P}}(1) = e^{-t\ell_n(k n^{1/\alpha})^{-\alpha}} + o_{\mathbf{P}}(1),
\end{align*}
which plugged into (\ref{distr_k_max_line1}) yields
\begin{align}
    \mathbf{P}&(K^{[0,t]}_{\max}\leq k n^{1/\alpha})= e^{-t\ell_n(k n^{1/\alpha})^{-\alpha}} \prod_{l>k n^{1/\alpha}}\prod_{a\in[n]_l} \frac{\ell_n^{l-1}}{\ell_n^{l-1}+l!p_l\prod_{i\in a}w_i} + o_{\mathbf{P}}(1).
\end{align}
We have that
\begin{align}
    \lim_{n\to\infty} \prod_{l>k n^{1/\alpha}}\prod_{a\in[n]_l} \frac{\ell_n^{l-1}}{\ell_n^{l-1}+l!p_l\prod_{i\in a}w_i} = \lim_{n\to\infty} \prod_{l>k n^{1/\alpha}}\prod_{a\in[n]_l} \frac{1}{1+\frac{l!p_l\prod_{i\in a}w_i}{\ell_n^{l-1}}} = 1.
\end{align}
Further, since $\mathbf{E}[W_n]\to\mathbf{E}[W]$ as $n\to\infty$,
\begin{align}
    \lim_{n\to\infty} \ell_n (k n^{1/\alpha})^{-\alpha} = \lim_{n\to\infty} \frac{\ell_n}{k^{\alpha}n} = \lim_{n\to\infty} \frac{\mathbf{E}[W_n]}{k^{\alpha}} = \frac{\mathbf{E}[W]}{k^{\alpha}},  
\end{align}
we obtain for every $t\geq0$
\begin{align}
    \lim_{n\to\infty} \mathbf{P}\bigg(\frac{K^{[0,t]}_{\max} }{n^{1/\alpha}}\leq k \bigg)= e^{-tk^{-\alpha}\mathbf{E}[W]}.
\end{align}
Note that $g(k) = e^{-tk^{-\alpha}\mathbf{E}[W]}$ is a CDF of the Fr{\'e}chet distribution.

\noindent\textbf{Part 2: Verifying conditions of Lemma \ref{conv_Skorokhod}.}
We again check if our processes fulfil conditions $(i)-(ii)$ of Lemma \ref{conv_Skorokhod}.\\
\noindent\textbf{Condition (i): Convergence of the finite-dimensional distribution.} We want to show that for all $\{s_1,...,s_t\}\in[0,t]$ and $k_1,k_2,...,k_t\in\mathbf{N}$, as $n\to\infty$,
\begin{align} \label{k_max_desired_conv}
    \mathbf{P}\bigg(\frac{K^{[0,s_1]}_{\max}}{n^{1/\alpha}}\leq k_1, \frac{K^{[0,s_2]}_{\max}}{n^{1/\alpha}}\leq k_2,..., \frac{K^{[0,s_t]}_{\max} }{n^{1/\alpha}}\leq k_t \bigg) \longrightarrow \mathbf{P}\bigg(\kappa^{(0,s_1]}_{\max} \leq k_1,\kappa^{(0,s_2]}_{\max} \leq k_2,...,\kappa^{(0,s_t]}_{\max} \leq k_t \bigg).
\end{align}
Note that for every $s_i,s_j\in[0,t]: s_i<s_j$ it holds that $\mathbf{P}(K^{[0,s_i]}_{\max}>K^{[0,s_j]}_{\max})=0.$ Thus, computing the joint distribution function for every non-decreasing sequence $k_1,k_2,...,k_t\in\mathbf{N}$ is straightforward. For other sequences, we perform the following loop:
\begin{align}
    &i=1,\\
    &k_i=\min\{k_1,k_2,...,k_t\},\nonumber\\
    &\Bar{m}=\arg\min k_i,\nonumber\\
    &\text{Remove $\{k_1,k_2,...,k_{\Bar{m}-1}\}$ from the sequence},\nonumber\\
    &\text{Iterate for $\{k_{\Bar{m}+1},...,k_t\}$}.\nonumber
\end{align}
Denote the modified $k_1,k_2,...,k_t$ by $\bar{k}_1,\bar{k}_2,...,\bar{k}_t$ and note that the modified version is a non-decreasing sequence. It follows that
\begin{align}
    \mathbf{P}\bigg(\frac{K^{[0,s_1]}_{\max}}{n^{1/\alpha}}\leq k_1, \frac{K^{[0,s_2]}_{\max}}{n^{1/\alpha}}\leq k_2,..., \frac{K^{[0,s_t]}_{\max} }{n^{1/\alpha}}\leq k_t \bigg) = \mathbf{P}\bigg(\frac{K^{[0,s_1]}_{\max}}{n^{1/\alpha}}\leq \bar{k}_1, \frac{K^{[0,s_2]}_{\max}}{n^{1/\alpha}}\leq \bar{k}_2,..., \frac{K^{[0,s_t]}_{\max} }{n^{1/\alpha}}\leq \bar{k}_t \bigg).
\end{align}
Hence, we further assume that we are working with non-decreasing sequences. Note that, by definition (see (\ref{max_gr_size_split})), for a partition $\{0,s_1,s_2,...,s_{t-1},s_t\}$ of the time interval $[0,s_t]$ it holds
\begin{align}
    K^{[0,s_t]}_{\max}=\max\{K^{[0,s_1]}_{\max},K^{(s_1,s_2]}_{\max},...,K^{(s_{t-1},s_t]}_{\max}\}.
\end{align}
Thus,
\begin{align}
    \mathbf{P}&(K^{[0,s_1]}_{\max}\leq k_1 n^{1/\alpha}, K^{[0,s_2]}_{\max}\leq k_2 n^{1/\alpha},..., K^{[0,s_t]}_{\max}\leq k_t n^{1/\alpha})\\
    =& \mathbf{P}(K^{[0,s_1]}_{\max}\leq k_1 n^{1/\alpha}, \max\{K^{[0,s_1]}_{\max},K^{(s_1,s_2]}_{\max}\}\leq k_2 n^{1/\alpha},..., \max\{K^{[0,s_1]}_{\max},K^{(s_1,s_2]}_{\max},...,K^{(s_{t-1},s_t]}_{\max}\}\leq k_t n^{1/\alpha}) \nonumber\\
    =& \mathbf{P}(K^{[0,s_1]}_{\max}\leq k_1 n^{1/\alpha},K^{[0,s_1]}_{\max}\leq k_2 n^{1/\alpha}, K^{(s_1,s_2]}_{\max}\leq k_2n^{1/\alpha},...,K^{[0,s_1]}_{\max}\leq k_t n^{1/\alpha}, K^{(s_1,s_2]}_{\max}\leq k_tn^{1/\alpha}\nonumber\\
    &\hspace{10.7cm},...,K^{(s_{t-1},s_t]}_{\max}\leq k_t n^{1/\alpha}) \nonumber\\
    =& \mathbf{P}(K^{[0,s_1]}_{\max}\leq k_1 n^{1/\alpha}, K^{(s_1,s_2]}_{\max}\leq k_2n^{1/\alpha},...,K^{(s_{t-1},s_t]}_{\max}\leq k_t n^{1/\alpha}). \nonumber
\end{align}
Note that for non-overlapping time intervals, $\big(K^{(s_i,s_{i+1}]}_{\max}\big)_{i}$ are all independent. Hence, using the computation from Part 1 we obtain
\begin{align}
    \mathbf{P}&(K^{[0,s_1]}_{\max}\leq k_1 n^{1/\alpha}, K^{(s_1,s_2]}_{\max}\leq k_2n^{1/\alpha},...,K^{(s_{t-1},s_t]}_{\max}\leq k_t n^{1/\alpha}) = \mathbf{P}(K^{[0,s_1]}_{\max}\leq k_1 n^{1/\alpha}) \prod_{i=2}^t \mathbf{P}(K^{(s_{i-1},s_i]}_{\max}\leq k_in^{1/\alpha}) \nonumber\\
    &= \mathbf{P}(K^{[0,s_1]}_{\max}\leq k_1 n^{1/\alpha}) \prod_{i=2}^t \Big(\prod_{l>k_i n^{1/\alpha}}\prod_{a\in[n]_l} \mathbf{P}\big(\text{$a$ never ON in $(s_{i-1},s_i]$}\big) \Big)\\
    &=e^{-s_1\mathbf{E}[W_n]k_1^{-\alpha}} \prod_{l>k_1 n^{1/\alpha}}\prod_{a\in[n]_l} \frac{\ell_n^{l-1}}{\ell_n^{l-1}+l!p_l\prod_{i\in a}w_i} \prod_{i=2}^t e^{-(s_i-s_{i-1})\mathbf{E}[W_n]k_i^{-\alpha}}\nonumber\\
    &\stackrel{n\to\infty}{\longrightarrow} e^{-s_1\mathbf{E}[W]k_1^{-\alpha}} \prod_{i=2}^t e^{-(s_i-s_{i-1})\mathbf{E}[W]k_i^{-\alpha}}.\nonumber
\end{align}
Note that, since $k_1\leq k_2\leq...\leq k_t$,
\begin{align}
    &e^{-s_1\mathbf{E}[W]k_1^{-\alpha}} \prod_{i=2}^t e^{-(s_i-s_{i-1})\mathbf{E}[W]k_i^{-\alpha}} = \mathbf{P}(\kappa^{(0,s_1]}_{\max}\leq k_1) \prod_{i=2}^t \mathbf{P}(\kappa^{(s_{i-1},s_i]}_{\max}\leq k_i)\\
    &= \mathbf{P}(\kappa^{(0,s_1]}_{\max}\leq k_1,\kappa^{(s_1,s_2]}_{\max}\leq k_2,...,\kappa^{(s_{t-1},s_t]}_{\max}\leq k_t)\nonumber\\
    &= \mathbf{P}(\kappa^{(0,s_1]}_{\max}\leq k_1,\max\{\kappa^{(0,s_1]}_{\max},\kappa^{(s_1,s_2]}_{\max}\}\leq k_2,...,\max\{\kappa^{(0,s_1]}_{\max},\kappa^{(s_1,s_2]}_{\max},...,\kappa^{(s_{t-1},s_t]}_{\max}\}\leq k_t)\nonumber\\
    &=\mathbf{P}(\kappa^{(0,s_1]}_{\max}\leq k_1,\kappa^{(0,s_2]}_{\max}\leq k_2,...,\kappa^{(0,s_t]}_{\max}\leq k_t).\nonumber
\end{align}
This proves the desired convergence from (\ref{k_max_desired_conv}).\\

\noindent\textbf{Condition (ii): Tightness of the limiting process.} We want to show that
\begin{align}
    \lim_{\delta\to0}\mathbf{P}\big( \big|\kappa^{(0,t]}_{\max}-\kappa^{(0,t-\delta]}_{\max}\big|>\varepsilon\big) = 0,
\end{align}
which is equivalent to
\begin{align}
    \lim_{\delta\to0}\mathbf{P}\big( \kappa^{(0,t]}_{\max}>\kappa^{(0,t-\delta]}_{\max}+\varepsilon\big) = 0.
\end{align}
By the proof of condition $(i)$
\begin{align}    \label{goal_lim_ii_k_max}  
\lim_{\delta\to0}\mathbf{P}\big( \kappa^{(0,t]}_{\max}>\kappa^{(0,t-\delta]}_{\max}+\varepsilon\big) = \lim_{\delta\to0}\lim_{n\to\infty}
\mathbf{P}\big( K^{[0,t]}_{\max}>K^{[0,t-\delta]}_{\max}+\varepsilon n^{1/\alpha}\big).
\end{align}
We compute 
\begin{align} 
    \mathbf{P}&\big(K^{[0,t]}_{\max}>K^{[0,t-\delta]}_{\max}+\varepsilon n^{1/\alpha}\big) = \sum_{k=2}^{\infty} \mathbf{P}(\text{group bigger than $k+\varepsilon n^{1/\alpha}$ arrives in $(t-\delta,t]$})\mathbf{P}(K^{[0,t-\delta]}_{\max}=k)\\
    &=\sum_{k=2}^{\infty} \big(1-e^{-\delta\ell_n\sum_{l>k+\varepsilon n^{1/\alpha}}p_l}\big)\mathbf{P}(K^{[0,t-\delta]}_{\max}=k) \leq \delta\ell_n\sum_{k=2}^{\infty} (k+\varepsilon n^{1/\alpha})^{-\alpha}\mathbf{P}(K^{[0,t-\delta]}_{\max}=k)\nonumber\\
    &\leq \delta\ell_n\sum_{k=2}^{\infty} (\varepsilon n^{1/\alpha})^{-\alpha}\mathbf{P}(K^{[0,t-\delta]}_{\max}=k) = \frac{\delta\ell_n}{\varepsilon^{\alpha}n}=\frac{\delta\mathbf{E}[W_n]}{\varepsilon^{\alpha}}.\nonumber
\end{align}
Substituting into (\ref{goal_lim_ii_k_max}) yields
\begin{align}
    \lim_{\delta\to0}\mathbf{P}\big( \kappa^{(0,t]}_{\max}>\kappa^{(0,t-\delta]}_{\max}+\varepsilon\big) = \lim_{\delta\to0}\lim_{n\to\infty}\frac{\delta\mathbf{E}[W_n]}{\varepsilon^{\alpha}} = \lim_{\delta\to0}\frac{\delta\mathbf{E}[W]}{\varepsilon^{\alpha}}=0.
\end{align}

\noindent\textbf{Condition (iii): Tightness of the original process.}
We want to show that for any $\varepsilon,\eta>0$ there exists $n_0\geq1$ and $\delta>0$ such that, for all $n\geq n_0$,
\begin{align}
    \mathbf{P}\Bigg( \sup_{a,s_1,s_2:s\in[s_1,s_2],s_2-s_1<\delta} \min \Bigg(\Bigg|\frac{K^{[0,s]}_{\max}}{n^{1/\alpha}}-\frac{K^{[0,s_1]}_{\max}}{n^{1/\alpha}}\Bigg|,\Bigg|\frac{K^{[0,s_2]}_{\max}}{n^{1/\alpha}}-\frac{K^{[0,s]}_{\max}}{n^{1/\alpha}}\Bigg|\Bigg) > \varepsilon \Bigg) \leq \eta.
\end{align}
Note that
\begin{align}
    \mathbf{P}&\Bigg( \sup_{s\in[s_1,s_2],s_2-s_1<\delta} \min \Bigg(\Bigg|\frac{K^{[0,s]}_{\max}}{n^{1/\alpha}}-\frac{K^{[0,s_1]}_{\max}}{n^{1/\alpha}}\Bigg|,\Bigg|\frac{K^{[0,s_2]}_{\max}}{n^{1/\alpha}}-\frac{K^{[0,s]}_{\max}}{n^{1/\alpha}}\Bigg|\Bigg) > \varepsilon \Bigg)\\
    &\leq \mathbf{P}\Bigg( \exists s\in[s_1,s_2],s_2-s_1<\delta: \min \Big(\Big|K^{[0,s]}_{\max}-K^{[0,s_1]}_{\max}\Big|,\Big|K^{[0,s_2]}_{\max}-K^{[0,s]}_{\max}\Big|\Big) > \varepsilon n^{1/\alpha} \Bigg).\nonumber
\end{align}
For the minimum of two terms to be bigger than $\varepsilon n^{1/\alpha}$, both of them have to be bigger than $\varepsilon n^{1/\alpha}$. As we are dealing with a non-decreasing random variable, this can only happen if a group which is bigger by $\varepsilon n^{1/\alpha}$ than the so far largest group arrives in $(s_1,s]$ and then the same happens in $(s,s_2]$. Once again we partition $[0,t]$ into intervals of length $\delta$ and apply stationarity to deduce
\begin{align}
    \mathbf{P}&\Bigg( \sup_{s\in[s_1,s_2],s_2-s_1<\delta} \min \Bigg(\Bigg|\frac{K^{[0,s]}_{\max}}{n^{1/\alpha}}-\frac{K^{[0,s_1]}_{\max}}{n^{1/\alpha}}\Bigg|,\Bigg|\frac{K^{[0,s_2]}_{\max}}{n^{1/\alpha}}-\frac{K^{[0,s]}_{\max}}{n^{1/\alpha}}\Bigg|\Bigg) > \varepsilon \Bigg) \leq \frac{t}{\delta}\mathbf{P}(T_1),
\end{align}
where $T_l$ is the event that the dynamic largest group process encounters two jumps in the $l$-th time interval of lenght $\delta$.
We compute
\begin{align}
    \mathbf{P}&(T_1)\leq \mathbf{P}(K^{[0,s]}_{\max}>K^{[0,s_1]}_{\max}+\varepsilon n^{1/\alpha},K^{[0,s_2]}_{\max}>K^{[0,s]}_{\max}+\varepsilon n^{1/\alpha})\\
    &\sum_{k=2}^{\infty}\sum_{l=k+1}^{\infty}\mathbf{P}(\text{$a: |a|>l+\varepsilon n^{1/\alpha}$ arrives in $(s_1,s]$})\mathbf{P}(\text{$a: |a|=l+\varepsilon n^{1/\alpha}$ arrives in $(s,s_2]$})\mathbf{P}(K^{[0,t-\delta]}_{\max}=k).\nonumber
\end{align}
From previous points we know that for $l\geq 2$ $\mathbf{P}(\text{$a: |a|>l+\varepsilon n^{1/\alpha}$ arrives in $(s_1,s]$})\leq \delta\ell_n(\varepsilon n^{1/\alpha})^{-\alpha}$. Substituting this bound in the above calculation yields
\begin{align}
    \mathbf{P}(T_1)&\leq\sum_{k=2}^{\infty}\sum_{l=k+1}^{\infty}\delta\ell_n(\varepsilon n^{1/\alpha})^{-\alpha}\mathbf{P}(\text{$a: |a|=l+\varepsilon n^{1/\alpha}$ arrives in $(s,s_2]$})\mathbf{P}(K^{[0,s_1]}_{\max}=k)\\
&=\delta\ell_n(\varepsilon n^{1/\alpha})^{-\alpha}\leq\sum_{k=2}^{\infty}\mathbf{P}(\text{$a: |a|>k+\varepsilon n^{1/\alpha}$ arrives in $(s,s_2]$})\mathbf{P}(K^{[0,s_1]}_{\max}=k)\nonumber\\
&\leq \bigg(\delta\ell_n(\varepsilon n^{1/\alpha})^{-\alpha}\bigg)^2=\frac{\delta^2(\mathbf{E}[W_n])^2}{\varepsilon^{2\alpha}}\nonumber
\end{align}
Note that for $n_0$ big enough we have that $\mathbf{E}[W_n]$ is close to $\mathbf{E}[W]$ for any $n\geq n_0$. Hence, if we take $\delta < \frac{\eta\varepsilon^{2\alpha}}{(\mathbf{E}[W])^2}$ the condition will hold for any pair $\eta,\varepsilon>0$.
\end{proof}

\begin{wrapfigure}{r}{4cm}
\label{wrap-fig:1}
\includegraphics[width=0.17\textwidth]{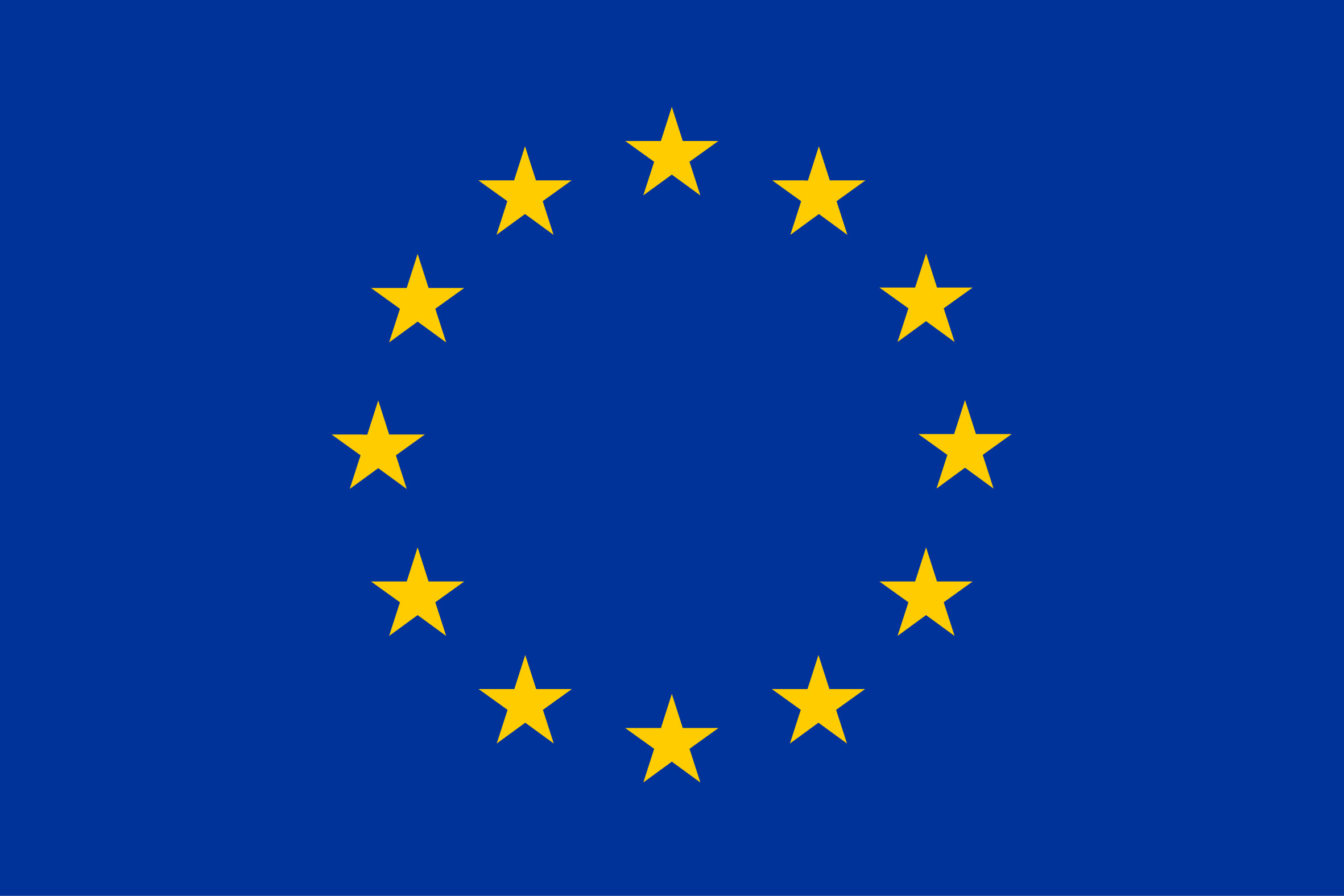}
\end{wrapfigure}
\paragraph{\bf Acknowledgement.} The work of MM is supported by the European Union’s Horizon 2020 research and innovation programme under the Marie Skłodowska-Curie grant agreement no. 945045, and by the NWO Gravitation project NETWORKS under grant no. 024.002.003.\\
The work of RvdH is supported in parts by the NWO through the Gravitation {\sc Networks} grant 024.002.003.

\printbibliography

\appendix
\newpage
\section{Proof of the link between \texorpdfstring{$\bgrg$}{} and \texorpdfstring{$\bcm$}{} from \texorpdfstring{\cite{Hofstad2018}}{}} 
\label{appendix_two_models_relation}
Here we prove the theorem that lies at the heart of our results - the fact that, under certain conditions, $\bgrg$ and $\mathrm{BCM}_n(\bm{d}^{(l)},\bm{d}^{(r)})$ are equivalent. The proof follows in four steps: First, we show that the $\bgrg$ conditioned on its degree sequence is uniform. Secondly, we show that $\mathrm{BCM}_n(\bm{d}^{(l)},\bm{d}^{(r)})$ conditioned on simplicity is uniform. We also state regularity conditions that allow us to draw an even stronger link between the two models. Finally, we conclude that under such circumstances $\bgrg$ and $\mathrm{BCM}_n(\bm{d}^{(l)},\bm{d}^{(r)})$ are equivalent. 

\subsection{\texorpdfstring{$\bgrg$}{} conditioned on degree sequence is uniform}
We adapt the derivation of a similar result for $\mathrm{GRG}_n(\bm{w})$. (See \textup{\cite[Section 6.6]{Hofstad2016}}). Note that $\bgrg$ is entirely determined by the group activity, i.e., if we know which group is active we automatically know which vertices are in it. Hence, we can encode the probability of $\bgrg$ taking a particular form via a sequence of indicator random variables: Take $x = (x_{a})_{a\in\cup_{k\geq2}[n]_k}$ - a sequence of $0$s and $1$s - and $X = (X_{a})_{a\in\cup_{k\geq2}[n]_k}$ - sequence of independent random variables describing the existence of particular groups i.e.,
\begin{align}
    \mathbf{P}(X_{a}=1) = 1-\mathbf{P}(X_{a}=0) = \pi^a_{\text{\rm{ON}}}.
\end{align}
Then, we have the following identification of the law of $\bgrg$:
\begin{proposition}[$\bgrg$ as a function of left- and right-degrees.] \label{fct_left_right}
The probability that the sequence $X = (X_{a})_{a\in\cup_{k\geq2}[n]_k}$ takes a form $x = (x_{a})_{a\in\cup_{k\geq2}[n]_k}$ can be expressed as a function of left- and right-degree sequences $(\bm{d}^{(l)},\bm{d}^{(r)})=\big((d^{(l)}_i)_{i\in[n]},(d^{(r)}_a)_{a\in\cup_{k\geq2}[n]_k}\big)$:
\begin{align}
    \mathbf{P}(X=x) = H\big((d^{(l)}(x),d^{(r)}(x))\big)\big) \cdot \bigg(\prod_{a\in\cup_{k\geq2}[n]_k} (1+\laoff)\bigg)^{-1},
\end{align}
where $d_i^{(l)}(x)=\sum_{a\in[n]_k:a\ni i}x_a$, $d_a^{(r)}(x) = |a|\cdot x_a$ and $H$ satisfies
\begin{align}
    H\big((d^{(l)}(x),d^{(r)}(x))\big)\big) = \prod_{i\in[n]} w_i^{d^{(l)}_i(x)} \prod_{a\in\cup_{k\geq2}[n]_k} \frac{f\big(d^{(r)}_a(x)\big)}{\ell_n^{d^{(r)}_a(x)\big(1-\frac{1}{|a|}\big)}}.
\end{align}
\end{proposition}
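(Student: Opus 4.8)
The plan is to exploit the independence of the group indicators and directly factor the resulting product-form likelihood, then reorganize the exponents so that they are expressed through the two degree sequences. First I would write, using the independence of $(X_a)_{a\in\cup_{k\geq2}[n]_k}$,
\[
    \mathbf{P}(X=x) = \prod_{a\in\cup_{k\geq2}[n]_k} (\piaon)^{x_a}(\piaoff)^{1-x_a}.
\]
Using (\ref{pi_ON}) together with $\laon=1$, so that $\piaon=\laoff/(1+\laoff)$ and $\piaoff=1/(1+\laoff)$, each factor becomes $(\laoff)^{x_a}/(1+\laoff)$. Factoring out the denominators yields
\[
    \mathbf{P}(X=x) = \Big(\prod_{a\in\cup_{k\geq2}[n]_k} (\laoff)^{x_a}\Big)\Big(\prod_{a\in\cup_{k\geq2}[n]_k} (1+\laoff)\Big)^{-1},
\]
which already isolates exactly the normalizing constant appearing in the proposition. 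It therefore remains only to identify the first product with $H$.

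Next I would substitute $\laoff=f(|a|)\prod_{i\in a}w_i/\ell_n^{|a|-1}$ and split the product into a weight part and an $f/\ell_n$ part, observing that only active groups ($x_a=1$) contribute nontrivially. For the weight part I would interchange the order of the two products and count, for each vertex $i$, how many active groups contain it; since by definition $d^{(l)}_i(x)=\sum_{a\ni i}x_a$, this gives $\prod_{a:\,x_a=1}\prod_{i\in a}w_i=\prod_{i\in[n]} w_i^{d^{(l)}_i(x)}$, matching the first factor of $H$.

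For the remaining factor I would use $d^{(r)}_a(x)=|a|\,x_a$ to rewrite each term. When $x_a=1$ we have $f(|a|)=f(d^{(r)}_a(x))$ and $\ell_n^{|a|-1}=\ell_n^{|a|(1-1/|a|)}=\ell_n^{d^{(r)}_a(x)(1-1/|a|)}$, while when $x_a=0$ both numerator and denominator reduce to $1$ under the convention $f(0)=1$, so an inactive group may be absorbed as the trivial factor $f(0)/\ell_n^0=1$. Assembling the two pieces then reproduces exactly $H\big((d^{(l)}(x),d^{(r)}(x))\big)$ and completes the identification.

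The argument is essentially a bookkeeping computation and I do not expect a conceptual obstacle; the only point demanding care is the passage from a product indexed by groups to one indexed by vertices in the weight factor, together with the consistent treatment of inactive groups in the $H$-formula, which relies on reading $f(0)=1$ (equivalently, on restricting the product to active groups). This mirrors the derivation for $\mathrm{GRG}_n(\bm{w})$ in \textup{\cite[Section 6.6]{Hofstad2016}}, with the role of single edges played here by groups and with the extra size-dependent factor $f(|a|)/\ell_n^{|a|-1}$ taking the place of $w_iw_j/\ell_n$.
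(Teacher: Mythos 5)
Your proposal is correct and follows essentially the same route as the paper's proof: both start from the independent Bernoulli product $\prod_a(\piaon)^{x_a}(\piaoff)^{1-x_a}$, factor out $\big(\prod_a(1+\laoff)\big)^{-1}$, interchange the order of products to turn $\prod_{a:x_a=1}\prod_{i\in a}w_i$ into $\prod_{i\in[n]}w_i^{d^{(l)}_i(x)}$, and use the convention $f(0)=1$ together with $d^{(r)}_a(x)=|a|\,x_a$ to recast the remaining factor as a function of the right-degrees. The only cosmetic difference is that the paper momentarily distributes $f(|a|)^{1/|a|}$ across the per-vertex factors before regrouping, which your more direct factorization of $(\laoff)^{x_a}/(1+\laoff)$ bypasses without any loss.
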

\begin{proof}
Taking $X = (X_{a})_{a\in\cup_{k\geq2}[n]_k}$ and $x = (x_{a})_{a\in\cup_{k\geq2}[n]_k}$ as above, we obtain
\begin{align} \label{our_model}
    \mathbf{P}(X=x) & = \prod_{a\in\cup_{k\geq2}[n]_k} (\pi^a_{\text{\rm{ON}}})^{x_a} (1-\pi^a_{\text{\rm{ON}}})^{1-x_a}\\
    & = \prod_{a\in\cup_{k\geq2}[n]_k} \Bigg(\prod_{i\in a} \frac{f(|a|)^{\frac{1}{|a|}}w_i}{\ell_n^{\frac{|a|-1}{|a|}}} \Bigg)^{x_a} \prod_{a\in\cup_{k\geq2}[n]_k} \frac{1}{1+\frac{f(|a|)\prod_{i\in a}w_i}{\ell_n^{|a|-1}}}.\nonumber
\end{align}
Note that $\frac{\prod_{i\in a}f(|a|)w_i}{\ell_n^{|a|-1}} = \laoff$ and hence we can abbreviate
\begin{align} \label{main_prod}
    \mathbf{P}(X=x) & = \Big(\prod_{a\in\cup_{k\geq2}[n]_k} (1+\laoff)\Big)^{-1} \prod_{a\in\cup_{k\geq2}[n]_k} \bigg(\prod_{i\in a} w_i \bigg)^{x_a} \Bigg(\frac{f(|a|)}{\ell_n^{{|a|-1}}} \Bigg)^{x_a}.
\end{align}
We observe that
\begin{align} \label{l_deg_prod}
    \prod_{a\in\cup_{k\geq2}[n]_k} \bigg(\prod_{i\in a} w_i \bigg)^{x_a} = \prod_{a\in\cup_{k\geq2}[n]_k} \prod_{i\in a} w_i^{x_a} = \prod_{i\in[n]} \prod_{a:a \ni i}  w_i^{x_a} = \prod_{i\in[n]} w_i^{\sum_{a\in[n]_k:a\ni i}x_a} = \prod_{i\in[n]} w_i^{d^{(l)}_i(x)}
\end{align}
is a function of the left-degrees. Similarly, extending the definition of the function of group-size distribution by fixing $f(0)=1$,
\begin{align} \label{r_deg_prod}
    \prod_{a\in\cup_{k\geq2}[n]_k} \Big(\frac{f(|a|)}{\ell_n^{|a|-1}}\Big)^{x_a} = \prod_{a\in\cup_{k\geq2}[n]_k} \frac{(f(|a|))^{x_a}}{\ell_n^{(|a|-1)x_a}} = \prod_{a\in\cup_{k\geq2}[n]_k} \frac{f\big(d^{(r)}_a(x)\big)}{\ell_n^{d^{(r)}_a(x)\big(1-\frac{1}{|a|}\big)}}
\end{align}
is a function of the right-degrees, as $d^{(r)}_a(x)=|a|\cdot x_a$. After substituting (\ref{l_deg_prod}) and (\ref{r_deg_prod}) into (\ref{main_prod}) the claim follows.
\end{proof}

Given Proposition \ref{fct_left_right} it is not difficult to show that the static bipartite graph conditioned on its degree sequence is uniform:

\ourgraphuniform*

\begin{proof}
With $x = (x_{a})_{a\in\cup_{k\geq2}[n]_k}$ satisfying $d^{(l)}_i(x) = d^{(l)}_i$ for all $i \in [n]$ and $d^{(r)}_a(x) = d^{(r)}_a$ for all groups $a$ we can write
\begin{align}
    \mathbf{P}(X = x&\mid  d^{(l)}_i(X) = d^{(l)}_i \hspace{0.1cm} \forall i \in [n], d^{(r)}_a(X) = d^{(r)}_a \hspace{0.1cm} \forall a)\\
    & = \frac{\mathbf{P}(X=x)}{\mathbf{P}(d^{(l)}_i(X) = d^{(l)}_i \hspace{0.1cm} \forall i \in [n], d^{(r)}_a(X) = d^{(r)}_a \hspace{0.1cm} \forall a \in [n]_k)} \nonumber\\
    &= \frac{\mathbf{P}(X=x)}{\sum_{\{y:d^{(l)}_i(y) = d^{(l)}_i \hspace{0.1cm} \forall i \in [n], d^{(r)}_a(y) = d^{(r)}_a \hspace{0.1cm} \forall a \in [n]_k\}}\mathbf{P}(X=y)}\nonumber\\
    & = \frac{G(a)^{-1} H\big((d^{(l)}(x),d^{(r)}(x))\big)}{\sum_{\{y:d^{(l)}_i(y) = d^{(l)}_i \hspace{0.1cm} \forall i \in [n], d^{(r)}_a(y) = d^{(r)}_a \hspace{0.1cm} \forall a \in [n]_k\}}G(a)^{-1} H\big((d^{(l)}(x),d^{(r)}(x)\big)}\nonumber\\
    & = \frac{H\big((d^{(l)},d^{(r)})\big)}{\sum_{\{y:d^{(l)}_i(y) = d^{(l)}_i \hspace{0.1cm} \forall i \in [n], d^{(r)}_a(y) = d^{(r)}_a \hspace{0.1cm} \forall a \in [n]_k\}}H\big((d^{(l)},d^{(r)})\big)}\nonumber\\
    & = \frac{1}{\#\{y:d^{(l)}_i(y) = d^{(l)}_i \hspace{0.1cm} \forall i \in [n], d^{(r)}_a(y) = d^{(r)}_a \hspace{0.1cm} \forall a \in [n]_k\}},\nonumber
\end{align}
which means that the distribution is uniform over all bipartite graphs with the prescribed left- and right-degree sequences.
\end{proof}

\subsection{Bipartite graph with communities conditioned on simplicity is uniform}

Note: $\bcm$ from \cite{Hofstad2018} does not have a community structure. Hence, we prove the equivalence between $\bgrg$ and $\bcm$ with complete communities. Before proving the main result, i.e., the fact that the bipartite configuration model is uniform given simplicity, we need an auxiliary proposition (which is analogous to a similar result for $\mathrm{CM}_n(\bm{d})$ - see \textup{\cite[Proposition 7.7]{Hofstad2016}}):

\begin{proposition} (The law of $\bcm$) \label{proposition_law}
Denote by $G = (x_{ia})_{i\in [n], a\in[n]_k}$ a bipartite multigraph on left-vertices $i\in [n]$ and right-vertices $a\in [n]_k$, such that $d^{(l)}_i =\sum_{a\in\cup_{k\geq2}[n]_k} x_{ia}$ and $d^{(r)}_a =\sum_{i\in[n]} x_{ia}$, where $x_{ia}$ is the number of edges between $i\in [n]$ and $a\in[n]_k$. Then,
\begin{align} \label{BCM_G}
    \mathbf{P}(\bcm = G) = \frac{1}{h_n!} \frac{\prod_{i \in [n]}d^{(l)}_i! \prod_{a\in\cup_{k\geq2}[n]_k}d^{(r)}_a!}{\prod_{i\in[n],a\in[n]_k} x_{ia}!},
\end{align}
with $h_n = \sum_{i \in [n]}d^{(l)}_i = \sum_{a\in\cup_{k\geq2}[n]_k} d^{(r)}_a$. By $d$ in $\bcm$ we mean a double degree sequence $(\bm{d}^{(l)},\bm{d}^{(r)})=\big((d^{(l)}_i)_{i\in[n]},(d^{(r)}_a)_{a\in\cup_{k\geq2}[n]_k}\big)$.
\end{proposition}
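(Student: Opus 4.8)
The plan is to follow the blueprint of the analogous result for the unipartite configuration model, \textup{\cite[Proposition 7.7]{Hofstad2016}}, adapting the half-edge bookkeeping to the bipartite setting. Recall that $\bcm$ is constructed by attaching $d^{(l)}_i$ half-edges to each left-vertex $i\in[n]$ and $d^{(r)}_a$ half-edges to each right-vertex $a$, so that both sides carry exactly $h_n = \sum_{i}d^{(l)}_i = \sum_{a}d^{(r)}_a$ half-edges, and then selecting a uniformly random perfect matching between the left and the right half-edges. Since such a matching is precisely a bijection between two labelled sets of size $h_n$, there are $h_n!$ of them and each occurs with probability $1/h_n!$. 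Thus it suffices to count $N(G)$, the number of matchings (configurations) whose projection to the multigraph is the prescribed $G=(x_{ia})$, and then set $\mathbf{P}(\bcm=G)=N(G)/h_n!$.

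First I would count $N(G)$ in three stages. For each left-vertex $i$, I partition its $d^{(l)}_i$ labelled half-edges according to the group they will be matched into, requiring exactly $x_{ia}$ of them to point at group $a$; the number of such partitions is the multinomial coefficient $d^{(l)}_i!/\prod_{a}x_{ia}!$. Symmetrically, for each group $a$ I partition its $d^{(r)}_a$ labelled half-edges among the vertices, with $x_{ia}$ of them pointing at $i$, giving $d^{(r)}_a!/\prod_{i}x_{ia}!$ choices. Once these two partitions are fixed, for every pair $(i,a)$ there remain $x_{ia}$ left half-edges and $x_{ia}$ right half-edges that must be matched to one another, which can be done in $x_{ia}!$ ways.

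Multiplying over all vertices, groups and pairs then gives
\begin{align}
    N(G) = \Bigg(\prod_{i\in[n]}\frac{d^{(l)}_i!}{\prod_{a}x_{ia}!}\Bigg)\Bigg(\prod_{a}\frac{d^{(r)}_a!}{\prod_{i}x_{ia}!}\Bigg)\prod_{i,a}x_{ia}! = \frac{\prod_{i\in[n]}d^{(l)}_i!\,\prod_{a}d^{(r)}_a!}{\prod_{i,a}x_{ia}!},
\end{align}
since the two multinomials together contribute $\big(\prod_{i,a}x_{ia}!\big)^{-2}$, of which one factor cancels against the $\prod_{i,a}x_{ia}!$ produced by the within-block pairings. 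Dividing by $h_n!$ yields exactly (\ref{BCM_G}).

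The main point requiring care is the third stage of the count, namely verifying that the $x_{ia}!$ pairings within each $(i,a)$ block are genuinely free and are neither already fixed by, nor double-counted against, the two partition steps; this is what produces the single surviving $\prod_{i,a}x_{ia}!$ in the denominator. I would also note explicitly that the bipartite structure rules out self-loops, so — unlike the unipartite case in \textup{\cite[Proposition 7.7]{Hofstad2016}} — no self-loop correction factor appears, and only multi-edges (captured by $x_{ia}\geq 2$) need to be accounted for, precisely through the $x_{ia}!$ terms.
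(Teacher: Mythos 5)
Your proposal is correct and the final formula is derived along the paper's skeleton: both proofs write $\mathbf{P}(\bcm = G) = N(G)/h_n!$, using that the uniform pairing is a uniform bijection between the $h_n$ left and $h_n$ right half-edges, and both arrive at $N(G) = \prod_{i}d^{(l)}_i!\,\prod_{a}d^{(r)}_a!\big/\prod_{i,a}x_{ia}!$. Where you genuinely diverge is in how $N(G)$ is enumerated. The paper argues by symmetry: starting from one configuration projecting to $G$, it permutes the half-edge labels at every vertex ($\prod_{i}d^{(l)}_i!\prod_{a}d^{(r)}_a!$ permutations) and then divides by $\prod_{i,a}x_{ia}!$ to correct for permutations that reproduce the same set of half-edge pairs --- an orbit-stabilizer style overcounting argument stated somewhat informally (``we divide \dots\ to compensate for the double-counting''). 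You instead count constructively in three stages: an ordered partition of each $L_i$ by target group (multinomial $d^{(l)}_i!/\prod_a x_{ia}!$), an ordered partition of each $R_a$ by target vertex, and a free bijection within each $(i,a)$ block contributing $x_{ia}!$. This is fully bijective: given a matching $\sigma$ with the prescribed block counts, both partitions and all within-block restrictions are uniquely recovered from $\sigma$, so each configuration is produced exactly once and the single surviving $\prod_{i,a}x_{ia}!$ in the denominator emerges by cancellation rather than by a compensating correction --- which settles precisely the point you flag as needing care. What each approach buys: the paper's argument is shorter but leaves the stabilizer count ($\prod_{i,a}x_{ia}!$ permutations fixing the pairing) as an assertion, while yours makes the ``which pairings are free'' question explicit at the cost of a slightly longer bookkeeping. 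Your closing remark that the bipartite structure rules out self-loops, so no analogue of the self-loop factor in \cite[Proposition 7.7]{Hofstad2016} appears and only multi-edges enter through the $x_{ia}!$ terms, is correct and is left implicit in the paper.
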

\begin{remark}
Note that \textup{\cite[(2.38)]{Hofstad2022}} yields the same formula as (\ref{BCM_G}). However, the authors of \cite{Hofstad2022} deliver this result in a form of a remark, giving justification rather than formal proof. We provide a formal proof.
\end{remark}
\begin{proof}
We start by computing the number of all possible matchings between the left and right sides. Imagine we want to assign a right-half-edge to every left-half-edge uniformly at random. For the first fixed left-half-edge we have $h_n$ choices of available right-half-edges. For the second left-half-edge, $h_n-1$ choices, and so on. It is not hard to see that the number of all such matchings is $h_n!$. Hence,
\begin{align}
    \mathbf{P}(\bcm = G) = \frac{1}{h_n!} \# N(G),
\end{align}
where $N(G)$ is the number of configurations that, after identifying the vertices, result in the multigraph $G$. Note that permuting half-edges incident to vertices will give rise to the same pairs of left- and right-vertices, hence the same multigraph $G$, but yet, when it comes to half-edges, it is a different configuration. The number of such permutations is $\prod_{i \in [n]}d^{(l)}_i! \cdot \prod_{a\in\cup_{k\geq2}[n]_k}d^{(r)}_a!$. However, some of these permutations yield the same half-edge pairings. If two half-edges of the left-vertex $i\in[n]$ are paired to two half-edges of the right-vertex $a\in[n]_k$ and we permute all of them, we will have the same half-edges being matched again. Thus, we divide by $\prod_{i\in[n],a\in[n]_k} x_{ia}!$ to compensate for the 'double-counting' caused by multiple connections.
\end{proof}

Using this result we can prove the main theorem of the section (which is an adaptation of a similar result for $\mathrm{CM}_n(\bm{d})$ - see \textup{\cite[Proposition 7.15]{Hofstad2016}}):

\BCMuniform*

\begin{proof}
Since, by (\ref{BCM_G}), $\mathbf{P}(\bcm = G)$ is the same for every simple graph $G$, also conditional probability\\
$\mathbf{P}(\bcm = G\mid \bcm \hspace{0.1cm} \text{is simple})$ is the same for every simple graph $G$. Hence, for any degree sequence $(\bm{d}^{(l)},\bm{d}^{(r)})=\big((d^{(l)}_i)_{i\in[n]},(d^{(r)}_a)_{a\in\cup_{k\geq2}[n]_k}\big)$ and conditionally on the event \{$\bcm$ is a simple graph\}, $\bcm$ is a uniform simple graph with degree sequence $(\bm{d}^{(l)},\bm{d}^{(r)})=\big((d^{(l)}_i)_{i\in[n]},(d^{(r)}_a)_{a\in\cup_{k\geq2}[n]_k}\big)$.
\end{proof}

\subsection{Relation between \texorpdfstring{$\bgrg$}{} and \texorpdfstring{$\bcm$}{}.}

Thanks to Theorems \ref{our_graph_uniform} and \ref{BCM_uniform} we can show that the static bipartite generalized random graph and the bipartite configuration model under certain conditions yield a certain graph $G$ with the same probability. However, as we mentioned in the proof overview section, if we assume a few regularity conditions on the degree sequences, we can deduce a stronger link determining when certain events happen with high probability for $\bcm$ and $\bgrg$. The necessary regularity conditions are as follows:

\regcond*

If we assume these conditions for $\bcm$, the following is a natural consequence of Theorem \ref{BCM_uniform} (analogously to a similar result for $\mathrm{CM}_n(\bm{d})$ - see \textup{\cite[Corollary 7.17]{Hofstad2016}}):

\begin{corollary} (Uniform graphs with given degree sequence and $\bcm$). \label{BCM_unif_event}
Assume that $(\bm{d}^{(l)},\bm{d}^{(r)})=$\\$\big((d^{(l)}_i)_{i\in[n]},(d^{(r)}_a)_{a\in\cup_{k\geq2}[n]_k}\big)$ satisfies Condition \ref{reg_cond}. Then, an event $\mathcal{E}_n$ occurs with high probability for a uniform simple bipartite random graph with degrees $\bm{d}=(\bm{d}^{(l)},\bm{d}^{(r)})=\big((d^{(l)}_i)_{i\in[n]},(d^{(r)}_a)_{a\in\cup_{k\geq2}[n]_k}\big)$ when it occurs with high probability for $\bcm$.
\end{corollary}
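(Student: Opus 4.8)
The plan is to deduce this corollary directly from Theorem \ref{BCM_uniform}, combined with the single quantitative fact that, under Condition \ref{reg_cond}, $\bcm$ is simple with probability bounded away from zero. Let $U_n$ denote a uniform simple bipartite graph with the prescribed degree sequence $\bm{d}=(\bm{d}^{(l)},\bm{d}^{(r)})$. By Theorem \ref{BCM_uniform}, conditionally on the event $\{\bcm \text{ is simple}\}$ the graph $\bcm$ has exactly the law of $U_n$, so that for every event $\mathcal{E}_n$,
\begin{align}
    \mathbf{P}(U_n \in \mathcal{E}_n) = \mathbf{P}(\bcm \in \mathcal{E}_n \mid \bcm \text{ is simple}).
\end{align}
Hence it suffices to show that whenever $\mathbf{P}(\bcm \in \mathcal{E}_n)\to 1$, then also $\mathbf{P}(\bcm \in \mathcal{E}_n \mid \bcm \text{ is simple})\to 1$.

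First I would record the elementary conditional bound, working with the complement $\mathcal{E}_n^c$:
\begin{align}
    \mathbf{P}(\bcm \in \mathcal{E}_n^c \mid \bcm \text{ is simple}) = \frac{\mathbf{P}(\bcm \in \mathcal{E}_n^c,\; \bcm \text{ simple})}{\mathbf{P}(\bcm \text{ simple})} \leq \frac{\mathbf{P}(\bcm \in \mathcal{E}_n^c)}{\mathbf{P}(\bcm \text{ simple})}.
\end{align}
If $\mathbf{P}(\bcm \in \mathcal{E}_n)\to 1$ the numerator tends to $0$, and the whole ratio vanishes as soon as the denominator is bounded away from $0$. Thus the entire content of the corollary reduces to establishing the lower bound
\begin{align}
    \liminf_{n\to\infty}\mathbf{P}(\bcm \text{ is simple}) > 0.
\end{align}

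The main obstacle is precisely this lower bound, and this is where Condition \ref{reg_cond}(b) enters decisively. The plan is to mimic the classical argument for $\mathrm{CM}_n(\bm{d})$ (see \textup{\cite[Theorem 7.12 and Corollary 7.17]{Hofstad2016}}): since the graph is bipartite there are no self-loops, so simplicity is equivalent to the absence of multiple edges between a left-vertex and a right-vertex. I would show by the method of moments, applied to the uniform pairing of half-edges, that the number of such multiple edges converges in distribution to a Poisson random variable with finite mean $\lambda$ expressed through the limiting size-biased left- and right-degrees $\Tilde{D}^{(l)}$ and $\Tilde{D}^{(r)}$. The finiteness of $\lambda$ is exactly guaranteed by $\mathbf{E}[(D^{(l)})^2],\mathbf{E}[(D^{(r)})^2]<\infty$ from Condition \ref{reg_cond}(b); this then yields $\mathbf{P}(\bcm \text{ is simple})\to e^{-\lambda}>0$, which fed into the two displayed bounds completes the proof. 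The only genuinely delicate point is controlling the factorial moments of the multiple-edge count uniformly in $n$, for which the second-moment assumption is both used and, in general, necessary, since under only a finite first moment the probability of simplicity may degenerate to zero.
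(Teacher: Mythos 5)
Your reduction is exactly the paper's: identify the law of the uniform graph $U_n$ with $\bcm$ conditioned on simplicity via Theorem \ref{BCM_uniform}, bound $\mathbf{P}(\bcm\in\mathcal{E}_n^c\mid\bcm\text{ simple})\leq \mathbf{P}(\bcm\in\mathcal{E}_n^c)/\mathbf{P}(\bcm\text{ simple})$, and reduce everything to $\liminf_n\mathbf{P}(\bcm\text{ simple})>0$. The only divergence is how that last bound is obtained. The paper simply imports it as a black box from \textup{\cite[Theorem 1.10 (1.45)]{Angel2016}}, noting that its hypotheses are implied by Condition \ref{reg_cond}; you instead propose to re-derive it by the method of moments, observing that bipartiteness rules out self-loops so that simplicity is equivalent to the absence of multiple edges, and showing the multi-edge count converges to a Poisson variable with finite mean expressed through $\Tilde{D}^{(l)}$ and $\Tilde{D}^{(r)}$ (with the limit $\mathbf{P}(\bcm\text{ simple})\to e^{-\lambda}>0$, where $\lambda$ is finite precisely by the second-moment part of Condition \ref{reg_cond}(b)). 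This sketch is sound and is essentially the classical $\mathrm{CM}_n(\bm{d})$ argument adapted to the bipartite pairing; what it buys is a self-contained proof and an explicit limiting constant, at the cost of carrying out the factorial-moment estimates uniformly in $n$ --- the genuinely technical step you correctly flag, and the one the paper avoids by citation. Your remarks that bipartiteness kills self-loops and that a finite first moment alone does not suffice for the simplicity probability to stay bounded away from zero are both accurate. In short: correct, same skeleton, with one cited ingredient replaced by a (viable but heavier) direct Poisson-approximation proof.
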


\begin{proof}
Let $\mathrm{UG}_n(\bm{d})$ denote a uniform simple bipartite random graph with degrees $\bm{d}=(\bm{d}^{(l)},\bm{d}^{(r)})$. Let $\mathcal{E}_n$ be a subset of multi-graphs that $\lim_{n \to \infty} \mathbf{P}\big(\bcm \in \mathcal{E}_n^c \big) = 0$. We need to prove that then also $\lim_{n \to \infty} \mathbf{P}\big(\mathrm{UG}_n(\bm{d})\in \mathcal{E}_n^c \big)= 0$. By Theorem \ref{BCM_uniform},
\begin{align}
    \mathbf{P}\big((\mathrm{UG}_n(\bm{d})\in \mathcal{E}_n^c \big) & = \mathbf{P}(\bcm \in \mathcal{E}_n^c\mid  \bcm \hspace{0.1cm} \text{is simple})\\
    & = \frac{\mathbf{P}(\bcm \in \mathcal{E}_n^c, \bcm \hspace{0.1cm} \text{is simple})}{\mathbf{P}(\bcm \hspace{0.1cm} \text{is simple})}\nonumber\\
    &\leq \frac{\mathbf{P}(\bcm \in \mathcal{E}_n^c)}{\mathbf{P}(\bcm \hspace{0.1cm} \text{is simple})}.\nonumber
\end{align}
By assumption we have that $\lim_{n \to \infty} \mathbf{P}(\bcm \in \mathcal{E}_n^c) = 0$. Moreover, by \textup{\cite[Theorem 1.10 (1.45)]{Angel2016}}, for which the conditions are satisfied by Condition \ref{reg_cond}, it follows
\begin{align}\label{BCM_simple_wpp}
    \liminf_{n \to \infty} \mathbf{P}(\bcm \hspace{0.1cm} \text{is simple}) > 0,
\end{align}
so that $\mathbf{P}\big((\mathrm{UG}_n(\bm{d})\in \mathcal{E}_n^c \big) \to 0$.
\end{proof}

Now we can prove the final result on the link between the two models (which is analogous to a similar result for $\mathrm{GRG}_n(\bm{w})$ and $\mathrm{CM}_n(\bm{d})$ - see \textup{\cite[Theorem 7.18]{Hofstad2016}}):

\twomodelsrelation*

\begin{proof}
Equality in (\ref{two_models_distr_equality}) follows from Theorems \ref{our_graph_uniform} and \ref{BCM_uniform} for every simple bipartite graph $G$ with degree sequence $\bm{D}(\bm{d}^{(l)},\bm{d}^{(r)})$. These results imply that $\bgrg $ as well as $\bcm$ are uniform simple random graphs. Further, by (\ref{two_models_distr_equality}) we have that
\begin{align}
    \mathbf{P}(\bgrg \in \mathcal{E}_n\mid \bm{D}=\bm{d}) = \mathbf{P}(\bcm \in \mathcal{E}_n\mid  \bcm \hspace{0.1cm} \text{simple}). \nonumber
\end{align}
We rewrite
\begin{align}
    \mathbf{P}(\bgrg \in \mathcal{E}^c_n) &= \mathbf{E}\big[\mathbf{P}(\bgrg \in \mathcal{E}^c_n\mid D)\big]\\
    &=\mathbf{E}\big[\mathbf{P}(\bcm \in \mathcal{E}^c_n\mid \bcm \hspace{0.1cm} \text{simple})\big]\nonumber\\
    &\leq \mathbf{E}\bigg[\frac{\mathbf{P}(\bcm \in \mathcal{E}^c_n)}{\mathbf{P}(\bcm \hspace{0.1cm} \text{simple})} \wedge 1\bigg],\nonumber
\end{align}
where $\mathbf{E}$ is the expectation w.r.t the degree sequence $D$. We assumed that $\mathbf{P}(\bcm \in \mathcal{E}^c_n) \stackrel{\mathbf{P}}{\longrightarrow}0$. Since $D$ satisfies Condition \ref{reg_cond} in probability, by \textup{\cite[Theorem 1.10 (1.45)]{Angel2016}} it follows
\begin{align}\label{BCM_simple_wpp_2}
    \liminf_{n \to \infty} \mathbf{P}(\bcm \hspace{0.1cm} \text{is simple}) > 0.
\end{align}
Hence, by the dominated convergence theorem, we conclude that $\mathbf{P}(\bgrg \in \mathcal{E}^c_n)\longrightarrow0$.
\end{proof}

\section{Regularity conditions static bipartite graph and consequences} \label{appx_static_graph}

Theorem \ref{two_models_relation} shows that if the degree sequences of $\bcm$ and $\bgrg$ satisfy Condition \ref{reg_cond}(i)-(ii), then it can be deduced that if some event $\mathcal{E}_n$ happens with high probability for $\bcm$, it also happens with high probability for $\bgrg$. The results derived for $\bcm$ in \cite{Hofstad2018} and \cite{Hofstad2022} hold precisely under Conditions \ref{reg_cond}(i)-(ii). Hence, if $\bgrg$ also satisfies these regularity conditions, we can transfer all the results on the local convergence and the giant component from \cite{Hofstad2018} and \cite{Hofstad2022}.

Here, we show that the degree sequence of $\bgrg$ indeed satisfies the required conditions. We will also argue why it is possible to drop Condition \ref{reg_cond}(ii). Before proceeding to the proofs, we state one consequence of Condition \ref{cond_weights} that we frequently make use of in the following sections:

\begin{corollary} \label{asympt_max_weight}
Condition \ref{cond_weights} $(a)-(b)$ implies that $\max_{i \in [n]} w_i = o(n)$ and Condition \ref{cond_weights} $(a)-(c)$ implies that $\max_{i \in [n]} w_i = o(\sqrt{n})$.
\end{corollary}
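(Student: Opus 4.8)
The plan is to deduce both statements from the uniform integrability of the sequences $(W_n)_n$ and $(W_n^2)_n$, which in turn follows from combining the weak convergence in Condition~\ref{cond_weights}(a) with the first-moment convergence in (b), respectively the second-moment convergence in (c). The starting observation is that, since $W_n = w_{o_n}$ is the weight of a uniformly chosen vertex, for any measurable $g\ge0$ one has the exact identity $\mathbf{E}[g(W_n)] = \frac1n\sum_{i\in[n]} g(w_i)$; in particular $\mathbf{E}[W_n]=\ell_n/n$ and $\mathbf{E}[W_n^2]=\frac1n\sum_i w_i^2$.

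First I would record the elementary deterministic bound linking the maximal weight to a truncated first moment. Writing $w_{(n)}=\max_{i\in[n]} w_i$, the single index attaining the maximum contributes $w_{(n)}/n$ to the truncated mean whenever $w_{(n)}>K$, so for every threshold $K>0$,
\begin{align}
    \frac{w_{(n)}}{n}\,\mathds{1}_{\{w_{(n)}>K\}} \;\le\; \frac1n\sum_{i:\,w_i>K} w_i \;=\; \mathbf{E}\big[W_n\mathds{1}_{\{W_n>K\}}\big].
\end{align}
Next I would invoke the standard fact that, for nonnegative random variables, convergence in distribution together with convergence of the means implies uniform integrability: since $W_n\stackrel{d}{\to}W$ and $\mathbf{E}[W_n]\to\mathbf{E}[W]<\infty$ under Condition~\ref{cond_weights}(a)--(b), the family $(W_n)_n$ is uniformly integrable, i.e.\ $\lim_{K\to\infty}\sup_n\mathbf{E}[W_n\mathds{1}_{\{W_n>K\}}]=0$. (This follows from $\mathbf{E}[W_n\mathds{1}_{\{W_n>2K\}}]\le 2\,\mathbf{E}[(W_n-K)^+]$ together with $\mathbf{E}[(W_n-K)^+]=\mathbf{E}[W_n]-\mathbf{E}[W_n\wedge K]\to\mathbf{E}[(W-K)^+]$, the latter by weak convergence applied to the bounded continuous map $x\mapsto x\wedge K$, followed by dominated convergence in $K$.) Given $\varepsilon>0$, choose $K$ with $\sup_n\mathbf{E}[W_n\mathds{1}_{\{W_n>K\}}]<\varepsilon$; the displayed inequality then forces $w_{(n)}/n\le\varepsilon$ for all $n$ with $w_{(n)}>K$, while for the remaining $n$ trivially $w_{(n)}/n\le K/n\to0$. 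Hence $\limsup_n w_{(n)}/n\le\varepsilon$ for every $\varepsilon>0$, i.e.\ $\max_{i\in[n]} w_i=o(n)$.

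For the sharper statement under (a)--(c) I would repeat the argument verbatim with $W_n$ replaced by $W_n^2$. The continuous mapping theorem gives $W_n^2\stackrel{d}{\to}W^2$, Condition~\ref{cond_weights}(c) gives $\mathbf{E}[W_n^2]\to\mathbf{E}[W^2]<\infty$, so $(W_n^2)_n$ is uniformly integrable; the analogue of the bound reads $\frac{w_{(n)}^2}{n}\mathds{1}_{\{w_{(n)}^2>K\}}\le\mathbf{E}[W_n^2\mathds{1}_{\{W_n^2>K\}}]$, yielding $w_{(n)}^2/n\to0$, i.e.\ $\max_{i\in[n]} w_i=o(\sqrt n)$. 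The only non-routine ingredient is the uniform-integrability lemma, so the hard part will really just be citing or reproving that standard equivalence between weak convergence plus convergence of moments and uniform integrability; the remaining steps are the one-line max-versus-truncated-mean inequality and a diagonal $\varepsilon$--$K$ argument.
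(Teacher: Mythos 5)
Your proof is correct, and it is exactly the standard argument: the paper itself states Corollary \ref{asympt_max_weight} without proof, treating it as a known consequence of Condition \ref{cond_weights} (the analogous statement for $\mathrm{GRG}_n(\bm{w})$ appears in \cite{Hofstad2016}), and the canonical derivation there is precisely yours --- the pointwise bound $\tfrac{1}{n}\max_{i}w_i\,\mathds{1}_{\{\max_i w_i>K\}}\leq \mathbf{E}[W_n\mathds{1}_{\{W_n>K\}}]$ combined with uniform integrability of $(W_n)_n$ (resp.\ $(W_n^2)_n$), which follows from weak convergence plus convergence of the first (resp.\ second) moments. All steps check out, including the truncation identity $\mathbf{E}[(W_n-K)^+]=\mathbf{E}[W_n]-\mathbf{E}[W_n\wedge K]$ and the bound $\mathbf{E}[W_n\mathds{1}_{\{W_n>2K\}}]\leq 2\,\mathbf{E}[(W_n-K)^+]$, so your write-up supplies in full the details the paper omits.
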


\subsection{Convergence of left degrees in \texorpdfstring{$\bgrg$}{}}
Throughout this section we assume Condition \ref{cond_weights}(c), however, we later argue it can be lifted. To show the convergence of the left degrees in $\bgrg$, we first need the following auxiliary result:

\begin{theorem} [Poisson approximation of the number of $k$-cliques containing a vertex] \label{Poi_coupl_1}
Let $\mathcal{C}_k(i)$ denote the number of groups of size $k$ containing vertex $i\in[n]$.There exists a coupling $(\hat{\mathcal{C}}_k(i),\hat{Z}_{i,k})$ of $\mathcal{C}_k(i)$ and a Poisson random variable $Z_{i,k}$ with parameter $kp_kw_i$, such that
\begin{align} \label{poisson_approx_final_ineq}
     \mathbf{P}(\mathcal{\hat{C}}_k(i) \neq \hat{Z}_{i,k}) \leq \frac{2(k(k-1)p_k)^2w_i^2}{\ell_n} \Bigg(\frac{k-1}{\ell_n}\Bigg)^{k-2} \Bigg(\frac{\mathbf{E}[W_n^2]}{\mathbf{E}[W_n]}\Bigg)^{k-1}.
\end{align}
\end{theorem}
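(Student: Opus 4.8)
The plan is to express $\mathcal{C}_k(i)$ as a sum of independent indicators and apply the one-dimensional Bernoulli--Poisson coupling group by group. Writing $\mathcal{C}_k(i)=\sum_{a\in[n]_k:\,a\ni i}I_a$ with $I_a=\mathds{1}_{\{a\text{ is ON}\}}$, the groups of size $k$ through $i$ are exactly the sets $a=\{i\}\cup b$ with $b\subseteq[n]\setminus\{i\}$, $|b|=k-1$, and the $I_a$ are independent $\mathrm{Ber}(\piaon)$ variables since distinct groups switch independently.

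First I would invoke the elementary coupling of $\mathrm{Ber}(p)$ and $\mathrm{Poi}(p)$, which can be realised with failure probability $p(1-e^{-p})\le p^2$. Applying it independently to each group and using that an independent sum of Poisson variables is again Poisson, I obtain a coupling $(\hat{\mathcal{C}}_k(i),\hat Z_{i,k})$ of $\mathcal{C}_k(i)$ with $\hat Z_{i,k}\sim\mathrm{Poi}(\Lambda_i)$, where $\Lambda_i=\sum_{a\ni i}\piaon$, such that
\begin{align}
\mathbf{P}\big(\hat{\mathcal{C}}_k(i)\neq\hat Z_{i,k}\big)\le\sum_{a\in[n]_k:\,a\ni i}\big(\piaon\big)^2\le\sum_{a\in[n]_k:\,a\ni i}\big(\laoff\big)^2,
\end{align}
where the last step uses $\piaon=\laoff/(1+\laoff)\le\laoff$.

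The core computation is the evaluation of this sum. Factoring out $w_i^2$ and inserting $\laoff=k!p_k\prod_{j\in a}w_j/\ell_n^{k-1}$ gives
\begin{align}
\sum_{a\in[n]_k:\,a\ni i}\big(\laoff\big)^2=\frac{(k!p_k)^2w_i^2}{\ell_n^{2(k-1)}}\sum_{\substack{b\subseteq[n]\setminus\{i\}\\|b|=k-1}}\,\prod_{j\in b}w_j^2.
\end{align}
I would bound the inner elementary symmetric sum by $\tfrac{1}{(k-1)!}\big(\sum_jw_j^2\big)^{k-1}$, substitute $\sum_jw_j^2=n\mathbf{E}[W_n^2]$ and $\ell_n=n\mathbf{E}[W_n]$, and simplify to
\begin{align}
\sum_{a\in[n]_k:\,a\ni i}\big(\laoff\big)^2\le\frac{(k!)^2p_k^2w_i^2}{(k-1)!}\,\frac{1}{\ell_n^{k-1}}\Big(\frac{\mathbf{E}[W_n^2]}{\mathbf{E}[W_n]}\Big)^{k-1}.
\end{align}
Matching this with the target amounts to the elementary inequality $\tfrac{(k!)^2}{(k-1)!}=k\cdot k!\le 2k^2(k-1)^k$, valid for $k\ge2$ because $k!\le k(k-1)^{k-1}$; writing $k^2(k-1)^k=(k(k-1))^2(k-1)^{k-2}$ then reproduces exactly the constants in (\ref{poisson_approx_final_ineq}), with the factor $2$ to spare.

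The one point requiring genuine care is the identification of the Poisson parameter with $kp_kw_i$. The coupling produces $\mathrm{Poi}(\Lambda_i)$ with the graph's exact mean $\Lambda_i=\sum_{a\ni i}\piaon$; to see that this matches $kp_kw_i$ I would use $\sum_{a\ni i}\laoff=\tfrac{k!p_kw_i}{\ell_n^{k-1}}e_{k-1}$, where $e_{k-1}$ is the elementary symmetric polynomial of degree $k-1$ in $(w_j)_{j\neq i}$, together with the expansion $e_{k-1}=\tfrac{\ell_n^{k-1}}{(k-1)!}\big(1+o(1)\big)$ coming from the same collision-counting estimate. Under Condition \ref{cond_weights}, with $\max_iw_i=o(\sqrt n)$ from Corollary \ref{asympt_max_weight}, the resulting mean gap $kp_kw_i-\Lambda_i=O(p_kw_i^2/\ell_n)$ is $o(1)$, so $\Lambda_i\to kp_kw_i$ and the Poisson limit has the claimed parameter. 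The main obstacle is thus not the coupling itself but this bookkeeping: controlling the elementary symmetric sums, both for the squared-probability bound and for the mean, with explicit enough constants to land on the precise form of (\ref{poisson_approx_final_ineq}).
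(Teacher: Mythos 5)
Your first two steps coincide with the paper's proof: the decomposition of $\mathcal{C}_k(i)$ into independent group indicators, the per-group Bernoulli--Poisson coupling with total failure probability at most $\sum_{a\ni i}\big(\piaon\big)^2$, and the elementary-symmetric-sum bound giving $\frac{(k!)^2p_k^2w_i^2}{(k-1)!\,\ell_n^{k-1}}\big(\mathbf{E}[W_n^2]/\mathbf{E}[W_n]\big)^{k-1}$ are all exactly what the paper does. The genuine gap is in your final paragraph. The theorem is a non-asymptotic statement: for every finite $n$ there must exist a coupling of $\mathcal{C}_k(i)$ with a Poisson variable of parameter \emph{exactly} $kp_kw_i$ satisfying the explicit bound (\ref{poisson_approx_final_ineq}). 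Your construction delivers $\hat Y_{i,k}\sim\mathrm{Poi}(\Lambda_i)$ with $\Lambda_i=\sum_{a\ni i}\piaon$, and you then only argue that $\Lambda_i\to kp_kw_i$ as $n\to\infty$. Convergence of the parameters is a weaker, limiting assertion; it does not produce the stated coupling, and no finite-$n$ bound on $\mathbf{P}(\hat{\mathcal{C}}_k(i)\neq\hat Z_{i,k})$ with $\hat Z_{i,k}\sim\mathrm{Poi}(kp_kw_i)$ follows from it.

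The missing step is a second, exact coupling, and it is also the true origin of the factor $2$, which you instead spent on the constant inequality $k\cdot k!\le 2k^2(k-1)^k$ (note that $k\cdot k!\le k^2(k-1)^{k-1}$ already holds without the $2$ --- the slack you observed is not spare, it is the budget for the second error term). Set $\varepsilon_{i,k}=kp_kw_i-\Lambda_i$; this is nonnegative since $\piaon\le\laoff$ termwise and $e_{k-1}\big((w_j)_{j\neq i}\big)\le \ell_n^{k-1}/(k-1)!$. Take $\hat V_{i,k}\sim\mathrm{Poi}(\varepsilon_{i,k})$ independent of everything else and put $\hat Z_{i,k}=\hat Y_{i,k}+\hat V_{i,k}$, so that $\hat Z_{i,k}\sim\mathrm{Poi}(kp_kw_i)$ exactly, and by Markov's inequality $\mathbf{P}(\hat Y_{i,k}\neq\hat Z_{i,k})\le\mathbf{E}[\hat V_{i,k}]=\varepsilon_{i,k}$. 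Expanding $\varepsilon_{i,k}$ as a sum of terms $k!p_kw_iw_b\big(\ell_n^{-(k-1)}-(\ell_n^{k-1}+k!p_kw_iw_b)^{-1}\big)$ and bounding the resulting squared-weight symmetric sums exactly as in your core computation shows $\varepsilon_{i,k}$ is controlled by the \emph{same} quantity as the first coupling error; adding the two contributions via a union bound yields precisely the factor $2$ in (\ref{poisson_approx_final_ineq}). With this correction your argument becomes the paper's proof; without it, what you have proved is a coupling with $\mathrm{Poi}(\Lambda_i)$ together with a remark that the parameters converge, which does not deliver the theorem as stated.
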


\begin{proof}
We adapt the proof of a similar result for $\mathrm{GRG}_n(\bm{w})$. (See the proof of \textup{\cite[Theorem 6.7]{Hofstad2016}}.). Note that it holds that
\begin{align} \label{number_k_cliques}
    \mathcal{C}_k(i) = \sum_{a\in [n]_k: a\ni i} \mathds{1}_{\{\text{$a$ is \rm{ON}}\}}
\end{align}
Hence, $\mathcal{C}_k(i)$ is a sum of independent Bernoulli random variables and by \textup{\cite[Theorem 2.10]{Hofstad2018}} we know that there exists a Poisson random variable $\hat{Y}_{i,k}$ with parameter
\begin{align} 
    \lambda_{i,k} = \sum_{a\in[n]_k: a\ni i} \piaon  \label{lambda}
\end{align}
and a random variable $\mathcal{\hat{C}}_k(i)$ with the same distribution as $\mathcal{C}_k(i)$, such that
\begin{align}
    \mathbf{P}(\mathcal{\hat{C}}_k(i) \neq \hat{Y}_{i,k}) \leq \sum_{a\in[n]_k:a\ni i} \big(\piaon\big)^2.
\end{align}
We have
\begin{align} \label{pi_squared}
    \sum_{a\in[n]_k: a\ni i} \big(\piaon\big)^2 &= \sum_{a\in[n]_k: a\ni i} \Bigg( \frac{k!p_kw_i \prod_{j\in a,j\neq i}w_j}{\ell_n^{k-1}+k!p_kw_i \prod_{j\in a,j\neq i}w_j} \Bigg)^2\leq (k!p_k)^2 w_i^2 \sum_{a\in[n]_k: a\ni i} \Bigg( \frac{\prod_{j\in a,j\neq i}w_j}{\ell_n^{k-1}}\Bigg)^2\\
    &\leq (k!p_k)^2 w_i^2 \frac{\big(\sum_{j \in [n]} w_j^2\big)^{k-1}}{(k-1)!(\ell^2_n)^{k-1}}= \frac{(k-1)!(kp_k)^2 w_i^2}{\ell_n^{k-1}} \frac{\big(\mathbf{E}[W_n^2]\big)^{k-1}}{\big(\mathbf{E}[W_n]\big)^{k-1}}. \nonumber
\end{align}
Let $\varepsilon_{i,k} = kp_kw_i-\lambda_{i,k} \geq 0$. Take $\hat{V}_{i,k} \sim Poi(\varepsilon_{i,k})$ and write $\hat{Z}_{i,k} = \hat{Y}_{i,k} + \hat{V}_{i,k}$. By the Markov inequality,
\begin{align}
    \mathbf{P}(\hat{Y}_{i,k} \neq \hat{Z}_{i,k}) = \mathbf{P}(\hat{V}_{i,k} \neq \varnothing) = \mathbf{P}(\hat{V}_{i,k} \geq 1) \leq \mathbf{E}[\hat{V}_{i,k}] = \varepsilon_{i,k}.
\end{align}
We note that
\begin{align} \label{poisson_coupling_better_parameter}
    \varepsilon_{i,k} & = kp_kw_i - \lambda_{i,k} = \frac{k!p_kw_i}{(k-1)!} \cdot 1 - \sum_{j_1<...<j_{k-1}\in[n]} \pi^{\{i,j_1,...,j_{k-1}\}}_{\text{\rm{ON}}}\\
    & = \frac{k!p_kw_i}{(k-1)!} \sum_{j_1,...,j_{k-1}} \frac{w_{j_1}\cdots w_{j_{k-1}}}{\ell_n^{k-1}} - \sum_{j_1<...<j_{k-1}\in[n]} \frac{k!p_kw_i w_{j_1}\cdots w_{j_{k-1}}}{\ell_n^{k-1}+k!p_kw_i w_{j_1}\cdots w_{j_{k-1}}} \nonumber\\
    & = \sum_{j_1<...<j_{k-1}\in[n]} k!p_kw_i w_{j_1}\cdots w_{j_{k-1}} \bigg(\frac{1}{\ell_n^{k-1}} - \frac{1}{\ell_n^{k-1}+k!p_kw_i w_{j_1}\cdots w_{j_{k-1}}} \bigg)\nonumber\\
    & = \sum_{j_1<...<j_{k-1}\in[n]} \frac{(k!p_k)^2w^2_i w^2_{j_1}\cdots w^2_{j_{k-1}}}{\ell_n^{k-1}(\ell_n^{k-1}+k!p_kw_i w_{j_1}\cdots w_{j_{k-1}})} \leq \sum_{j_1<...<j_{k-1}\in[n]} \frac{(k!p_k)^2w^2_i w^2_{j_1}\cdots w^2_{j_{k-1}}}{\ell_n^{2(k-1)}} \nonumber\\
    & \leq \frac{(k-1)!(kp_k)^2w_i^2}{\ell_n^{k-1}} \frac{\big(\sum_{j \in [n]} w_j^2\big)^{k-1}}{\ell_n^{k-1}}. \nonumber
\end{align}
Using the fact that $k!\leq k^k$ this yields
\begin{align}
    \mathbf{P}(\mathcal{\hat{C}}_k(i) \neq \hat{Z}_{i,k}) & \leq \mathbf{P}(\mathcal{\hat{C}}_k(i) \neq \hat{Y}_{i,k}) + \mathbf{P}(\hat{Y}_{i,k} \neq \hat{Z}_{i,k})\\ \nonumber
    & \leq \frac{2(k-1)!(kp_k)^2w_i^2}{\ell_n^{k-1}} \frac{\big(\sum_{j \in [n]} w_j^2\big)^{k-1}}{\ell_n^{k-1}} \leq 2(k(k-1)p_k)^2\Bigg(\frac{k-1}{\ell_n}\Bigg)^{k-2} \frac{w_i^2}{\ell_n} \Bigg(\frac{\mathbf{E}[W_n^2]}{\mathbf{E}[W_n]}\Bigg)^{k-1},
\end{align}
which proves the desired result.
\end{proof}

Thanks to the coupling derived above we conclude that the degree of a uniformly chosen left-vertex converges:

\begin{theorem}[Left-degree in bipartite graph $\bgrg$] \label{conv_unif_left}
Let $D^{(l)}_n$ denote the degree of a uniformly chosen left-vertex in $\bgrg$. Then, as $n\to\infty$,
\begin{align} 
    D^{(l)}_n \stackrel{\text{d}}{\longrightarrow} D^{(l)},
\end{align}
where $D^{(l)}$ is a mixed-Poisson variable with mixing parameter $W\mu$, with $\mu = \sum_k kp_k.$
\end{theorem}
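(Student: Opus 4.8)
The plan is to apply the Poisson coupling of Theorem~\ref{Poi_coupl_1} separately for each group size, sum the couplings over $k$, and control the resulting error using a truncation in the group size together with the second-moment control of the weights from Condition~\ref{cond_weights}(c). Write the left-degree of a fixed vertex $i$ as $d^{(l)}_i=\sum_{k\ge2}\mathcal C_k(i)$, where $\mathcal C_k(i)$ is the number of active groups of size $k$ through $i$. Since groups of distinct sizes are disjoint subsets that switch on independently, the variables $(\mathcal C_k(i))_{k\ge2}$ are independent, so the couplings of Theorem~\ref{Poi_coupl_1} may be chosen jointly independent over $k$; this yields independent $\hat Z_{i,k}\sim\mathrm{Poi}(kp_kw_i)$ with $\sum_k\hat Z_{i,k}\sim\mathrm{Poi}(\mu w_i)$, using $\mu=\sum_k kp_k$. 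Because $D^{(l)}_n=d^{(l)}_{o_n}$ with $o_n$ uniform, the law of $D^{(l)}_n$ is the average over $i\in[n]$ of the laws of $d^{(l)}_i$, and $w_{o_n}=W_n\stackrel{d}{\to}W$.

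Next I would fix a truncation level $b_n\to\infty$ with $b_n=o(n)$ (e.g.\ $b_n=\log n$) and treat large and small groups differently. For the large groups I would \emph{not} use \eqref{poisson_approx_final_ineq}, since that bound diverges once summed up to $k$ of order $n$; instead I use the elementary first-moment estimate $\mathbf E[\mathcal C_k(i)]\le kp_kw_i$, which follows from $\piaon\le k!p_k\prod_{j\in a}w_j/\ell_n^{k-1}$ together with $\sum_{a\ni i,|a|=k}\prod_{j\in a,\,j\ne i}w_j\le \ell_n^{k-1}/(k-1)!$. Averaging over $i$ gives
\[
\mathbf P\big(\exists\,a\ni o_n,\ |a|>b_n,\ a\ \mathrm{ON}\big)\le \frac1n\sum_{i\in[n]}\sum_{k>b_n}\mathbf E[\mathcal C_k(i)]\le \mathbf E[W_n]\sum_{k>b_n}kp_k\longrightarrow0,
\]
because $\mu<\infty$. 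Hence, with high probability, $o_n$ lies in no group larger than $b_n$, so $D^{(l)}_n$ coincides with the truncated degree $\sum_{k\le b_n}\mathcal C_k(o_n)$, and likewise $\sum_{k\le b_n}kp_k\,W_n\to\mu W$.

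For the small groups I would sum \eqref{poisson_approx_final_ineq} over $2\le k\le b_n$ and average over $i$, using $\tfrac1n\sum_i w_i^2=\mathbf E[W_n^2]$, to obtain
\[
\frac1n\sum_{i\in[n]}\mathbf P\Big(\sum_{k\le b_n}\hat{\mathcal C}_k(i)\ne\sum_{k\le b_n}\hat Z_{i,k}\Big)\le \frac{2\mathbf E[W_n^2]}{\ell_n}\sum_{k=2}^{b_n}\big(k(k-1)p_k\big)^2\Big(\frac{k-1}{\ell_n}\Big)^{k-2}\Big(\frac{\mathbf E[W_n^2]}{\mathbf E[W_n]}\Big)^{k-1}.
\]
Writing $\beta_n=\mathbf E[W_n^2]/\mathbf E[W_n]$ and $x_k=(k-1)\beta_n/\ell_n$, the summand equals $(k(k-1)p_k)^2\beta_n x_k^{k-2}$; since $b_n=o(n)$ forces $x_k\le b_n\beta_n/\ell_n\le\tfrac12$ for all $k\le b_n$ once $n$ is large, one has $x_k^{k-2}\le(1/2)^{k-2}$, so the right-hand side is at most $\frac{2\mathbf E[W_n^2]\beta_n}{\ell_n}\sum_{k\ge2}(k(k-1)p_k)^2(1/2)^{k-2}=O(1/n)\to0$, the series converging because $p_k\le1$. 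Thus the truncated degree of $o_n$ is, up to vanishing total-variation error, distributed as $\mathrm{Poi}\big(\sum_{k\le b_n}kp_k\,W_n\big)$.

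Finally I would combine the pieces: $\Lambda_n:=\sum_{k\le b_n}kp_k\,W_n\to\mu W$ in distribution (since $\sum_{k\le b_n}kp_k\to\mu$ deterministically and $W_n\stackrel{d}{\to}W$), and for each fixed $j\ge0$ the map $\lambda\mapsto e^{-\lambda}\lambda^j/j!$ is bounded and continuous, so $\mathbf P(D^{(l)}_n=j)\to\mathbf E\big[e^{-\mu W}(\mu W)^j/j!\big]$. Pointwise convergence of the probability mass function to a genuine mass function gives $D^{(l)}_n\stackrel{d}{\to}D^{(l)}$, the mixed-Poisson variable with mixing parameter $\mu W$. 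I expect the main obstacle to be exactly the split just described: the coupling estimate \eqref{poisson_approx_final_ineq} is useless for groups whose size is comparable to $n$, so large groups must be removed by the crude first-moment bound, and one has to verify that a single truncation scale $b_n\to\infty$, $b_n=o(n)$, simultaneously makes the large-group mass and the small-group coupling error vanish — which it does.
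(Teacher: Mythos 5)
Your proposal is correct and follows essentially the same route as the paper's proof: decompose $d^{(l)}_{o_n}=\sum_{k\ge2}\mathcal{C}_k(o_n)$, discard groups of size $k>b_n$ (with $b_n\to\infty$, $b_n=o(n)$) via the first-moment bound $\mathbf{E}[\mathcal{C}_k(i)]\le kp_kw_i$, apply the Poisson coupling of Theorem~\ref{Poi_coupl_1} for $k\le b_n$, and conclude through $W_n\stackrel{d}{\to}W$ and the mixed-Poisson continuity argument. The only difference is that you spell out steps the paper leaves implicit — the joint independence of the couplings across $k$ and the uniform geometric control $x_k\le 1/2$ when summing the coupling error up to $b_n$ — which fills in, rather than departs from, the paper's argument.
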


\begin{proof}
We adapt the proof of a similar result for $\mathrm{GRG}_n(\bm{w})$. (See the proof of \textup{\cite[Corollary 6.9]{Hofstad2016}}.). We have that $d^{(l)}_i = \sum_{k=2}^{\infty} \mathcal{C}_k(i)$ for any vertex $i\in[n]$. Fix a sequence $b_n\to\infty$ as $n\to\infty$. Note that,
\begin{align}
    \mathbf{E}\Bigg[\sum_{k>b_n} \mathcal{C}_k(i)\Bigg] \leq  \sum_{k>b_n} kp_k w_i \sum_{j_1,...,j_{k-1}}\frac{w_{j_1}\cdots  w_{j_k}}{\ell_n^{k-1}} = w_i \sum_{k>b_n} kp_k = o(1),
\end{align}
for every $i\in[n]$ and every $b_n\to\infty$ since we assumed $\sum_{k=2}^{\infty}kp_k<\infty$. We conclude that $d^{(l)}_i$ is close to $\sum_{k=2}^{b_n} \mathcal{C}_k(i)$. We know from Theorem \ref{Poi_coupl_1}, the assumption $\sum_{k=2}^{\infty}k^2p_k<\infty$ and the fact that $\max_{i\in[n]}w_i=o(\sqrt{n})$, that for all $k\in[2,b_n]$, $\mathcal{C}_k(i)$ is close in distribution to a Poisson variable with parameter $kp_kw_i$ if we choose $b_n=o(n)$. Hence, for a uniformly chosen vertex $o_n$ from $[n]$, the number of groups of size $k$ containing this vertex, for all $k\in[2,b_n]$ with $b_n$ at most $o(n)$, is close to a Poisson variable with parameter $kp_kw_{o_n}$, where $w_{o_n}$ is the weight of a uniformly chosen vertex. Such a variable follows a mixed-Poisson distribution with mixing distribution $w_{o_n}$, and $w_{o_n}$ is distributed like $W_n$. We know that a mixed-Poisson random variable converges to a limiting mixed-Poisson random variable when the mixing distribution converges. Since we have assumed convergence of $W_n$ to a limiting variable $W$ in Condition \ref{cond_weights}, it follows that for all $k\in[2,b_n]$ with $b_n$ at most $o(n)$, $\mathcal{C}_k(o_n)$ converges to a Poisson random variable with parameter $kp_kW$. Thus, truncated $D^{(l)}_n$ has the same distribution as the sum over $k \in[2,b_n)$ of independent Poisson variables with parameters $kp_kW$, which, as $n\to\infty$, converges to a Poisson variable with parameter $W\mu$. Since the difference between $D^{(l)}_n$ and truncated $D^{(l)}_n$ is small as $n\to\infty$, we conclude $D^{(l)}_n$ also converges to a Poisson variable with parameter $W\mu$.
\end{proof}
Below we use the second-moment method to show that also the expected degree of a uniformly chosen left-vertex converges.

\begin{theorem} \label{1st_mom_unif_bipartite}
Let $D^{(l)}_n$ denote the degree of a uniformly chosen left-vertex in $\bgrg$. Then, as $n\to\infty$,
\begin{align}
    \mathbf{E}[D^{(l)}_n\mid  G_n] \stackrel{\mathbf{P}}{\longrightarrow} \mu\mathbf{E}[W].
\end{align}
\end{theorem}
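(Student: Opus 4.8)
The plan is to apply the second-moment method to the empirical average left-degree. Observe first that, since $o_n$ is uniform on $[n]$, conditioning on the realisation $G_n$ gives $\mathbf{E}[D^{(l)}_n\mid G_n]=\frac1n\sum_{i\in[n]}d^{(l)}_i$. Summing the identity $d^{(l)}_i=\sum_{k\ge2}\mathcal{C}_k(i)$ over $i$ and counting each active group once per incident vertex yields the clean rewriting
\begin{align}
L_n:=\mathbf{E}[D^{(l)}_n\mid G_n]=\frac1n\sum_{a\in\cup_{k\ge2}[n]_k}|a|\,\mathds{1}_{\{a\text{ is ON}\}}.
\end{align}
The crucial structural feature is that the indicators $\big(\mathds{1}_{\{a\text{ is ON}\}}\big)_a$ are \emph{independent} across groups, so $L_n$ is $n^{-1}$ times a sum of independent random variables. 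I would therefore show that $\mathbf{E}[L_n]\to\mu\mathbf{E}[W]$ and $\Var(L_n)\to0$, and conclude by Chebyshev's inequality.

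For the variance this is immediate and needs only $\mu_{(2)}<\infty$: by independence,
\begin{align}
\Var(L_n)=\frac1{n^2}\sum_{a}|a|^2\,\piaon(1-\piaon)\le\frac1{n^2}\sum_{k\ge2}k^2\sum_{a\in[n]_k}\piaon.
\end{align}
Using $\piaon\le\laoff$ together with the bound $\sum_{a\in[n]_k}\prod_{j\in a}w_j\le\ell_n^{k}/k!$ (each $k$-subset accounts for $k!$ ordered distinct tuples, all of which appear in $\ell_n^k=(\sum_j w_j)^k$) gives $\sum_{a\in[n]_k}\piaon\le p_k\ell_n$, hence $\Var(L_n)\le n^{-2}\mu_{(2)}\ell_n=\mu_{(2)}\mathbf{E}[W_n]/n\to0$. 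These same two bounds control the mean from above: $\mathbf{E}[L_n]=\frac1n\sum_{k\ge2}k\sum_{a\in[n]_k}\piaon\le\mu\mathbf{E}[W_n]$, so that $\limsup_n\mathbf{E}[L_n]\le\mu\mathbf{E}[W]$ by Condition \ref{cond_weights}(b).

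It remains to produce the matching lower bound $\liminf_n\mathbf{E}[L_n]\ge\mu\mathbf{E}[W]$, which I expect to be the main obstacle. Writing $\mathbf{E}[L_n]=\sum_{k\ge2}\mathbf{E}[\mathcal{C}_k(o_n)]$ with $\mathbf{E}[\mathcal{C}_k(o_n)]=\frac{k}{n}\sum_{a\in[n]_k}\piaon$, I would fix $k$ and show $\mathbf{E}[\mathcal{C}_k(o_n)]\to kp_k\mathbf{E}[W]$. This is exactly where the linearisation of $\piaon$ enters: the Poisson coupling of Theorem \ref{Poi_coupl_1} shows that $\lambda_{i,k}=\sum_{a\ni i,|a|=k}\piaon$ is asymptotically $kp_kw_i$, the error coming from the denominator correction and from replacing the ordered distinct-index sum by $\ell_n^{k-1}/(k-1)!$; the collision terms are negligible because $\sum_j w_j^2=O(n)$ and $\max_{i\in[n]}w_i=o(\sqrt n)$ (Corollary \ref{asympt_max_weight}), so that averaging over $i$ and invoking $\mathbf{E}[W_n]\to\mathbf{E}[W]$ yields the claim for each fixed $k$. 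Finally I would exchange the limit with the sum over $k$ by dominated convergence, the uniform bound $\mathbf{E}[\mathcal{C}_k(o_n)]\le kp_k\mathbf{E}[W_n]\le Ckp_k$ being summable precisely because $\mu=\sum_k kp_k<\infty$. Combining the two bounds gives $\mathbf{E}[L_n]\to\mu\mathbf{E}[W]$, and together with $\Var(L_n)\to0$ Chebyshev's inequality delivers $L_n\stackrel{\mathbf{P}}{\longrightarrow}\mu\mathbf{E}[W]$, as required.
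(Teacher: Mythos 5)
Your proposal is correct and follows essentially the same route as the paper's proof: the same rewriting of $\mathbf{E}[D^{(l)}_n\mid G_n]$ as $n^{-1}\sum_{a}|a|\,\mathds{1}_{\{a\text{ is ON}\}}$ exploiting independence across groups, the same linearisation of $\piaon$ with the error controlled via $\mathbf{E}[W_n^2]/\mathbf{E}[W_n]$, and Chebyshev's inequality to conclude. The only difference is bookkeeping: where you exchange the limit with the sum over $k$ by dominated convergence against the summable bound $Ckp_k$, the paper truncates the group size at $b_n\to\infty$ with $b_n=o(n)$ and shows the tail vanishes, and both devices rest on exactly the same hypotheses $\mu<\infty$ and $\mu_{(2)}<\infty$.
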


\begin{proof}
Note that $\mathbf{E}[D^{(l)}_n\mid  G_n] = \frac{1}{n}\sum_{i\in[n]}d^{(l)}_i$ is the same as $\frac{1}{n}\sum_{a\in\cup_{k\geq2}[n]_k}d^{(r)}_a=\frac{1}{n}\sum_{a\in\cup_{k\geq2}[n]_k}|a|\cdot\mathds{1}_{\{\text{$a$ is \rm{ON}}\}}$, which is a sum of independent variables. To avoid more assumptions on the moments of the group-size distribution, we fix a sequence $b_n\to\infty$ as $n\to\infty$ and use truncation with respect to the group size, i.e,
\begin{align}
    \sum_{k=2}^{\infty}\sum_{a\in[n]_k} |a| \mathds{1}_{\{\text{$a$ is \rm{ON}}\}} = \sum_{k=2}^{b_n}\sum_{a\in[n]_k} |a|\mathds{1}_{\{\text{$a$ is \rm{ON}}\}} + \sum_{k=b_n+1}^{\infty}\sum_{a\in[n]_k} |a| \mathds{1}_{\{\text{$a$ is \rm{ON}}\}}.
\end{align}
For further notational convenience, denote $w_a = \prod_{i\in a}w_i$. We compute
\begin{align}
    \mathbf{E}&\big[\frac{1}{n} \sum_{k=b_n+1}^{\infty}\sum_{a\in[n]_k} |a|\mathds{1}_{\{\text{$a$ is \rm{ON}}\}}\big] = \frac{1}{n}\sum_{k>b_n} k \sum_{a\in[n]_k} \frac{k!p_kw_a}{\ell_n^{k-1}+k!p_kw_a}\leq \frac{\ell_n}{n}\sum_{k>b_n}kp_k = o(1),
\end{align}
for every $b_n\to\infty$, since we have assumed $\sum_k kp_k<\infty$. Hence, the contribution from groups larger than $b_n$ will vanish in probability, i.e.,
\begin{align}
    \sum_{k=2}^{\infty}\sum_{a\in[n]_k} |a| \mathds{1}_{\{\text{$a$ is \rm{ON}}\}} = \sum_{k=2}^{b_n}\sum_{a\in[n]_k} |a| \mathds{1}_{\{\text{$a$ is \rm{ON}}\}} + o_{\mathbf{P}}(1).
\end{align}
Note that
\begin{align} \label{pi_a_on_rewrite}
    \frac{k!p_kw_a}{\ell_n^{k-1}+k!p_kw_a} = \frac{k!p_kw_a}{\ell_n^{k-1}} - \frac{(k!p_k)^2(w_a)^2}{\ell_n^{k-1}(\ell_n^{k-1}+k!p_kw_a)}.
\end{align}
Hence,
\begin{align}
    \mathbf{E}[\frac{1}{n}\sum_{i\in[n]}d^{(l)}_i] &\leq \frac{1}{n}\sum_{k=2}^{b_n}k\sum_{a\in[n]_k} \frac{k!p_k w_a}{\ell_n^{k-1}}\leq \frac{\ell_n}{n}\sum_{k=2}^{\infty}kp_k = \mathbf{E}[W_n]\mu \stackrel{n\to\infty}{\longrightarrow}\mathbf{E}[W]\mu.
\end{align}
Further, using the fact that $k!\leq k^k$,
\begin{align} \label{pi_a_on_lower_bound}
    \sum_{a\in[n]_k}&\frac{(k!p_k)^2(w_a)^2}{\ell_n^{k-1}(\ell_n^{k-1}+k!p_kw_a)} \leq \sum_{a\in[n]_k}\frac{(k!p_k)^2(w_a)^2}{(\ell_n^{k-1})^2}\\
    &\leq k!(p_k)^2\frac{\big(\sum_{i\in[n]}w_i^2\big)^k}{\ell_n^{k-2}\ell_n^{k}} = (kp_k)^2 \Bigg(\frac{k}{\ell_n}\Bigg)^{k-2} \Bigg(\frac{\mathbf{E}[W_n^2]}{\mathbf{E}[W_n]}\Bigg)^k, \nonumber
\end{align}
and thus
\begin{align}
    \mathbf{E}[\frac{1}{n}\sum_{i\in[n]}d^{(l)}_i] &\geq \frac{\ell_n}{n}\sum_{k=2}^{b_n}kp_k - \frac{1}{n} \sum_{k=2}^{b_n} k (kp_k)^2 \Bigg(\frac{k}{\ell_n}\Bigg)^{k-2} \Bigg(\frac{\mathbf{E}[W_n^2]}{\mathbf{E}[W_n]}\Bigg)^k\\
    &\geq \frac{\ell_n}{n}\sum_{k=2}^{b_n}kp_k - \frac{b_n}{n} \sum_{k=2}^{b_n} (kp_k)^2 \Bigg(\frac{b_n}{\ell_n}\Bigg)^{k-2} \Bigg(\frac{\mathbf{E}[W_n^2]}{\mathbf{E}[W_n]}\Bigg)^k = \frac{\ell_n}{n}\sum_{k=2}^{b_n}kp_k - o(1) \stackrel{n\to\infty}{\longrightarrow}\mathbf{E}[W]\mu, \nonumber
\end{align}
if we choose $b_n=o(n)$ and since we have assumed $\mu_{(2)} = \sum_kk^2p_k < \infty$. Using the independence and the fact that the variance of an indicator random variable is smaller or equal to its expectation we compute
\begin{align} \label{var_average_left_degree}
    \Var[\frac{1}{n}\sum_{i\in[n]}d^{(l)}_i] \leq \frac{\ell_n}{n^2}\sum_{k=2}^{\infty}k^2p_k = \frac{\mathbf{E}[W_n]}{n} \sum_{k=2}^{\infty}k^2p_k = o(1),
\end{align}
since we have assumed $\sum_{k=2}^{\infty}k^2p_k<\infty$. Taking $n$ large enough so that $\big|\mathbf{E}[\frac{1}{n}\sum_{i\in[n]}d^{(l)}_i] - \mu\mathbf{E}[W]\big|\leq \frac{\varepsilon}{2}$, by Chebyshev's inequality
\begin{align}
    \mathbf{P}\bigg(\bigg|\frac{1}{n}\sum_{i\in[n]}d^{(l)}_i-\mu\mathbf{E}[W]\bigg|>\varepsilon\big) & \leq \mathbf{P}\bigg(\bigg|\frac{1}{n}\sum_{i\in[n]}d^{(l)}_i-\mathbf{E}\bigg[\frac{1}{n}\sum_{i\in[n]}d^{(l)}_i\bigg]\bigg|>\frac{\varepsilon}{2} \bigg)\\
    &\leq \frac{4}{\varepsilon^2}\Var\bigg[\frac{1}{n}\sum_{i\in[n]}d^{(l)}_i\bigg] = o(1). \nonumber
\end{align}
\end{proof}
\begin{remark}
The above shows why Assumptions (\ref{assump_1_mom}) and (\ref{assump_2_mom}) are necessary.
\end{remark}

\subsection{Convergence of right degrees in \texorpdfstring{$\bgrg$}{}} \label{conv_group}
We want to show that the degree of a uniformly chosen right-vertex converges. We first need two auxiliary results:
\begin{theorem} [Convergence of the number of groups of fixed size] \label{conv_A_k}
Denote $A_k = \#\{a: |a|=k\}$. For $k\geq2$,
\begin{align}
    \frac{A_k}{n} \stackrel{\mathbf{P}}{\longrightarrow} p_k\mathbf{E}[W].
\end{align}
\end{theorem}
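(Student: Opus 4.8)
The plan is to use a first- and second-moment argument. Since the groups switch independently, $A_k=\sum_{a\in[n]_k}\mathds{1}_{\{a\text{ is ON}\}}$ is a sum of independent Bernoulli variables with success probabilities $\piaon$, so it concentrates around its mean and it suffices to (i) show $\mathbf{E}[A_k]/n\to p_k\mathbf{E}[W]$ and (ii) control the variance.

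First I would compute the first moment. Writing $w_a=\prod_{i\in a}w_i$, we have
\begin{align}
\mathbf{E}[A_k]=\sum_{a\in[n]_k}\piaon=\sum_{a\in[n]_k}\frac{k!p_kw_a}{\ell_n^{k-1}+k!p_kw_a}.
\end{align}
Using the identity $\frac{x}{\ell_n^{k-1}+x}=\frac{x}{\ell_n^{k-1}}-\frac{x^2}{\ell_n^{k-1}(\ell_n^{k-1}+x)}$ with $x=k!p_kw_a$ (as in (\ref{pi_a_on_rewrite})) splits $\mathbf{E}[A_k]$ into a leading term $\frac{k!p_k}{\ell_n^{k-1}}\sum_{a\in[n]_k}w_a$ and a correction. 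The essential estimate is the symmetric-function identity
\begin{align}
k!\sum_{a\in[n]_k}w_a=\sum_{\substack{i_1,\dots,i_k\in[n]\\\text{distinct}}}w_{i_1}\cdots w_{i_k}=\ell_n^k-R_n,
\end{align}
where $R_n$ collects the tuples with a repeated index and satisfies $R_n=O\bigl(\ell_n^{k-2}\sum_{i\in[n]}w_i^2\bigr)=O\bigl(\ell_n^{k-2}\,n\mathbf{E}[W_n^2]\bigr)$, so that $R_n/\ell_n^k=O(1/n)$ because $\mathbf{E}[W_n^2]\to\mathbf{E}[W^2]<\infty$ by Condition \ref{cond_weights}(c). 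Dividing by $n$ yields a leading contribution $\frac{p_k\ell_n}{n}(1+o(1))=p_k\mathbf{E}[W_n](1+o(1))\to p_k\mathbf{E}[W]$. The correction term is nonnegative and bounded above by $\frac{1}{n}\sum_{a\in[n]_k}\frac{(k!p_k)^2w_a^2}{\ell_n^{2(k-1)}}$, which after bounding $\sum_{a\in[n]_k}w_a^2\le \frac{1}{k!}(n\mathbf{E}[W_n^2])^k$ is of order $O(n^{-(k-1)})=o(1)$ for $k\ge2$; hence $\mathbf{E}[A_k]/n\to p_k\mathbf{E}[W]$.

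For the variance, independence of the group indicators gives $\Var(A_k)=\sum_{a\in[n]_k}\piaon(1-\piaon)\le\mathbf{E}[A_k]=O(n)$, so $\Var(A_k/n)=O(1/n)\to0$. Chebyshev's inequality then shows $A_k/n-\mathbf{E}[A_k]/n\to0$ in probability, and combining with the first-moment limit yields $A_k/n\stackrel{\mathbf{P}}{\longrightarrow}p_k\mathbf{E}[W]$. The only genuinely delicate point is the symmetric-function estimate controlling $R_n$: one must verify that the off-diagonal (coincidence) corrections in expanding $\ell_n^k$, together with the quadratic correction from linearising $\piaon$, are both negligible, and it is precisely here that the finite second moment of the weights, Condition \ref{cond_weights}(c) (equivalently $\max_{i}w_i=o(\sqrt n)$ via Corollary \ref{asympt_max_weight}), is used. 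Since $k$ is fixed, no uniformity in $k$ is needed and the estimates remain elementary.
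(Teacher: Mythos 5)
Your proposal is correct and follows essentially the same route as the paper's proof: a first- and second-moment argument based on the linearisation $\piaon=\frac{k!p_kw_a}{\ell_n^{k-1}}-\frac{(k!p_kw_a)^2}{\ell_n^{k-1}(\ell_n^{k-1}+k!p_kw_a)}$, with the quadratic correction shown to be $O(n^{-(k-1)})$ and the variance bounded by $\Var(A_k)\le\mathbf{E}[A_k]$ via independence of the group indicators, followed by Chebyshev. Your explicit treatment of the diagonal term $R_n$ in $k!\sum_{a\in[n]_k}w_a=\ell_n^k-R_n$ is a welcome bit of extra care on a step the paper's lower bound passes over silently (and your bound $R_n/\ell_n^k=O(1/n)$ under finite second moment is exactly what justifies it), though note that Corollary \ref{asympt_max_weight} gives $\max_i w_i=o(\sqrt n)$ as a consequence of Condition \ref{cond_weights}(a)--(c), not an equivalence.
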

\begin{proof}
To prove the desired statement we use the second-moment method. We have
\begin{align}
    A_k  = \sum_{a \in[n]_k} \mathds{1}_{ \{\text{group $a$ is \rm{ON}}\}},
\end{align}
and thus,
\begin{align}
    \mathbf{E}[A_k] = \sum_{a \in[n]_k} \piaon = \sum_{j_1<...<j_k \in[n]} \frac{k!p_kw_{j_1}\cdots w_{j_{k}}}{\ell_n^{k-1}+k!p_kw_{j_1}\cdots w_{j_{k}}}.
\end{align}
Using (\ref{pi_a_on_rewrite}) we arrive at
\begin{align} \label{conv_group_equ}
    \mathbf{E}\bigg[\frac{A_k}{n}\bigg] \leq \frac{1}{n \cdot k!} \sum_{j_1,...,j_k \in[n]} \frac{k!p_kw_{j_1}\cdots w_{j_{k}}}{\ell_n^{k-1}} = \frac{p_k\ell_n}{n} \sum_{j_1,...,j_k \in[n]} \frac{w_{j_1}\cdots w_{j_{k}}}{\ell_n^{k}}=p_k\mathbf{E}[W_n] \longrightarrow p_k\mathbf{E}[W], \nonumber
\end{align}
as $n\to\infty$. Since $k$ is fixed, by (\ref{pi_a_on_rewrite}) and (\ref{pi_a_on_lower_bound}),
\begin{align}
    \mathbf{E}\bigg[\frac{A_k}{n}\bigg] \geq p_k\mathbf{E}[W_n] -\frac{(k p_k)^2}{n} \Bigg(\frac{k}{\ell_n}\Bigg)^{k-2} \Bigg(\frac{\mathbf{E}[W_n^2]}{\mathbf{E}[W_n]}\Bigg)^k = p_k\mathbf{E}[W_n] - o(1),
\end{align}
since we have assumed $\mu_{(2)}=\sum_k k^2p_k<\infty$. Therefore,
\begin{align}
    p_k\mathbf{E}[W_n] - o(1) \leq \mathbf{E}\bigg[\frac{A_k}{n}\bigg] \leq p_k\mathbf{E}[W_n].
\end{align}
Moreover, since $A_k$ is a sum of indicator random variables it holds that $\Var[A_k]\leq\mathbf{E}[A_k]$, which yields, for all $k\geq2$,
\begin{align}
    \Var \bigg(\frac{A_k}{n}\bigg) = \frac{1}{n^2} \Var[A_k] \leq  \frac{p_k\mathbf{E}[W_n]}{n} = o(1).
\end{align}
Take $n$ big enough so that $\big|\mathbf{E}\big[\frac{A_k}{n}\big] -p_k\mathbf{E}[W]\big|\leq \frac{\varepsilon}{2}$. Then
\begin{align}
    \mathbf{P}\bigg(\bigg|\frac{A_k}{n}-p_k\mathbf{E}[W]\bigg|>\varepsilon\bigg) \leq \mathbf{P}\bigg(\bigg|\frac{A_k}{n}-\mathbf{E}\big[\frac{A_k}{n}\big]\bigg|>\frac{\varepsilon}{2} \bigg) \leq \frac{4}{\varepsilon^2}\Var\bigg[\frac{A_k}{n}\bigg] = o(1). \nonumber
\end{align}
\end{proof}

\begin{theorem} [Convergence of the groups to vertices ratio] \label{conv_all_gr} Recall $M_n = \#\{a\in[n]_k: \text{$a$ is \rm{ON}}\}=\sum_{k=2}^{\infty}A_k$. As $n\to\infty$,
\begin{align}
    \frac{M_n}{n} \stackrel{\mathbf{P}}{\longrightarrow} \mathbf{E}[W].
\end{align}
\end{theorem}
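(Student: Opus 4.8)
The plan is to reduce the infinite sum $M_n=\sum_{k\ge 2}A_k$ to a finite one by truncating in the group size, exploiting the per-size convergence already established in Theorem~\ref{conv_A_k} together with a uniform (in $n$) bound on the contribution of large groups. The key structural observation is that $(p_k)_{k\ge2}$ is a probability mass function, so the target constant can be written as $\mathbf{E}[W]=\mathbf{E}[W]\sum_{k\ge2}p_k$, and I only need to match $\tfrac1n\sum_{k\ge2}A_k$ with $\mathbf{E}[W]\sum_{k\ge2}p_k$.

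First I would fix a truncation level $K\ge2$ and split
\begin{align}
\frac{M_n}{n}=\frac1n\sum_{k=2}^{K}A_k+\frac1n\sum_{k>K}A_k.
\end{align}
For the head the sum is finite, so Theorem~\ref{conv_A_k} applies term by term and yields $\tfrac1n\sum_{k=2}^K A_k\stackrel{\mathbf{P}}{\longrightarrow}\mathbf{E}[W]\sum_{k=2}^K p_k$ as $n\to\infty$. This conveniently offloads the delicate correction-term estimate (which controls the gap between $\pi^a_{\mathrm{ON}}$ and its linearisation) onto the already-proven Theorem~\ref{conv_A_k}.

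For the tail I would bound the expectation uniformly in $n$ using only the crude inequality $\pi^a_{\mathrm{ON}}\le k!p_k w_a/\ell_n^{k-1}$, where $w_a=\prod_{i\in a}w_i$, together with the elementary-symmetric-polynomial bound $\sum_{a\in[n]_k}w_a\le \ell_n^{k}/k!$. This gives
\begin{align}
\mathbf{E}\Big[\frac1n\sum_{k>K}A_k\Big]=\frac1n\sum_{k>K}\sum_{a\in[n]_k}\pi^a_{\mathrm{ON}}\le \frac{\ell_n}{n}\sum_{k>K}p_k=\mathbf{E}[W_n]\sum_{k>K}p_k.
\end{align}
Since $\mathbf{E}[W_n]\to\mathbf{E}[W]<\infty$ is bounded and $\sum_{k>K}p_k\to0$ as $K\to\infty$, this tail expectation can be made arbitrarily small, uniformly for all sufficiently large $n$, and Markov's inequality then controls $\tfrac1n\sum_{k>K}A_k$ in probability.

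The conclusion follows from a standard $\varepsilon$–$\delta$ interchange of the two limits: given $\varepsilon>0$, choose $K$ so that both $\mathbf{E}[W]\sum_{k>K}p_k$ and $\sup_{n\ \mathrm{large}}\mathbf{E}[W_n]\sum_{k>K}p_k$ are below $\varepsilon$, then send $n\to\infty$ using the head convergence. The main obstacle—and the only place any care is needed—is precisely this uniform tail control licensing the exchange of $n\to\infty$ and $K\to\infty$; it is here that the assumption $\sum_{k\ge2}p_k=1$ is essential, and it is worth noting that, unlike the average-degree computation in Theorem~\ref{1st_mom_unif_bipartite}, no moment of the group-size law is required, since the count $A_k$ carries no factor $|a|$. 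An alternative route is a direct second-moment method on $M_n$, using independence of the group indicators to get $\Var(M_n/n)\le \mathbf{E}[M_n]/n^2\le \mathbf{E}[W_n]/n=o(1)$; this works too, but it still needs a truncation to pin down $\mathbf{E}[M_n/n]\to\mathbf{E}[W]$, so the decoupling argument above is cleaner since it reuses Theorem~\ref{conv_A_k} verbatim.
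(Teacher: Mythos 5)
Your proof is correct, but it is organised differently from the paper's. The paper proves Theorem~\ref{conv_all_gr} by a self-contained second-moment argument on $M_n$ itself: it truncates at an $n$-dependent level $b_n\to\infty$ with $b_n=o(n)$, derives two-sided bounds on $\mathbf{E}[M_n/n]$ (the lower bound requiring the linearisation-error estimate (\ref{pi_a_on_lower_bound}), and hence control of $\mathbf{E}[W_n^2]/\mathbf{E}[W_n]$), bounds $\Var(M_n/n)\le \ell_n/n^2=o(1)$ using independence of the group indicators, and concludes by Chebyshev --- exactly the route you sketch as your ``alternative''. Your primary argument instead fixes a truncation level $K$, invokes Theorem~\ref{conv_A_k} term by term for the finitely many head terms, controls the tail by the uniform-in-$n$ expectation bound $\mathbf{E}\big[\tfrac1n\sum_{k>K}A_k\big]\le \mathbf{E}[W_n]\sum_{k>K}p_k$ plus Markov, and finishes with a standard interchange of the limits $n\to\infty$ and $K\to\infty$; no mean or variance computation is redone at the level of $M_n$, since the concentration is inherited from Theorem~\ref{conv_A_k}. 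Both proofs use the same two elementary ingredients for the tail ($\pi^a_{\mathrm{ON}}\le k!p_kw_a/\ell_n^{k-1}$ and $\sum_{a\in[n]_k}\prod_{i\in a}w_i\le \ell_n^k/k!$). What your route buys is modularity and brevity --- the delicate correction-term estimate appears only once, inside Theorem~\ref{conv_A_k}, and your observation that the tail here needs no moment of $(p_k)$ beyond summability (no factor $k$ multiplies $A_k$, unlike in Theorem~\ref{1st_mom_unif_bipartite}) is accurate; what the paper's route buys is a single reusable template (truncate at $b_n$, bound mean and variance, Chebyshev) that it deploys nearly verbatim for several neighbouring statements, avoiding the fixed-$K$ double-limit bookkeeping.
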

\begin{proof}
We have
\begin{align}
    M_n = \sum_{k=2}^{\infty}\sum_{a\in[n]_k} \mathds{1}_{\{\text{$a$ is \rm{ON}}\}} = \sum_{k=2}^{\infty} A_k = \sum_{k=2}^{b_n} A_k + \sum_{k>b_n} A_k,
\end{align}
where $b_n$ is a sequence diverging to infinity as $n\to\infty$. Note that, using (\ref{pi_a_on_rewrite}),
\begin{align}
    \mathbf{E}\bigg[\frac{\sum_{k>b_n} A_k}{n}\bigg] \leq \mathbf{E}[W_n]\sum_{k>b_n} p_k = o(1),
\end{align}
for each $b_n\to\infty$. Thus,
\begin{align}
    M_n = \sum_{k=2}^{b_n} A_k + o_{\mathbf{P}}(1).
\end{align}
Again by (\ref{pi_a_on_rewrite}),
\begin{align}
    \mathbf{E}\bigg[\frac{M_n}{n}\bigg] = \sum_{k=2}^{b_n} \mathbf{E}\bigg[\frac{A_k}{n}\bigg] \leq \frac{\ell_n}{n} \sum_{k=2}^{\infty} p_k = \mathbf{E}[W_n] \longrightarrow \mathbf{E}[W],
\end{align}
and on the other hand, by (\ref{pi_a_on_rewrite}) and (\ref{pi_a_on_lower_bound}),
\begin{align}
    \mathbf{E}\bigg[\frac{M_n}{n}\bigg] &\geq \mathbf{E}[W_n]\sum_{k=2}^{b_n} p_k - \frac{1}{n} \sum_{k=2}^{b_n}(kp_k)^2 \Bigg(\frac{k}{\ell_n}\Bigg)^{k-2} \Bigg(\frac{\mathbf{E}[W_n^2]}{\mathbf{E}[W_n]}\Bigg)^k = \mathbf{E}[W_n] \sum_{k=2}^{b_n} p_k - o(1) \stackrel{n\to\infty}{\longrightarrow} \mathbf{E}[W],
\end{align}
if we choose $b_n=o(n)$. Further, since each $A_k$ is a sum of independent indicators,
\begin{align}
    \frac{1}{n^2}\Var(M_n) = \frac{1}{n^2} \sum_{k=2}^{b_n} \Var(A_k) \leq \frac{1}{n^2} \sum_{k=2}^{\infty} \mathbf{E}[A_k] \leq \frac{\ell_n}{n^2} = o(1).
\end{align}
Taking again $n$ big enough so that $\big|\mathbf{E}\big[\frac{M_n}{n}\big] - \mathbf{E}[W]\big|\leq \frac{\varepsilon}{2}$. Then
\begin{align}
    \mathbf{P}\bigg(\bigg|\frac{M_n}{n}-\mathbf{E}[W]\bigg|>\varepsilon\bigg) \leq \mathbf{P}\bigg(\bigg|\frac{M_n}{n}-\mathbf{E}\big[\frac{M_n}{n}\big]\bigg|>\frac{\varepsilon}{2} \bigg) \leq \frac{4}{\varepsilon^2}\Var\bigg(\frac{M_n}{n}\bigg) = o(1). \nonumber
\end{align}
\end{proof}
With the above, we can conclude convergence of the degree of a uniformly chosen right-vertex:

\begin{theorem} [Convergence of the degree of a uniformly chosen group] Recall that we denote the degree of a uniformly chosen group $a \in [n]_{k\geq2}$ by $D_n^{(r)}$. As $n\to\infty$,
\begin{align}\label{conv_unif_a}
    \mathbf{P}(D^{(r)}_n = k\mid  G_n) \stackrel{\mathbf{P}}{\longrightarrow} p_k.
\end{align}
\end{theorem}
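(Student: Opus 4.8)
The plan is to recognize that, conditionally on the graph realization $G_n$, choosing a right-vertex uniformly at random is the same as choosing uniformly among the \emph{active} groups, and that the degree of an active group $a$ equals its size $|a|$ by \eqref{bipartite_r_deg}. Hence the conditional law of $D^{(r)}_n$ is exactly the empirical size distribution of the active groups, so that for every fixed $k\geq 2$,
\begin{align}
    \mathbf{P}(D^{(r)}_n = k \mid G_n) = \frac{\#\{a \in [n]_k : a \text{ is \rm{ON}}\}}{\#\{a : a \text{ is \rm{ON}}\}} = \frac{A_k}{M_n}.
\end{align}
This reduces the statement to analyzing the ratio $A_k/M_n$, whose numerator and denominator we have already controlled.

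First I would rewrite this ratio using the two normalized quantities whose limits are known,
\begin{align}
    \frac{A_k}{M_n} = \frac{A_k/n}{M_n/n}.
\end{align}
By Theorem~\ref{conv_A_k}, the numerator satisfies $A_k/n \stackrel{\mathbf{P}}{\longrightarrow} p_k \mathbf{E}[W]$, while by Theorem~\ref{conv_all_gr} the denominator satisfies $M_n/n \stackrel{\mathbf{P}}{\longrightarrow} \mathbf{E}[W]$. The only point requiring (minor) care is passing to the limit in the quotient. Since Condition~\ref{cond_weights}(b) guarantees $\mathbf{E}[W] > 0$, the denominator converges in probability to a strictly positive constant, so the map $(x,y)\mapsto x/y$ is continuous at the limit point $(p_k\mathbf{E}[W],\mathbf{E}[W])$; the continuous mapping theorem then yields
\begin{align}
    \mathbf{P}(D^{(r)}_n = k \mid G_n) = \frac{A_k/n}{M_n/n} \stackrel{\mathbf{P}}{\longrightarrow} \frac{p_k \mathbf{E}[W]}{\mathbf{E}[W]} = p_k,
\end{align}
which is the claim.

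There is no genuine obstacle here: the substance of the argument is entirely carried by the two preceding convergence theorems, whose proofs rely on the second-moment method together with the truncation over group sizes and the finiteness of $\mu$ and $\mu_{(2)}$; the final step is just a routine quotient-of-limits manipulation. If one prefers to avoid invoking the continuous mapping theorem with a random denominator, the same conclusion follows by working on the event $\{|M_n/n - \mathbf{E}[W]| \leq \mathbf{E}[W]/2\}$, which has probability tending to $1$ by Theorem~\ref{conv_all_gr}; on this event the ratio is a Lipschitz function of $(A_k/n, M_n/n)$ near the limit point, and a standard $\varepsilon$–$\delta$ argument combining the two convergences concludes.
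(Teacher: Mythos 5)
Your proposal is correct and follows essentially the same route as the paper's own proof: both identify $\mathbf{P}(D^{(r)}_n = k \mid G_n) = A_k/M_n$, rewrite this as $(A_k/n)/(M_n/n)$, and combine Theorems~\ref{conv_A_k} and~\ref{conv_all_gr} with the continuous mapping theorem applied to the quotient. Your extra remarks --- that $\mathbf{E}[W]>0$ makes the division continuous at the limit point, and the alternative $\varepsilon$--$\delta$ argument on the high-probability event --- merely make explicit details the paper leaves implicit.
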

\begin{proof}
Note that $\mathbf{P}(D^{(r)}_n = k\mid  G_n) = A_k\slash M_n$. From Theorems \ref{conv_A_k} and \ref{conv_all_gr} we know that $n^{-1}A_k$ and $n^{-1}M_n$ converge in probability, which implies convergence of the joint vector $(n^{-1}A_k,n^{-1}M_n$. Hence, the convergence of the ratio is guaranteed by the continuous mapping theorem:
\begin{align}
    \frac{A_k}{M_n} = \frac{\frac{A_k}{n}}{\frac{M_n}{n}} \stackrel{\mathbf{P}}{\longrightarrow} \frac{p_k\mathbf{E}[W]}{\mathbf{E}[W]} = p_k.
\end{align}
\end{proof}

It also follows easily that the expected degree of a uniformly chosen right-vertex converges.

\begin{corollary} [Convergence of the first moment of the degree of a uniformly chosen group]
It follows from the previous that
\begin{align}\label{conv_1st_mom_a}
    \mathbf{E}[D^{(r)}_n\mid  G_n] \stackrel{\mathbf{P}}{\longrightarrow} \mu.
\end{align}
\end{corollary}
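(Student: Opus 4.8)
The plan is to reduce the statement to the two convergence results already established for the numerator and denominator of a single ratio, thereby avoiding any delicate term-by-term interchange of limit and infinite sum. The starting point is the exact identity
\begin{align}
    \mathbf{E}[D^{(r)}_n\mid G_n] = \sum_{k\geq2} k\,\frac{A_k}{M_n} = \frac{\sum_{a\in\cup_{k\geq2}[n]_k} d^{(r)}_a}{M_n},
\end{align}
which holds because $\sum_{k\geq2} k A_k = \sum_{k\geq2} k\,\#\{a:|a|=k,\ \text{$a$ is ON}\} = \sum_{a}|a|\,\mathds{1}_{\{\text{$a$ is ON}\}} = \sum_a d^{(r)}_a$, by definition of $d^{(r)}_a$ in \eqref{bipartite_r_deg}. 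Crucially, the bipartite handshake lemma gives $\sum_{a} d^{(r)}_a = \sum_{i\in[n]} d^{(l)}_i$, since both count the active (group, vertex) incidences of $\bgrg$; explicitly $\sum_{i\in[n]} d^{(l)}_i = \sum_{i\in[n]}\sum_{k\geq2}\sum_{a\ni i}\mathds{1}_{\{\text{$a$ is ON}\}} = \sum_a |a|\,\mathds{1}_{\{\text{$a$ is ON}\}}$.

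With this identity in hand, the remaining step is purely a matter of dividing numerator and denominator by $n$ and recognizing the two quantities that have already been controlled:
\begin{align}
    \mathbf{E}[D^{(r)}_n\mid G_n] = \frac{\tfrac1n\sum_{i\in[n]} d^{(l)}_i}{\tfrac1n M_n} = \frac{\mathbf{E}[D^{(l)}_n\mid G_n]}{M_n/n}.
\end{align}
I would then invoke Theorem \ref{1st_mom_unif_bipartite}, which gives $\mathbf{E}[D^{(l)}_n\mid G_n]\stackrel{\mathbf{P}}{\to}\mu\mathbf{E}[W]$, and Theorem \ref{conv_all_gr}, which gives $M_n/n\stackrel{\mathbf{P}}{\to}\mathbf{E}[W]$. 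Since convergence in probability of each coordinate yields joint convergence in probability of the pair, and since the map $(x,y)\mapsto x/y$ is continuous at the limit point $(\mu\mathbf{E}[W],\mathbf{E}[W])$ (this is where I use $\mathbf{E}[W]>0$ from Condition \ref{cond_weights}(b) to stay away from the singularity of the denominator), the continuous mapping theorem gives
\begin{align}
    \mathbf{E}[D^{(r)}_n\mid G_n]\stackrel{\mathbf{P}}{\longrightarrow}\frac{\mu\mathbf{E}[W]}{\mathbf{E}[W]}=\mu,
\end{align}
as required.

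The only genuine subtlety, and the step I expect to require the most care, is the justification that I may pass from $\sum_{k\geq2} k\,A_k/M_n$ to a ratio of two well-behaved averages rather than trying to exchange the infinite sum over $k$ with the probability limit $A_k/M_n\to p_k$ of the preceding theorem. A naive approach through the marginal convergence $\mathbf{P}(D^{(r)}_n=k\mid G_n)\stackrel{\mathbf{P}}{\to} p_k$ would demand uniform integrability or an explicit tail bound to conclude convergence of first moments, which is exactly the technical nuisance the handshake identity circumvents. Consequently $\mathbf{E}[W]>0$ is the essential hypothesis, and the assumption $\mu<\infty$ enters only through Theorem \ref{1st_mom_unif_bipartite}, so no further moment control on the group-size distribution is needed here beyond what has already been assumed.
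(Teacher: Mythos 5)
Your proposal is correct, and its skeleton is the same as the paper's: write $\mathbf{E}[D^{(r)}_n\mid G_n]=\sum_{k\geq2}kA_k/M_n$, establish convergence in probability of numerator and denominator separately after dividing by $n$, and conclude by joint convergence and the continuous mapping theorem, using $\mathbf{E}[W]>0$ to stay away from the singularity. The one genuine difference is how the numerator is handled. The paper asserts that $\sum_{k\geq2}kA_k/n\stackrel{\mathbf{P}}{\to}\mu\mathbf{E}[W]$ "by the second-moment method and suitable truncation," i.e., it gestures at a fresh Chebyshev-plus-truncation computation that is never written out. You instead observe, via the bipartite handshake identity $\sum_a d^{(r)}_a=\sum_{i\in[n]}d^{(l)}_i$, that this numerator is \emph{exactly} $\mathbf{E}[D^{(l)}_n\mid G_n]$, so the required convergence is verbatim Theorem \ref{1st_mom_unif_bipartite} and no new estimate is needed. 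This is a tidier route: it eliminates the only nontrivial analytic step, makes explicit why no additional moment control on $(p_k)_{k\geq2}$ is required beyond what Theorem \ref{1st_mom_unif_bipartite} already uses, and correctly flags that the naive alternative, exchanging the infinite sum over $k$ with the limit $A_k/M_n\stackrel{\mathbf{P}}{\to}p_k$ from the preceding theorem, would demand uniform integrability. It is also fully consistent with the paper's own machinery, since the proof of Theorem \ref{1st_mom_unif_bipartite} itself opens by applying the same handshake identity in the reverse direction ($\frac1n\sum_i d^{(l)}_i=\frac1n\sum_a|a|\mathds{1}_{\{\text{$a$ is ON}\}}$); you are simply reusing that link rather than letting the second-moment argument be silently duplicated.
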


\begin{proof}
Note that 
\begin{align}
    \mathbf{E}[D^{(r)}_n\mid  G_n] = \frac{\sum_{k=2}^{\infty}\sum_{a\in[n]_k}d^{(r)}_a}{M_n} = \frac{\sum_{k=2}^{\infty} kA_k}{M_n}
\end{align}
It is easy to show by the second-moment method and suitable truncation that $\sum_{k=2}^{\infty} kA_k\slash n \stackrel{\mathbf{P}}{\longrightarrow} \mu \mathbf{E}[W]$ and we already showed that $\frac{M_n}{n} \stackrel{\mathbf{P}}{\longrightarrow} \mathbf{E}[W]$. The claim follows again thanks to the joint convergence and continuous mapping theorem.
\end{proof}

\subsection{Convergence of the degree distribution in \texorpdfstring{$\drig$}{}} \label{appx_static_intersection}
\begin{remark}
    Note that the construction of $\drig$ allows for multiple edges, as two vertices $i,j\in[n]$ might meet in more than one group. However, it is easy to show that it will not happen with high probability in a neighborhood of a uniformly chosen vertex and hence is negligible as long as local convergence is concerned.
\end{remark}
\subsubsection{Expected average degree}

\firstmomunifv*

\begin{proof} [Proof of Theorem \ref{1st_mom_unif_v}]
Write $\mathbf{E}[D_n\mid  G_n] = \frac{1}{n}\sum_{i\in[n]}d_i$, where $d_i$ is the degree of vertex $i\in[n]$. Since we want to use the second-moment method and $d_i,d_j$ of some $i,j\in[n]$ are not independent, it is more convenient to express $\mathbf{E}[D_n\mid  G_n]$ in terms of groups, which are independent. Note that $\sum_{i\in[n]}d_i$ is nothing else than twice the number of all edges in $\drig$. As $\drig$ is constructed from $\bgrg$, it is a collection of $k$-cliques, whose ON or OFF status is determined by the ON and OFF processes of groups $a\in\cup_{n\geq2}[n]_k$ present in $\bgrg$. Hence,
\begin{align}
    \sum_{i\in[n]}d_i = \sum_{k=2}^{\infty}\sum_{a\in[n]_k} 2 \cdot \frac{|a|(|a|-1)}{2} \mathds{1}_{\{\text{$a$ is \rm{ON}}\}} = \sum_{k=2}^{\infty}\sum_{a\in[n]_k} |a|(|a|-1) \mathds{1}_{\{\text{$a$ is \rm{ON}}\}},
\end{align}
since the number of edges in a $k$-clique equals $k(k-1)\slash2$. To avoid more assumptions on the moments of the group-size distribution, we once more fix a sequence $b_n\to\infty$ as $n\to\infty$ and use truncation with respect to the group size, i.e,
\begin{align}
    \sum_{k=2}^{\infty}\sum_{a\in[n]_k} |a|(|a|-1) \mathds{1}_{\{\text{$a$ is \rm{ON}}\}} = \sum_{k=2}^{b_n}\sum_{a\in[n]_k} |a|(|a|-1) \mathds{1}_{\{\text{$a$ is \rm{ON}}\}} + \sum_{k>b_n}\sum_{a\in[n]_k} |a|(|a|-1) \mathds{1}_{\{\text{$a$ is \rm{ON}}\}}.
\end{align}
Once again denote $w_a = \prod_{i\in a}w_i$. Using upper bounds derived in previous proofs we compute
\begin{align}
    \mathbf{E}\big[\frac{1}{n} \sum_{k>b_n}\sum_{a\in[n]_k} |a|(|a|-1) \mathds{1}_{\{\text{$a$ is \rm{ON}}\}}\big] &=  \frac{1}{n}\sum_{k>b_n} k(k-1) \sum_{a\in[n]_k} \frac{k!p_kw_a}{\ell_n^{k-1}+k!p_kw_a}\\
    &\leq \frac{\ell_n}{n}\sum_{k>b_n}k(k-1)p_k = o(1), \nonumber
\end{align}
for every $b_n\to\infty$, since we have assumed $\sum_k k^2p_k<\infty$. Hence, the contribution from groups larger than $b_n$ will vanish in probability, i.e.,
\begin{align}
    \sum_{k=2}^{\infty}\sum_{a\in[n]_k} |a|(|a|-1) \mathds{1}_{\{\text{$a$ is \rm{ON}}\}} = \sum_{k=2}^{b_n}\sum_{a\in[n]_k} |a|(|a|-1) \mathds{1}_{\{\text{$a$ is \rm{ON}}\}}+o_{\mathbf{P}}(1).
\end{align}
Thus, again applying previously derived upper bounds,
\begin{align}
    \mathbf{E}[\frac{1}{n}\sum_{i\in[n]}d_i] &= \frac{1}{n} \sum_{k=2}^{b_n}\sum_{a\in[n]_k} |a|(|a|-1) \mathbf{E}\big[\mathds{1}_{\{\text{$a$ is \rm{ON}}\}}\big] +o(1) = \frac{1}{n} \sum_{k=2}^{b_n} k(k-1) \sum_{a\in[n]_k} \piaon+o(1)\\
    &\leq \frac{\ell_n}{n} \sum_{k=2}^{b_n} k(k-1)p_k+o(1)\leq \mathbf{E}[W_n](\mu_{(2)}-\mu) +o(1) \stackrel{n\to\infty}{\longrightarrow} \mathbf{E}W(\mu_{(2)}-\mu). \nonumber
\end{align}
On the other hand, using (\ref{pi_a_on_rewrite}),
\begin{align}
    \mathbf{E}[\frac{1}{n}\sum_{i\in[n]}d_i] &\geq \mathbf{E}[W_n]\sum_{k=2}^{b_n} k(k-1)p_k - \frac{1}{n} \sum_{k=2}^{b_n} k(k-1) (kp_k)^2 \Bigg(\frac{k}{\ell_n}\Bigg)^{k-2} \Bigg(\frac{\mathbf{E}[W_n^2]}{\mathbf{E}[W_n]}\Bigg)^k\\
    &\geq \mathbf{E}[W_n]\sum_{k=2}^{b_n} k(k-1)p_k - \frac{b^2_n}{n} \sum_{k=2}^{b_n} (kp_k)^2 \Bigg(\frac{b_n}{\ell_n}\Bigg)^{k-2} \Bigg(\frac{\mathbf{E}[W_n^2]}{\mathbf{E}[W_n]}\Bigg)^k \nonumber\\
    &=\mathbf{E}[W_n]\sum_{k=2}^{b_n} k(k-1)p_k - o(1) \stackrel{n\to\infty}{\longrightarrow} \mathbf{E}W(\mu_{(2)}-\mu),\nonumber
\end{align}
if we choose $b_n=o(n)$. We now compute the variance:
\begin{align} \label{var_average_degree}
    \Var\bigg(\frac{1}{n}\sum_{i\in[n]}d_i\bigg) &= \frac{1}{n^2} \sum_{k=2}^{b_n}\sum_{a\in[n]_k} |a|^2(|a|-1)^2 \Var\big[\mathds{1}_{\{\text{$a$ is \rm{ON}}\}}\big] \leq \frac{1}{n^2} \sum_{k=2}^{b_n}\sum_{a\in[n]_k} |a|^2(|a|-1)^2 \mathbf{E}\big[\mathds{1}_{\{\text{$a$ is \rm{ON}}\}}\big]\nonumber\\
    &\leq \frac{\ell_n}{n^2}\sum_{k=2}^{b_n} k^2(k-1)^2p_k \leq \frac{\ell_n b_n^2}{n^2}\sum_{k=2}^{b_n} k(k-1)p_k = o(1),
\end{align}
if we choose $b_n=o(\sqrt{n})$. Thus, taking $n$ big enough so that $\big|\mathbf{E}[\frac{1}{n}\sum_{i\in[n]}d_i] - (\mu_{(2)}-\mu)\mathbf{E}[W]\big|\leq \frac{\varepsilon}{2}$, we obtain 
\begin{align*}
    \mathbf{P}\bigg(\bigg|\frac{1}{n}\sum_{i\in[n]}d_i-(\mu_{(2)}-\mu)\mathbf{E}[W]\bigg|>\varepsilon\bigg) \leq \mathbf{P}\bigg(\bigg|\frac{1}{n}\sum_{i\in[n]}d_i-\mathbf{E}\big[\frac{1}{n}\sum_{i\in[n]}d_i\big]\bigg|>\frac{\varepsilon}{2} \bigg) \leq \frac{4}{\varepsilon^2}\Var\bigg(\frac{1}{n}\sum_{i\in[n]}d_i\bigg) = o(1).
\end{align*}
\end{proof}

\subsubsection{Degree sequence}

\degseqstat*

\begin{proof}
We adapt the proof of a similar result for $\mathrm{GRG}_n(\bm{w})$. (See the proof of \textup{\cite[Theorem 6.10]{Hofstad2016}}.) Since $(q_k)_{k\geq0}$ is a probability mass function,
\begin{align}
    \sum_{k\geq0}|Q^{(n)}_k-q_k| =2d_{\textsuperscript{\rm{TV}}}(Q^{(n)},q) \stackrel{\mathbf{P}}{\longrightarrow} 0,
\end{align}
if and only if $\max_{k\geq0}|Q^{(n)}_k-q_k| \stackrel{\mathbf{P}}{\longrightarrow} 0.$ Therefore we have to show that $\mathbf{P}(\max_{k\geq0}|Q^{(n)}_k-q_k| \geq \varepsilon)$ vanishes for every $\varepsilon>0$. Note that,
\begin{align}
    \mathbf{P}(\max_{k\geq0}|Q^{(n)}_k-q_k| \geq \varepsilon) \leq \sum_{k\geq0}\mathbf{P}(|Q^{(n)}_k-q_k| \geq \varepsilon).
\end{align}
We also have that $\mathbf{E}[Q_k^{(n)}]=\mathbf{P}(D_n=k)$ and by the previous theorem we know that $\lim_{n\to\infty} \mathbf{P}(D_n=k) = q_k$. Hence, for $n$ sufficiently large,
\begin{align}
    \max_k |\mathbf{E}[Q^{(n)}_k]-q_k| \leq \frac{\varepsilon}{2}.
\end{align}
Thus, for $n$ sufficiently large,
\begin{align}
    \mathbf{P}(\max_{k\geq0}|Q^{(n)}_k-q_k| \geq \varepsilon) \leq \sum_{k\geq0}\mathbf{P}(|Q^{(n)}_k-\mathbf{E}[Q^{(n)}_k]| \geq \frac{\varepsilon}{2}) \leq \frac{4}{\varepsilon^2}\sum_{k\geq0} \mathrm{Var}(Q^{(n)}_k),
\end{align}
where the last follows from the Chebychev inequality. We have
\begin{align}
    \mathrm{Var}(Q_k^{(n)}) \leq & \frac{1}{n^2} \sum_{i\in[n]}[\mathbf{P}(d_i=k)-\mathbf{P}(d_i=k)^2]\\
    &+ \frac{1}{n^2} \sum_{i,j\in[n],i\neq j}[\mathbf{P}(d_i=d_j=k)-\mathbf{P}(d_i=k)\mathbf{P}(d_j=k)].\nonumber
\end{align}
We have that 
\begin{align}
    \sum_{k\geq0} \frac{1}{n^2} \sum_{i\in[n]}[\mathbf{P}(d_i=k)-\mathbf{P}(d_i=k)^2] \leq \sum_{k\geq0} \frac{1}{n^2} \sum_{i\in[n]}\mathbf{P}(d_i=k) = \frac{1}{n} = o(1).
\end{align}
We want to show that the second term vanishes too. Note that $d_i=\sum_{a:a\ni i}(|a|-1)\mathds{1}_{\{\text{$a$ is \rm{ON}}\}}$. Hence, the correlation between $d_i$ and $d_j$ is due to groups containing both $i$ and $j$. We write
\begin{align}
    d_{i\setminus j} = \sum_{k=2}^{\infty}\sum_{a\in[n]_k:a \ni i, a \niton j} (|a|-1)\mathds{1}_{\{\text{$a$ is \rm{ON}}\}}, 
\end{align}
and we define $d_{j\setminus i}$ analogously. We also define
\begin{align}
    d_{i,j} = \sum_{k=2}^{\infty}\sum_{a\in[n]_k:a \ni i,j} (|a|-1)\mathds{1}_{\{\text{$a$ is \rm{ON}}\}}.
\end{align}
Then $(d_i,d_j)$ has the same law as $(d_{i\setminus j} + d_{i,j},d_{j\setminus i} + d_{i,j})$. Now let us introduce random variable $\hat{\mathds{1}}_{\{\text{$a$ is \rm{ON}}\}}$ such that $\hat{\mathds{1}}_{\{\text{$a$ is \rm{ON}}\}}\stackrel{\text{d}}{=}\mathds{1}_{\{\text{$a$ is \rm{ON}}\}}$ and $\hat{\mathds{1}}_{\{\text{$a$ is \rm{ON}}\}}$ independent of $\big(\mathds{1}_{\{\text{$a$ is \rm{ON}}\}}\big)_{a\in\cup_{k\geq2}[n]_k}$. Thus, 
\begin{align}
    \hat{d}_{i,j} = \sum_{k=2}^{\infty}\sum_{a\in[n]_k:a \ni i,j} (|a|-1)\hat{\mathds{1}}_{\{\text{$a$ is \rm{ON}}\}} \stackrel{\text{d}}{=} d_{i,j},
\end{align}
and then $(d_{i\setminus j} + d_{i,j},d_{j\setminus i} + \hat{d}_{i,j})$ are \emph{independent} random variables with the same marginals as $d_i,d_j$. Hence
\begin{align}
    &\mathbf{P}(d_i=d_j=k) = \mathbf{P}\big((d_{i\setminus j} + d_{i,j},d_{j\setminus i} + d_{i,j})=(k,k)\big),\\
    &\mathbf{P}(d_i=k)\mathbf{P}(d_j=k) = \mathbf{P}\big((d_{i\setminus j} + d_{i,j},d_{j\setminus i} + \hat{d}_{i,j})=(k,k)\big).\nonumber
\end{align}
Therefore,
\begin{align}
    \mathbf{P}&(d_i=d_j=k) - \mathbf{P}(d_i=k)\mathbf{P}(d_j=k)\\
    &= \mathbf{P}\big((d_{i\setminus j} + d_{i,j},d_{j\setminus i} + d_{i,j})=(k,k)\big) - \mathbf{P}\big((d_{i\setminus j} + d_{i,j},d_{j\setminus i} + \hat{d}_{i,j})=(k,k)\big)\nonumber\\
    &\leq \mathbf{P}\big( \big[(d_{i\setminus j} + d_{i,j},d_{j\setminus i} + d_{i,j})=(k,k)\big]\setminus \big[(d_{i\setminus j} + d_{i,j},d_{j\setminus i} + \hat{d}_{i,j})=(k,k) \big] \big)\nonumber\\
    &=\mathbf{P}\big((d_{i\setminus j} + d_{i,j},d_{j\setminus i} + d_{i,j})=(k,k),(d_{i\setminus j} + d_{i,j},d_{j\setminus i} + \hat{d}_{i,j})\neq(k,k)\big). \nonumber
\end{align}
When the above happens, it must be that $d_{i,j} \neq \hat{d}_{i,j}$, so there exists such $a\ni i,j$ that $\mathds{1}_{\{\text{$a$ is \rm{ON}}\}}\neq \hat{\mathds{1}}_{\{\text{$a$ is \rm{ON}}\}}$. If then, for some $a$, $\hat{\mathds{1}}_{\{\text{$a$ is \rm{ON}}\}}=1$, then $\mathds{1}_{\{\text{$a$ is \rm{ON}}\}}=0$ and $d_{i\setminus j} + d_{i,j}=k$; If $\mathds{1}_{\{\text{$a$ is \rm{ON}}\}}=1$, then $\hat{\mathds{1}}_{\{\text{$a$ is \rm{ON}}\}}=0$ and $d_{j\setminus i} + \hat{d}_{i,j}= k-(|a|-1).$ Hence,
\begin{align}
    \mathbf{P}&(d_i=d_j=k) - \mathbf{P}(d_i=k)\mathbf{P}(d_j=k) \leq \sum_{a\ni i,j}\mathbf{P}(\text{$a$ is \rm{ON}})[\mathbf{P}(d_i=k) + \mathbf{P}(d_j = k-|a|+1)].
\end{align}
This yields
\begin{align}
    \sum_{k\geq0}\mathrm{Var}(Q_k^{(n)}) &\leq o(1) + \sum_{k\geq0} \frac{1}{n^2} \sum_{i,j\in[n],i\neq j} \sum_{a\ni i,j}\mathbf{P}(\text{$a$ is \rm{ON}})[\mathbf{P}(d_i=k) + \mathbf{P}(d_j=k-|a|+1)]\\
    &\leq o(1) + \frac{2}{n^2} \sum_{i,j\in[n]} \sum_{a\ni i,j} \frac{|a|!p_{|a|}w_a}{\ell_n^{a-1}},\nonumber
\end{align}
where once again we denote $w_a =\prod_{v\in a} w_v$. Since
\begin{align}
    \sum_{a\ni i,j} \frac{|a|!p_{|a|}w_a}{\ell_n^{a-1}} &= \sum_{l=2}^{\infty} \sum_{v_1<...<v_{l-2}} \frac{l!p_{l}w_iw_jw_a}{\ell_n^{l-1}}\\
    &= \frac{w_iw_j}{\ell_n}\sum_{l=2}^{\infty} l(l-1)p_l \sum_{v_1,...,v_{l-2}} \frac{w_{v_1}...w_{v_l}}{\ell_n^{l-2}} = \frac{w_iw_j}{\ell_n} (\mu_{(2)}-\mu),\nonumber
\end{align}
we obtain 
\begin{align}
    \sum_{k\geq0}\mathrm{Var}(Q_k^{(n)}) &\leq o(1) + \frac{2}{n^2} \sum_{i,j\in[n]} \frac{w_iw_j}{\ell_n} (\mu_{(2)}-\mu)\\
    &= o(1) + (\mu_{(2)}-\mu)\frac{1}{n^2} \sum_{i\in[n]} \frac{w^2_i}{\ell_n} = o(1) + \frac{(\mu_{(2)}-\mu)\mathbf{E}[W_n^2]}{n^2}=o(1).\nonumber
\end{align}
\end{proof}

\begin{remark}[Eliminating conditions on higher moments by weight truncation] \label{appx_eliminating_cond_c}
Note that Theorem \ref{two_models_relation} is only valid for the $\bgrg$ with weights $(w_i)_{i\in[n]}$ satisfying Condition \ref{cond_weights}(a)-(c), as it requires a finite second moment of the degree of a uniformly chosen vertex. However, we argue that the local convergence statement can easily be extended to the $\bgrg$ not satisfying the latter via a \emph{truncation argument}. To do so, we adapt a similar argument for $\mathrm{GRG}_n(\bm{w})$. (See the proof of \textup{\cite[Theorem 4.23]{Hofstad2023}}). Namely, we can truncate the weights of all vertices by some $K>0$, i.e., introduce $\mathrm{BGRG}^{(K)}_n(\bm{w})$ with weights $\big(w_i^{(K)}\big)_{i\in[n]}$ such that
\begin{align}
    w_i^{(K)} = w_i \wedge K.
\end{align}
Note that if $(w_i)_{i\in[n]}$ in $\bgrg$ satisfies Conditions \ref{cond_weights}(a)-(b), then $\big(w_i^{(K)}\big)_{i\in[n]}$ in $\mathrm{BGRG}^{(K)}_n(\bm{w})$ satisfies Conditions \ref{cond_weights}(a)-(c). Hence, the second moments of $D^{(l),(K)}_n$ and $D^{(r),(K)}_n$ are finite and all results on $\bcm$ can be transferred to $\mathrm{BGRG}^{(K)}_n(\bm{w})$ thanks to Theorem \ref{two_models_relation}. This means that $(\mathrm{BGRG}^{(K)}_n(\bm{w}),V_n^{(l)})$ converges locally in probability to some limiting $(G,o)$. We now show that this implies that also $(\bgrg, V_n^{(l)})$ converges locally in probability to $(G,o)$. By local convergence, for any fixed rooted graph $(H_{\star},o')$ and $r\in\mathbf{N}$,
\begin{align}
    \frac{1}{n} \sum_{i\in[n]} \mathds{1}_{\{B_r(G^{(K)}_n,i) \simeq (H_{\star},o')\}} \stackrel{\mathbf{P}}{\longrightarrow} \hspace{0.1cm}& \mathbf{P}(B_r(G,o) \simeq (H_{\star},o')).
\end{align}
We write
\begin{align}
    \frac{1}{n} \sum_{i\in[n]} \mathds{1}_{\{B_r(G_n,i) \simeq (H_{\star},o')\}} &= \frac{1}{n} \sum_{i\in[n]} \mathds{1}_{\{B_r(G_n,i) \simeq (H_{\star},o'), \text{$B_r(G_n,i)$ contains only $j$ such that $w_j\leq K$}\}}\\
    &+ \frac{1}{n} \sum_{i\in[n]} \mathds{1}_{\{B_r(G_n,i) \simeq (H_{\star},o'),\text{$B_r(G_n,i)$ contains $j$ with $w_j>K$}\}}, \nonumber
\end{align}
and note that
\begin{align}
    \lim_{K\to\infty}\mathbf{E}\Bigg[\frac{1}{n} \sum_{i\in[n]} \mathds{1}_{\{B_r(G_n,i) \simeq (H_{\star},o'),\text{$B_r(G_n,i)$ contains $j$ with $w_j>K$}\}}\Bigg]&\\
    \leq \lim_{K\to\infty}\mathbf{P}(\text{$B_r(G_n,V_n^{(l)})$ contains $j:w_j>K$}) = o(1),& \nonumber
\end{align}
by Conditions \ref{cond_weights}(a)-(b). Thus, for $K$ large enough,
\begin{align}
    \frac{1}{n} \sum_{i\in[n]} \mathds{1}_{\{B_r(G_n,i) \simeq (H_{\star},o')\}} &\stackrel{\text{d}}{=} \frac{1}{n} \sum_{i\in[n]} \mathds{1}_{\{B_r(G_n,i) \simeq (H_{\star},o'), \text{$B_r(G_n,i)$ contains only $j:w_j\leq K$}\}} + o_{\mathbf{P}}(1)\\
    &=\frac{1}{n} \sum_{i\in[n]} \mathds{1}_{\{B_r(G^{(K)}_n,i) \simeq (H_{\star},o')\}}+o_{\mathbf{P}}(1) \stackrel{\mathbf{P}}{\longrightarrow} \hspace{0.1cm} \mathbf{P}(B_r(G,o) \simeq (H_{\star},o')).\nonumber
\end{align}
Having shown local convergence of $\bgrg$, we transfer the result on the giant component in a similar manner. Denote the size of the giant component in $\mathrm{BGRG}^{(K)}_n(\bm{w})$ by $|\mathscr{C}^{(K)}_{\max}|$ and the size of the giant component in $\bgrg$ by $|\mathscr{C}_{\max}|$. Transferring results from \cite{Hofstad2022} we obtain, as $n\to\infty$,
\begin{align}
    \frac{|\mathscr{C}^{(K)}_{\max}|}{n} \stackrel{\mathbf{P}}{\longrightarrow} \xi.
\end{align}
Naturally,
\begin{align} \label{giant_truncated_lower_bound}
    \frac{|\mathscr{C}_{\max}|}{n} \geq \frac{|\mathscr{C}^{(K)}_{\max}|}{n} \stackrel{\mathbf{P}}{\longrightarrow} \xi = \mu\big(|\mathscr{C}(o)|=\infty\big).
\end{align}
On the other hand, denote $Z_{\geq k} = \frac{1}{n}\sum_{i\in[n]}\mathds{1}_{\{|\mathscr{C}(i)|\geq k\}}$ and note that on the event $\{Z_{\geq k} \geq 1\}$,
\begin{align}
    \frac{|\mathscr{C}_{\max}|}{n} \leq \frac{Z_{\geq k}}{n}.
\end{align}
By local convergence in probability,
\begin{align}
    \frac{1}{n}Z_{\geq k} = \mathbf{E}\bigg[\mathds{1}_{\big\{|\mathscr{C}(V^{(l)}_n)|\geq k\big\}} \mid  G_n\bigg] \stackrel{\mathbf{P}}{\longrightarrow} \xi_{\geq k} = \mu\big(|\mathscr{C}(o)|\geq k\big),
\end{align}
Note that $\lim_{k\to\infty}\xi_{\geq k} = \xi$. Thus, for every $\varepsilon>0$,
\begin{align}
    \limsup_{n\to\infty} \mathbf{P}\bigg(\frac{|\mathscr{C}_{\max}|}{n} \leq \xi + \varepsilon \bigg) &\leq \lim_{n\to\infty} \mathbf{P}\bigg(\frac{1}{n}Z_{\geq k} \geq \xi+\varepsilon\bigg) \\
    &\leq \lim_{k\to\infty}\lim_{n\to\infty} \mathbf{P}\bigg(\frac{1}{n}Z_{\geq k} \geq \xi_{\geq k} + \varepsilon\bigg) = 0,\nonumber
\end{align}
and the desired statement follows. Hence indeed, the most important results that we treat in this paper, i.e., local convergence and the existence of giant component, are also true for $\bgrg$ only satisfying Conditions \ref{cond_weights}(a)-(b).
\end{remark}

\subsection{Static local convergence of \texorpdfstring{$\bgrg$}{} and \texorpdfstring{$\drig$}{}}  \label{appx_static_sec_loc_conv}

\noindent \textbf{Local convergence of $\bcm$.} Having verified that $\bgrg$ fulfils the necessary regularity conditions we can now conveniently transfer results on the local convergence from \cite{Hofstad2018}. For the comfort of the reader, we quote the statement of the original result on the local convergence of $\bcm$ (\textup{\cite[Theorem 2.14]{Hofstad2018}}), which states that under Condition \ref{reg_cond}, as $n\to\infty$, $(\mathrm{BCM}_n,V^b_n)$ converges locally in probability to $(\mathrm{BP}_{\gamma},0)$.
The limiting object - $(\mathrm{BP}_{\gamma},0)$ - is a mixture of two branching processes with the root $0$. The two processes are needed because of the bipartite structure of $\bcm$ - each of them corresponds to contributions made to the limit by left- and right-vertices respectively. As the structures of local limits of $\bcm$ and $\bgrg$ are very similar, we do not describe the first one in more detail and direct the reader to Section \ref{sec__main_results_stationary} where the latter is explained.\\

\noindent \textbf{Local convergence of $\mathrm{RIGC}$.} In \cite{Hofstad2018} the local convergence of the resulting intersection graph is a consequence of the convergence of the underlying bipartite graph. Thus, the same will take place for our model.\\

Again for the reader's convenience, we first quote the statement of the original result from \cite{Hofstad2018} (see \textup{\cite[Theorem 2.8]{Hofstad2018}}: Under Condition \ref{reg_cond}, as $n\to\infty$, $(\mathrm{RIGC}_n,V^l_n)$ converges locally in probability to $(\mathrm{CP},o)$. As we already mentioned, the convergence of $\mathrm{RIGC}$ follows from the convergence of $\bcm$. Therefore, the limiting object $(\mathrm{CP},o)$ is a community projection of the limiting object $(\mathrm{BP}_{\gamma},0)$ just like $\mathrm{RIGC}_n$ is a community projection of $\bcm$ (see (2.2) in \cite{Hofstad2018}).\\

\noindent \textbf{Static local limit of $\bgrg$ and $\drig$.} We proceed by stating the local limit of $\bgrg$ and $\drig$.

\locconvstatic*

\begin{proof}
Thanks to Theorem \ref{two_models_relation} and the fact that $\bgrg$ fulfils Condition \ref{reg_cond}(i), we can transfer \textup{\cite[Theorem 2.8]{Hofstad2018}} to $\bgrg$, obtaining the above. The limiting left- and right-degrees $D^{(l)}$ and $D^{(r)}$ are the ones derived in the verification of Condition \ref{reg_cond}(i).\\

The result for $\drig$ is equivalent to \textup{\cite[Theorem 2.8]{Hofstad2018}} and it is a consequence of the relationship between the resulting $\drig$ and the underlying $\bgrg$. The convergence of intersection graphs is preserved by the community projection that transforms the underlying bipartite structures into them. Hence, naturally, the local limit of $\drig$ is a community projection (see (\ref{com_proj})) of the local limit of $\bgrg$.
\end{proof}

\subsection{Static giant component} \label{appx_static_sec_giant}

\subsubsection{Giant component in \texorpdfstring{$\bgrg$}{}}

In \cite{Hofstad2022} results on the giant component of $\bcm$ are again shown under Condition \ref{reg_cond}. Hence, thanks to Theorem \ref{two_models_relation} we can transfer them to our situation. We start with the giant component of the underlying bipartite graph. For the reader's convenience, we state the original result from \cite{Hofstad2022} adapted to the notation we use in this paper. 

\begin{restatable}[The largest component of the $\mathrm{BCM}$ \textup{\cite[Theorem 2.11]{Hofstad2022}}]{theorem}{giantbcm}\label{giant_bcm} 
Consider $\mathrm{BCM}_n = \mathrm{BCM}(\bm{d}^{(l)},\bm{d}^{(r)})$ under Condition \ref{reg_cond} and further assume that $\Bar{V}_2+\Bar{A}_2<2$, where $\Bar{V}_k =\frac{1}{n}\#\{i\in[n]:d^{(l)}_i=k\}$ and $\Bar{A}_k =\frac{1}{M_n}\#\{a:|a|=k\}$. Under the supercriticality condition $\mathbf{E}[\Tilde{D}^{(l)}]\mathbf{E}[\Tilde{D}^{(r)}]>1$, we have that $\xi_l>0$, $\eta_l<1$ and $\eta_r=G_{\Tilde{D}^{(l)}}(\eta_l)<1$. Then, as $n\to\infty$,
\begin{align}
    \frac{|\mathscr{C}_{1,b} \cap \mathcal{V}^{(l)}|}{n} \stackrel{\mathbf{P}}{\longrightarrow} & \xi_l,\\
    \frac{|\mathscr{C}_{1,b} \cap \mathcal{V}_k^{(l)}|}{n} \stackrel{\mathbf{P}}{\longrightarrow} & p_k(1-\eta^k_l).
\end{align}
\end{restatable}
Given this, we obtain Theorem \ref{giant_bipartite}.

\begin{proof}[Proof of Theorem \ref{giant_bipartite}]
The result is automatically transferred from Theorem \ref{giant_bcm}. We are allowed to do that because in Theorem \ref{two_models_relation} we linked $\bcm$ satisfying Conditions \ref{reg_cond} to $\bgrg$ satisfying Conditions \ref{reg_cond} and we also showed that $\bgrg$ satisfies these regularity conditions earlier in this section. Note that the condition $\Bar{V}_2+\Bar{A}_2<2$ made in \cite{Hofstad2022} is always satisfied in our model (as $V_0>0$) and does not have to be assumed for Theorem \ref{giant_bipartite} to hold.
\end{proof}

\subsubsection{Giant component in \texorpdfstring{$\drig$}{}}
Once again, we want to transfer results from \cite{Hofstad2022}, namely the result on the giant component in the resulting intersection graph. Again for the comfort of the reader, we quote the original result with notation adapted to our convention:

\begin{restatable}[Size of the largest component \textup{\cite[Theorem 2.6]{Hofstad2022}}]{theorem}{giantRIGC} \label{giant_RIGC}
Consider $\drig$ under Condition \ref{reg_cond}, and further assume that $\Bar{V}_2+\Bar{A}_2<2$, where $\Bar{V}_k =\frac{1}{n}\#\{i\in[n]:d^{(l)}_i=k\}$ and $\Bar{A}_k =\frac{1}{M_n}\#\{a:|a|=k\}$. Then, there exists $\eta_l \in[0,1]$, the smallest
solution of the fixed point equation
\begin{align}
    \eta_l = G_{\Tilde{D}^{(r)}}(G_{\Tilde{D}^{(l)}}(\eta_l)),
\end{align}
and $\xi_l= 1-G_{D^{(l)}}(\eta_l) \in[0,1]$ such that
\begin{align} \label{giant_intersect_conv}
    \frac{|\mathscr{C}_1|}{n} \stackrel{\mathbf{P}}{\longrightarrow} \xi_l.
\end{align}
Furthermore, $\xi_l>0$ exactly when
\begin{align} \label{giant_cond1}
    \mathbf{E}[\Tilde{D}^{(l)}]\mathbf{E}[\Tilde{D}^{(r)}] > 1,
\end{align}
In this case, $\mathscr{C}_1$ is unique.
\end{restatable}

Hence, we obtain that our model has a giant component as well when $\frac{\mathbf{E}[W^2](\mu_{(2)}-\mu)}{\mathbf{E}[W]} > 1$:

\begin{proof}[Proof of Theorem \ref{giant_stationary}]
The convergence in (\ref{giant_intersect_conv_our}) is a consequence of Theorem \ref{giant_bipartite} and it can be transferred because of the graph equivalence from Theorem \ref{two_models_relation}. However, since, as in the case of $\mathrm{RIGC}$, the giant component in the resulting graph will exist only if the giant component in the underlying graph exists this result can be also deduced from Theorem \ref{giant_bipartite}. Note again that the condition $\Bar{V}_2+\Bar{A}_2<2$ made in \cite{Hofstad2022} is always satisfied in our model (as $V_0>0$) and does not have to be assumed for Theorem \ref{giant_stationary} to hold. A nice feature of our model is the fact that we have a more explicit form of the limiting left- and right-degrees than the authors of \cite{Hofstad2018} and \cite{Hofstad2022}. Hence, we can derive the equality in (\ref{giant_cond_our}): By definition of the shift random variable $\Tilde{D}^{(r)}$ (see (\ref{def_shift_variable})),
\begin{align}
    \mathbf{E}[\Tilde{D}^{(r)}] &= \sum_{k=1}k\mathbf{P}(\Tilde{D}^{(r)}=k)=\sum_{k=1}k(k+1)\frac{\mathbf{P}(D^{(r)}=k+1)}{\mathbf{E}[D^{(r)}]} = \frac{1}{\mu} \sum_{k=1}k(k+1)p_{k+1}\\
    &=\frac{1}{\mu} \sum_{l=2}(l-1)lp_l = \frac{\mu_{(2)}-\mu}{\mu},\nonumber
\end{align}
where we have used (\ref{conv_unif_a}) and (\ref{conv_1st_mom_a}) when substituting explicit expressions for the probability mass function and expected value of the random variable $D^{(r)}$.
Similarly,
\begin{align} \label{exp_tilde_d_l}
    \mathbf{E}[\Tilde{D}^{(l)}] &=\sum_{k=1}k(k+1)\frac{\mathbf{P}(D^{(l)}=k+1)}{\mathbf{E}[D^{(l)}]}.
\end{align}
We know by Corollary \ref{conv_unif_left} that $D^{(l)}$ is a mixed-Poisson variable with rate $W\mu$. Hence, we need to condition on the weight variable $W$ to compute its expectation:
\begin{align}
    \mathbf{E}[D^{(l)}] &= \mathbf{E}\big[\mathbf{E}(Poi(w\mu)\mid  W\big] = \mathbf{E}[\mu W]=\mu\mathbf{E}[W].
\end{align}
Plugging it into (\ref{exp_tilde_d_l}) yields
\begin{align}
    \mathbf{E}[\Tilde{D}^{(l)}] &=\frac{1}{\mu\mathbf{E}[W]}\sum_{k=1}k(k+1)\mathbf{P}(D^{(l)}=k+1)\\
    &=\frac{1}{\mu\mathbf{E}[W]}\sum_{l=2}(l-1)l \mathbf{E}\big[\mathbf{E}\big[\mathbf{P}(Poi(w\mu)=l)\mid  W\big]\big].\nonumber
\end{align}
After substituting $\mathbf{P}(Poi(w\mu)=l)=\frac{\big(w\mu\big)^le^{-w\mu}}{l!}$ we obtain
\begin{align}
    \mathbf{E}[\Tilde{D}^{(l)}] &= \frac{1}{\mu\mathbf{E}[W]}\sum_{l=2}(l-1)l \mathbf{E}\big[\mathbf{E}\big[\frac{\big(w\mu\big)^le^{-w\mu}}{l!}\mid  W\big]\big]=\frac{1}{\mu\mathbf{E}[W]}\mathbf{E}\bigg[\mathbf{E}\bigg[\sum_{l=2}\frac{\big(W\mu\big)^l}{(l-2)!}e^{-W\mu}\mid  W\bigg]\bigg]\\
    &= \frac{1}{\mu\mathbf{E}[W]}\mathbf{E}\big[\mathbf{E}\big[e^{-W\mu}\cdot e^{W\mu}(W\mu)^2\mid  W\big]\big] = \frac{\mu^2\mathbf{E}[W^2]}{\mu\mathbf{E}[W]}=\frac{\mu\mathbf{E}[W^2]}{\mathbf{E}[W]}. \nonumber
\end{align}
Therefore, the condition (\ref{giant_cond_our}) becomes
\begin{align}
    \frac{\mathbf{E}[W^2](\mu_{(2)}-\mu)}{\mathbf{E}[W]} >1.
\end{align}
\end{proof}

\begin{remark}[Results for $\bgrgrescaled$] \label{appx_results_union_rescaled}
We claim that $\bgrgrescaled$ fulfils the same conditions that guaranteed convergence of the $\bgrg$, i.e., uniformity and regularity conditions of the degree sequences. Indeed, note that by replacing $f(|a|)=|a|!p_{|a|}$ by $f(|a|)=(1+t)|a|!p_{|a|}$, all the proofs from Appendix \ref{appx_static_graph} follow analogously. Hence, we do not explicitly repeat them here. However, for the comfort of the reader we now state the corresponding regularity conditions as the limiting variables are necessary to understand some of our statements on the union graph:
\begin{enumerate}
    \item Denote the degree of a uniformly chosen left-vertex in $\bgrgrescaled$ by $D^{(l),(t)}_n$. Then, as $n\to\infty$,
\begin{align} 
    D^{(l),(t)}_n \to D^{(l),(t)} \hspace{0.2cm}\text{and}\hspace{0.2cm}\mathbf{E}[D^{(l),(t)}_n\mid  G_n] \stackrel{\mathbf{P}}{\longrightarrow} \mu(1+t)\mathbf{E}[W],
\end{align}
where $D^{(l),(t)}$ is a Poisson variable with parameter $W\mu(1+t)$, with $\mu = \sum_k kp_k$.
    \item Denote the degree of a uniformly chosen right-vertex in $\bgrgrescaled$ by $D_n^{(r),(t)}$ . Then, as $n\to\infty$,
\begin{align}\label{conv_unif_a_union}
    \mathbf{P}(D^{(r),(t)}_n = k\mid  G_n) \stackrel{\mathbf{P}}{\longrightarrow} p_k \hspace{0.2cm}\text{and}\hspace{0.2cm} \mathbf{E}[D^{(r),(t)}_n\mid  G_n] \stackrel{\mathbf{P}}{\longrightarrow} \mu.
\end{align}
\end{enumerate}
Naturally, the above automatically transfers to $\bgrgunion$, as it is asymptotically equivalent to $\bgrgrescaled$.
\end{remark}

\section{Convergence of processes with jumps} \label{appx_jumps_conv}
To prove dynamic results for processes of giant membership and size of the largest group we apply a well-known criterion for convergence of processes in Skorokhod $J_1$ topology, that we now quote with a minor change of replacing $D[0,1]$ with $D[0,t]$:
\begin{restatable}[\textup{\cite[Thm 13.3]{Billingsley2013}}]{theorem}{convSkorokhod} \label{conv_Skorokhod}
Assume a sequence of processes $\big(X_n(s)\big)_{s\in[0,t]}$ and a process $\big(\mathcal{X}(s)\big)_{s\in[0,t]}$ in $D[0,t]$, equipped with the $d_0$ metric, satisfy the following conditions:
\begin{enumerate}[(i)]
    \item For all $\{s_1,...,s_k\}\in[0,t]: \big(X_n(s_1),...,X_n(s_k)\big) \stackrel{d}{\longrightarrow} \big(\mathcal{X}(s_1),...,\mathcal{X}(s_k)\big)$ as $n\to\infty$.
    \item $\mathcal{X}(t) - \mathcal{X}(t-\delta) \stackrel{\mathbf{P}}{\longrightarrow} 0$ as $\delta \to 0$.
    \item For every $\varepsilon,\eta>0$ there exists $n_0\geq1$ and $\delta>0$ such that for all $n\geq n_0$
    \begin{align}
        \mathbf{P}\Bigg( \sup_{s,s_1,s_2: s\in[s_1,s_2],s_2-s_1<\delta} \min \Big(\Big|X_n(s)-X_n(s_1)\Big|,\Big|X_n(s_2)-X_n(s)\Big|\Big) > \varepsilon \Bigg) \leq \eta.
    \end{align}
\end{enumerate}
Then $\big(X_n(s)\big)_{s\in[0,t]} \stackrel{d}{\longrightarrow} \big(\mathcal{X}(s)\big)_{s\in[0,t]}$ as $n\to\infty$ in Skorokhod $J_1$ topology.
\end{restatable}
To extend the notion of local convergence to the dynamic setting we have to take care of the jumps with respect to the local topology, which is less straightforward than the jumps we described before (for instance jumps of indicator processes). However, the fact that the space of rooted graphs equipped with the local topology is Polish allows us to apply the theory of convergence of processes from a compact space to a separable space from \cite{kallenberg2002foundations}. For the comfort of the reader, we now quote the appropriate statements.\\

We start by introducing the notation. Fix two metric spaces $(K,d)$ and $(S, p)$, where $K$ is compact and $S$ is separable and complete, and consider the space $C(K,S)$ of continuous functions from $K$ to $S$, endowed with the uniform metric $\Bar{\rho}(x,y)=\sup_{t\in K} \rho(x_t,y_t)$. The following lemma characterizes conditions necessary for convergence in $C(K,S)$:
\begin{restatable}[\textup{\cite[Lemma 16.2]{kallenberg2002foundations}}]{lemma}{convcompacttoseparable} \label{conv_compact_to_separable}
Let $X,X_1,X_2,...$ be random elements in $C(K,S)$. Then $X_n$ converges weakly to $X$ iff $X_n$ converges to $X$ for all finite-dimensional distributions and $(X_n)$ is relatively compact in distribution.
\end{restatable}
For random elements in sufficiently regular metric spaces, the condition of relative compactness in the above lemma can be replaced by tightness. This key result in the theory of weak convergence is the subject of the following theorem:
\begin{restatable}[\textup{\cite[Theorem 16.3]{kallenberg2002foundations}}]{theorem}{relcompactvstight} \label{rel_compact_vs_tight}
For any sequence of random elements $\xi_1,\xi_2,...$ in a metric space $S$, tightness implies relative compactness in distribution, and the two conditions are equivalent when $S$ is separable and complete.
\end{restatable}

\end{document}